
\documentclass[1p]{elsarticle}
\voffset=-1.65cm
 \hoffset=-2.1cm
 \textwidth 170mm \textheight 241mm

\usepackage[T1]{fontenc}
\usepackage{amsmath,amssymb,latexsym,mathrsfs}
\usepackage{mathabx} 
\usepackage{xcolor}
\usepackage[all,color,cmtip]{xy}
\usepackage{rotating}
\usepackage{amsthm}
\usepackage{mathtools}
\usepackage{enumitem}
\usepackage{titlesec}
\usepackage{tikz}
\biboptions{sort&compress}
\usepackage{blkarray}
\usepackage{hyperref} 

\usepackage{csquotes}
\usepackage{xcolor}
\usepackage{bbm}
\usepackage{mathtools}

\makeatletter
\def\ps@pprintTitle{%
  \let\@oddhead\@empty
  \let\@evenhead\@empty
  \def\@oddfoot{\reset@font\hfil\thepage\hfil}
  \let\@evenfoot\@oddfoot
}
\makeatother

\titleformat{\section}
  {\normalfont\fontsize{12}{15}\bfseries}{\thesection}{1em}{}

\titleformat{\subsection}
  {\normalfont\fontsize{11}{15}\bfseries}{\thesubsection.}{0.5em}{}


\def\stringmod#1{M_{#1}}

\def\a{\alpha}
\def\kk{K}
\def\La{\Lambda}
\def\ideal{R}
\def\Car{C}

\newcommand{\perm}{\mathcal{P}}
\newcommand{\forb}{\mathcal{F}}
\newcommand{\rad}{\mathrm{rad}}
\newcommand{\inc}{\mathrm{inc}}
\newcommand{\Ker}{\mathrm{ker} }
\newcommand{\Rnk}{\mathrm{rk}}
\newcommand{\tr}{{\rm{tr}}}
\newcommand{\Dyn}{\mathrm{Dyn}}
\newcommand{\R}{\mathbb{R}}
\newcommand{\Z}{\mathbb{Z}}

\newcommand{\M}{\mathbb{M}}
\newcommand{\MM}{\mathbb{M}}

\newcommand{\A}{\mathbb{A}}
\def\AA{\mathbb{A}}

\newcommand{\CC}{\mathbb{C}}
\newcommand{\C}{\mathcal{C}}
\newcommand{\D}{\mathbb{D}}
\newcommand{\DD}{\mathbb{D}}
\newcommand{\E}{\mathbb{E}}
\newcommand{\EE}{\mathbb{E}}

\newcommand{\CRnk}{\mathrm{crk}}
\newcommand{\crk}{\mathrm{crk}}
\newcommand{\Id}{\mathbf{I}}
\newcommand{\roots}{\mathcal{R}}
\newcommand{\bas}{\mathrm{e}}
\newcommand{\sgn}{\mathrm{sgn}}

\newcommand{\bulito}{\mathmiddlescript{\bullet}}
\newcommand{\diag}{\mathrm{diag}}
\newcommand{\wt}[1]{\widetilde{#1}}

\newcommand{\edges}{\mathbbm{v}}
\newcommand{\bdg}{\mathbf{B}}
\newcommand{\bdgA}{\mathbf{A}}
\newcommand{\bdgD}{\mathbf{D}}
\newcommand{\bdgC}{\mathbf{C}}
\newcommand{\Null}{\mathrm{Null}}
\def\verts#1{V(#1)} 
\def\edgs#1{E(#1)}  
\def\posloops#1{E(#1)^{+\ell}}
\def\shposloops{E^{+\ell}}
\def\negloops#1{E(#1)^{-\ell}}
\def\extbdg#1{\dot{#1}} 
\def\extwalks{\mathfrak{w}}
\def\walks{\mathfrak{w}}
\def\walksred{\overline{\mathfrak{w}}}
\def\wlk{\omega} 
\def\signbigr{\sigma} 
\def\CR{\mathcal{R}}
\def\CRinc{\mathcal{R}^{\rm inc}}
\def\GlnZ{{\rm Gl}_n(\Z)}

\def\ZZ{\Z}
\def\cyccond{\beta}
\def\CD{\mathcal{D}}
\def\CA{\mathcal{A}}
\def\CB{\mathcal{B}}
\def\CF{\mathcal{F}}
\def\CG{\mathcal{G}}
\def\weak{\sim}
\def\Gweak{\sim_{{\rm G}}}

\def\switch{\sim_s}
\def\un{\underline{n}}
\def\S{\sigma} 
\def\T{\tau} 
\def\refl{\mathrm{r}} 
\def\onevertexbdg{\mathbf{L}}

\def\dl{{\mathrm{dl}}} 
\def\Ci{{({\scriptsize${\mathbb{C}}$}1)}}
\def\Cii{{({\scriptsize${\mathbb{C}}$}2)}}
\def\Ciii{{({\scriptsize${\mathbb{C}}$}3)}}

\def\dim{{\rm dim}}
\def\pp{{\bf p}}
\def\qq{{\bf q}}
\def\quiv{\Gamma}
\def\hI{\hat{I}}
\newcommand{\swi}{\mathrm{sw}}

\newcommand\mathmiddlescript[1]{\vcenter{\hbox{$\scriptstyle #1$}}}

\numberwithin{equation}{section}

\newtheorem{lemma}[equation]{Lemma}{\bf}{\it}
\newtheorem{proposition}[equation]{Proposition}{\bf}{\it}
\newtheorem{corollary}[equation]{Corollary}{\bf}{\it}
\newtheorem{theorem}[equation]{Theorem}{\bf}{\it}
\newtheorem{problem}[equation]{Problem}{\bf}{\it}
\newtheorem*{theoremA}{Theorem A}{\bf}{\it}
{\bf}{\it}
\newtheorem*{theoremB}{Theorem B}{\bf}{\it}
\newtheorem*{theoremC}{Theorem C}{\bf}{\it}
\theoremstyle{definition}
\newtheorem{remark}[equation]{Remark}{\bf}{\rm}
\newtheorem{alg}[equation]{Algorithm}{\bf}{\rm}
\newtheorem{definition}[equation]{Definition}{\bf}{\rm}
\newtheorem{example}[equation]{Example}{\bf}{\rm}

\theoremstyle{remark}

\begin{document}
\title{Bidirected graphs, integral quadratic forms and some Diophantine equations}

\author{Jes\'us Arturo Jim\'enez Gonz\'alez\fnref{UNAM,cora}} 

\ead{jejim@im.unam.mx}

\author{Andrzej Mr\'oz\fnref{UMK}}
\ead{amroz@mat.umk.pl}

\fntext[UNAM]{Instituto de Matem\'aticas UNAM. 
Ciudad~Universitaria C.P.~04510, Mexico City.}
\fntext[UMK]{Faculty of Mathematics and Computer Science, Nicolaus Copernicus University, ul. Chopina 12/18, 87-100 Toru\'n, Poland}

\fntext[cora]{Corresponding author}
\journal{\ }

\begin{abstract}
Bidirected graphs are multigraphs where every edge has an independent direction at each end. In the paper, with an arbitrary bidirected graph we associate a non-negative integral quadratic form (called the incidence form of the graph), and determine all forms that appear in this way in two main results: first, among non-negative connected unit forms, precisely those of Dynkin type $\mathbb{A}$ or $\mathbb{D}$ are incidence forms; second, we give simple conditions on the coefficients of a non-negative connected non-unitary form to be an incidence form. We say that those non-unitary forms have Dynkin type $\mathbb{C}$, and justify such nomenclature by generalizing known classifications and properties of non-negative quadratic forms of Dynkin types $\mathbb{A}$ and $\mathbb{D}$ to the introduced type~$\mathbb{C}$. We also show that the graphical framework of an incidence form is an useful tool to visualize its arithmetical properties, to prove new facts and to perform efficient computations for integral quadratic forms and related problems in number theory, algebra and graph theory. For instance, in a third main result we relate the walks of a bidirected graph with the $0,1,2$-roots of the associated incidence form (and to the classical root systems in the positive case). Moreover, we prove the universality property for a large class of integral quadratic forms, provide computational methods to find solutions or to characterize the finiteness of the sets of solutions of various related Diophantine equations, show a variant of Whitney's theorem on line graphs using switching classes, and apply our techniques to give a conceptual and constructive proof of the non-negativity (and possible Dynkin types) of the Euler quadratic forms of a class of finite-dimensional gentle algebras.
\end{abstract}

\begin{keyword}
integral quadratic form 
\sep Dynkin type \sep bidirected graph \sep signed graph \sep Gabrielov transformation \sep incidence matrix \sep root system\sep Diophantine equation \sep line graph \sep Euler quadratic form\sep gentle algebra
\MSC[2020] primary:
05C76 
\sep
05B20 
\sep
15A63 

secondary:
15A21 
\sep
11E25 
\sep
17B22 
\sep
05C20 
\sep
05C22 
\sep
05C50 
\sep
11Y50 
\sep
20F55 
\end{keyword}

\maketitle


\section{Introduction}\label{sec:intro}

Integral quadratic forms, that is, mappings $q: \Z^n \to \Z$ defined by a homogeneous polynomial of the second degree
\begin{equation}\label{EQ:iqf}
q(x_1,\ldots,x_n)=\sum_{i=1}^n q_ix_i^2 + \sum_{i<j}q_{ij}x_ix_j,
\end{equation}
with integer coefficients $q_i,q_{ij} \in \Z$, appear in many mathematical contexts, especially in number theory and algebra, but also in geometry, topology and some branches of computer science. They are often associated to more intricate structures to reflect their certain properties, take for instance Euler or Tits quadratic form of a finite dimensional associative algebra~\cite{ASS06,BPS11,Kac80, cmR}, or integral form induced by the Killing form of a (semi-simple) complex Lie algebra~\cite{BGZ06,jeH72}, cf.~\cite{BKL06,PR19}. Natural questions concerning integral quadratic forms, like the classification up to changes of basis of the group $\Z^n$ (the $\ZZ$-equivalence relation $\weak$, see~(\ref{eq:equivalence})) or searching for solutions of induced Diophantine equations
\begin{equation}\label{EQ:dio}
q(x_1,\ldots,x_n)=d,
\end{equation}
for fixed $d \in \Z$ lead to highly non-trivial problems (cf.~Xth Hilbert Problem \cite{Hilbert}) and have been a guiding light in mathematics for many years. For example, Lagrange's Theorem asserts that every integer $d \geq 1$ can be expressed as the sum of four square numbers, that is, equation~(\ref{EQ:dio}) always has a solution for such $d$ values and $q(x_1,x_2,x_3,x_4)=x_1^2+x_2^2+x_3^2+x_4^2$, see~\cite{BJP19} and subsection~\ref{subsec:universal} below. 
The set of solutions to the equation~(\ref{EQ:dio}) for $d \in \Z$ is denoted by
\begin{equation}\label{EQ:roo}
\mathcal{R}_q(d):=q^{-1}(d)=\{x \in \Z^n \colon q(x)=d\}
\end{equation}
and its elements  are called  \textbf{$d$-roots} of $q$. If the set $\CR_q(d)$ is non-empty, then we say that $q$ {\bf represents} the integer $d$. If $q$ represents all integers (or all non-negative integers in case $q$ is non-negative), then we call $q$ a {\bf universal} form, cf.~\cite{Ram,BH290}. We note that the sets of (positive) $0$-roots and $1$-roots of the Euler form of a finite-dimensional  algebra are of special importance -- in certain cases they classify indecomposable modules, see the celebrated result of Kac \cite{Kac80}, see also \cite{ASS06,cmR}.

On the other hand, the classification of integral forms up to $\weak$ is a largely unsolved, highly non-trivial problem, compare with the well-known classification of real quadratic forms up to $\mathbb{R}$-equivalence, following from Sylvester's Law of Inertia. Classifications of (certain kinds of) integral quadratic forms in small number of variables $n$ follow from deep results of many authors, among others, Gauss~\cite{cfG66}, Hasse, Minkowski, Eichler, Witt~\cite[Ch.~15]{CS99}; for $n \geq 24$ the problem is considered ``impracticable'', see~\cite[p.~353]{CS99}.

For these reasons it is natural to study subclasses of all integral quadratic forms defined by some restrictions on their coefficients. In the present paper we focus mainly on \textbf{unit} (resp. \textbf{semi-unit}) \textbf{forms}, that is, integral  forms~\eqref{EQ:iqf} whose diagonal coefficients satisfy $q_i=1$ (resp. $q_i \in \{0,1\}$), for $i=1,\ldots,n$. More generally, we consider so-called {\bf fully regular} and {\bf Cox-regular} integral quadratic forms whose integer coefficients satisfy certain divisibility conditions (see Definition \ref{def:Coxreg}) which imply that the associated Weyl group and the Coxeter transformation is integral, cf.~\cite{KS15a, MZ22}. Cox-regular forms are closely related to the so-called quasi-Cartan matrices in the sense of Barot-Geiss-Zelevinsky \cite{BGZ06} (see \cite{MM19} for more details, cf.~\cite{dS22, MM21qa, PR19}) and generalized intersection matrices in the sense of Slodowy \cite{Slodovy} (see also Gabrielov \cite{Gabrielov} and Simson \cite{dS22}, cf.~Remark \ref{rem:GabrielovLie}).

The $\weak$-classification problem for these classes seems to be much simpler (than for arbitrary integral quadratic forms); however, it remains non-trivial and is still unsolved. Classification of all non-negative  semi-unit forms was given by Barot-de la Pe\~na in~\cite{BP99} by means of the Dynkin type $\Dyn(q) \in \{\A_m,\D_m,\E_6,\E_7,$ $\E_8\}$, and the corank $\CRnk(q) \geq 0$ of a quadratic form $q$, see Theorem~\ref{T:wea}, cf.~\cite{dS16a, SZ17}. Note that the study of non-negative (semi-)unit forms is important form the point of view of algebraic applications, since, for instance, Euler and Tits forms of an algebra are often unitary (see~\cite{ASS06,cmR}) and the definiteness is a relevant property in the study of representation theory, cf.~\cite{BPS11} and references therein.

In the paper we propose a novel approach to the study of non-negative integral quadratic forms in terms of bidirected graphs, that is, multigraphs where edges can be directed (having one tail and one head) or bidirected (having two tails or two heads), see details in Definition~\ref{D:bdg}. Inspired by the line (signed) graph construction in classical graph theory \cite{tZ08, CGSS76}, we define the integral quadratic form $q_{\bdg}$ associated to a bidirected graph $\bdg$ by means of its incidence matrix $I(\bdg)$. It appears that this construction helps to study a large subclass of non-negative quadratic forms in relatively simple combinatorial terms. More precisely, we prove the following three main results. Theorems A and B establish a foundational relationship between bidirected graphs and the
non-negativity of associated quadratic forms of specific Dynkin types. Recall that a bidirected graph $\bdg$ is {\bf unbalanced} if there exists a closed walk in $\bdg$ having odd number of bidirected arrows, in which case we set $\cyccond_\bdg:=0$, otherwise $\bdg$ is {\bf balanced} and we set $\cyccond_\bdg:=1$.

\begin{theoremA}
Given a connected unit  form $q:\Z^n \to \Z$, the following two conditions are equivalent.
\begin{enumerate}[label={\textnormal{(\roman*)}},topsep=3px,parsep=0px]
\item The form $q$ is non-negative of Dynkin type $\A$ or $\D$.
\item $q=q_{\bdg}$ for some $($connected$)$ bidirected graph $\bdg$ without loops.
\end{enumerate}
Moreover, in this case
  \begin{enumerate}[label={\textnormal{(\alph*)}},topsep=3px,parsep=0px]
 \item $\Dyn(q)=\D$ if and only if $|\verts{\bdg}| \geq 4$ and $\bdg$ is unbalanced,
 \item $\CRnk(q)=|\edgs{\bdg}|-|\verts{\bdg}|+\cyccond_{\bdg}$.
\end{enumerate}
\end{theoremA}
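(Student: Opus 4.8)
The plan is to first make the shape of the incidence form explicit. With the standard sign convention, the incidence matrix $I(\bdg)$ is the $|\verts{\bdg}|\times|\edgs{\bdg}|$ matrix whose column indexed by an edge $e$ with endpoints $u,v$ equals $\pm\bas_u\pm\bas_v$, the two signs being dictated by the directions at the two ends; thus every column has exactly two nonzero entries, each $\pm 1$. Consequently $q_{\bdg}(x)=\tfrac12\,x^{\tr}I(\bdg)^{\tr}I(\bdg)\,x=\tfrac12\,|I(\bdg)x|^2$, which is manifestly non-negative and has every diagonal coefficient equal to $1$. This single formula drives both implications.

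For (ii)$\Rightarrow$(i), non-negativity and unitarity are immediate. To pin down the Dynkin type I would view the columns $\pm\bas_u\pm\bas_v$ as roots of the system $\D_{|\verts{\bdg}|}=\{\pm\bas_u\pm\bas_v\}\subset\Z^{|\verts{\bdg}|}$: since $q(x)=1$ forces $|I(\bdg)x|^2=2$, the map $I(\bdg)$ sends the $1$-roots of $q$ into the roots of $\D_{|\verts{\bdg}|}$, so the root system of $q$ embeds isometrically into $\D_{|\verts{\bdg}|}$. Every irreducible sub-root-system of $\D_m$ is of type $\A$ or $\D$ -- the exceptional systems $\E_6,\E_7,\E_8$ never embed, as their roots demand coordinate patterns unavailable among vectors $\pm\bas_u\pm\bas_v$ -- and since $\bdg$ connected makes $q$ connected, $\Dyn(q)\in\{\A,\D\}$.

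For the converse (i)$\Rightarrow$(ii), the substantial direction, I would read off from $q$ its signed coefficient graph $G_q$ on the variables $\{1,\dots,n\}$, joining $i$ and $j$ by an edge carrying the label $q_{ij}\in\{-2,-1,0,1,2\}$ (non-negativity forces $|q_{ij}|\le 2$), and then realize $G_q$ as the signed line graph of a bidirected graph $\bdg$ whose edge set is the variable set. The key input is the root-system description of graphs with smallest eigenvalue $\ge-2$ \cite{CGSS76}: precisely because $\Dyn(q)\in\{\A,\D\}$ the form avoids the exceptional $\E_8$-configurations, forcing $G_q$ to be a generalized line graph, and a Whitney-type reconstruction -- carried out up to switching, so as to absorb the sign and direction ambiguity -- then yields a connected loopless $\bdg$ with $q_{\bdg}=q$. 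I expect this reconstruction to be the main obstacle: verifying that the switching class of signs is globally consistent, and that no exceptional configuration survives under the hypothesis $\Dyn(q)\in\{\A,\D\}$, is where most of the work (and the announced results on switching classes) will be needed.

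It remains to argue the two quantitative claims structurally. For (b), the radical of the non-negative form $q_{\bdg}=\tfrac12|I(\bdg)x|^2$ is exactly the kernel of $I(\bdg)$, whence $\CRnk(q)=|\edgs{\bdg}|-\Rnk I(\bdg)$; the standard signed-graph rank formula \cite{tZ08} gives $\Rnk I(\bdg)=|\verts{\bdg}|-\cyccond_{\bdg}$ for connected $\bdg$ (the rank dropping by one precisely in the balanced case, where all signs switch to those of an ordinary graph), which is (b). For (a), I would separate the types by balance: a balanced $\bdg$ has its columns switchable to the form $\bas_u-\bas_v$, which span a copy of $\A_{|\verts{\bdg}|-1}$ and force $\Dyn(q)=\A$; an unbalanced $\bdg$ genuinely uses both patterns $\pm(\bas_u+\bas_v)$ and spans $\D_{|\verts{\bdg}|}$, giving $\Dyn(q)=\D$ -- save for the low-rank coincidences $\D_2\cong\A_1\times\A_1$ and $\D_3\cong\A_3$, which precisely account for the threshold $|\verts{\bdg}|\ge 4$.
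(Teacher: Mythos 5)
Your direction (ii)$\Rightarrow$(i) and claim (b) are essentially sound, though by a different route than the paper: you embed the $1$-roots of $q$ via $I(\bdg)$ into the root system $\D_{|\verts{\bdg}|}$ and invoke the classification of sub-root-systems, whereas the paper rules out type $\E$ combinatorially (it classifies all bidirected graphs whose incidence bigraph has no dotted edges and observes none realizes $\E_r$). Your route is workable, but note two points: the identification of the Barot--de la Pe\~na Dynkin type of a non-negative form of positive corank with the type of the finite root system $I(\bdg)^{\tr}(\mathcal{R}_q(1))$ is itself a nontrivial fact needing citation; and your parenthetical ``$\bdg$ connected makes $q$ connected'' is false in general (two parallel non-loop arrows of opposite sign give a disconnected incidence bigraph) --- harmless here only because connectedness of $q$ is a standing hypothesis of the theorem. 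Similarly, in (a) the assertion that an unbalanced $\bdg$ ``genuinely uses both patterns and spans $\D_m$'' is the conclusion, not an argument; the paper instead derives (a) from the nullity dichotomy for $I(\bdg)$ (unbalanced $\Leftrightarrow$ trivial right kernel $\Leftrightarrow$ no switching to a quiver), the fact that quivers give type $\A$, and an inductive lemma showing every $\bdg$ with $q_{\bdg}=q_{\mathbf{A}^c_r}$, $r\ge 4$, is balanced.

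The genuine gap is in (i)$\Rightarrow$(ii), which you yourself flag as ``the main obstacle'' and then do not carry out. The Whitney/CGSS reconstruction of a signed line graph applies to simple signed graphs with least adjacency eigenvalue $\ge -2$, i.e.\ essentially to the positive case; but here $q$ is merely non-negative of arbitrary corank, so its bigraph $\Delta_q$ has double edges (coefficients $q_{ij}=\pm 2$) and the classical generalized-line-graph machinery does not directly produce a loop-less bidirected graph realizing $q$, nor does it control the sign/switching consistency you mention. The paper avoids this entirely: by Theorem~\ref{T:wea}(a), $q\Gweak q_{\widehat{D}^{(c)}_r}$ for $D_r\in\{\A_r,\D_r\}$; the canonical extensions are exhibited explicitly as incidence forms of $\bdgA^c_r$ and $\bdgD^c_r$; and --- this is the engine your proposal lacks --- Lemma~\ref{L:tra} and Corollary~\ref{C:tra} show that Gabrielov transformations, sign inversions and permutations lift to operations on bidirected graphs satisfying $T^{\tr}I(\bdg)=I(\bdg')$, so the property of being an incidence form is preserved along the G-equivalence. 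Without this lifting (or a complete Whitney-type reconstruction valid for all coranks), the implication (i)$\Rightarrow$(ii) remains unproved.
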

 The proof of  the theorem is given in subsection \ref{subsec:proofA}. We also provide  a characterization of type $\A$ in terms of bidirected graphs having no bidirected arrow, see Proposition \ref{P:null}.

The second main result provides a description of the remaining connected incidence forms in terms of the Dynkin type $\CC$ in the sense of Definition \ref{D:tyc}. We prove this theorem in subsection \ref{subsec:proofAp}.

\begin{theoremB}
Given a connected irreducible integral quadratic form $q:\Z^n \to \Z$, the following two conditions are equivalent.
\begin{enumerate}[label={\textnormal{(\roman*)}},topsep=3px,parsep=0px]
\item The form $q$ is non-negative of Dynkin type $\CC$.
\item $q=q_{\bdg}$ for some $($connected$)$ bidirected graph $\bdg$ having a bidirected loop.
\end{enumerate}
 Moreover, in this case $\cyccond_\bdg=0$ and $\CRnk(q)=|\edgs{\bdg}|-|\verts{\bdg}|.$
\end{theoremB}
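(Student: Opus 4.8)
The plan is to establish the two implications separately and to read off the supplementary assertions ($\cyccond_\bdg=0$ and the corank formula) along the way, in close analogy with the proof of Theorem~A. For (ii)$\Rightarrow$(i) I would argue directly from the incidence matrix. Writing $q_{\bdg}(x)=\tfrac12\,x^{\top}I(\bdg)^{\top}I(\bdg)\,x=\tfrac12\lVert I(\bdg)x\rVert^2$ shows at once that $q_\bdg$ is non-negative, since its symmetric Gram matrix $\tfrac12 I(\bdg)^{\top}I(\bdg)$ is positive semi-definite over $\R$ (and integral, because every column of $I(\bdg)$ has even squared norm). A bidirected loop $\ell$ at a vertex $v$ has incidence column $\pm2\,\bas_v$, so it contributes the diagonal coefficient $\tfrac12\cdot 4=2$; hence $q_\bdg$ is non-unitary, while connectedness of $\bdg$ forces $q_\bdg$ to be connected and irreducible. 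The loop by itself is a closed walk with a single bidirected arrow, so $\bdg$ is unbalanced and $\cyccond_\bdg=0$. Using $\Rnk\big(I(\bdg)\big)=|\verts{\bdg}|-\cyccond_\bdg$ for a connected bidirected graph (the fact underlying formula~(b) of Theorem~A), I obtain
\[
\CRnk(q_\bdg)=|\edgs{\bdg}|-\Rnk\big(I(\bdg)\big)=|\edgs{\bdg}|-|\verts{\bdg}|.
\]
It then remains to identify $\Dyn(q_\bdg)=\CC$, which I would do by producing the explicit $\weak$-reduction of $q_\bdg$ to the canonical type-$\CC$ representative of Definition~\ref{D:tyc}, reading the radical off the cycle structure of $\bdg$.

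For the realization direction (i)$\Rightarrow$(ii) I would first normalise $q$. A non-negative connected irreducible form of type $\CC$ is $\weak$-equivalent to a canonical type-$\CC$ form, which I realise by an explicit bidirected graph $\bdg_0$: a tree of ordinary arrows carrying a single bidirected loop (the source of the diagonal coefficient $2$ that distinguishes this case from Theorem~A), augmented by additional edges producing the required number of independent balanced cycles so as to match the corank $|\edgs{\bdg}|-|\verts{\bdg}|$. I would then transport the realization from the canonical form back to $q$ along the chosen equivalence, matching each elementary step of the reduction (a point reflection, a Gabrielov transformation, or a switching) with an admissible operation on bidirected graphs that preserves both the incidence-form property and the presence of a bidirected loop.

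The step I expect to be the main obstacle is precisely this last lifting in (i)$\Rightarrow$(ii): guaranteeing that the sequence of $\weak$-transformations connecting an arbitrary type-$\CC$ form to the canonical one can be realised graph-theoretically while keeping the bidirected loop intact and $q_\bdg$ an incidence form at every stage. Controlling the corank simultaneously — so that the edges added to generate the radical neither disconnect $\bdg$ nor accidentally balance it — is the delicate point, and this is where the switching-class analysis and the Whitney-type uniqueness result announced in the introduction should carry the argument.
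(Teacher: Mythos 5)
Your direction (ii)$\Rightarrow$(i) is essentially sound and matches the paper's argument, with two caveats. First, a convention slip: with the paper's $|\edgs{\bdg}|\times|\verts{\bdg}|$ incidence matrix and $x\in\Z^{\edgs{\bdg}}$ one has $q_\bdg(x)=\tfrac12\|I(\bdg)^{\tr}x\|^2$, so it is the \emph{rows} of $I(\bdg)$ (one per arrow) that have even squared norm, and a bidirected loop contributes a row $\pm2\bas_v$, not a column. Second, and more importantly, you do not need to ``identify $\Dyn(q_\bdg)=\CC$ by producing an explicit $\weak$-reduction to a canonical representative'': Definition~\ref{D:tyc} defines Dynkin type $\CC$ purely by the coefficient conditions \Ci--\Ciii\ (non-unitary, $1\le q_i\le 2$, fully regular), all of which follow from Lemma~\ref{L:ful} once irreducibility and connectedness rule out directed loops. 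That step of your plan is superfluous, and as written it would import the circularity described below.

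The genuine gap is in (i)$\Rightarrow$(ii). Your plan rests on the assertion that every non-negative connected irreducible form of type $\CC$ is $\weak$- (or G-) equivalent to a canonical type-$\CC$ representative, which you would then realize graphically and transport back. But no such classification is available at this point: the analogue of Theorem~\ref{T:wea} for type $\CC$ is Theorem~\ref{T:chc}, which the paper \emph{derives from} Theorem~B, not the other way around. Since type $\CC$ is defined only by coefficient conditions, the entire difficulty of the implication is precisely to normalize an arbitrary such form, and your proposal defers this to an unproven step. The paper does it from scratch: Lemma~\ref{L:piv} produces a rigid G-transformation making all $q_{1j}>0$, and Lemma~\ref{L:techC} is an induction on $n$ showing that any such form carries a partition of $\underline{n}$ into classes $U^2_{1,-1},U^1_{v,\pm1}$ realizing it verbatim as the incidence form of a star-shaped bidirected graph (bidirected loops at one vertex, directed and two-head arrows into it); the induction step works by excluding ``critical'' indefinite subbigraphs. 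Transporting back along $T^{-1}$ is then handled by Corollary~\ref{C:tra}, and the survival of the bidirected loop is automatic because G-transformations preserve the multiset of diagonal coefficients (Corollary~\ref{cor:Gweak}) and $q_i=2$ characterizes bidirected loops -- so the ``delicate point'' you flag is actually the easy part, while the normalization you take for granted is the hard one. The switching-class/Whitney material (Corollary~\ref{C:swiAr}) concerns uniqueness of realizations in type $\A_r$, $r\ge4$, and does not supply the missing normalization.
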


Given a bidirected graph $\bdg$, we define in~\eqref{eq:inc}  a mapping  $\inc: \extwalks(\bdg) \longrightarrow \Z^{|\edgs{\bdg}|}$ where $\extwalks(\bdg)$ is the set of all  walks in $\bdg$.
The following third main theorem provides a structural and algorithmic description of the sets of solutions of Diophantine equations~(\ref{EQ:dio}) for small $d$'s.

\begin{theoremC}
Given a connected bidirected graph $\bdg$, the set of vectors \[\CRinc_\bdg:=\{\pm\inc(\wlk) : \ \wlk\in\extwalks(\bdg)\}\subseteq \Z^{|\edgs{\bdg}|}\]  satisfies
\begin{equation}\label{eq:B}
\mathcal{R}_q(0) \cup \mathcal{R}_q(1)\  \subseteq  \ \CRinc_\bdg \ \subseteq \ \mathcal{R}_q(0) \cup \mathcal{R}_q(1) \cup \mathcal{R}_q(2),
\end{equation}
where $q:=q_{\bdg}$. Moreover,
\begin{enumerate}[label={\textnormal{(\alph*)}},topsep=4px,parsep=0px]
\item $\CR_q(0) = \{\pm \inc(\wlk) : \  \, \wlk\in\extwalks(\bdg)\ \text{is a closed walk with even  number of bidirected arrows}\}$,
\item $\CR_q(1) = \{\pm \inc(\wlk) : \  \, \wlk\in\extwalks(\bdg)\ \text{is an open walk}\}$,
\item $\CRinc_\bdg\cap\CR_q(2)=  \{\pm \inc(\wlk) : \  \, \wlk\in\extwalks(\bdg)\ \text{is a closed walk with odd number of bidirected arrows}\}$,
\item $\mathcal{R}_q(0) \cup \mathcal{R}_q(1) = \CRinc_\bdg$ if and only if $\bdg$ is balanced.
\end{enumerate}
\end{theoremC}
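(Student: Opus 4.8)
The plan is to prove Theorem~C by establishing a precise dictionary between walks in $\bdg$ and the values taken by the incidence form $q=q_\bdg$ on the vectors $\inc(\wlk)$, and then reading off each of the four statements from this dictionary together with the two-sided inclusion~\eqref{eq:B}. The fundamental computation I would carry out first is to express $q(\inc(\wlk))$ directly in terms of the combinatorics of the walk $\wlk$. Since $q$ is built from the incidence matrix $I(\bdg)$, the value $q(\inc(\wlk))$ should reduce to a sum over the vertices visited by $\wlk$, where each vertex contributes according to how the consecutive edges of the walk enter and leave it (i.e.\ according to the signs at the half-edges incident to that vertex). The key point I expect is that at every \emph{internal} (non-endpoint) traversal of a vertex the two incident half-edges cancel, contributing $0$, whereas the endpoints of an open walk each contribute $+1$, giving $q(\inc(\wlk))=1$ for an open walk; this is exactly statement~(b). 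For a closed walk there are no free endpoints, but the cyclic closure may fail to cancel precisely when the number of bidirected arrows traversed is odd — in that case the mismatch of signs around the cycle produces a net contribution of $2$, and when it is even everything cancels to give $0$. This is the heart of statements~(a) and~(c).

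First I would fix the formula for $\inc$ from~\eqref{eq:inc} and compute $q(\inc(\wlk))$ for a walk $\wlk$, organizing the sum vertex-by-vertex; the sign bookkeeping is where the distinction between directed and bidirected arrows enters, and tracking the parity of bidirected arrows along $\wlk$ is the crux. Having shown $q(\inc(\wlk))\in\{0,1,2\}$ for every $\wlk$, with the value determined by whether $\wlk$ is open, closed-even, or closed-odd, the containment $\CRinc_\bdg\subseteq\CR_q(0)\cup\CR_q(1)\cup\CR_q(2)$ of~\eqref{eq:B} follows immediately since $q(\pm v)=q(v)$. This same computation simultaneously proves the forward inclusions in (a), (b), and (c): every open walk lands in $\CR_q(1)$, every closed even walk in $\CR_q(0)$, and every closed odd walk in $\CRinc_\bdg\cap\CR_q(2)$.

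The reverse inclusions — that \emph{every} $0$-root and $1$-root of $q$ arises from a walk, giving $\CR_q(0)\cup\CR_q(1)\subseteq\CRinc_\bdg$ and the equalities in (a) and (b) — are the substantive direction and where I expect the main obstacle to lie. Here I would argue that any vector $x\in\CR_q(0)\cup\CR_q(1)$ lies in the image of $\inc$ by analyzing the support of $x$ as a subgraph of $\bdg$: the condition $q(x)\le 1$ forces the support to be essentially a single path or cycle, because any more complicated configuration (a vertex of degree $\ge 3$ in the support, or two disjoint cycles) would push $q(x)$ strictly above $1$. I would likely invoke the non-negativity and Dynkin-type structure from Theorems~A and~B, which constrain $\CR_q(0)$ and $\CR_q(1)$ to the root-system-like vectors, and then match these against $\inc$-images of walks. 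Controlling the coefficient sizes of $x$ (showing they are bounded by the multiplicity of walk-traversals) is the delicate part; the bidirected-loop case from Theorem~B, where $q$ is non-unitary and $\CR_q(2)$ genuinely enters, requires separate care.

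Finally, statement~(d) follows as a clean corollary. By definition $\bdg$ is balanced exactly when it contains no closed walk with an odd number of bidirected arrows, so by~(c) the balancedness of $\bdg$ is equivalent to $\CRinc_\bdg\cap\CR_q(2)=\emptyset$. Combined with the two-sided inclusion~\eqref{eq:B}, emptiness of $\CRinc_\bdg\cap\CR_q(2)$ is equivalent to $\CRinc_\bdg\subseteq\CR_q(0)\cup\CR_q(1)$, which together with the already-established reverse inclusion $\CR_q(0)\cup\CR_q(1)\subseteq\CRinc_\bdg$ yields the equality $\CR_q(0)\cup\CR_q(1)=\CRinc_\bdg$. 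Conversely, if that equality holds then no walk can have value $2$, forcing $\bdg$ balanced. Thus (d) is purely formal once (a)--(c) are in place, and the only real work is the vertex-by-vertex sign computation and the support analysis for the reverse inclusions.
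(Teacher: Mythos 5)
Your forward direction is sound and essentially coincides with the paper's: the telescoping/vertex-cancellation computation you describe is exactly the identity $I(\bdg)^{\tr}\inc(\wlk)=\bas_{v_0}-\sigma(\wlk)\bas_{v_\ell}$ (Lemma~\ref{L:Incinc}), from which $q(\inc(\wlk))=\tfrac12\|\bas_{v_0}-\sigma(\wlk)\bas_{v_\ell}\|^2\in\{0,1,2\}$ according to whether $\wlk$ is open, closed positive, or closed negative. Claim (d) is also purely formal once (a)--(c) hold, as you say.

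The genuine gap is in the reverse inclusion $\CR_q(0)\cup\CR_q(1)\subseteq\CRinc_\bdg$ (and hence the equalities in (a) and (b)). Your plan is a ``support analysis'': that $q(x)\le 1$ forces the support of $x$ to be essentially a single path or cycle with coefficients bounded by walk multiplicities, possibly aided by the Dynkin classification of Theorems A and B. This structural claim is false in general. Already for $\bdg=\bdgA_1^1$ (a directed $2$-cycle) one has $q(x_1,x_2)=(x_1-x_2)^2$, so $(k+1,k)\in\CR_q(1)$ for every $k\in\Z$: the $1$-roots have unbounded coefficients and full support on the cycle, and in graphs with branch vertices the support of a $1$-root need not be a path at all. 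Moreover, invoking Theorems A and B here risks circularity and in any case does not by itself produce a walk realizing a given root. The paper avoids all of this: since $q(x)=\tfrac12\|I(\bdg)^{\tr}x\|^2$, the condition $q(x)\le 1$ is \emph{equivalent} to $I(\bdg)^{\tr}x=\S\bas_s+\T\bas_t$ for some signs and vertices, and Proposition~\ref{prop:incinc} then constructs a walk with $\inc(\wlk)=\pm x$ by induction on $|x|=\sum_i|x_i|$, greedily peeling off one arrow incident to the current endpoint $s$ with the correct sign (the set $E_s^x$ of such arrows is shown nonempty whenever $I(\bdg)^{\tr}x\neq 0$, with a separate conjugation argument when $I(\bdg)^{\tr}x=0$). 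This induction is the missing idea in your proposal; without it, or an equally precise substitute, the reverse inclusion is not established.
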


 We prove the theorem in subsection \ref{subsec:proofB}.  The above results generalize the special case of Dynkin type $\A$ and incidence forms of usual directed graphs (quivers) studied by the first named author in~\cite{jaJ2018,jaJ2020a,jaJ2020b,jaJ2020c}, see Theorem \ref{T:mainA}. The proofs of our results are rather lengthy and technical. But once we have them, they provide a useful tool to prove new results and to perform computations on integral quadratic forms and related objects. Among others, we provide the following applications of (the methods of) Theorems A, B and C:

\begin{itemize}[topsep=3px,parsep=0px]
\item a complete classification of all non-negative integral forms of Dynkin type $\CC$ with respect to Gabrielov equivalence $\Gweak$, a stronger (than $\weak$) equivalence of Cox-regular forms having its origins in Lie theory and singularity theory (cf.~\cite{BKL06, Gabrielov, Slodovy, PR19} and Remark \ref{rem:GabrielovLie}), see Theorem \ref{T:chc}, see also Theorem \ref{T:pr1} and Proposition \ref{prop:pr2};
\item a characterization of positive incidence quadratic forms by means of the finiteness of certain sets of roots (see Proposition \ref{prop:posfinite}) and  a simple combinatorial criterion (see Corollary \ref{cor:posfinite});

\item in the positive case, a proof that the set of vectors $\CRinc_{\bdg}$ of Theorem C is a finite root system in the sense of Bourbaki (up to the removal of the zero vector, cf.~\cite[9.2]{jeH72} and~\cite{CGSS76,BH12,Kac80}), which provides a graph theoretical interpretation of reflections and root systems of certain Dynkin type, see Lemma~\ref{L:reflection} and Proposition~\ref{P:rootSys}.

\item a proof that every connected non-negative unit form or a Cox-regular form of Dynkin type $\CC$ is universal, provided its rank is at least 4, see Theorem \ref{thm:univ};
\item partial solution of so-called switching class problem (see Problem \ref{prob:switching}), a variant of Whitney's Theorem on line graphs (cf.~\cite{hW32,hC21} and~\cite{tZ08,BH12,CGSS76}), see Corollary~\ref{C:swiAr};
\item a proof that the Euler form of a gentle algebra \cite{AS87, LP20, OPS18} of finite global dimension is non-negative as well as the description of its Dynkin type, see Theorem \ref{thm:gentle}.
\end{itemize}
 Moreover, in subsection \ref{subsec:comp} we discuss the computational aspects of our results and we outline three algorithms to perform efficient computations on the incidence forms and to solve related Diophantine equations \eqref{EQ:dio}.

Throughout the text we denote by $\underline{n}$ the set $\{1,\ldots,n\}$ for $n \geq 1$. By $\bas_1=\bas^{(n)}_1,\ldots, \bas_n=\bas^{(n)}_n$ we denote the canonical basis of the abelian group $\Z^n$, whose elements are treated as column vectors. The set of $n \times m$ matrices with coefficients in a ring $R$ (usually $R=\Z$ the integer numbers or $R=\R$ the real numbers) is denoted by $\M_{n,m}(R)$. We set $\M_n(R):=\M_{n,n}(R)$ and we denote by $\GlnZ=\{M\in\M_n(\ZZ): \,{\rm det}(M)=\pm1\}$ the group of $\Z$-invertible matrices, which we often identify with automorphisms of the group $\ZZ^n$. The identity matrix in $\M_n(\Z)$ is denoted by $\Id_n$ or simply $\Id$ if the size is clear.
By the \textbf{length}  (resp.~{\bf squared norm}) of a vector $x=[x_1,\ldots,x_n]^\tr\in\R^n$ we mean the value $|x|=\sum_{i=1}^n|x_i|$ (resp.~$||x||^2=x^\tr x=\sum_{i=1}^nx_i^2$). Recall that the \textbf{nullity} of a matrix $M\in\M_{n,m}(\Z)$ is the rank of the (right) null space $\Ker(M)$ of $M$, denoted here by $\Null(M)$. The rank of $M$ is denoted by $\Rnk(M)$. Clearly, $\Null(M)=m-\Rnk(M)$.

\smallskip

{\bf Acknowledgement}

\smallskip

A part of the results of this paper was obtained during a research stay of the first author at
The Faculty of Mathematics and Computer Science of Nicolaus Copernicus University in Toru\'n
 within project {\em University Centre of Excellence \lq\lq Dynamics, Mathematical Analysis and Artificial Intelligence''}. The author expresses his gratefulness to NCU for the hospitality and for providing excellent working conditions.

\section{Definitions and basic properties}\label{sec:basics}

For a quadratic form $q:\Z^n \to \Z$ as in~(\ref{EQ:iqf}), we take for convenience $q_{ii}:=q_{i}$ and $q_{ji}:=q_{ij}$ for $i<j$. Denote by $q(-,-):\Z^n \times \Z^n \to \Z$ the bilinear form given by $q(x,y)=q(x+y)-q(x)-q(y)$ for $x,y \in \Z^n$, called the \textbf{polarization} of $q$. The (symmetric) matrix $G_q:=[q(\bas_i,\bas_j)]_{i,j} \in \M_{n}(\Z)$ is called the \textbf{Gram matrix} of $q$. Note that $q(x,y)=x^{\tr}G_qy$ and $q(x)=\frac{1}{2}x^{\tr}G_qx$ for any $x,y \in \Z^n$. We say that $q$ is \textbf{non-negative} (resp. \textbf{positive}) if $q(x) \geq 0$ (resp. $q(x) > 0$) for any $0 \neq x \in \Z^n$, that is, if the associated Gram matrix $G_q$ is positive semi-definite (resp. positive definite).
We say that $q$ is \textbf{irreducible} if, whenever $q=aq'$ for an integral quadratic form $q'$ and an integer $a \in \Z$, then we have $a=\pm 1$. The \textbf{radical} of $q$ is the set
\begin{equation}\label{eq:rad}
\rad(q):=\{x \in \Z^n \colon q(x,-)=0\}=\Ker(G_q),
\end{equation}
which is a pure subgroup of $\Z^n$. It is known that if $q$ is non-negative, then $\rad(q)=\mathcal{R}_q(0)$, see for instance~\cite[Proposition~2.8]{dS11}.
The \textbf{rank} and \textbf{corank} of $q$ are the rank and nullity of  $G_q$, and they are denoted by $\Rnk(q):=\Rnk(G_q)$ and $\CRnk(q):=\Null(G_q)=n-\Rnk(G_q)$. Note that $\CRnk(q)$ is the rank of $\rad(q)$, and in case $q$ is non-negative, $\crk(q)=0$ if and only if $q$ is positive. Given a subset $X=\{j_1<j_2<\ldots< j_r\}\subseteq \un$ we define the {\bf restriction} $q^{X}:\ZZ^r\to\ZZ$ of $q$ to $X$ as the composition
\begin{equation}\label{eq:restriction}
q^{X}:=q\circ\iota_X
\end{equation}
 with the inclusion  $\iota_X:\ZZ^r\to\ZZ^n$ given by $\bas^{(r)}_t\mapsto \bas^{(n)}_{j_t}$ for all $1\leq t\leq r$. Clearly, if $q$ is non-negative, then so is $q^X$ and $\crk(q^X)\leq \crk(q)$. 
For $m\geq 1$ and a quadratic form $q':\ZZ^{m}\to\ZZ$ we define the {\bf direct sum} $q\oplus q':\ZZ^{n+m}\to\ZZ$ as the quadratic form given by $(q\oplus q')(x_1,\ldots,x_{n+m}):=q(x_1,\ldots,x_n)+q'(x_{n+1}, \ldots,x_{n+m})$. Observe that $G_{q\oplus q'}=G_{q}\oplus G_{q'}:={\scriptsize\left[\begin{array}{cc}\!\!G_{q}\!\!&\!\!0\!\!\!\\\!\!0\!\!&\!\!G_{q'}\!\!\!\end{array}\right]}.$ 

We say that two integral quadratic forms $q,q':\ZZ^n\to \ZZ$ are $\ZZ$-{\bf equivalent} (or $\ZZ$-{\bf congruent}) if
\begin{equation}\label{eq:equivalence}
q'=q\circ T, \qquad \text{for an automorphism $T$ of $\Z^n$,}
\end{equation}
or equivalently, if $G_{q'}=T^\tr G_{q}T$ for a matrix $T$ in $\GlnZ$. We write then $q\weak q'$ or $q\weak^T q'$.
Observe that if $q\weak q'$, then $q$ is non-negative if and only if so is $q'$, and in this case $\crk(q)=\crk(q')$; in particular, $q$ is positive if and only if so is $q'$.
We say that $q$ and $q'$ are {\bf trivially equivalent} and we write $q\cong q'$ if  $q\weak^P q'$, for some
permutation matrix $P\in\GlnZ$, that is, $P(\bas_i)=\bas_{\pi(i)}$ for all $i\in\un$, where  $\pi:\un\to\un$ is a  permutation (in this case we write $P=P^\pi$).

Following~\cite{BP99,dS13}, by a \textbf{bigraph} we mean a signed multi-graph $\Delta=(\verts{\Delta}, \edgs{\Delta}, \signbigr)$ with a set of vertices $\verts{\Delta}$,  a multi-set of (undirected) edges $\edgs{\Delta}$ and a sign function $\signbigr:\edgs{\Delta}\to\{\pm 1\}$ such that given two vertices $i$ and $j$ (maybe equal), all edges between $i$ and $j$ are of the same sign. We say that a bigraph $\Delta$ is \textbf{connected} if so is the underlying multi-graph of $\Delta$. By convention, edges $e\in\edgs{\Delta}$ of $\Delta$ with $\signbigr(e)=-1$ (resp.~$\signbigr(e)=+1$) are depicted as {\bf solid} (resp.~{\bf dotted}) edges\label{p:convsigns}\footnote{Note that we use the opposite convention to that usually used in signed graph theory, cf.~\cite{tZ08, tZ82}. We follow \cite{BP99, dS13}.}. With an integral quadratic form $q:\Z^n\to\Z$ we associate a bigraph $\Delta=\Delta_q$ with $\verts{\Delta}:=\underline{n}$ as follows. For $i \neq j$ there are exactly $|q_{ij}|$ edges between vertices $i$ and $j$, all of them of sign ${\rm sgn}(q_{ij})$. 
At every vertex $i$ there are exactly $|q_i-1|$ loops, all of them of sign ${\rm sgn}(q_{i}-1)$. 
Conversely, for any bigraph $\Delta$ with $\verts{\Delta}=\underline{n}$ we denote by $q_{\Delta}$ the (unique) integral quadratic form satisfying $\Delta_{q_{\Delta}}=\Delta$.
An integral quadratic form $q$ is said to be \textbf{connected} if the corresponding bigraph $\Delta_q$ is connected (equivalently, if $q$ is not trivially equivalent to a direct sum of integral quadratic forms). Note that the bigraph $\Delta_{q^X}$ of the restriction $q^X$ coincides  with the full subbigraph $\Delta^{X}$ of $\Delta=\Delta_q$ induced by the vertices from $X$.

 The following further definitions are fundamental for our work.

\begin{definition}\label{def:Coxreg}
Let $q:\Z^n \to \Z$ be an integral quadratic form as in~(\ref{EQ:iqf}).
\begin{itemize}
 \item[a)] We say that $q$ is \textbf{semi-unitary} (resp.~\textbf{unitary}) if $q_i\in \{0,1\}$ (resp.~$q_i=1$) for all $i \in \underline{n}$.

 \item[b)] We say that $q$ is \textbf{semi-fully regular} (resp.~\textbf{fully regular}) if $q_i \geq 0$ (resp.~$q_i >0$) for all $i \in \underline{n}$, and
 \[
 \frac{q_{ij}}{q_iq_j} \in \Z, \quad \text{for any $i \neq j$ in $\underline{n}$ with $q_i>0$ and $q_j>0$.}
 \]

 \item[c)] We say that $q$ is \textbf{semi-Cox-regular} (resp.~\textbf{Cox-regular} \cite{KS15a}) if $q_i \geq 0$ (resp.~$q_i >0$) for all $i \in \underline{n}$, and
 \[
 \frac{q_{ij}}{q_i} \in \Z, \ \  \text{if $q_i>0$, \ and}\quad  {\frac{q_{ij}}{q_j} \in \Z, \ \
 \text{if $q_j>0$}},
 \]
 for any $i \neq j$.
\end{itemize}
\end{definition}

If $q$ is (semi-)unitary, then we also say that $q$ is a (semi-)unit form. Cox-regular forms are also called \textit{Roiter's integral quadratic} forms in~\cite{MZ22}, where further comments on the nomenclature and motivation for the study of such forms may be found, see also \cite{Ro78, KS15a, dS22} and Remark \ref{rem:GabrielovLie}. Note that (semi-)unitary forms are (semi-)fully regular, and that (semi-)fully regular forms are (semi-)Cox-regular. Recall that a Cox-regular quadratic form is called \textbf{classic} if $q_{ij}\leq 0$ for $i \neq j$ (cf.~\cite{Ro78, MZ22}), equivalently, if all non-loop edges in $\Delta_q$ are solid. Note that an integral form $q$ is unitary if and only if $\Delta_q$ has no loops.

\begin{remark}\label{R:irr}
(a) A Cox-regular quadratic form $q$ is irreducible if and only if ${\rm gcd}(q_1,\ldots, q_n)=1$. Each Cox-regular $q$ is an integer multiple of a unique irreducible Cox-regular $\hat{q}$. 

(b) If an integral quadratic form $q$ is non-negative, then $q_i=0$ implies that $q_{ij}=0$ for all $j\neq i$ since $\rad(q)=\mathcal{R}_q(0)$, cf.~\cite[Lemma 3.2]{jaJ2018}. This means that $q\cong q'\oplus\zeta$ for the zero  form $\zeta:\Z\to\Z$, $\zeta(x_1)=0$, and some non-negative integral form $q':\ZZ^{n-1}\to\ZZ$ of $\crk(q')=\crk(q)-1$. In particular, if $q:\Z^n\to\Z$ for $n\geq 2$ is semi-Cox-regular (resp.~semi-fully regular, semi-unitary), non-negative and connected, then $q$ is Cox-regular (resp.~fully regular, unitary).
\end{remark}

For the following main definition we refer to~\cite{EJ01}, see also~\cite{tZ08}.

\begin{definition}\label{D:bdg}
By \textbf{bidirected graph} we mean a triple $\bdg=(\verts{\bdg},\edgs{\bdg},\edges_\bdg)$ consisting of (linearly ordered) finite sets $\verts{\bdg}$ and $\edgs{\bdg}$, together with a multi-set $\edges(i)=\edges_\bdg(i) \subset \verts{\bdg} \times \{\pm 1\}$ of {\bf signed endpoints} with exactly two elements $\edges(i)=\{(u,\epsilon),(u',\epsilon')\}$, for each $i \in \edgs{\bdg}$. The elements of $\verts{\bdg}$ and $\edgs{\bdg}$ are called \textbf{vertices} and \textbf{arrows} of $\bdg$, respectively. We say that $i$ is a \textbf{directed arrow} if $\epsilon \neq \epsilon'$, and a \textbf{bidirected arrow} if $\epsilon = \epsilon'$ (in this case, $i$ is a \textbf{two-tail arrow} if $\epsilon=1$, and a \textbf{two-head arrow} if $\epsilon=-1$). We consider the following additional notions:
\begin{enumerate}[label={\textnormal{(\alph*)}},topsep=3px,parsep=0px]
\item $\bdg$ is equipped with the  \textbf{sign} function $\sigma=\sigma_\bdg:\edgs{\bdg}\to\{\pm 1\}$ defined  by $\sigma(i):=(-1)\epsilon\epsilon'$ for  $i \in \edgs{\bdg}$ with $\edges(i)=\{(u,\epsilon),(u',\epsilon')\}$. Note that with this sign, positive and negative arrows are directed and bidirected, respectively.
\item    If both first entries of the members of $\edges(i)$ coincide, that is, $u=u'$, then we say that $i\in \edgs{\bdg}$ is a \textbf{loop}. The set of all directed (resp.~bidirected) loops in $\edgs{\bdg}$ is denoted by $\posloops{\bdg}$ (resp.~$\negloops{\bdg}$).

\item A bidirected graph $\bdg'=(\verts{\bdg'},\edgs{\bdg'},\edges')$ is called a \textbf{bidirected subgraph} of $\bdg$ if $\verts{\bdg'} \subseteq \verts{\bdg}$,  $\edgs{\bdg'}\subseteq \edgs{\bdg'}$ and $\edges'(i)=\edges(i)$ for all $i\in \edgs{\bdg'}$.
\end{enumerate}
\end{definition}

In the paper we will always assume that the set of arrows $\edgs{\bdg}$ of any connected bidirected graph $\bdg$ is non-empty. We will often identify $\edgs{\bdg}$ with the set $\un=\{1,\ldots,n\}$ for $n=|\edgs{\bdg}|$, and take $\verts{\bdg}=\{u_1,\ldots,u_m\}$ for $m=|\verts{\bdg}|$ (with natural orders on both sets). We will use the following graphical convention to display bidirected graphs. For an arrow $i$ with $\edges(i)=\{(u,\epsilon),(u',\epsilon')\}$, we take
\[
\overbracket{\xymatrix{\bulito_{u} \ar[rr]^-{i}_*+<1em>{\scriptsize \text{if $\epsilon=1$, $\epsilon'=-1$}} && \bulito_{u'}}}^{\text{directed arrow}} \qquad
\overbracket{\underbracket{\xymatrix{\bulito_{u} \ar@{|-|}[rr]^-{i}_*+<1em>{\scriptsize \text{if $\epsilon=1=\epsilon'$}} && \bulito_{u'}}}_{\text{two-tail arrow}} \qquad
\underbracket{\xymatrix{\bulito_{u} \ar@{<->}[rr]^-{i}_*+<1em>{\scriptsize \text{if $\epsilon=-1=\epsilon'$}} && \bulito_{u'}}}_{\text{two-head arrow}} }^{\text{bidirected arrows}}
\]

A bidirected graph with no bidirected arrows is just a directed multigraph, which we call a \textbf{quiver}. The \textbf{underlying (multi-)graph} $\overline{\bdg}$ of a bidirected graph $\bdg$ is obtained by ignoring the directions attached to the end-points of every arrow. To be precise, $\overline{\bdg}=(\verts{\bdg},\edgs{\bdg},\overline{\edges})$, where $\overline{\edges}(i)=\{u,u'\}$ if $\edges(i)=\{(u,\epsilon),(u',\epsilon')\}$ for an arrow $i$ of $\bdg$. Two arrows $i \neq j$ in a bidirected graph $\bdg$ are said to be \textbf{incident} (resp. \textbf{parallel}) if $\overline{\edges}(i) \cap \overline{\edges}(j) \neq \emptyset$ (resp. if $\overline{\edges}(i)=\overline{\edges}(j)$), and similarly, if $u \in \overline{\edges}(i)$ for a vertex $u$ and an arrow $i$, then we say that $u$ and $i$ are \textbf{incident}. A bidirected graph $\bdg$ is  \textbf{connected} (resp. a \textbf{tree}) if so is $\overline{\bdg}$.

 We consider also an extended bidirected graph $\extbdg{\bdg}$ by adding to $\bdg$ the set of formal inverses $(\posloops{\bdg})^{-1}:=\{i^{-1}: \, i\in \posloops{\bdg}\}$ of directed loops, satisfying $\edges(i^{-1})=\edges(i)$  {(so $\sigma(i^{-1})=\sigma(i)$)} for any $i\in \posloops{\bdg}$. Take $(i^{-1})^{-1}=i$.  By a \textbf{walk} of $\bdg$ we mean a walk in $\overline{\extbdg{\bdg}}$, that is, an alternating sequence of vertices and arrows of $\extbdg{\bdg}$, starting and ending with vertices,
\begin{equation}\label{eq:walk}
\wlk=(v_0,i_1,v_1,i_2,v_2,\ldots,v_{\ell-1},i_{\ell},v_{\ell}),
\end{equation}
where $\overline{\edges}(i_r)=\{v_{r-1},v_r\}$ for $r=1,\ldots,\ell$. The integer $\ell \geq 0$ is called the \textbf{length} of the walk $\wlk$. We define the composition of walks in an obvious way. If $v_0=v_{\ell}$ (equivalently, if $\wlk$ is composable with itself), then we call $\wlk$ a {\bf closed walk}; otherwise, we call $\wlk$ an {\bf open walk}.
We extend the sign function $\sigma=\sigma_\bdg$ to all walks by setting $\sigma(\wlk)=\sigma(i_1)\cdot \ldots \cdot \sigma(i_{\ell})$ for $\wlk$ as above, and
taking $\sigma(\wlk)=+1$ if $\wlk$ is a trivial walk, that is, if $\ell=0$.

For an arrow $i \in \edgs{\bdg}$ (resp. a vertex $u \in \verts{\bdg}$) we denote by $\bas_i$ the $i$-th canonical vector of $\Z^{\edgs{\bdg}}$ (resp. by $\bas_u$ the $u$-th canonical vector of $\Z^{\verts{\bdg}}$). The \textbf{(arrow-vertex) incidence matrix} $I(\bdg)$ associated to a bidirected graph $\bdg$ is the $|\edgs{\bdg}| \times |\verts{\bdg}|$ matrix given by
\begin{equation}\label{eq:IofB}
I(\bdg)^{\tr}\bas_i=\epsilon\bas_{u}+\epsilon'\bas_{u'}, \quad \text{where $\edges(i)=\{(u,\epsilon),(u',\epsilon')\}$.}
\end{equation}
Note that the $i$-th row of $I(\bdg)$ is zero if and only if $i$ is a directed loop. In particular, $I(\bdg)$ determines the shape of $\bdg$ uniquely up to the localization of directed loops.

\begin{definition}\label{D:inc}
With any bidirected graph $\bdg$ we associate a quadratic form $q_{\bdg}$ as follows:
\begin{equation}\label{eq:qB}
q_{\bdg}(x):=\frac{1}{2}x^\tr I(\bdg)I(\bdg)^{\tr}x=\frac{1}{2}||I(\bdg)^{\tr}x||^{2}, \quad \text{for any $x \in \Z^{\edgs{\bdg}}$.}
\end{equation}
The quadratic form $q_{\bdg}$ is referred to as \textbf{incidence quadratic form} of $\bdg$.
\end{definition}

The rank and corank (that is, the rank of the left null space) of the incidence matrix $I(\bdg)$ will be denoted by $\Rnk_\bdg$ and $\CRnk_\bdg$ respectively.

\begin{lemma}\label{L:ful}
For any bidirected graph $\bdg$ with $m=|\verts{\bdg}|$ and $n=|\edgs{\bdg}|$, the associated form $q:=q_{\bdg}$ is a non-negative integral quadratic form satisfying
\begin{equation*}
q_{i} = \left\{
\begin{array}{l l}
2, & \text{if $i$ is a bidirected loop}, \\
1, & \text{if $i$ is not a loop}, \\
0, & \text{if $i$ is a directed loop},
\end{array} \right.
\end{equation*}
for any arrow $i$ of $\bdg$. Moreover, $q$ is a semi-fully regular quadratic form and the rank and corank of $q$ are given by
\[
\Rnk(q_\bdg)=\Rnk_\bdg=m-\Null(I(\bdg)) \quad \text{and} \quad \CRnk(q_\bdg)=\CRnk_\bdg=n-m+\Null(I(\bdg)).
\]
\end{lemma}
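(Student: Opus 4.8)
The plan is to work entirely with the Gram matrix. By Definition~\ref{D:inc} we have $q(x)=\frac{1}{2}x^\tr I(\bdg)I(\bdg)^\tr x=\frac{1}{2}||I(\bdg)^\tr x||^2$, so $G_q=I(\bdg)I(\bdg)^\tr$ and non-negativity is immediate, being a scalar multiple of a squared norm. Reading $q(x)=\frac{1}{2}x^\tr G_q x$ against the defining expression~\eqref{EQ:iqf} gives $q_i=\frac{1}{2}(G_q)_{ii}$ and $q_{ij}=(G_q)_{ij}$ for $i\neq j$; since $(G_q)_{ij}$ is the inner product of the $i$-th and $j$-th rows of $I(\bdg)$, the whole computation reduces to inner products of rows, which by~\eqref{eq:IofB} are $\epsilon\bas_u^\tr+\epsilon'\bas_{u'}^\tr$ when $\edges(i)=\{(u,\epsilon),(u',\epsilon')\}$.

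For the diagonal coefficients I would split into the three arrow types. If $i$ is not a loop, then $u\neq u'$, the row has two nonzero entries $\epsilon,\epsilon'\in\{\pm1\}$, and $(G_q)_{ii}=\epsilon^2+\epsilon'^2=2$, so $q_i=1$. If $i$ is a bidirected loop, then $u=u'$ and $\epsilon=\epsilon'$, the row equals $2\epsilon\bas_u^\tr$, and $(G_q)_{ii}=4$, so $q_i=2$. If $i$ is a directed loop, then $u=u'$ and $\epsilon=-\epsilon'$, the row vanishes, and $q_i=0$. In every case $(G_q)_{ii}$ is even and $(G_q)_{ij}\in\Z$, so $q$ is an integral form with the asserted diagonal, and $q_i\in\{0,1,2\}\geq 0$.

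For semi-full regularity it then remains to verify $q_iq_j\mid q_{ij}$ whenever $i\neq j$ with $q_i,q_j>0$, i.e.\ whenever neither $i$ nor $j$ is a directed loop. I would again argue by cases on the row descriptions above. If both are non-loops, $q_iq_j=1$ and there is nothing to check. If one is a non-loop and the other a bidirected loop at a vertex $v$, then $q_iq_j=2$ and $q_{ij}$ equals $\pm2$ times the $v$-entry of the non-loop row (which lies in $\{-1,0,1\}$), hence is even. If both are bidirected loops, at vertices $u$ and $v$, then $q_iq_j=4$ and the inner product of $2\epsilon\bas_u^\tr$ and $2\epsilon'\bas_v^\tr$ is $0$ if $u\neq v$ and $\pm4$ if $u=v$, hence divisible by $4$. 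This settles semi-full regularity.

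For the rank and corank the one conceptual ingredient is the characteristic-zero identity $\Ker(AA^\tr)=\Ker(A^\tr)$: if $AA^\tr x=0$ then $||A^\tr x||^2=x^\tr AA^\tr x=0$, whence $A^\tr x=0$, and the reverse inclusion is clear. Applying this to $A=I(\bdg)$ gives $\Rnk(q)=\Rnk(G_q)=\Rnk(I(\bdg)I(\bdg)^\tr)=\Rnk(I(\bdg)^\tr)=\Rnk(I(\bdg))=\Rnk_\bdg$. As $I(\bdg)$ has $m$ columns, rank-nullity yields $\Rnk(I(\bdg))=m-\Null(I(\bdg))$; as it has $n$ rows, $\CRnk_\bdg=n-\Rnk(I(\bdg))$. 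Combining these with $\CRnk(q)=n-\Rnk(q)$ produces both displayed formulas together with $\Rnk(q)=\Rnk_\bdg$ and $\CRnk(q)=\CRnk_\bdg$. I do not expect a genuine obstacle here: every step is routine bookkeeping on the rows of the incidence matrix, and the only non-formal point is the kernel identity, which is elementary over characteristic zero.
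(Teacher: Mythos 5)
Your proposal is correct and follows essentially the same route as the paper's proof: non-negativity from the squared-norm expression, the diagonal coefficients by the same case analysis on $\edges(i)$, and the rank formulas via $\Rnk(I(\bdg)I(\bdg)^\tr)=\Rnk(I(\bdg))$ (which you prove via the kernel identity, where the paper simply cites the rank--nullity theorem). The only cosmetic difference is in the semi-full regularity step: you check the three pairs of arrow types by hand, whereas the paper observes once that $I(\bdg)^\tr\bas_i/q_i$ is an integer vector whenever $q_i\neq 0$, so that $q_{ij}/(q_iq_j)$ is an inner product of integer vectors --- the same computation, packaged more compactly.
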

\begin{proof}
That $q$ is an integral quadratic form follows by observing that the matrix $I(\bdg)I(\bdg)^\tr$ is a symmetric integer matrix with even integers on the diagonal, see \eqref{eq:IofB} and \eqref{eq:qB}.
The non-negativity of $q$ is clear since by \eqref{eq:qB} the value of $q(x)$ is given by a squared norm.

Consider an arbitrary arrow $i \in \edgs{\bdg}$, and take $\edges(i)=\{(u,\epsilon),(u',\epsilon')\}$. By definition we have
\begin{equation*}
q_i=q(\bas_i)=\frac{1}{2}||\epsilon\bas_{u}+\epsilon'\bas_{u'}||^2 = \left\{
\begin{array}{l l}
2, & \text{if $u=u'$ and $\epsilon=\epsilon'$}, \\
1, & \text{if $u \neq u'$}, \\
0, & \text{if $u=u'$ and $\epsilon \neq \epsilon'$},
\end{array} \right.
\end{equation*}
which shows the claim on the coefficient $q_i$. This claim also shows that if $q_i \neq 0$, then
\[
\frac{I(\bdg)^{\tr}\bas_i}{q_i}=\frac{\epsilon\bas_{u}+\epsilon'\bas_{u'}}{q_i}
\]
is a vector with integer coefficients (in $\Z^{\verts{\bdg}}$). In particular, for $j\neq i$ with $q_j \neq 0$ we get
\[
\frac{q_{ij}}{q_iq_j}=\frac{[I(\bdg)^{\tr}\bas_i]^{\tr}}{q_i}\frac{[I(\bdg)^{\tr}\bas_j]}{q_j} \in \Z,
\]
which shows the semi-full regularity of $q$. The final assertion follows from the equality $\Rnk(I(\bdg))=\Rnk(I(\bdg)I(\bdg)^\tr)$ which is a consequence of the well known nullity-rank theorem. 
\end{proof}

The bigraph $\Delta_{q_{\bdg}}$ associated to the incidence form of a bidirected graph $\bdg$, denoted by $\Delta(\bdg)$ in the text, will be referred to as \textbf{incidence bigraph} of $\bdg$. Note that  $q_\bdg=q_{\Delta(\bdg)}$.

\begin{remark}\label{R:con} (a) By definition, the Gram matrix of $q=q_\bdg$ has the form $G_q=I(\bdg)I(\bdg)^{\tr}$, cf.~Lemma \ref{L:ful}. It means that the incidence bigraph $\Delta=\Delta(\bdg)$ of $\bdg$ can be viewed as (a variant of) the  {\it line signed graph} of $\bdg$ (cf.~\cite{tZ08, CGSS76} and subsection~\ref{subsec:W}) and it can be described combinatorially as follows. Roughly speaking, let $\Delta'=(\verts{\Delta'}, \edgs{\Delta'}, \sigma')$ be a signed graph defined by $\verts{\Delta'}:=\edgs{\bdg}$ and $\{i,j\}\in\edgs{\Delta'}$ for all $i,j$ incident in $\bdg$. Moreover, we set $\sigma'(\{i,j\}):=-1$ if $i$ and $j$ are \lq\lq composable'' in $\bdg$ and $\sigma'(\{i,j\}):=+1$ otherwise. Then $\Delta$ is obtained from $\Delta'$ by removing all pairs of parallel edges of opposite signs.
More precisely, Table~\ref{TA:des} contains a local description of this process $\bdg\mapsto \Delta(\bdg)$.

(b) From (a) it follows that non-incident arrows in $\bdg$ correspond to non-adjacent vertices in $\Delta(\bdg)$. In particular, if $\Delta(\bdg)$ is connected (equivalently, $q_\bdg$ is a connected quadratic form), then so is $\bdg$. Note that the opposite implication does not hold in general, cf.~Table~\ref{TA:des} and Lemma \ref{L:irr}.
\end{remark}

The following example shows that a bidirected graph $\bdg$ is not unique for its incidence form $q=q_\bdg$. Even the number of vertices is not uniquely determined (in contrast to the number of arrows).
\begin{example}\label{ex:twobdg}
Take the following two bidirected graphs:
\[\xymatrixrowsep{0pt}
\xymatrixcolsep{35pt}
\bdg=\xymatrix@!0{\bulito_{u_1} \ar@{<->}@<-.5ex>[r]_-{3} \ar@<.5ex>[r]^-{1} & \bulito_{u_2} \ar[r]^-{2} & \bulito_{u_3}} \quad \text{and} \quad \bdg'=\xymatrix@!0{\bulito_{u_1} \ar[r]^-{1} & \bulito_{u_2} \ar[r]^-{2} & \bulito_{u_3} \ar[r]^-{3} & \bulito_{u_4} }
\]
The corresponding incidence matrices are given by
\[
I(\bdg)=\begin{bmatrix*}[r]1&-1&0\\0&1&-1\\-1&-1&0 \end{bmatrix*}, \quad \text{and} \quad I(\bdg')=\begin{bmatrix*}[r]1&-1&0&0\\0&1&-1&0\\0&0&1&-1 \end{bmatrix*}.
\]
The incidence forms $q=q_\bdg$ and $q'=q_{\bdg'}$ as well as their Gram matrices $G_q=I(\bdg)I(\bdg)^{\tr}$ and $G_{q'}=I(\bdg')I(\bdg')^{\tr}$ and bigraphs $\Delta=\Delta_q=\Delta(\bdg)$ and $\Delta'=\Delta_{q'}=\Delta(\bdg)$ coincide:
\[
\begin{array}{c}
q(x_1, x_2, x_3)=x_1^2+x_2^2+x_3^2-x_1x_2-x_2x_3=q'(x_1, x_2, x_3),\medskip\\  G_q=G_{q'}=\begin{bmatrix*}[r]2&-1&0\\-1&2&-1\\0&-1&2 \end{bmatrix*} \qquad
\xymatrixrowsep{0pt}
\xymatrixcolsep{30pt}
\Delta=\Delta'=\xymatrix@!0{1 \ar@{-}[r] & 2 \ar@{-}[r] & 3 }.
\end{array}
\]
\noindent (see also Example \ref{ex:twobdgcrks}).
\end{example}

\begin{table}[h]
\begin{center}
 \begin{tabular}{l c c}
 \hspace{25mm} $\bdg$ && $\Delta(\bdg)$ \\
  \hline
 $\xymatrix@C=1pc{\mathmiddlescript{\cdots} \ar@{->}[r]^(.35){i} & \bulito \ar@{|-}[r]^(.65){j} & \mathmiddlescript{\cdots}}$ && $\xymatrix{i \ar@{-}[r] & j}$ \\[10pt]
 $\xymatrix@C=1pc{\mathmiddlescript{\cdots} \ar@{->}[r]^(.35){i} & \bulito \ar@{<-}[r]^(.65){j} & \mathmiddlescript{\cdots}} \quad \xymatrix@C=1pc{\mathmiddlescript{\cdots} \ar@{-|}[r]^(.35){i} & \bulito \ar@{|-}[r]^(.65){j} & \mathmiddlescript{\cdots}}$ && $\xymatrix{i \ar@{.}[r] & j}$ \\[10pt]
 $\xymatrix{\bulito \ar@<.5ex>@{->}[r]^-{i} \ar@<-.5ex>@{<-}[r]_-{j} & \bulito } \quad \xymatrix{\bulito \ar@<.5ex>@{<->}[r]^-{i} \ar@<-.5ex>@{|-|}[r]_-{j} & \bulito }$ && $\xymatrix{i \ar@<-.3ex>@{-}[r] \ar@<.3ex>@{-}[r] & j}$ \\[10pt]
 $\xymatrix{\bulito \ar@<.5ex>@{->}[r]^-{i} \ar@<-.5ex>@{->}[r]_-{j} & \bulito } \quad \xymatrix{\bulito \ar@<.5ex>@{<->}[r]^-{i} \ar@<-.5ex>@{<->}[r]_-{j} & \bulito } \quad \xymatrix{\bulito \ar@<.5ex>@{|-|}[r]^-{i} \ar@<-.5ex>@{|-|}[r]_-{j} & \bulito }$ && $\xymatrix{i \ar@<-.3ex>@{.}[r] \ar@<.3ex>@{.}[r] & j}$ \\[10pt]
 $\xymatrix{\bulito \ar@<.5ex>@{->}[r]^-{i} \ar@<-.5ex>@{<->}[r]_-{j} & \bulito } \quad \xymatrix{\bulito \ar@<.5ex>@{->}[r]^-{i} \ar@<-.5ex>@{|-|}[r]_-{j} & \bulito } \quad \text{or $i$, $j$ non-incident}$ && $\xymatrix{i \ar@{}[r]  & j}$ \\[10pt]
 $\xymatrix@C=1pc{\bulito \ar@{<->}@(lu,ld)_-{i} \ar@{|-}[r]^(.65){j} & \mathmiddlescript{\cdots} } \quad \xymatrix@C=1pc{\bulito \ar@{|-|}@(lu,ld)_-{i} \ar@{<-}[r]^(.65){j} & \mathmiddlescript{\cdots} }$ && $\xymatrix{i \ar@{.}@(lu,ld) \ar@<-.3ex>@{-}[r] \ar@<.3ex>@{-}[r] & j}$ \\[10pt]
 $\xymatrix@C=1pc{\bulito \ar@{<->}@(lu,ld)_-{i} \ar@{<-}[r]^(.65){j} & \mathmiddlescript{\cdots} } \quad \xymatrix@C=1pc{\bulito \ar@{|-|}@(lu,ld)_-{i} \ar@{|-}[r]^(.65){j} & \mathmiddlescript{\cdots} }$ && $\xymatrix{i \ar@{.}@(lu,ld) \ar@<-.3ex>@{.}[r] \ar@<.3ex>@{.}[r] & j}$ \\[10pt]
 $\xymatrix@C=1pc{\bulito \ar@(lu,ld)_-{i} \ar@{<-}[r]^(.65){j} & \mathmiddlescript{\cdots} } \quad \xymatrix@C=1pc{\bulito \ar@(lu,ld)_-{i} \ar@{|-}[r]^(.65){j} & \mathmiddlescript{\cdots} }$ && $\xymatrix{i \ar@{-}@(lu,ld) \ar@{}[r] & j}$ \\[10pt]
 $\xymatrix@C=1pc{\bulito \ar@{<->}@(lu,ld)_-{i} \ar@{|-|}@(ru,rd)^-{j} }$ && $\xymatrix{i \ar@{.}@(lu,ld) \ar@/^3pt/@<.4ex>@{-}[r] \ar@<-.3ex>@{-}[r] \ar@<.3ex>@{-}[r] \ar@/_3pt/@<-.4ex>@{-}[r] & j \ar@{.}@(ru,rd)}$ \\[10pt]
 $\xymatrix@C=1pc{\bulito \ar@{<->}@(lu,ld)_-{i} \ar@{<->}@(ru,rd)^-{j} } \quad \xymatrix@C=1pc{\bulito \ar@{|-|}@(lu,ld)_-{i} \ar@{|-|}@(ru,rd)^-{j} }$ && $\xymatrix{i \ar@{.}@(lu,ld) \ar@/^3pt/@<.4ex>@{.}[r] \ar@<-.3ex>@{.}[r] \ar@<.3ex>@{.}[r] \ar@/_3pt/@<-.4ex>@{.}[r] & j \ar@{.}@(ru,rd)}$ \\[10pt]
 $\xymatrix@C=1pc{\bulito \ar@(lu,ld)_-{i} \ar@{<->}@(ru,rd)^-{j} } \quad \xymatrix@C=1pc{\bulito \ar@(lu,ld)_-{i} \ar@{|-|}@(ru,rd)^-{j} }$ && $\xymatrix{i \ar@{-}@(lu,ld) & j \ar@{.}@(ru,rd)}$ \\[10pt]
 $\xymatrix@C=1pc{\bulito \ar@(lu,ld)_-{i} \ar@(ru,rd)^-{j} }$ && $\xymatrix{i \ar@{-}@(lu,ld) & j \ar@{-}@(ru,rd)}$
 \end{tabular}
\end{center}
\caption{Bigraphs associated to pairs of arrows in a bidirected graph.}
\label{TA:des}
\end{table}

Denote by $L$ the bigraph containing one vertex and one solid loop. Note that $L=\Delta_\zeta$  is the bigraph associated to the zero  form $\zeta:\Z\to\Z$.

\begin{lemma}\label{L:irr}
Let $\bdg$ be a connected bidirected graph.
\begin{enumerate}[label={\textnormal{(\alph*)}},topsep=3px,parsep=0px]
 \item If $\bdg$ contains a directed loop $i$, then $\Delta(\bdg)=L \sqcup \Delta(\bdg')$, where $\bdg'$ is the bidirected graph obtained from $\bdg$ by deleting the loop $i$.
 \item  $\bdg$ has at least two vertices if and only if  $q_{\bdg}$ is an irreducible quadratic form.
 \item If $\bdg$ has at least three vertices and no directed loop, then $\Delta({\bdg})$ is connected.
\item If $|\verts{\bdg}|=2$ and $\bdg$ has no directed loop, then $\Delta(\bdg)$ is connected if and only if either $\bdg$ has a bidirected loop, or all arrows in $\bdg$ have the same sign.
\end{enumerate}
\end{lemma}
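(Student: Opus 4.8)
The plan is to reduce every part to the single observation that, for distinct arrows $i,j$ of $\bdg$, the off-diagonal coefficient of $q:=q_\bdg$ is the Gram entry
\[
q_{ij}=(G_q)_{ij}=(I(\bdg)^\tr\bas_i)^\tr(I(\bdg)^\tr\bas_j),
\]
which by \eqref{eq:IofB} equals the sum, over the vertices shared by $i$ and $j$, of the products of the attached signs. Hence $i$ and $j$ are adjacent in $\Delta(\bdg)=\Delta_q$ if and only if $q_{ij}\neq 0$, and by Remark \ref{R:con}(b) non-incident arrows are never adjacent; the diagonal coefficients are given by Lemma \ref{L:ful}. With this dictionary each part becomes elementary sign bookkeeping.

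For (a) I would use that the row of $I(\bdg)$ indexed by a directed loop $i$ vanishes (as noted after \eqref{eq:IofB}); then $G_q\bas_i=I(\bdg)(I(\bdg)^\tr\bas_i)=0$, so $q_{ij}=0$ for every $j$ and $q_i=0$. Thus the vertex $i$ of $\Delta(\bdg)$ is isolated and carries one loop of sign $\sgn(q_i-1)=-1$, i.e.\ its component is $L$; deleting $i$ removes the $i$-th row of $I(\bdg)$, so $q_{\bdg'}$ is the restriction $q^{\edgs{\bdg}\setminus\{i\}}$ and $\Delta(\bdg')$ the corresponding full subbigraph, giving $\Delta(\bdg)=L\sqcup\Delta(\bdg')$. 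For (b) I would argue that $q$ is irreducible exactly when the greatest common divisor of all its coefficients is $1$: if $|\verts{\bdg}|\geq 2$, connectedness forces a non-loop arrow $i$ with $q_i=1$, so the gcd is $1$; if $|\verts{\bdg}|=1$, every arrow is a loop, so each diagonal coefficient lies in $\{0,2\}$ and, by the sign formula, each off-diagonal one in $\{0,\pm4\}$, whence $2$ divides all coefficients and $q$ is reducible.

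The genuinely delicate part is (c), where both $|\verts{\bdg}|\geq 3$ and the absence of directed loops are needed. I would introduce an auxiliary graph $H$ on the vertex set $\edgs{\bdg}$, joining $i$ and $j$ whenever they are incident but not both non-loop and parallel. A short case analysis of the sign formula shows $q_{ij}\neq 0$ on every edge of $H$ (two arrows sharing a single vertex give $q_{ij}=\pm1$, a bidirected loop against an incident arrow gives $\pm2$, two bidirected loops at one vertex give $\pm4$), so $H$ is a spanning subgraph of $\Delta(\bdg)$ and it suffices to show $H$ connected. The heart of the argument is the local claim $(\star)$: all arrows incident to a fixed vertex $u$ lie in one $H$-component. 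Two such arrows are directly $H$-adjacent unless they are parallel non-loops between $u$ and some $u'$; but then, as $|\verts{\bdg}|\geq 3$ and $\overline{\bdg}$ is connected, there is an arrow $k$ with exactly one end in $\{u,u'\}$ and the other at a third vertex, and such $k$ is $H$-adjacent to both, bridging them. Finally I would chain any two arrows by walking along a path of $\overline{\bdg}$ between the vertices supporting them and applying $(\star)$ at each vertex. The main obstacle is precisely this cancellation of parallel arrows: incident arrows need not be adjacent in $\Delta(\bdg)$, and the third vertex supplied by $|\verts{\bdg}|\geq 3$ is exactly what mends every such failure.

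For (d), with $\verts{\bdg}=\{u,u'\}$ and no directed loop, I would classify the non-loop arrows by their signed endpoints $(\epsilon^{(u)},\epsilon^{(u')})\in\{\pm1\}^2$ and evaluate $q_{ij}=\epsilon_i^{(u)}\epsilon_j^{(u)}+\epsilon_i^{(u')}\epsilon_j^{(u')}$. This reveals that the directed non-loop arrows form one clique, the bidirected ones another, and every mixed pair satisfies $q_{ij}=0$, while any bidirected loop meets each non-loop arrow with $q_{ij}=\pm2\neq0$; connectedness guarantees at least one non-loop arrow. Hence $\Delta(\bdg)$ is connected exactly when the two cliques get bridged, which happens precisely if a bidirected loop is present (being adjacent to all non-loop arrows) or if one clique is empty, i.e.\ all arrows have the same sign; if neither holds, the two non-empty cliques are separated, yielding the converse.
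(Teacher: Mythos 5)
Your proposal is correct and follows essentially the same route as the paper: everything rests on the computation $q_{ij}=(I(\bdg)^{\tr}\bas_i)^{\tr}(I(\bdg)^{\tr}\bas_j)$, giving $q_{ij}=\epsilon\epsilon'$ (resp.\ $2\epsilon\epsilon'$, $\pm4$) for incident non-parallel arrows, plus the observation that the third vertex repairs the only failure of adjacency, namely parallel non-loop pairs. Your auxiliary graph $H$ and local claim $(\star)$ in part (c), and the gcd formulation in part (b), are just cleaner packagings of the paper's chain-of-incident-arrows argument and of its Remarks 2.4 and 2.10.
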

\begin{proof}
Since the form $q=q_\bdg$ is non-negative, part $(a)$ is a consequence of Remark \ref{R:irr}(b) and the definition of $\Delta_q$ (cf.~\cite[Lemma 3.2]{jaJ2018}). For $(b)$, since $\bdg$ is connected so if $\verts{\bdg}\geq 2$, then there must be an arrow $i$ which is not a loop. Then $q_i=1$ by Lemma~\ref{L:ful}, and therefore $q_{\bdg}$ is irreducible. For the converse, if $\verts{\bdg}=1$, then $\bdg$ consist of loops only and in this case $q$ is not irreducible, see Remark \ref{R:loo} where this case is discussed in detail.

Now, consider two arrows $i$ and $j$ in $\bdg$ which are incident but non-parallel. Then $\edges(i)=\{(v,\epsilon),(v_0,\epsilon_0)\}$ and $\edges(j)=\{(v,\epsilon'),(v_1,\epsilon_1)\}$ for some vertices $v,v_0,v_1$ with $v_0 \neq v_1$ and some signs $\epsilon,\epsilon_0,\epsilon_1 \in \{\pm 1\}$. In this case we have
\[
q_{ij}= [I(\bdg)^{\tr}\bas_i]^{\tr}[I(\bdg)^{\tr}\bas_j]=[\epsilon\bas_{v}^{\tr}+\epsilon_0\bas_{v_0}^{\tr}][\epsilon'\bas_{v}+\epsilon_1\bas_{v_1}]=\epsilon\epsilon'+(\epsilon\epsilon_1\bas_v^{\tr}\bas_{v_1}+\epsilon_0\epsilon'\bas_{v_0}^{\tr}\bas_v).
\]
Note that if neither $i$ nor $j$ is a loop, then $q_{ij}=\epsilon\epsilon' \neq 0$, and that if $i$ is a bidirected loop (that is, $v_0=v$ and $\epsilon_0=\epsilon$) and $j$ is not a loop, then $q_{ij}=2\epsilon\epsilon' \neq 0$.

To show $(c)$, using the connectedness of $\bdg$, for any pair of arrows $i$ and $j$ there is a sequence of arrows $i_0,i_1,\ldots,i_t$ with $i_0=i$, $i_t=j$ and such that $i_{r-1}$ and $i_r$ are incident arrows for $r=1,\ldots,t$. Moreover, since $\bdg$ has at least three vertices, such sequence may be chosen such that $t>1$, $i_{r-1}$ and $i_r$ are non-parallel arrows (for $r=1,\ldots,t$), and $i_{r}$ is not a loop for each $r\in \{1,\ldots,t-1\}$. By the above, we have $q_{i_{r-1}i_r}\neq 0$ for $r=1,\ldots,t$, which shows that $q_{\bdg}$ is a connected quadratic form (that is, $\Delta(\bdg)$ is connected).

Finally, assume that $\bdg$ is as in (d). If $\bdg$ does not have a bidirected loop and it contains arrows of distinct signs, then $\edgs{\bdg}=E^+\sqcup E^-$ where $E^+\neq \emptyset$ (resp.~$E^-\neq \emptyset$) are all non-loop directed (resp.~bidirected) arrows. So $i$ and $j$ are non-adjacent in $\Delta:=\Delta(\bdg)$ for each $i\in E^+$ and $j\in E^-$ (see Table \ref{TA:des}), which means that $\Delta$ is disconnected. To show the converse assume that $\bdg$ has a bidirected loop $i_0$ at a vertex $u_0$. Then by Table \ref{TA:des}, $i_0$ is adjacent in $\Delta$ with all $j$ incident in $\bdg$ with $u_0$. This means that $\Delta$ is connected since $|\verts{\bdg}|=2$. In the remaining case when $\bdg$ has no bidirected loop and all arrows in $\bdg$ have the same sign, again by Table \ref{TA:des} we easily see that in $\Delta$ each pair of vertices is joined by two parallel edges, hence $\Delta$ is connected.
\end{proof}

\begin{remark} \label{R:loo}
Assume that $\bdg$ is a bidirected graph with only one vertex, $\verts{\bdg}=\{u\}$. Then all arrows in $\edgs{\bdg}=\{i_1,\ldots,i_n\}$ are loops (recall that by our global assumption $n\geq 1$). We can assume that $i_1,\ldots, i_p$ are directed loops and  $i_{p+1},\ldots,i_{p+s}$ (resp.~$i_{p+s+1}, \ldots, i_{p+s+t}$) are bidirected loops with two tails (resp.~two heads) for $p,s,t\geq 0$ with $p+s+t=n$. We denote then $\bdg$ by $\onevertexbdg^{p,s,t}$. Then by Lemma \ref{L:ful} the diagonal coefficients of the incidence form $q=q_\bdg$ satisfy $q_{i_1}=\ldots=q_{i_p}=0$ and $q_{i_{p+1}}=\ldots=q_{i_n}=2$.
In particular,  $q_{\bdg}$ is non-irreducible (cf.~Remark \ref{R:irr}). More precisely, $q=2\hat{q}$, where $\hat{q}$ is irreducible integral form such that $\Delta_{\hat{q}}=L^p\sqcup\Delta'$, where $\Delta'$ is a loop-free full bigraph with $\verts{\Delta'}=\{i_{p+1},\ldots, i_n\}$ and precisely two edges joining every pair of vertices. Edges $\{i_k, i_{l}\}$ for $p<k\leq p+s$ and $p+s<l\leq n$ are solid, the remaining ones are dotted. Take for instance $\bdg=\onevertexbdg^{2,2,2}$. Then $q_\bdg=2\hat{q}$ with $\Delta_{\hat{q}}$ of the shape:
$${\xymatrix{
i_1\ar@{-}@(lu,ld) & i_3\ar@<-.4ex>@{.}[r] \ar@<.4ex>@{.}[r] \ar@<-.4ex>@{-}[rd]\ar@<.4ex>@{-}[rd] \ar@<-.4ex>@{-}[d]\ar@<.4ex>@{-}[d]& i_4\ar@<-.4ex>@{-}[ld]\ar@<.4ex>@{-}[ld]\ar@<-.4ex>@{-}[d]\ar@<.4ex>@{-}[d]\\
i_2\ar@{-}@(lu,ld) & i_6\ar@<-.4ex>@{.}[r] \ar@<.4ex>@{.}[r] & i_5}}$$
\end{remark}

\begin{lemma}\label{L:sub}
Let $\bdg$ be a bidirected graph with $n=|\edgs{\bdg}|\geq 1$.
Then
   $\Delta'$ is a full subbigraph of $\Delta(\bdg)$ if and only if there is a bidirected subgraph $\bdg'$ of $\bdg$ with $\verts{\bdg'}=\verts{\bdg}$ such that $\Delta'=\Delta(\bdg')$.
\end{lemma}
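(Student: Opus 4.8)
The plan is to reduce the statement to the compatibility between the restriction of a quadratic form and the induced full subbigraph that was already noted in the text, namely that for $q:\Z^n\to\Z$ and $X\subseteq\un$ the bigraph $\Delta_{q^X}$ of the restriction $q^X$ (see \eqref{eq:restriction}) coincides with the full subbigraph $\Delta_q^X$ induced by $X$. Since the vertex set of $\Delta(\bdg)=\Delta_{q_\bdg}$ is exactly $\edgs{\bdg}$ (identified with $\un$ for $n=|\edgs{\bdg}|$), both sides of the asserted equivalence are parametrized by subsets $X\subseteq\edgs{\bdg}$: a full subbigraph of $\Delta(\bdg)$ is the one induced by some $X\subseteq\edgs{\bdg}$, whereas a bidirected subgraph $\bdg'$ with $\verts{\bdg'}=\verts{\bdg}$ amounts to selecting a subset $X:=\edgs{\bdg'}\subseteq\edgs{\bdg}$ of arrows while retaining all vertices and all signed endpoints.

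The key step I would establish is the identity
\begin{equation*}
q_{\bdg_X}=(q_\bdg)^X,\qquad\text{for every }X\subseteq\edgs{\bdg},
\end{equation*}
where $\bdg_X:=(\verts{\bdg},X,\edges_\bdg|_X)$ denotes the bidirected subgraph on the whole vertex set with arrow set $X$. First I would observe that, because $\bdg_X$ keeps the same signed endpoints $\edges(i)$ for each $i\in X$, its incidence matrix $I(\bdg_X)$ is precisely the submatrix of $I(\bdg)$ formed by the rows indexed by $X$, all columns being retained (see \eqref{eq:IofB}). Consequently $I(\bdg_X)I(\bdg_X)^\tr$ is the principal $X\times X$ submatrix of $I(\bdg)I(\bdg)^\tr=G_{q_\bdg}$; since the $(i,j)$ entry $[I(\bdg)^\tr\bas_i]^\tr[I(\bdg)^\tr\bas_j]$ depends only on $\edges(i)$ and $\edges(j)$, this principal submatrix equals $G_{(q_\bdg)^X}$. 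Hence $G_{q_{\bdg_X}}=G_{(q_\bdg)^X}$, which gives the claimed equality of forms.

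Combining this identity with the restriction-to-subbigraph fact yields $\Delta(\bdg_X)=\Delta_{q_{\bdg_X}}=\Delta_{(q_\bdg)^X}=\Delta(\bdg)^X$, the full subbigraph of $\Delta(\bdg)$ induced by $X$. Both implications then follow at once: given a bidirected subgraph $\bdg'$ with $\verts{\bdg'}=\verts{\bdg}$, set $X=\edgs{\bdg'}$ so that $\Delta(\bdg')=\Delta(\bdg_X)=\Delta(\bdg)^X$ is full; conversely, given a full subbigraph $\Delta'$ of $\Delta(\bdg)$, let $X\subseteq\edgs{\bdg}$ be its vertex set and put $\bdg'=\bdg_X$, whence $\Delta'=\Delta(\bdg)^X=\Delta(\bdg')$.

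I do not expect a genuine obstacle, as the lemma is essentially a compatibility between two restriction operations. The only point requiring care is the clean bookkeeping identification $\verts{\Delta(\bdg)}=\edgs{\bdg}$ together with the observation that passing from $\bdg$ to $\bdg_X$ deletes exactly the rows of $I(\bdg)$ outside $X$; once this is in place, the principal-submatrix computation and the cited identity $\Delta_{q^X}=\Delta_q^X$ do all the remaining work.
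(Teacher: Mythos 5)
Your proposal is correct and follows essentially the same route as the paper: both arguments identify the full subbigraph induced by $X\subseteq\edgs{\bdg}$ with $\Delta_{(q_\bdg)^X}$, observe that deleting the arrows outside $X$ (keeping all vertices) deletes exactly the corresponding rows of $I(\bdg)$, and conclude via $G_{(q_\bdg)^X}=\iota_X^{\tr}I(\bdg)I(\bdg)^{\tr}\iota_X=I(\bdg')I(\bdg')^{\tr}$ that the restricted form is the incidence form of that subgraph. The only cosmetic difference is that you phrase the computation in terms of principal submatrices rather than conjugation by the inclusion $\iota_X$; the content is identical.
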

\begin{proof} 
Fix  a full subbigraph $\Delta'$ of $\Delta(\bdg)$. Then $\Delta'=\Delta_{q^X}$ for a restriction $q^X=q\circ \iota_X$ of $q=q_\bdg$ to some subset $X\subseteq\un$, see \eqref{eq:restriction}. Thus $G_{q^X}=\iota^\tr G_q\iota = \iota^\tr I(\bdg)I(\bdg)^\tr\iota=I'I'^\tr$ where $\iota=\iota_X$ and $I':=\iota^\tr I(\bdg)$. Observe that $I'=I(\bdg')$ is the incidence matrix of the bidirected subgraph $\bdg'$ obtained from $\bdg$ by removing the arrows with indices in $\un\setminus X$. In particular, $\verts{\bdg'}=\verts{\bdg}$ and $\Delta'=\Delta_{q^X}=\Delta_{q_{\bdg'}}=\Delta(\bdg')$. The opposite implication holds easily by similar arguments.
\end{proof}

\section{Transformations of bidirected graphs and quadratic forms}\label{2}

In this section we review the main transformations used in the study of (Cox-regular or unitary) integral quadratic forms, and give their corresponding graphical interpretation within the class of incidence forms (Definition~\ref{D:tra} and Lemma~\ref{L:tra}). We also establish some first numerical conditions on incidence forms that are visible in the corresponding bidirected graph.

Let $q:\ZZ^n\to\ZZ$ be an integral quadratic form as in \eqref{EQ:iqf}. For $i \neq j$ in $\underline{n}$ with $q_i \neq 0$, consider the linear transformation $T_{ij}=T^q_{ij}:\R^n \to \R^n$ given by
\begin{equation}\label{EQ:gab}
T^q_{ij}(\bas_k) = \left\{
\begin{array}{l l}
\bas_k, & \text{if $k \neq j$}, \\
\bas_j-\frac{q_{ij}}{q_i}\bas_i, & \text{if $k=j$},
\end{array} \right.
\end{equation}
called (elementary) $\textbf{Gabrielov transformation}$ of $q$ at $(i,j)$, see \cite{cmR, Ze94}. In case $q_{ij}>0$ (resp. $q_{ij}<0$), the transformation $T_{ij}^q$ is also called an \textbf{inflation} (resp. a \textbf{deflation}) of $q$.  Observe that if $q$ is a semi-Cox-regular quadratic form, then $T^q_{ij}$ is defined over the integers for any $i \neq j$ with $q_i \neq 0$, and the (well-defined) composition $q':=q \circ T^q_{ij}$ is also a semi-Cox-regular integral quadratic form, cf.~\cite[Lemma~4.9]{MZ22}. To compute $q'$ explicitly, take $y=T^q_{ij}x$ and note that $y_k=x_k$ for $k \neq i$ and $y_i=x_i-\frac{q_{ij}}{q_i}x_j$. Then
\begin{eqnarray*}
q(T^q_{ij}x) & = & q(y)=\sum_{k=1}^nq_ky_k^2+\sum_{k<\ell}q_{k\ell}y_ky_\ell \\
& = & \left[ \sum_{k \neq i}q_kx_k^2+\sum_{\substack{k < \ell \\ \text{$k \neq i$ and $i \neq \ell$}}}q_{k\ell}x_kx_\ell \right]+q_iy_i^2+\sum_{\substack{k<i \\ k \neq j}}q_{ki}x_ky_i+\sum_{\substack{i<\ell \\ \ell \neq j}}q_{i\ell}x_\ell y_i+q_{ij}y_ix_j \\
& = & q(x)-2q_{ij}x_ix_j +\frac{q_{ij}^2}{q_i}x_j^2 -\sum_{\substack{k<i \\ k \neq j}}\frac{q_{ki}q_{ij}}{q_i}x_kx_j-\sum_{\substack{i<\ell \\ \ell \neq j}}\frac{q_{i\ell}q_{ij}}{q_i}x_\ell x_j-\frac{q_{ij}^2}{q_i}x_j^2 \\
& = & q(x)-2q_{ij}x_ix_j -\sum_{\substack{k<i \\ k \neq j}}\frac{q_{ki}q_{ij}}{q_i}x_kx_j-\sum_{\substack{i<\ell \\ \ell \neq j}}\frac{q_{i\ell}q_{ij}}{q_i}x_\ell x_j.
\end{eqnarray*}
This shows that the coefficients of $q'$ are given as follows: for $k \leq \ell$,
\begin{equation}\label{EQ:coe}
q'_{k\ell} = \left\{
\begin{array}{l l}
q_{k}, & \text{if $k=\ell$}, \\[1ex]
q_{k\ell}, & \text{if $k<\ell$ and $k,\ell \neq j$}, \\[1ex]
q_{kj}-\frac{q_{ki}q_{ij}}{q_{i}}, & \text{if $k<\ell$ and $k \neq i, \ell=j$}, \\[1ex]
q_{j\ell}-\frac{q_{i\ell}q_{ij}}{q_{i}}, & \text{if $k<\ell$ and $k=j$}, \\[1ex]
-q_{ij}, & \text{if ($k=i$ and $\ell=j$) or ($k=j$ and $\ell=i$)}.
\end{array} \right.
\end{equation}
Here we used our conventions $q_{\ell k}=q_{k\ell}$ for $k<\ell$, and $q_k=q_{kk}$. Note that the definition of $T^q_{ij}$ includes both cases $i<j$ and $i>j$, given $q_i \neq 0$. We extend this definition when $q_i=0$ by setting $T_{ij}^q=\Id_n$ in this case.
\begin{lemma}\label{L:inv}
Let $q:\Z^n \to \Z$ be a Cox-regular quadratic form, $T=T^q_{ij}$ a  Gabrielov transformation of $q$, and take $q':=q \circ T$. Then the following hold.
\begin{enumerate}[label={\textnormal{(\roman*)}},topsep=3px,parsep=0px]
 \item The inverse $T^{-1}=T^{q'}_{ij}$ of $T$ is a Gabrielov transformation of $q'$.
 \item If $q$ is connected $($resp. irreducible$)$, then so is $q'$.
 \item For every $k \in \underline{n}$ we have $q'_k=q_k$.
 \item If $q$ is fully regular, then so is $q'$.
\end{enumerate}
\end{lemma}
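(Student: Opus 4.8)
The plan is to read all four statements off the explicit coefficient formula \eqref{EQ:coe} for $q'=q\circ T^q_{ij}$, treating (iii) as the foundational fact. Since $T^q_{ij}=\Id_n$ whenever $q_{ij}=0$, I may assume throughout that $q_{ij}\neq 0$. For (iii), the first line of \eqref{EQ:coe} gives $q'_{kk}=q_k$ directly, so $q'_k=q_k$ for every $k\in\un$; equivalently one expands the polarization to get $q\bigl(\bas_j-\tfrac{q_{ij}}{q_i}\bas_i\bigr)=q_j$. For (i), I first note that $q'_i=q_i\neq 0$ by (iii), so the Gabrielov transformation $T^{q'}_{ij}$ is defined; the last line of \eqref{EQ:coe} gives $q'_{ij}=-q_{ij}$, whence $T^{q'}_{ij}(\bas_j)=\bas_j+\tfrac{q_{ij}}{q_i}\bas_i$, and a one-line check shows $T^{q'}_{ij}\circ T^q_{ij}$ fixes each $\bas_k$, so that $T^{-1}=T^{q'}_{ij}$ is a Gabrielov transformation of $q'$.

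For (ii), the irreducibility statement is quick: $q'$ is again Cox-regular (its diagonal is positive by (iii), and semi-Cox-regularity is preserved as recalled just before the lemma), so by Remark \ref{R:irr}(a) it suffices to observe that $\gcd(q'_1,\dots,q'_n)=\gcd(q_1,\dots,q_n)$, again by (iii). The connectedness statement is the delicate one, and I would argue on the graph $G$ on $\un$ having an edge $\{k,\ell\}$ exactly when $q_{k\ell}\neq 0$. By \eqref{EQ:coe} only the coefficients $q'_{kj}$ incident to $j$ can differ from $q_{kj}$, so $G\setminus\{j\}$ and $G(q')\setminus\{j\}$ coincide; the key point is that $q'_{kj}=q_{kj}-\tfrac{q_{ki}q_{ij}}{q_i}$ already equals $q_{kj}$ unless $q_{ki}\neq 0$, i.e.\ unless $k$ is adjacent to $i$. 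Consequently every $j$-edge into a component of $G\setminus\{j\}$ not containing $i$ is unchanged, while the component containing $i$ is reattached to $j$ through the surviving edge $\{i,j\}$ (as $q'_{ij}=-q_{ij}\neq 0$); connectedness of $G$ then forces connectedness of $G(q')$. I expect this to be the main obstacle, precisely because a single Gabrielov transformation can both create and destroy edges at $j$, and one must verify it never severs a whole component.

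For (iv), by (iii) it remains to check that $\tfrac{q'_{k\ell}}{q_kq_\ell}\in\Z$ for all $k\neq\ell$. The cases $k,\ell\neq j$ and $(k,\ell)=(i,j)$ are immediate from \eqref{EQ:coe} and full regularity of $q$; the only substantial case is $\ell=j$ (with its symmetric mirror $k=j$), where $\tfrac{q'_{kj}}{q_kq_j}=\tfrac{q_{kj}}{q_kq_j}-\tfrac{q_{ki}q_{ij}}{q_iq_kq_j}$. The first summand is integral by full regularity, and the second factors as $\tfrac{q_{ki}}{q_k}\cdot\tfrac{q_{ij}}{q_iq_j}$, where $\tfrac{q_{ki}}{q_k}=q_i\tfrac{q_{ki}}{q_iq_k}$ and $\tfrac{q_{ij}}{q_iq_j}$ are both integers by full regularity of $q$ at the pairs $(k,i)$ and $(i,j)$; this completes the argument.
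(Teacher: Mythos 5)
Your proof is correct, and for parts (iii) and (iv) it coincides with the paper's: both read $q'_k=q_k$ off the first line of \eqref{EQ:coe}, and both verify full regularity by the same factorization $\tfrac{q'_{kj}}{q_kq_j}=\tfrac{q_{kj}}{q_kq_j}-\tfrac{q_{ki}}{q_kq_i}\cdot\tfrac{q_{ij}}{q_j}$ (you split the second summand slightly differently, but the integrality inputs are identical). The genuine difference is in (i) and (ii): the paper does not prove these at all, but simply cites \cite[Lemma~4.9]{MZ22}, whereas you supply complete self-contained arguments. Your verification of (i) — that $q'_{ij}=-q_{ij}$ forces $T^{q'}_{ij}(\bas_j)=\bas_j+\tfrac{q_{ij}}{q_i}\bas_i$ and hence $T^{q'}_{ij}\circ T^q_{ij}=\Id_n$ — is exactly what the cited lemma would deliver. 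Your connectedness argument is the most substantial addition: the observation that $q'_{kj}\neq q_{kj}$ only when $q_{ki}\neq 0$, so that every component of $G\setminus\{j\}$ not containing $i$ keeps all its $j$-edges while the component containing $i$ stays attached through the surviving edge $\{i,j\}$, is sound (the key point being that $k$ adjacent to $i$ with $k,i\neq j$ forces $k$ and $i$ into the same component of $G\setminus\{j\}$) and correctly identifies the only place where an edge could be severed. Your irreducibility argument via Remark~\ref{R:irr}(a) and the invariance of the diagonal is also valid, granting (as the paper does just before the lemma) that $q'$ is again Cox-regular. What your route buys is independence from the external reference; what the paper's route buys is brevity. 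No gaps.
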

\begin{proof}
For the properties (i)-(ii) we refer to \cite[Lemma~4.9]{MZ22}, cf.~\cite{Ze94}. Claim (iii) follows directly from equation~(\ref{EQ:coe}). To show (iv) we apply the equation~(\ref{EQ:coe}) to observe that for $k \neq \ell$ in $\underline{n}$ we have
\begin{equation*}
\frac{q'_{k\ell}}{q'_kq'_{\ell}} = \left\{
\begin{array}{l l}
\frac{q_{k\ell}}{q_kq_{\ell}}, & \text{if $k \neq j$ and $\ell \neq j$}, \\
\frac{q_{kj}}{q_kq_j}-\left[ \frac{q_{ki}}{q_kq_i} \right]\frac{q_{ij}}{q_j}, & \text{if $k \neq i$ and $\ell=j$}, \\
\frac{-q_{ij}}{q_iq_j}, & \text{if $k=i$ and $\ell=j$}.
\end{array} \right.
\end{equation*}
Note that this equation includes all cases since $k \neq \ell$ are arbitrary. Then $q'_{k\ell}/(q'_kq'_{\ell}) \in \Z$ since $q$ is fully regular, which shows the full regularity of $q'$.
\end{proof}

A diagonal matrix  $T=\diag(s_{1},\ldots,s_{m})$ with $s_{i}\in\{+1,-1\}$  is called a \textbf{sign inversion matrix}. If $s_i=-1$ for some $i$ and $s_j=1$ for all $j\neq i$, then we call $T_i:=T$ \textbf{sign inversion at $i$}. Observe that if $q:\ZZ^n\to\ZZ$ is a Cox-regular (resp.~irreducible, connected) form, then so is $q'=q\circ T_i$.  Moreover, $q'_j=q_j$ for all $j\in\un$ and $q'_{ik}=-q_{ik}$ for all $k\neq i$.

Recall that an integer matrix $O\in\M_m(\Z)$ is \textbf{orthogonal} if $OO^\tr=\Id_m$.
The following known fact  provides a useful characterization of such matrices.

\begin{lemma}[{{\cite[Lemma 2.3]{dS11}}}]\label{L:ortSimson}
 An integer matrix $O\in\M_m(\Z)$ is orthogonal if and only if $O=TP$ for a $($unique$)$
  sign inversion matrix $T$
  and a $($unique$)$ permutation matrix $P=P^\pi$ associated to a permutation $\pi$ of the set $\{1,\ldots,m\}$.
\end{lemma}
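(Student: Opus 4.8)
The plan is to prove the statement by a pair of inclusions: first show that any matrix of the form $O = TP$ (sign inversion times permutation) is orthogonal, which is the easy direction, and then establish the converse, that every integer orthogonal matrix decomposes in this way, together with the uniqueness claim. For the forward direction I would simply note that a permutation matrix $P$ satisfies $PP^\tr = \Id_m$ (its rows are the standard basis vectors in permuted order) and a sign inversion matrix $T = \diag(s_1,\ldots,s_m)$ with $s_i \in \{\pm 1\}$ satisfies $TT^\tr = T^2 = \diag(s_1^2,\ldots,s_m^2) = \Id_m$. Since $P^\tr T^\tr = (TP)^\tr$ and $T$ is symmetric, one computes $OO^\tr = TP P^\tr T^\tr = T \Id_m T = \Id_m$, so $O$ is orthogonal.

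For the converse, the key observation is that the condition $OO^\tr = \Id_m$ forces each row of $O$ to be a unit vector in the Euclidean norm over $\Z^m$. Concretely, if $r_i$ denotes the $i$-th row of $O$, then the diagonal entries of $OO^\tr$ give $r_i r_i^\tr = \|r_i\|^2 = \sum_k O_{ik}^2 = 1$. Since the entries $O_{ik}$ are integers and a sum of squares of integers equals $1$ only when exactly one term is $\pm 1$ and the rest vanish, each row $r_i$ has a single nonzero entry, equal to $\pm 1$, in some column $c(i)$. The off-diagonal equations $r_i r_j^\tr = 0$ for $i \neq j$ then force $c(i) \neq c(j)$, so $c$ is a bijection of $\{1,\ldots,m\}$, i.e.\ a permutation $\pi := c$. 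Writing $s_i \in \{\pm 1\}$ for the value of the nonzero entry in row $i$, I would set $T := \diag(s_1,\ldots,s_m)$ and $P := P^\pi$ and verify directly from \eqref{eq:IofB}-style bookkeeping that $O = TP$, matching entries $O_{ik} = s_i \, \delta_{k,\pi(i)}$ against $(TP)_{ik} = s_i (P^\pi)_{ik} = s_i\, \delta_{k,\pi(i)}$.

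For uniqueness, suppose $O = TP = T'P'$ with $T = \diag(s_i)$, $T' = \diag(s_i')$ sign inversions and $P = P^\pi$, $P' = P^{\pi'}$ permutation matrices. Reading off the unique nonzero entry in each row of $O$ recovers both the column index $\pi(i)$ and the sign $s_i$ intrinsically from $O$: the position of the nonzero entry in row $i$ determines $\pi(i) = \pi'(i)$ for all $i$, hence $\pi = \pi'$ and $P = P'$; and the value of that entry determines $s_i = s_i'$, hence $T = T'$. This shows both factors are uniquely determined by $O$.

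The proof is essentially elementary and I do not expect a serious obstacle; the one step requiring care is the integrality argument, namely that $\sum_k O_{ik}^2 = 1$ with $O_{ik} \in \Z$ forces a single $\pm 1$ entry per row. This is where the hypothesis $O \in \M_m(\Z)$ is genuinely used — the analogous statement fails badly over $\R$, where orthogonal matrices form the full orthogonal group. Once this structural fact about rows is in hand, the identification $O = TP$ and the uniqueness are immediate matchings of entries, so the remainder is routine.
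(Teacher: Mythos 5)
Your proof is correct and complete; note that the paper itself gives no proof of this lemma, citing it as \cite[Lemma 2.3]{dS11}, and your argument (each row of an integer matrix with $OO^{\tr}=\Id_m$ has squared norm $1$, hence is $\pm\bas_{c(i)}$; off-diagonal orthogonality makes $c$ a permutation; the positions and signs of the nonzero entries determine $P$ and $T$ uniquely) is exactly the standard one. The only cosmetic point is that with the paper's convention $P^{\pi}(\bas_i)=\bas_{\pi(i)}$ your indexing produces $\pi^{-1}$ rather than $\pi$, but this does not affect existence or uniqueness of the factorization.
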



\begin{lemma}\label{L:equivs}
Let $\bdg$ and $\bdg'$ be two bidirected graphs with $m$ vertices and $n$ arrows. Assume that
 there is  $T\in\GlnZ$ and an orthogonal matrix $O\in\M_m(\Z)$ such that $I(\bdg)'=T^{\tr}I(\bdg)O$. Then:
\begin{enumerate}[label={\textnormal{(\alph*)}},topsep=6px,parsep=0px]
\item  $q_\bdg\weak^T q_{\bdg'}$,
\item  $q_\bdg\cong q_{\bdg'}$ provided  $T$ is a permutation matrix,
\item  $q_\bdg= q_{\bdg'}$ provided $T=\Id_n$ is the identity matrix.
\end{enumerate}
\end{lemma}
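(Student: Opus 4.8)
The plan is to reduce all three claims to a single computation at the level of Gram matrices, since $\weak^T$, $\cong$ and equality of incidence forms are all encoded by the corresponding Gram matrices. The hypothesis $I(\bdg')=T^\tr I(\bdg)O$ (with the mild notational caveat that $I(\bdg)'$ means $I(\bdg')$) is tailor-made for this: the orthogonality of $O$ is exactly what is needed to absorb the right-hand factor.

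First I would recall, from Definition~\ref{D:inc} together with Remark~\ref{R:con}(a), that the Gram matrix of an incidence form is $G_{q_\bdg}=I(\bdg)I(\bdg)^\tr$, and likewise $G_{q_{\bdg'}}=I(\bdg')I(\bdg')^\tr$. Substituting the hypothesis and transposing, $I(\bdg')^\tr=O^\tr I(\bdg)^\tr T$, so that
\[
G_{q_{\bdg'}}=\big(T^\tr I(\bdg)O\big)\big(O^\tr I(\bdg)^\tr T\big)=T^\tr I(\bdg)\,\big(OO^\tr\big)\,I(\bdg)^\tr T.
\]
The crux is now to invoke the orthogonality of $O$, namely $OO^\tr=\Id_m$, which collapses the middle factor and yields
\[
G_{q_{\bdg'}}=T^\tr\big(I(\bdg)I(\bdg)^\tr\big)T=T^\tr G_{q_\bdg}T.
\]

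Since $T\in\GlnZ$, this identity is precisely the defining congruence~\eqref{eq:equivalence} for $q_\bdg\weak^T q_{\bdg'}$, proving~(a). Parts (b) and (c) then follow as immediate specializations. If $T=P^\pi$ is a permutation matrix, the very same identity $G_{q_{\bdg'}}=P^\tr G_{q_\bdg}P$ is by definition the trivial equivalence $q_\bdg\cong q_{\bdg'}$, giving~(b). If $T=\Id_n$, then $G_{q_{\bdg'}}=G_{q_\bdg}$, and since an integral quadratic form is uniquely determined by its Gram matrix, we conclude $q_\bdg=q_{\bdg'}$, giving~(c).

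There is no genuine obstacle in this argument; it is essentially a one-line matrix manipulation once the Gram-matrix description of $q_\bdg$ is in hand. The only point demanding care is the transpose bookkeeping, in particular checking that the asymmetric hypothesis $I(\bdg')=T^\tr I(\bdg)O$ (with $T^\tr$ on the left and $O$ on the right) is exactly what produces the correctly-sided congruence $T^\tr G_{q_\bdg}T$, and that orthogonality is used in the form $OO^\tr=\Id_m$ rather than $O^\tr O=\Id_m$ (which coincide here, but it is the former that is applied). Everything else is formal.
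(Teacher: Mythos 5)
Your proof is correct and follows essentially the same route as the paper: both compute $G_{q_{\bdg'}}=I(\bdg')I(\bdg')^{\tr}=(T^{\tr}I(\bdg)O)(T^{\tr}I(\bdg)O)^{\tr}=T^{\tr}G_{q_\bdg}T$ using $OO^{\tr}=\Id_m$, and then read off (b) and (c) as specializations of (a). The only difference is that you spell out the transpose bookkeeping in more detail than the paper does.
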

\begin{proof}
Since $O$ is orthogonal we have $G_{q_{\bdg'}}=I(\bdg')I(\bdg')^{\tr}=(T^{\tr}I(\bdg)O)(T^{\tr}I(\bdg)O)^\tr=
T^{\tr}I(\bdg) I(\bdg)^\tr T=T^{\tr}G_{q_\bdg} T$, thus (a) holds. Claims (b) and (c) follow clearly by (a).
\end{proof}

Fix $n\geq 1$ and two semi-Cox-regular forms $q,q':\ZZ^n\to\ZZ$. Following \cite{Ze94, PR19} we say that $q$ and $q'$ are \textbf{Gabrielov equivalent}, or \textbf{G-equivalent}, if $q'=q\circ T$ (that is, $q \weak^T q'$) for $T\in\GlnZ$ being a composition of
 sign inversions, permutation matrices and (well-defined)  Gabrielov transformations. We write then $q \Gweak q'$ or $q \Gweak^T q'$ and we call $T$ a \textbf{G-transformation} (of $q$).

\begin{corollary}\label{cor:Gweak}
G-equivalence is an equivalence relation on the set of semi-Cox-regular integral forms. Moreover, assuming that $q\Gweak q'$, we have that if $q$ is connected $($resp.~irreducible, unitary, fully regular, Cox-regular$)$, then so is $q'$, and there exists a permutation $\pi$ such that $q'_i=q_{\pi(i)}$ for all $i$.
\end{corollary}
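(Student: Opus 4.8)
The plan is to verify each assertion of Corollary~\ref{cor:Gweak} by building directly on Lemma~\ref{L:inv}, the remarks on sign inversions, and the obvious behaviour of permutation matrices, handling the three generators of G-equivalence (Gabrielov transformations, sign inversions, permutations) one at a time and then combining them.

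First I would establish that $\Gweak$ is an equivalence relation. Reflexivity is immediate since $\Id_n$ is (vacuously) a composition of the allowed generators, giving $q\Gweak^{\Id_n} q$. For symmetry, suppose $q\Gweak^T q'$ with $T=T_1\cdots T_r$ a product of generators; then $T^{-1}=T_r^{-1}\cdots T_1^{-1}$, and I must check that each factor's inverse is again an admissible generator. For a permutation matrix $P^\pi$ the inverse is $P^{\pi^{-1}}$; for a sign inversion $T$ we have $T^{-1}=T$; and for a Gabrielov transformation the crucial point is Lemma~\ref{L:inv}(i), which says precisely that $T^{-1}=T^{q'}_{ij}$ is itself a (well-defined) Gabrielov transformation, now of the intermediate form. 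Reading the inverse factors in order along the chain of intermediate forms $q=q^{(0)},q^{(1)},\ldots,q^{(r)}=q'$, each $T_s^{-1}$ is a legal generator for $q^{(s)}$, so $T^{-1}$ is a G-transformation witnessing $q'\Gweak q$. Transitivity is then just concatenation of the two defining products, with the middle form matching up.

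Next I would treat the preservation of structural properties. It suffices to prove each of connectedness, irreducibility, unitarity, full regularity and Cox-regularity is preserved under a single generator, since an arbitrary G-transformation is a finite composition and the property then propagates along the chain. For sign inversions and permutations this is the content of the remarks immediately preceding and following Lemma~\ref{L:ortSimson}: a sign inversion $T_i$ sends a Cox-regular (resp.\ irreducible, connected) form to one of the same kind with $q'_j=q_j$, and permutations clearly permute the diagonal entries while preserving all these properties. For a single Gabrielov transformation, connectedness and irreducibility are Lemma~\ref{L:inv}(ii), full regularity is Lemma~\ref{L:inv}(iv), and unitarity follows from Lemma~\ref{L:inv}(iii) since $q'_k=q_k=1$ for all $k$. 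Cox-regularity under a Gabrielov transformation is guaranteed by the discussion after~\eqref{EQ:gab} (invoking \cite[Lemma~4.9]{MZ22}), which states that the composition of a semi-Cox-regular form with a well-defined Gabrielov transformation is again semi-Cox-regular, and since $q'_k=q_k>0$ the form stays Cox-regular rather than merely semi-Cox-regular.

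Finally, for the diagonal claim I track the permutation explicitly along the chain. Each Gabrielov transformation and each sign inversion fixes every diagonal coefficient (Lemma~\ref{L:inv}(iii) and the sign-inversion remark, respectively), so they contribute the identity permutation; each permutation matrix $P^\pi$ replaces the diagonal $(q_k)_k$ by $(q_{\pi^{-1}(k)})_k$. Composing the permutations arising from the permutation-matrix factors yields a single permutation $\pi$ with $q'_i=q_{\pi(i)}$ for all $i$, as required. I do not anticipate a genuine obstacle here: the statement is essentially bookkeeping, and the only substantive inputs are already packaged in Lemma~\ref{L:inv} and the cited \cite[Lemma~4.9]{MZ22}. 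The one point deserving care is symmetry, where one must confirm that inverting a Gabrielov transformation does not leave the admissible class and that well-definedness ($q_i\neq0$ at the relevant vertex) is inherited by the intermediate forms via Lemma~\ref{L:inv}(iii).
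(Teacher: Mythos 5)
Your proof is correct and follows exactly the route the paper intends: its own proof is the one-line remark ``Follows from Lemma~\ref{L:inv} and obvious properties of sign inversions and permutation matrices,'' and your argument is simply a careful expansion of that, using Lemma~\ref{L:inv}(i) for symmetry, parts (ii)--(iv) and the cited \cite[Lemma~4.9]{MZ22} for the preserved properties, and the diagonal-tracking for the permutation claim.
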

\begin{proof}
Follows from Lemma \ref{L:inv} and obvious properties of sign inversions and permutation matrices.
\end{proof}

\begin{remark}\label{rem:GabrielovLie}
G-equivalence $\Gweak$  has its origins and relevant interpretation in Lie theory and singularity theory. For instance, recall that the authors of \cite{Slodovy, BKL06} (resp.~of \cite{PR19}) showed that in case of unit forms (resp.~positive Cox-regular forms) G-equivalence corresponds to isomorphism of the  associated Lie algebras (by means of Serre-type relations induced by quasi-Cartan and intersection matrices related with quadratic forms). We refer also to \cite{BGZ06,  Slodovy, Gabrielov, dS22, PR21} for corresponding results from singularity theory and the theory of cluster algebras.
\end{remark}

We want to express G-transformations in terms of the corresponding bidirected graphs. We consider the following graphical transformations of bidirected graphs.

\begin{definition}\label{D:tra}
Let $\bdg$ be a bidirected graph with $m=|\verts{\bdg}|$ and $n=|\edgs{\bdg}|$.
\begin{enumerate}[label={\textnormal{(\alph*)}},topsep=3px,parsep=0px]
 \item Assume that $i$ and $j$ are incident arrows in $\bdg$, with $\edges_\bdg(i)=\{(v_0,\epsilon_0),(v_1,\epsilon_1)\}$, $\edges_\bdg(j)=\{(w_0,\eta_0),(w_1,\eta_1)\}$ and $v_0=w_0$. A bidirected graph $\bdg'=\bdg\mathcal{T}_{ij}$ is obtained from $\bdg$ by
     taking $\verts{\bdg'}:=\verts{\bdg},$ $\edgs{\bdg'}:=\edgs{\bdg}$ and $\edges_{\bdg'}(i'):=\edges_{\bdg}(i')$ for $i'\neq j$, and replacing the signed endpoints $\edges_{\bdg}(j)$ of $j$  as follows:
 \begin{enumerate}[label={\textnormal{(\alph*)}},topsep=3px,parsep=0px]
  \item[(a1)] if $w_1 \notin \{v_0,v_1\}$, then take $\edges_{\bdg'}(j):=\{(v_1,\sigma(i)\eta_0 ),(w_1,\eta_1)\}$;
  \item[(a2)] if $w_1=v_1 \neq v_0$, then take $\edges_{\bdg'}(j):=\{(v_0,\sigma(i)\eta_1 ),(v_1, \sigma(i)\eta_0)\}$;
  \item[(a3)] if $w_1=w_0$, then take $\edges_{\bdg'}(j):=\{(v_1,\sigma(i)\eta_0 ),(v_1, \sigma(i)\eta_1)\}$.
 \end{enumerate}

\item Let $i$ be an arrow in $\bdg$ with $\edges_\bdg(i)=\{ (v_0,\epsilon_0),(v_1,\epsilon_1)\}$. A bidirected graph $\bdg\mathcal{T}_i$ is obtained from $\bdg$ by replacing the signed endpoints $\edges_{\bdg}(i)$ by setting $\edges_{\bdg\mathcal{T}_i}(i):=\{(v_0,-\epsilon_0),(v_1,-\epsilon_1)\}$.

\item Let $\pi$ be a permutation of the set of arrows $\edgs{\bdg}$ of $\bdg$. A bidirected graph $\bdg\mathcal{P}^{\pi}$ is obtained from $\bdg$ by reordering its arrows via the permutation $\pi$. To be precise,
$\verts{\bdg \mathcal{P^{\pi}}}:=\verts{\bdg},$  $\edgs{\bdg \mathcal{P^{\pi}}}:=\edgs{\bdg}$ and \quad $\edges_{\bdg \mathcal{P^{\pi}}}:=\edges_{\bdg}\circ \pi$.
Observe that $(\bdg P^{\pi})\mathcal{P}^{\pi'}=\bdg \mathcal{P}^{\pi \pi'}$ for any permutation $\pi'$ of $\edgs{\bdg}$.

 \item Given $\verts{\bdg}=\{u_1,\ldots,u_m\}$, we define a right action of the group of integer orthogonal $m \times m$ matrices on the set $\verts{\bdg} \times \{\pm 1\}$ as follows. For such a matrix $O$ we take $(u_t,\epsilon) \cdot O :=(u_r,\epsilon \eta)$ if $O^{\tr}\bas_{u_t}=\eta \bas_{u_r}$ (cf. Lemma~\ref{L:ortSimson}; that this is indeed a right action can be verified directly). A bidirected graph $\bdg^{O}$, called \textbf{switching} of $\bdg$ with respect to the orthogonal matrix $O$, is obtained as $\bdg^{O}=(\verts{\bdg},\edgs{\bdg},\edges^{O})$ where $\edges^{O}(i):=\{(u,\epsilon)\cdot O,(u',\epsilon')\cdot O\}$ given that $\edges(i)=\{(u,\epsilon),(u',\epsilon')\}$.

\end{enumerate}
\end{definition}

Let us illustrate the constructions of Definition~\ref{D:tra} with an example.
\begin{example} \label{exa:tra}
Consider the following successive transformations on a bidirected graph $\bdg^0$ with $4$ vertices and $5$ arrows (the titles of the constructions $\bdg^i \mapsto \bdg^{i+1}$ refer to next lemma):
\[
\xy 0;/r.20pc/:
(-90, 25)="Fr1" *{};
( 90, 25)="Fr2" *{};
( 90,-5)="Fr3" *{};
(-90,-5)="Fr4" *{};
( -42, 5)="T1" *{\xymatrix{\bulito_{u_1} \ar@{|-|}[r]^-{3} \ar@<1ex>@/^5pt/@{|-|}[d]^-{5} \ar@{|-|}[d]^-{2} \ar@<-1ex>@{|-|}[d]_-{1} & \bulito_{u_3} \\ \bulito_{u_2} \ar@{|-|}[r]_-{4} & \bulito_{u_4}}};
( -27, 5)="T2" *{\xymatrix{\bulito_{u_1} \ar@{|-|}[r]^-{3} \ar@{|-|}[d]^-{2} \ar@<-1ex>@{|-|}[d]_-{1} & \bulito_{u_3} \\ \bulito_{u_2} \ar[ru]_-{5} \ar@{|-|}[r]_-{4} & \bulito_{u_4}}};
( -12, 5)="T3" *{\xymatrix{\bulito_{u_1} \ar@{|-|}[r]^-{3} \ar@{|-|}[d]^-{2} \ar@<-1ex>@{|-|}[d]_-{1} & \bulito_{u_3} \ar@{<->}[d]^-{5} \\ \bulito_{u_2} \ar@{|-|}[r]_-{4} & \bulito_{u_4}}};
(  3,  5)="T4" *{\xymatrix{\bulito_{u_1} \ar@{|-|}[r]^-{3} \ar@{<->}[d]^-{2} \ar@<-1ex>@{<->}[d]_-{1} & \bulito_{u_3} \ar@{<->}[d]^-{5} \\ \bulito_{u_2} \ar@{|-|}[r]_-{4} & \bulito_{u_4}}};
(  18, 5)="T5" *{\xymatrix{\bulito_{u_1} \ar@{|-|}[r]^-{1} \ar@{<->}[d]^-{4} \ar@<-1ex>@{<->}[d]_-{5} & \bulito_{u_3} \ar@{<->}[d]^-{2} \\ \bulito_{u_2} \ar@{|-|}[r]_-{3} & \bulito_{u_4}}};
(  33, 5)="T6" *{\xymatrix{\bulito_{u_1} \ar[r]^-{1} \ar@{<-}[d]^-{4} \ar@{<-}@<-1ex>[d]_-{5} & \bulito_{u_2} \ar[d]^-{2} \\ \bulito_{u_4} \ar@{<-}[r]_-{3} & \bulito_{u_3}}};
(-75, 20)="E1" *{\begin{matrix} \text{bidirected graph} \\ \bdg^0 \end{matrix}};
(-45, 20)="E2" *{\begin{matrix} \text{inflation} \\ \bdg^1=\bdg^0 \mathcal{T}_{3,5} \end{matrix}};
(-15, 20)="E3" *{\begin{matrix} \text{inflation} \\ \bdg^2=\bdg^1 \mathcal{T}_{4,5} \end{matrix}};
( 15, 20)="E4" *{\begin{matrix} \text{sign inversion} \\ \bdg^3=\bdg^2 \mathcal{T}_1\mathcal{T}_2 \end{matrix}};
( 45, 20)="E5" *{\begin{matrix} \text{permutation} \\ \bdg^4=\bdg^3\mathcal{P}^\pi \end{matrix}};
( 75, 20)="E6" *{\begin{matrix} \text{switching} \\ \bdg^5=(\bdg^4)^\mathcal{O} \end{matrix}};
\endxy
\]
Here $\pi$ is the permutation of arrows $\left( \begin{smallmatrix} 1&2&3&4&5\\3&5&4&2&1 \end{smallmatrix} \right)$, and $\mathcal{O}$ is the orthogonal matrix $\left[ \begin{smallmatrix}1&0&0&0\\0&0&0&-1\\0&-1&0&0\\ 0&0&1&0 \end{smallmatrix} \right]$ corresponding to the switching of $\bdg^4$ on the vertices $u_2$ and $u_3$ followed by the permutation of vertices $\left( \begin{smallmatrix} u_1&u_2&u_3&u_4\\u_1&u_4&u_2&u_3 \end{smallmatrix} \right)$. Note that $\bdg^5=\bdgA^2_3$ as in Definition~\ref{D:canAD} below.
\end{example}

The proof of the following result relies on straightforward manipulations of incidence matrices. For convenience, we give the details for some general situations.

\begin{lemma}\label{L:tra}
Let $\bdg$ be a bidirected graph with $m=|\verts{\bdg}|, n=|\edgs{\bdg}|\geq 1$, and consider one of the following linear transformations $T\in\GlnZ$.
\begin{enumerate}[label={\textnormal{(\alph*)}},topsep=3px,parsep=0px]
 \item $T=T^{q_{\bdg}}_{ij}$ is the Gabrielov transformation of $q_{\bdg}$ with respect to the indices $(i,j)$.
 \item $T=T_i$ is the sign inversion at an index $i$,
 \item $T=P^{\pi}$ is a permutation matrix. 
\end{enumerate}
Then $T^\tr I(\bdg)=I(\bdg\mathcal{T})$ and $q_{\bdg}\circ T=q_{\bdg\mathcal{T}}$, where $\mathcal{T}$ is one of the transformation $\mathcal{T}=\mathcal{T}_{ij}$, $\mathcal{T}=\mathcal{T}_i$ or $\mathcal{T}=\mathcal{P}^{\pi}$ as given in Definition~\ref{D:tra}, for the cases $(a)$, $(b)$ or $(c)$ respectively. Moreover,
\begin{enumerate}[label={\textnormal{(\alph*)}},topsep=3px,parsep=0px]
 \item[$({\rm d})$] if $\bdg^{O}$ is the switching of $\bdg$ with respect to an orthogonal matrix $O \in \MM_m(\ZZ)$, then $I(\bdg^O)=I(\bdg) O$; in this case $q_{\bdg^O}=q_{\bdg}$.
\end{enumerate}
\end{lemma}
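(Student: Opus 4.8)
The plan is to verify the single matrix identity $T^\tr I(\bdg)=I(\bdg\mathcal{T})$ in each of the cases (a), (b), (c), and separately the identity $I(\bdg^O)=I(\bdg)O$ in case (d); once these are established, the equalities of quadratic forms follow immediately from Lemma~\ref{L:equivs}. Indeed, for cases (a)--(c) the matrix $T$ is invertible in $\GlnZ$ and $I(\bdg\mathcal{T})=T^\tr I(\bdg)=T^\tr I(\bdg)\Id_m$, so Lemma~\ref{L:equivs}(a) with orthogonal matrix $O=\Id_m$ gives $q_\bdg\weak^T q_{\bdg\mathcal{T}}$, that is $q_\bdg\circ T=q_{\bdg\mathcal{T}}$. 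For case (d), writing $I(\bdg^O)=I(\bdg)O=\Id_n^\tr\, I(\bdg)\, O$ and invoking Lemma~\ref{L:equivs}(c) with $T=\Id_n$ yields directly $q_{\bdg^O}=q_\bdg$. So the entire content of the lemma reduces to the four incidence-matrix identities, and the proof is pure bookkeeping on the defining relation \eqref{eq:IofB}, namely $I(\bdg)^\tr\bas_i=\epsilon\bas_u+\epsilon'\bas_{u'}$ where $\edges(i)=\{(u,\epsilon),(u',\epsilon')\}$.

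First I would treat case (c), the permutation, as a warm-up: by Definition~\ref{D:tra}(c) we have $\edges_{\bdg\mathcal{P}^\pi}=\edges_\bdg\circ\pi$, so the $i$-th row of $I(\bdg\mathcal{P}^\pi)$ equals the $\pi(i)$-th row of $I(\bdg)$, which is exactly the effect of left-multiplying $I(\bdg)$ by the row-permutation matrix $(P^\pi)^\tr$. Case (b), the sign inversion $T_i$, is equally short: $T_i^\tr=T_i$ negates only the $i$-th row, and Definition~\ref{D:tra}(b) negates only the signs in $\edges(i)$, so $I(\bdg\mathcal{T}_i)^\tr\bas_i=-\epsilon_0\bas_{v_0}-\epsilon_1\bas_{v_1}=-(I(\bdg)^\tr\bas_i)$ while all other rows are unchanged, matching $T_i^\tr I(\bdg)$ row by row. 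For case (d) I would unwind the right action of $O$ on $\verts{\bdg}\times\{\pm1\}$ from Definition~\ref{D:tra}(d): if $O^\tr\bas_{u}=\eta\bas_{r}$ then $(u,\epsilon)\cdot O=(r,\epsilon\eta)$, so applying $I(\bdg^O)^\tr\bas_i$ to the relabelled endpoints and comparing with $O^\tr(I(\bdg)^\tr\bas_i)=O^\tr(\epsilon\bas_u+\epsilon'\bas_{u'})$ gives the identity $I(\bdg^O)^\tr=O^\tr I(\bdg)^\tr$, i.e. $I(\bdg^O)=I(\bdg)O$, using Lemma~\ref{L:ortSimson} to guarantee that $O^\tr$ sends each canonical vector to $\pm$ a canonical vector.

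The main obstacle, and the only genuinely laborious part, is case (a), the Gabrielov transformation, because $\mathcal{T}_{ij}$ is defined by the three sub-cases (a1), (a2), (a3) according to how the arrow $j$ is incident to $i$, and one must check the incidence-matrix identity in each. Here $T=T^{q_\bdg}_{ij}$ changes only column $j$ of the transpose, via $\bas_j\mapsto\bas_j-\tfrac{q_{ij}}{q_i}\bas_i$, so $T^\tr$ acts on rows and only the $j$-th row of $I(\bdg)$ is altered, becoming (the $j$-th row) $-\tfrac{q_{ij}}{q_i}\times$(the $i$-th row). The computation therefore hinges on evaluating $\tfrac{q_{ij}}{q_i}$ from the endpoint data. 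Since $i,j$ are incident with $v_0=w_0$, Lemma~\ref{L:ful} gives $q_i\in\{1,2\}$, and the polarization formula computes $q_{ij}=(I(\bdg)^\tr\bas_i)^\tr(I(\bdg)^\tr\bas_j)=\epsilon_0\eta_0+(\text{cross terms})$; one finds $\tfrac{q_{ij}}{q_i}$ comes out to $\pm1$ governed precisely by the sign $\sigma(i)=(-1)\epsilon_0\epsilon_1$, which is why the sign $\sigma(i)$ appears on the new endpoints in all of (a1)--(a3).

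Concretely, in sub-case (a1) the endpoints $v_0,v_1,w_1$ are distinct, so the new $j$-th row $I(\bdg)^\tr\bas_j-\tfrac{q_{ij}}{q_i}I(\bdg)^\tr\bas_i=(\eta_0\bas_{v_0}+\eta_1\bas_{w_1})-\tfrac{q_{ij}}{q_i}(\epsilon_0\bas_{v_0}+\epsilon_1\bas_{v_1})$ must collapse, after substituting $\tfrac{q_{ij}}{q_i}=\eta_0/\epsilon_0=\epsilon_0\eta_0$, to $\sigma(i)\eta_0\bas_{v_1}+\eta_1\bas_{w_1}$, matching $\edges_{\bdg'}(j)=\{(v_1,\sigma(i)\eta_0),(w_1,\eta_1)\}$; I would carry out the same cancellation bookkeeping for (a2), where $w_1=v_1$ forces the two contributions at $v_1$ to combine, and for (a3), where $j$ is a loop at $w_0=w_1=v_0$ and both of its endpoints get redirected to $v_1$. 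In each sub-case the coefficient of $\bas_{v_0}$ cancels exactly because $v_0=w_0$ is the shared vertex, which is the structural reason the transformation is well-defined; this cancellation is the crux, and once it is checked in the three configurations the identity $T^\tr I(\bdg)=I(\bdg\mathcal{T}_{ij})$ holds and the lemma follows from Lemma~\ref{L:equivs}(a).
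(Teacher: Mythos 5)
Your proposal is correct and follows essentially the same route as the paper: reduce everything to the incidence-matrix identities $T^\tr I(\bdg)=I(\bdg\mathcal{T})$ (resp.\ $I(\bdg^O)=I(\bdg)O$), conclude via Lemma~\ref{L:equivs}, and handle the Gabrielov case by computing $\tfrac{q_{ij}}{q_i}=\epsilon_0\eta_0$ and checking the cancellation of the $\bas_{v_0}$-coefficient in the modified $j$-th row. The paper likewise writes out only sub-case (a1) explicitly and leaves (a2), (a3) and the loop configuration to analogous calculations, so your level of detail matches its proof.
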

\begin{proof} (a) Let $q=q_\bdg$ and take $i,j\in\edgs{\bdg}$ with $\edges(i)=\{(v_0,\epsilon_0),(v_1,\epsilon_1)\}$, $\edges(j)=\{(w_0,\eta_0),(w_1,\eta_1)\}$ and $v_0=w_0$. Consider the case (a1) of Definition \ref{D:tra} with $v_0\neq v_1$. Then the incidence matrices of $\bdg$ and $\bdg'=\bdg\mathcal{T}_{ij}$ take the forms:
\def\prz{\!\!\!\!}
$$I(\bdg)={\scriptsize\begin{blockarray}{cccccccc}
    \prz& &\prz v_0\prz &  &\prz v_1\prz & &\prz w_1\prz &  \\
    \begin{block}{c[ccccccc]}
      \prz&\, \prz&\prz\vdots\prz&\prz \prz&\prz\vdots\prz&\prz \prz&\prz\vdots\prz&\prz\\
      i \prz&\, \cdots\prz&\prz\epsilon_0\prz&\prz\cdots\prz&\prz\epsilon_1\prz&\prz\cdots\prz&\prz0\prz&\prz\cdots\vspace{-3pt}\\
       \prz &\, \prz&\prz\vdots\prz&\prz \prz&\prz\vdots\prz&\prz \prz&\prz\vdots\prz&\prz\\
        j\prz&\, \cdots\prz&\prz\eta_0\prz&\prz\cdots\prz&\prz0\prz&\prz\cdots\prz&\prz\eta_1\prz&\prz\cdots\vspace{-3pt}\\
        \prz  &\, \prz&\prz\vdots\prz&\prz \prz&\prz\vdots\prz&\prz \prz&\prz\vdots\prz&\prz\\
    \end{block}
    \end{blockarray}},
    \quad
    I(\bdg')={\scriptsize\begin{blockarray}{cccccccc}
    \prz& &\prz v_0\prz &  &\prz v_1\prz & &\prz w_1\prz &  \\
    \begin{block}{c[ccccccc]}
      \prz&\, \prz&\prz\vdots\prz&\prz \prz&\prz\vdots\prz&\prz \prz&\prz\vdots\prz&\prz\\
      i \prz&\, \cdots\prz&\prz\epsilon_0\prz&\prz\cdots\prz&\prz\epsilon_1\prz&\prz\cdots\prz&\prz0\prz&\prz\cdots\vspace{-3pt}\\
        \prz&\, \prz&\prz\vdots\prz&\prz \prz&\prz\vdots\prz&\prz \prz&\prz\vdots\prz&\prz\\
        j\prz&\, \cdots\prz&\prz0\prz&\prz\cdots\prz&\prz-\epsilon_0\epsilon_1\eta_0\prz&\prz\cdots\prz&\prz\eta_1\prz&\prz\cdots\vspace{-3pt}\\
          \prz&\, \prz&\prz\vdots\prz&\prz \prz&\prz\vdots\prz&\prz \prz&\prz\vdots\prz&\prz\\
    \end{block}
    \end{blockarray}},
$$
cf.~\eqref{eq:IofB}. By applying \eqref{eq:qB} we get that $\frac{q_{ij}}{q_i}=\epsilon_0\eta_0$. Now by a straightforward calculation we check that $T^\tr I(\bdg)=I(\bdg')$ for $T=T^q_{ij}$, see \eqref{EQ:gab}.  This means that $q_\bdg\circ T=q_{\bdg'}$ by Lemma \ref{L:equivs}(a). The case when $v_0=v_1$ as well as the cases (a2)-(a3) of Definition \ref{D:tra} follow by analogous calculations, we skip the explicit details. 

For (b) let $T=T_i\in\GlnZ$ be the sign inversion at $i$. Then clearly $TI(\bdg)=T^\tr I(\bdg)=I(\bdg\mathcal{T}_i)$, see Definition \ref{D:tra}(b) and \eqref{eq:IofB}. This means that $q_\bdg\circ T=q_{\bdg\mathcal{T}_i}$ by Lemma \ref{L:equivs}(a). The claim (c) holds by similar arguments since $(P^\pi)^\tr I(\bdg)=I(\bdg\mathcal{P}^{\pi})$.

For (d), by definition we have $I(\bdg)^{\tr}\bas_i=\epsilon \bas_u+\epsilon'\bas_{u'}$ if $\edges(i)=\{(u,\epsilon),(u',\epsilon')\}$. Then, by the definition of $\bdg^O$, we have $I(\bdg^O)^{\tr}\bas_i=\epsilon O^{\tr}\bas_u+\epsilon'O^{\tr}\bas_{u'}=O^{\tr}I(\bdg)^{\tr}\bas_i$. Since this holds for all $i \in \edgs{\bdg}$, then $I(\bdg^O)=I(\bdg)O$. The final claim of (d) follows by Lemma \ref{L:equivs}(c). 
\end{proof}

\begin{corollary}\label{C:tra}
Let $\bdg$ be a bidirected graph, and take $q:=q_{\bdg}$ the associated incidence quadratic form. If $q \Gweak^T q'$, then there is a bidirected graph $\bdg'$ such that $T^{\tr}I(\bdg)=I(\bdg')$. In particular, $q'=q_{\bdg'}$ is also an incidence quadratic form.
\end{corollary}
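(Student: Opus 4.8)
The plan is to argue by induction on the number of elementary factors of the G-transformation $T$, using Lemma~\ref{L:tra} as the single-step engine that converts each algebraic move into a graphical one. Since $q\Gweak^T q'$, the definition of G-equivalence lets me write $T=T^{(1)}T^{(2)}\cdots T^{(k)}$, where each factor $T^{(r)}$ is a sign inversion, a permutation matrix, or a (well-defined) Gabrielov transformation. The point that the induction must carry is that a Gabrielov factor $T^{(r)}=T^{q^{(r-1)}}_{ij}$ is taken relative to the intermediate form $q^{(r-1)}:=q\circ T^{(1)}\cdots T^{(r-1)}$, so the inductive statement will assert not merely that $q^{(r-1)}$ is some quadratic form but that it is the incidence form of an explicit bidirected graph.

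First I would set $\bdg^{(0)}:=\bdg$ and $q^{(0)}:=q=q_\bdg$, and define $q^{(r)}:=q^{(r-1)}\circ T^{(r)}$, so that by associativity of composition $q^{(k)}=q\circ T=q'$. Assuming inductively that $q^{(r-1)}=q_{\bdg^{(r-1)}}$ for a bidirected graph $\bdg^{(r-1)}$, I apply the relevant part of Lemma~\ref{L:tra} to $\bdg^{(r-1)}$: according as $T^{(r)}$ is a Gabrielov transformation, a sign inversion, or a permutation, I set $\bdg^{(r)}:=\bdg^{(r-1)}\mathcal{T}_{ij}$, $\bdg^{(r-1)}\mathcal{T}_i$, or $\bdg^{(r-1)}\mathcal{P}^{\pi}$ respectively. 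In every case Lemma~\ref{L:tra} yields both $(T^{(r)})^\tr I(\bdg^{(r-1)})=I(\bdg^{(r)})$ and $q^{(r)}=q^{(r-1)}\circ T^{(r)}=q_{\bdg^{(r)}}$, which closes the induction and in particular exhibits each $q^{(r)}$ as an incidence form.

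Having run the induction up to $r=k$, I would assemble the global identity by telescoping the single-step relations. From $(T^{(r)})^\tr I(\bdg^{(r-1)})=I(\bdg^{(r)})$ for all $r$, iteration gives $I(\bdg^{(k)})=(T^{(k)})^\tr\cdots(T^{(1)})^\tr I(\bdg)=(T^{(1)}\cdots T^{(k)})^\tr I(\bdg)=T^\tr I(\bdg)$. Taking $\bdg':=\bdg^{(k)}$ then delivers $T^\tr I(\bdg)=I(\bdg')$ together with $q'=q^{(k)}=q_{\bdg'}$, which is exactly the assertion.

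I expect the only genuine subtlety (rather than an obstacle) to be the correct reading of \emph{``composition of $\dots$ (well-defined) Gabrielov transformations''}: each Gabrielov factor must be interpreted with respect to the current intermediate form $q^{(r-1)}$, not the original $q$, for Lemma~\ref{L:tra}(a) to apply verbatim. This is precisely what the strengthened induction hypothesis records by keeping $q^{(r-1)}$ equal to the incidence form $q_{\bdg^{(r-1)}}$. Moreover, since $q_\bdg$ is semi-fully regular by Lemma~\ref{L:ful} (hence semi-Cox-regular) and semi-Cox-regularity is preserved along the whole chain by Corollary~\ref{cor:Gweak}, every intermediate Gabrielov transformation is indeed integral and well-defined, so no factor of $T$ falls outside the scope of Lemma~\ref{L:tra}.
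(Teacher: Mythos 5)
Your proposal is correct and is essentially the argument the paper intends: the corollary is stated without proof as an immediate consequence of Lemma~\ref{L:tra}, and the implicit reasoning is exactly your factor-by-factor induction with the telescoping of the identities $(T^{(r)})^{\tr}I(\bdg^{(r-1)})=I(\bdg^{(r)})$. Your remark that each Gabrielov factor must be read relative to the current intermediate form, with well-definedness guaranteed by semi-Cox-regularity (Lemma~\ref{L:ful} and Corollary~\ref{cor:Gweak}), correctly identifies the one point the paper leaves tacit.
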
 

Recall that a bidirected graph $\bdg$ is {\bf balanced} if all closed walks in $\bdg$ are positive, that is, each of them  contains an even number of bidirected arrows. Otherwise, that is, when $\bdg$ contains a negative closed walk, we say $\bdg$ is {\bf unbalanced}.
 Observe that a bidirected graph $\bdg$ is a quiver (that is,  $\bdg$ has no bidirected arrow) if and only if $I(\bdg)\mathbbm{1}_m=0$, where $m=|\verts{\bdg}|$ and $\mathbbm{1}_m$ is the (column) vector in $\Z^m$ with all entries equal to~$1$. The main step for the proof of the following useful graphical interpretation of the (right) nullity of the incidence matrix $I(\bdg)$ of a bidirected graph $\bdg$ is to relate the balance property of $\bdg$ with the possibility of switching $\bdg$ into a quiver.

\begin{proposition} \label{P:null}
Let $\bdg$ be a connected bidirected graph. Then the $($right$)$ nullity of the incidence matrix $I(\bdg)$ of $\bdg$ satisfies $\Null(I(\bdg)) \in \{0,1\}$. Moreover,
\begin{enumerate}[label={\textnormal{(\alph*)}},topsep=4px,parsep=0px]
 \item $\Null(I(\bdg))=0$ if and only if $\bdg$ is unbalanced.
 \item $\Null(I(\bdg))=1$ if and only if there is a switching $\bdg^{O}$ of $\bdg$ which is a quiver.
\end{enumerate}
\end{proposition}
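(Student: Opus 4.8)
The plan is to read the right null space of $I(\bdg)$ directly off \eqref{eq:IofB}. Writing $m=|\verts{\bdg}|$, a vector $x=(x_u)_{u\in\verts{\bdg}}\in\R^m$ satisfies $I(\bdg)x=0$ exactly when, for every arrow $i$ with $\edges(i)=\{(u,\epsilon),(u',\epsilon')\}$, the row equation $\epsilon x_u+\epsilon'x_{u'}=0$ holds; since $\epsilon,\epsilon'\in\{\pm1\}$ and $\sigma(i)=-\epsilon\epsilon'$, this is the single relation $x_{u'}=\sigma(i)\,x_u$ (degenerating to $x_u=0$ when $i$ is a bidirected loop). So a null vector is a labelling of the vertices in which a directed arrow forces equal labels and a bidirected arrow forces opposite labels. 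First I would use connectedness: for a fixed base vertex $v_0$ and any walk $\wlk$ from $v_0$ to $u$, the relation propagates to $x_u=\sigma(\wlk)\,x_{v_0}$, so the evaluation $x\mapsto x_{v_0}$ is an injective linear map from the null space to $\R$; this already gives $\Null(I(\bdg))\le1$.

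Next I would prove (a) by deciding when a nonzero labelling exists. Setting $x_{v_0}:=1$ and $x_u:=\sigma(\wlk)$ along a chosen walk $\wlk\colon v_0\to u$ yields a well-defined vector iff any two walks $\wlk_1,\wlk_2\colon v_0\to u$ satisfy $\sigma(\wlk_1)=\sigma(\wlk_2)$. Since the reverse of a walk has the same sign and conjugating a closed walk by a connecting walk does not change its sign (all values being $\pm1$), this is equivalent to $\sigma(\wlk)=+1$ for every closed walk, i.e. every closed walk containing an even number of bidirected arrows --- precisely that $\bdg$ is balanced. One then checks that the resulting $x$ indeed satisfies all row relations (by extending $\wlk$ with the arrow $i$). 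Hence a nonzero null vector exists iff $\bdg$ is balanced; combined with the bound above this gives $\Null(I(\bdg))\in\{0,1\}$ together with statement (a). A bidirected loop is itself a negative closed walk, so the degenerate constraint $x_u=0$ is consistently absorbed into the unbalanced case.

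For (b) the key remark is that a nonzero null vector can always be normalised to a $\pm1$-vector: by the propagation formula each entry is $\sigma(\wlk)x_{v_0}\in\{\pm x_{v_0}\}$, so with $x_{v_0}=1$ we obtain $s\in\{\pm1\}^m$ with $I(\bdg)s=0$; conversely a $\pm1$-vector is nonzero. Thus $\Null(I(\bdg))=1$ iff $I(\bdg)$ has a $\pm1$ null vector. It then remains to identify $\pm1$ null vectors with quiver-switchings. Using $I(\bdg^O)=I(\bdg)O$ from Lemma~\ref{L:tra}(d) and the fact (recalled before the statement) that a bidirected graph is a quiver iff its incidence matrix annihilates $\mathbbm1_m$, the switching $\bdg^O$ is a quiver iff $I(\bdg)\,O\mathbbm1_m=0$. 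Writing $O=TP$ with $T=\diag(s_1,\ldots,s_m)$ and $P$ a permutation matrix by Lemma~\ref{L:ortSimson}, one has $O\mathbbm1_m=T\mathbbm1_m=(s_1,\ldots,s_m)^\tr$, so $O\mathbbm1_m$ ranges over all of $\{\pm1\}^m$ as $O$ ranges over orthogonal integer matrices (and $O=\diag(s)$ realises a prescribed $s$). Therefore a switching of $\bdg$ into a quiver exists iff $I(\bdg)$ admits a $\pm1$ null vector, which by the above is equivalent to $\Null(I(\bdg))=1$, proving (b).

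The step I expect to be most delicate is the well-definedness analysis in (a): one must work in the extended graph $\extbdg{\bdg}$ with the formal inverses of directed loops, verify carefully that $\sigma$ is invariant under reversal and conjugation of walks, and confirm that the zero label forced by a bidirected loop is handled uniformly by the even-number-of-bidirected-arrows criterion rather than as a separate special case. Once these combinatorial points are in place, the reductions in (b) are routine given Lemmas~\ref{L:ortSimson} and~\ref{L:tra}(d).
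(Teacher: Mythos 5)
Your proposal is correct, and it reaches the same destination as the paper by a slightly different road. The bound $\Null(I(\bdg))\le 1$ and the whole of part (b) are argued exactly as in the paper: the paper also observes that a nonzero null vector has constant absolute value on the vertices, normalises it to a $\pm1$-vector $x$, sets $O=\diag(x)$, and uses $I(\bdg^O)=I(\bdg)O$ together with the $\mathbbm{1}_m$-criterion for being a quiver. The difference is in part (a). The paper first proves, as a free-standing combinatorial claim, that $\bdg$ is balanced if and only if some switching of $\bdg$ is a quiver — the backward direction done by growing a quiver subgraph one vertex at a time and choosing the sign $s_u$ greedily — and then deduces (a) formally from this claim together with (b) and $\Null\in\{0,1\}$. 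You instead describe the null space intrinsically: a null vector is a vertex labelling with $x_{u'}=\sigma(i)x_u$ across each arrow, so a nonzero one exists precisely when the assignment $x_u:=\sigma(\wlk)$ along walks from a base vertex is well defined, which is the balance condition; part (a) then comes out directly and the balanced-iff-switchable equivalence of the paper becomes a corollary rather than an input. The two constructions of the $\pm1$-labelling (greedy extension versus walk signs from a base point) are the two standard proofs of the switching lemma for signed graphs and carry the same content; your organisation has the advantage of making the linear algebra and the combinatorics meet in a single object (the null space is exactly the set of consistent sign labellings), while the paper's isolates the reusable graph-theoretic statement. The delicate points you flag — reversal and conjugation invariance of $\sigma$, the extended graph with formal inverses of directed loops, and the fact that a bidirected loop forces $x_u=0$ but is itself a negative closed walk — are exactly the ones that need care, and you handle them correctly.
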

\begin{proof} Let $I:=I(\bdg)\in\M_{n,m}(\ZZ)$ for $n:=|\edgs{\bdg}|$, $m:=|\verts{\bdg}|$, and let $\delta:=\Null(I(\bdg))$. By connectedness, if $Ix=0$ for a vector $x=[x_1,\ldots,x_m]^\tr \in \Z^m$, then $|x_1|=\ldots =|x_m|$. This implies that $\delta \leq 1$. Indeed, if $x$ and $x'$ are two vectors in $\Z^m$ with $Ix=Ix'=0$ and $x_1=x_1'$, then $I(x-x')=0$, and therefore $x=x'$. If $x_1 \neq x_1'$, then we may assume that $x'_1 \neq 0$, and take $x''=\lambda x'$ with $\lambda=x_1/x'_1$. Then $Ix''=0$ and $x_1=x''_1$, and by the above $x=x''$, which means that $x$ is a scalar multiple of $x'$. That is, $\delta \leq 1$, and the first statement of the proposition holds.

To prove (a)-(b) we first show the following auxiliary claim:
\[
\text{$\bdg$ is balanced if and only if there is a switching $\bdg^{O}$ of $\bdg$ which is a quiver.} \tag{$*$} 
\]
Indeed, if  $\bdg^O$ is a quiver, then every walk (in particular, every closed walk) in $\bdg^O$ is positive. Thus the same holds for the closed walks of $\bdg$ since it is easy to see that the sign of the corresponding closed walks in $\bdg$ and $\bdg^O$ is unchanged. To show the opposite implication of $(*)$ assume that every closed walk of $\bdg$ is positive. Note that if $\bdg'$ is a connected (full) subgraph of $\bdg$ that is actually a quiver, and $u$ is a vertex in $\verts{\bdg}\setminus\verts{\bdg'}$ connected to some vertex $u'$ in $\bdg'$, then all arrows connecting $u$ to some vertex in $\bdg'$ have the same sign. Indeed, if $i'$ and $i''$ are arrows in $\bdg$ with different sign connecting vertex $u$ to (maybe equal) vertices $u'$ and $u''$ in $\bdg'$, then there is a (maybe trivial) walk $\wlk$ inside $\bdg'$ from $u'$ to $u''$. Since $\bdg'$ is a quiver, $\wlk$ is a positive walk. Then the composition $(u,i',u')w(u'',i'',u)$ is a negative closed walk, which is impossible. If all arrows from $u$ to a vertex in $\verts{\bdg'}$ are directed, then the full subgraph of $\bdg$ determined by the vertices $\verts{\bdg'} \sqcup \{u\}$ is a quiver. Otherwise, we obtain a similar quiver restriction by multiplying the $u$-th column of $I(\bdg)$ by $-1$. In this way, starting with the full subgraph $\bdg'$ of $\bdg$ determined by an arbitrary vertex, by connectedness we may add to $\bdg'$ one by one all the vertices of $\bdg$, and find adequate signs $s_u$ for each vertex $u$, such that if $O=\diag(s_u)$, then $\bdg^O$ has only directed arrows. That is, $\bdg^O$ is a quiver. This closes the proof of $(*)$.

Now by $(*)$ and the proved fact that $\delta\in\{0,1\}$, to conclude the proof of the proposition it remains to show (b). 
Assume that $\delta=\Null(I(\bdg))=1$, and take $O=\diag(x)$ for $x\in\ZZ^m$ such that $I(\bdg)x=0$ and all entries of $x$ are $\pm 1$ (it exists by the first part of the proof). Then $\bdg^O$ is a quiver, since
\[
I(\bdg^O) \mathbbm{1}_m=I(\bdg)O\mathbbm{1}_m=I(\bdg)\diag(x)\mathbbm{1}_m=I(\bdg)x=0.
\]
Conversely, if some switching $\bdg^O$ is a quiver, then $I(\bdg)O\mathbbm{1}_m=0$ so $\delta=1$ since $\delta\leq 1$.
\end{proof}

Given a bidirected graph $\bdg$, recall that we defined the parameter $\cyccond_\bdg\in \{0,1\}$  in the introduction by setting  $\cyccond_\bdg:=1$ if $\bdg$ is balanced and $\cyccond_\bdg:=0$ otherwise.
\begin{corollary}\label{cor:crk}
Let $\bdg$ be a connected bidirected graph with $m=|\verts{\bdg}|\geq 1$ and $n=|\edgs{\bdg}|\geq 1$. Then
\[
\Rnk(q_\bdg)=m-\cyccond_\bdg \quad \text{and} \quad \CRnk(q_\bdg)=n-m+\cyccond_\bdg.
\]
\end{corollary}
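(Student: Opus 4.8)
The plan is to derive Corollary~\ref{cor:crk} directly from the rank/corank formulas already established in Lemma~\ref{L:ful} together with the dichotomy for the nullity of the incidence matrix proved in Proposition~\ref{P:null}. Recall that Lemma~\ref{L:ful} gives
\[
\Rnk(q_\bdg)=m-\Null(I(\bdg)) \quad\text{and}\quad \CRnk(q_\bdg)=n-m+\Null(I(\bdg)),
\]
so the whole statement reduces to computing the single integer $\Null(I(\bdg))$ and identifying it with $\cyccond_\bdg$.

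First I would invoke Proposition~\ref{P:null}, which asserts that for a connected bidirected graph $\bdg$ the nullity $\Null(I(\bdg))$ lies in $\{0,1\}$, with the value $0$ occurring precisely when $\bdg$ is unbalanced and the value $1$ occurring precisely when $\bdg$ is balanced (equivalently, when some switching $\bdg^O$ is a quiver). Next I would recall the definition of the parameter $\cyccond_\bdg$ introduced just before the corollary: $\cyccond_\bdg=1$ if $\bdg$ is balanced and $\cyccond_\bdg=0$ otherwise. Comparing these two case distinctions, they match term by term: $\bdg$ balanced gives both $\Null(I(\bdg))=1$ and $\cyccond_\bdg=1$, while $\bdg$ unbalanced gives both $\Null(I(\bdg))=0$ and $\cyccond_\bdg=0$. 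Hence $\Null(I(\bdg))=\cyccond_\bdg$ in all cases.

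Finally I would substitute this identity into the two formulas of Lemma~\ref{L:ful}, obtaining $\Rnk(q_\bdg)=m-\cyccond_\bdg$ and $\CRnk(q_\bdg)=n-m+\cyccond_\bdg$, which is exactly the assertion. The argument is a short bookkeeping step: there is essentially no obstacle here, since all the genuine content—both the bound $\Null(I(\bdg))\le 1$ and its characterization via the balance property, and the reduction of $\Rnk(q_\bdg)$ and $\CRnk(q_\bdg)$ to $\Null(I(\bdg))$—has already been carried out in Lemma~\ref{L:ful} and Proposition~\ref{P:null}. The only point deserving a word of care is confirming that the two case splits (balanced/unbalanced for $\cyccond_\bdg$ versus for the nullity) are stated with the same convention, so that $\cyccond_\bdg$ and $\Null(I(\bdg))$ genuinely coincide rather than being complementary; once this is checked the corollary follows immediately.
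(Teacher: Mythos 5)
Your proposal is correct and follows exactly the paper's own argument: substitute the formulas of Lemma~\ref{L:ful} and identify $\Null(I(\bdg))$ with $\cyccond_\bdg$ via Proposition~\ref{P:null}. Your extra remark about checking that the balanced/unbalanced conventions agree is a sensible sanity check but adds nothing beyond what the paper does.
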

\begin{proof}
Recall that $\Rnk(q_\bdg)=m-\Null(I(\bdg))$ and $\CRnk(q_\bdg)=n-m+\Null(I(\bdg))$
 by Lemma \ref{L:ful}. Moreover, by Proposition \ref{P:null} the value of $\Null(I(\bdg))\in\{0,1\}$  coincides with the parameter $\cyccond_\bdg$. In this way we get the desired formulae.
\end{proof}

\begin{example}\label{ex:onevertexbdgcrk}
Take a one-vertex bidirected graph $\bdg=\onevertexbdg^{p,s,t}$ as defined in
Remark \ref{R:loo}, for some $p+s+t=n\geq 1$. Then $\Rnk(q_\bdg)=1-\cyccond_\bdg$ and $\CRnk(q_\bdg)=n-1+\cyccond_\bdg$ by Corollary \ref{cor:crk}. Note that $\cyccond_\bdg=0$ if and only if $p<n$, that is, if $\bdg$ has at least one bidirected loop. If $p=n$, then $\Rnk(q_\bdg)=0$ and $\CRnk(q_\bdg)=n$. Indeed, observe that if all loops in $\bdg$ are directed, then $q_\bdg=\zeta^n$ is a zero form, cf.~Lemma \ref{L:irr}(a).
We refer to Example \ref{ex:twobdgcrks} for another illustration of Corollary \ref{cor:crk}.
\end{example}

\section{Dynkin types $\AA$ and $\DD$}\label{sec:AD}

In this section we describe those unitary integral quadratic forms that are incidence forms of bidirected graphs (Theorem A, see subsection~\ref{subsec:proofA}). We start recalling the shape of (simply-laced) Dynkin graphs (Table \ref{TA:Dynkins}) and the classification of non-negative connected unit forms in terms of Dynkin type and corank (Theorem~\ref{T:wea} and Corollary~\ref{cor:weakvsweakGforunit}). The main step towards the proof of Theorem A is a description of the incidence unit forms whose bigraph is connected and has no dotted edges, achieved in Proposition~\ref{L:graph}.
\begin{table}[h!]
\begin{center}
\begin{tabular}[h!]{l}
		$\AA_n:$\
			{\scriptsize
				\begin{tikzpicture}[auto]
				\node (n1) at (0  , 0  ) {$1$};
				\node (n2) at (0.7, 0  ) {$2$};
				\node (n3) at (1.4, 0  ) {$3$};
				\node (n4) at (2.1, 0  ) {};
				\node (n5) at (2.7, 0  ) {};
				\node (n6) at (3.4, 0  ) {$n$};
				\node (n8) at (0  , 0.4) {};
				\draw [dotted, -] (n4) to  (n5);
				\foreach \x/\y in {1/2, 2/3, 3/4, 5/6}
				\draw [-] (n\x) to  (n\y);
				\end{tikzpicture}
		} 
    \\	$\DD_n:$\
			{\scriptsize
				\begin{tikzpicture}[auto]
				\node (n1) at (0  , 0  ) {$1$};
				\node (n2) at (0.7, 0.7) {$2$};
				\node (n3) at (0.7, 0  ) {$3$};
				\node (n4) at (1.4, 0  ) {$4$};
				\node (n5) at (2.1, 0  ) {};
				\node (n6) at (2.7, 0  ) {};
				\node (n7) at (3.4, 0  ) {$n$};
				\draw [dotted, -] (n5) to  (n6);
				\foreach \x/\y in {1/3, 2/3, 3/4, 4/5, 6/7}
				\draw [-] (n\x) to  (n\y);
				\end{tikzpicture}
		} 
\end{tabular}\quad 
\begin{tabular}{l}
		 $\EE_6:$\
			{\scriptsize
				\begin{tikzpicture}[auto]
				\node (n1) at (0  , 0  ) {$1$};
				\node (n2) at (0.7, 0  ) {$2$};
				\node (n3) at (1.4, 0  ) {$3$};
				\node (n4) at (1.4, 0.7) {$4$};
				\node (n5) at (2.1, 0  ) {$5$};
				\node (n6) at (2.8, 0  ) {$6$};
				\foreach \x/\y in {1/2, 2/3, 3/4, 3/5, 5/6}
				\draw [-] (n\x) to  (n\y);
				\end{tikzpicture}
		} 
        \\
		$\EE_7:$\
			{\scriptsize
				\begin{tikzpicture}[auto]
				\node (n1) at (0  , 0  ) {$1$};
				\node (n2) at (0.7, 0  ) {$2$};
				\node (n3) at (1.4, 0  ) {$3$};
				\node (n4) at (1.4, 0.7) {$4$};
				\node (n5) at (2.1, 0  ) {$5$};
				\node (n6) at (2.8, 0  ) {$6$};
				\node (n7) at (3.5  , 0  ) {$7$};
				\foreach \x/\y in {1/2, 2/3, 3/4, 3/5, 5/6, 6/7}
				\draw [-] (n\x) to  (n\y);
				\end{tikzpicture}
		} 
        \\
		$ \EE_8:$\
			{\scriptsize
				\begin{tikzpicture}[auto]
				\node (n1) at (0  , 0  ) {$1$};
				\node (n2) at (0.7, 0  ) {$2$};
				\node (n3) at (1.4, 0  ) {$3$};
				\node (n4) at (1.4, 0.7) {$4$};
				\node (n5) at (2.1, 0  ) {$5$};
				\node (n6) at (2.8, 0  ) {$6$};
				\node (n7) at (3.5  , 0  ) {$7$};
				\node (n8) at (4.2, 0  ) {$8$};
				\foreach \x/\y in {1/2, 2/3, 3/4, 3/5, 5/6, 6/7, 7/8}
				\draw [-] (n\x) to  (n\y);
				\end{tikzpicture}
		} 
\end{tabular}
\end{center}
\caption{(Simply laced) Dynkin graphs.  $\A_n$ (resp.~$\DD_n$) has $n\geq 1$ (resp.~$n\geq 4$) vertices.}
\label{TA:Dynkins}
\end{table}

\begin{definition}\label{D:canAD}
Consider the following two families of bidirected graphs.
\begin{enumerate}[label={\textnormal{(\alph*)}},topsep=3px,parsep=0px]
 \item For $r \geq 1$ and $c \geq 0$, take
\[
\bdgA_r^c := \quad \xymatrix@C=2.5pc{{\bulito}_{u_1} \ar[r]^-{1} & {\bulito}_{u_2} \ar[r]^-{2} & {\bulito}_{u_3} \ar[r]^-{3} & \cdots \ar[r]^-{r-1} & {\bulito}_{u_r} \ar[r]^-{r} & {\bulito}_{u_{r+1}} \ar@/^8pt/@<1ex>[lllll]^-{r+1} \ar@/^8pt/@<3.5ex>[lllll]^-{r+2} \ar@/^8pt/@<7.3ex>[lllll]^-{r+c}_-{\cdots} }
\]

 \item For $r \geq 3$ and $c \geq 0$, take
\[
\bdgD_r^c := \quad \xymatrix@C=2.5pc{{\bulito}_{u_1} \ar[r]^-{2} \ar@{<->}@<-1.5ex>[r]_-{1} & {\bulito}_{u_2} \ar[r]^-{3} & {\bulito}_{u_3} \ar[r]^-{4} & \cdots \ar[r]^-{r-1} & {\bulito}_{u_{r-1}} \ar[r]^-{r} \ar@{|-|}@<-1.5ex>[r]_-{r+1} \ar@{|-|}@<-4ex>[r]_-{r+2} \ar@{|-|}@<-7.8ex>[r]_-{r+c}^-{\cdots} & {\bulito}_{u_r} }
\]
\end{enumerate}
\end{definition}

\begin{remark}\label{R:canAD} (a)  It can be verified directly that
 the incidence bigraphs of $\bdgA_r^c$ and $\bdgD_r^c$ coincide with the so-called \textbf{canonical $c$-extensions}
 \[\widehat{\A}_r^{(c)}=\Delta(\bdgA_r^c)\quad \text{and}\quad \widehat{\D}_r^{(c)}=\Delta(\bdgD_r^c)
  \]
  of $\A_r$ ($r\geq 1$) and $\D_r$ ($r\geq 4$), respectively, introduced by Simson in \cite[Definition 2.2]{dS16a}. In particular, $\Delta(\bdgA_r^0)=\widehat{\A}_r^{(0)}=\A_r$ and $\Delta(\bdgD_r^0)=\widehat{\D}_r^{(0)}=\D_r$ are Dynkin graphs, and $q_{\bdgA_r^c}=q_{\widehat{\A}_r^{(c)}}$ and $q_{\bdgD_r^c}=q_{\widehat{\D}_r^{(c)}}$ are connected non-negative unit forms of corank $c\geq 0$, see \cite[Theorem 2.12(a)]{dS16a}, cf.~Lemma \ref{L:ful}.  There is a redundance of notation since one checks that $q_{\bdgA_3^{c}}\cong q_{\bdgD_3^{c}}$ for all $c\geq 0$. We keep $\bdgD_3^{c}$ for technical reasons, cf.~Proposition \ref{L:graph}.

  (b) Recall that in \cite{dS16a} also the canonical $c$-extensions $\widehat{\E}_r^{(c)}$ of $\E_r$ for $r=6,7,8$ were introduced, see also \cite{SZ17} for the details. Recall that $\widehat{\E}_r^{(0)}=\E_r$ and that $\E_r$ is a full subbigraph of $\widehat{\E}_r^{(c)}$ for any $r=6,7,8$ and $c\geq 1$. Note that by the results of the present paper $\widehat{\E}_r^{(c)}$ are not incidence bigraphs of any bidirected graph, see Corollary \ref{C:typeE}.
\end{remark}

 The following classification of non-negative unit forms is a reformulation of the main results of  Barot-de la Pe\~na \cite{BP99} and Simson and Zaj\k{a}c \cite{dS16a, SZ17}.

\begin{theorem}\label{T:wea}
Let $q:\ZZ^n\to\ZZ$ be a non-negative connected unit form with $n=r+c$ for the rank $r\geq 1$ and corank $c\geq 0$ of $q$. Then the following holds.
\begin{enumerate}[label={\textnormal{(\alph*)}},topsep=5px, parsep=0px]
  \item There exists a unique Dynkin graph $D_r \in \{\A_r,\D_r,\E_6,\E_7,$ $\E_8\}$   such that $q\Gweak q_{\widehat{D}_r^{(c)}}$,
 \item There exists a unique Dynkin graph $D_r \in \{\A_r,\D_r,\E_6,\E_7,$ $\E_8\}$   such that $q\weak q_{\widehat{D}_r^{(c)}}$,
 \item There exists a unique Dynkin graph $D_r \in \{\A_r,\D_r,\E_6,\E_7,$ $\E_8\}$   such that $q\weak q_{D_r}\oplus \xi^c$,
\end{enumerate} 
where $\zeta^c:\ZZ^c\to\ZZ$ denotes the zero form. Moreover, Dynkin graphs $D_r$ in $(a)$-$(c)$ coincide.
\end{theorem}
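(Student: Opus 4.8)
The plan is to deduce all three statements from the cited classification results, organizing the argument around the single numerical invariant $\Dyn(q)$. Since G-equivalence is a special case of $\weak$ (Corollary \ref{cor:Gweak}), statement $(a)$ formally implies $(b)$, and both refer to the canonical $c$-extension $q_{\widehat{D}_r^{(c)}}$, whereas $(c)$ refers to the decomposed normal form $q_{D_r}\oplus\zeta^c$. The first thing I would record is that the Dynkin type and the corank are complete invariants of the $\weak$-class of a non-negative connected unit form, so that the Dynkin graph $D_r$ occurring in each part is forced to equal $\Dyn(q)$; this makes the uniqueness assertions, as well as the final coincidence claim, automatic. I would therefore fix $D_r:=\Dyn(q)$ once and for all and reduce the theorem to three existence statements.

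For $(c)$ I would simply invoke Barot--de la Pe\~na \cite{BP99}: their classification asserts precisely that every non-negative connected unit form $q$ is $\weak$-equivalent to $q_{D_r}\oplus\zeta^c$, where $D_r$ is its Dynkin type and $c=\crk(q)$ is its corank. Uniqueness follows because two such normal forms $q_{D_r}\oplus\zeta^c$ and $q_{D'_{r'}}\oplus\zeta^{c'}$ can be $\weak$-equivalent only if $D_r=D'_{r'}$ and $c=c'$, the corank being a $\weak$-invariant (by the remarks following \eqref{eq:equivalence}) and the Dynkin type being determined by the positive part.

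Statement $(b)$ I would then derive from $(c)$ by transitivity. By Remark \ref{R:canAD} the canonical extension $q_{\widehat{D}_r^{(c)}}$ is itself a non-negative connected unit form of corank $c$, and its Dynkin type is $D_r$ (this is built into the construction of \cite[Definition 2.2, Theorem 2.12]{dS16a}). Applying $(c)$ to it yields $q_{\widehat{D}_r^{(c)}}\weak q_{D_r}\oplus\zeta^c$, and combining this with $(c)$ for $q$ gives $q\weak q_{\widehat{D}_r^{(c)}}$.

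The genuinely hard step is $(a)$, and here I would not attempt an independent proof but rather cite the refinement of Simson and Zaj\k{a}c \cite{dS16a, SZ17}: they show that the reduction of a non-negative connected unit form to its canonical $c$-extension can be realized by iterated inflations and deflations together with sign inversions and permutations, that is, by a G-transformation in the sense preceding Corollary \ref{cor:Gweak}. This upgrades the $\weak$-equivalence of $(b)$ to the asserted $\Gweak$-equivalence. Finally, the ``moreover'' claim is immediate: since each of $(a)$, $(b)$ and $(c)$ identifies $D_r$ with the $\weak$-invariant $\Dyn(q)$, the three Dynkin graphs coincide. The main obstacle is thus entirely concentrated in $(a)$, whose content---that the change of basis may be chosen inside the subgroup of $\GlnZ$ generated by Gabrielov transformations, sign inversions and permutations---is the deep input imported from \cite{dS16a, SZ17} and is not reproved here.
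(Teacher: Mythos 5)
Your proposal is correct and follows essentially the same route as the paper: part (c) is imported from Barot--de la Pe\~na, part (a) is the deep input imported from Simson--Zaj\k{a}c, uniqueness rests on the non-equivalence of distinct Dynkin forms together with the $\weak$-invariance of the corank, and the link between the two normal forms is the equivalence $q_{\widehat{D}_r^{(c)}}\weak q_{D_r}\oplus\zeta^c$. The only (inessential) difference is organizational: the paper obtains (b) as a formal consequence of (a) and then connects (b) to (c) via the reduction $q_{\widehat{D}_r^{(c)}}\weak q_{D_r}\oplus\zeta^c$, whereas you obtain (b) from (c) by transitivity and keep (a) as the final upgrade.
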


\begin{proof} 
The assertion (a) is the statement of \cite[Theorem 3.7(a)]{SZ17}. Since G-equivalence is a $\ZZ$-equivalence, the assertion (b) follows from (a), cf.~\cite[Theorem 2.12(c)]{dS16a}. Note that the uniqueness of $D_r$ in (a) and (b) is a consequence of the fact that $q_{\widehat{D}_r^{(c)}}\weak q_{\widehat{D}_r^{,(c)}}$ implies that $D_r=D'_{r}$. The latter follows by \cite[Theorem 2.12(b)]{dS16a}, \cite[Claim (a2), p.~29]{dS16a} and the fact that the unit forms associated to distinct Dynkin graphs are not $\Z$-equivalent (cf.~\cite[Lemma 3.5(a)]{MZ22}).

The assertion (c) is the statement of \cite[Theorem 2(a)]{BP99}. For the final claim observe first that the coincidence of Dynkin graphs $D_r$ from (a) and from (b) follow obviously  from the uniqueness of $D_r$ in these assertions. Whereas to show that $D_r$'s in (b) and (c) coincide it is enough to prove that $q_{\widehat{D}_r^{(c)}}\weak q_{D_r}\oplus \xi^c$. This latter claim can be easily shown by applying the reduction arguments from \cite[2.1(ii)]{BP99}.
\end{proof}

The unique (simply laced) Dynkin graph $D_r$ in Theorem~\ref{T:wea} is called the {\bf Dynkin type} of $q$, and is denoted by $\Dyn(q)=D_r$. We note that the papers \cite{BP99,dS16a, SZ17} provide few algorithmic procedures to compute $\Dyn(q)$. The following non-obvious fact is an easy conclusion from Theorem~\ref{T:wea}.

\begin{corollary}\label{cor:weakvsweakGforunit}
Given two non-negative connected unit forms $q$ and $q'$, the following conditions are equivalent:
\begin{enumerate}[label={\textnormal{(\roman*)}},topsep=3px,parsep=0px]
\item $q\weak q'$,
\item $q\Gweak q'$,
\item $\Dyn(q)=\Dyn(q')$ and $\crk(q)=\crk(q')$.
\end{enumerate}
\end{corollary}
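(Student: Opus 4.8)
The plan is to establish the cycle of implications (ii) $\Rightarrow$ (i) $\Rightarrow$ (iii) $\Rightarrow$ (ii), drawing entirely on Theorem~\ref{T:wea} together with the fact that $\Gweak$ is an equivalence relation (Corollary~\ref{cor:Gweak}). The implication (ii) $\Rightarrow$ (i) is immediate: a G-transformation is by definition a matrix $T\in\GlnZ$, so $q\Gweak q'$ entails $q\weak q'$.

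For (i) $\Rightarrow$ (iii), I would first recall that $\Z$-equivalent forms share the same corank, so $q\weak q'$ forces $\crk(q)=\crk(q')$, say equal to $c$; in particular $q$ and $q'$ have the same number of variables $n$ and hence the same rank $r=n-c$. Next I would invoke Theorem~\ref{T:wea}(b): writing $D_r=\Dyn(q)$ and $D'_r=\Dyn(q')$, we have $q\weak q_{\widehat{D}_r^{(c)}}$ and $q'\weak q_{\widehat{D'}_r^{(c)}}$. Transitivity of $\weak$ together with the hypothesis $q\weak q'$ gives $q_{\widehat{D}_r^{(c)}}\weak q_{\widehat{D'}_r^{(c)}}$, whence the uniqueness clause recorded in the proof of Theorem~\ref{T:wea}(b) yields $D_r=D'_r$, that is, $\Dyn(q)=\Dyn(q')$.

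For (iii) $\Rightarrow$ (ii), suppose $\Dyn(q)=\Dyn(q')=D_r$ and $\crk(q)=\crk(q')=c$. Since the Dynkin type records the rank $r$ and the common corank is $c$, both forms are attached to the very same canonical $c$-extension $\widehat{D}_r^{(c)}$. Applying Theorem~\ref{T:wea}(a) to each form gives $q\Gweak q_{\widehat{D}_r^{(c)}}$ and $q'\Gweak q_{\widehat{D}_r^{(c)}}$; symmetry and transitivity of $\Gweak$ (Corollary~\ref{cor:Gweak}) then deliver $q\Gweak q'$, closing the cycle.

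I do not expect a serious obstacle, since the statement is essentially a repackaging of Theorem~\ref{T:wea}; the only point demanding care lies in (iii) $\Rightarrow$ (ii), where one must ensure that the two forms reduce to \emph{identically} the same canonical representative. This is exactly what the simultaneous coincidence of Dynkin type (which fixes $r$) and of corank (which fixes $c$) guarantees, so the common target $q_{\widehat{D}_r^{(c)}}$ is unambiguously determined.
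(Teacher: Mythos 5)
Your proof is correct and is essentially the argument the paper intends: the corollary is stated as an easy consequence of Theorem~\ref{T:wea}, and your cycle (ii)~$\Rightarrow$~(i)~$\Rightarrow$~(iii)~$\Rightarrow$~(ii) uses exactly parts (a) and (b) of that theorem together with their uniqueness clauses and the transitivity of $\Gweak$ from Corollary~\ref{cor:Gweak}. The one point you rightly flag — that coincidence of Dynkin type and corank pins down a single canonical representative $q_{\widehat{D}_r^{(c)}}$ — is handled correctly, since the Dynkin type records the rank $r$ and hence, with $c$, the number of variables.
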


\begin{remark}\label{rem:weakvsweakGforunit}
An alternative proof of the fact that the relations $\Gweak$ and  $\weak$ coincide on the class on non-negative connected unit forms is given in \cite[Proposition 1.2]{BKL06}. It should be emphasized that this coincidence does not hold in general. A counterexample for indefinite unit forms is given in \cite[Remark 4.1]{BKL06}, and for  non-negative connected Cox-regular forms in Remark \ref{rem:weakDynkins} below.
\end{remark}

\subsection{Auxiliary results} \label{subsec:auxThmA}
Before we prove Theorem A we need some more technical facts, mainly concerning the characterization of bidirected graphs $\bdg$ which induce an incidence bigraph $\Delta=\Delta(\bdg)$ of given specific properties.
For instance, the following proposition describes the connected incidence bigraphs having no dotted edge. The strategy of its proof is one common in considerations of unit forms: identify the \lq\lq critical'' minimal subgraphs satisfying a desired property (in this case, being an incidence graph having dotted edges), and describe the bidirected graphs that do not contain any \lq\lq critical'' subgraph (cf. Lemma~\ref{L:sub}).

\begin{proposition}\label{L:graph}
Let $\bdg$ be a bidirected graph such that $\Delta(\bdg)$  is a connected bigraph without dotted edges and with $n \geq 1$ vertices. Then there is a permutation $\pi$ of the set of arrows $\edgs{\bdg}$ of $\bdg$ and a switching $\bdg':=(\bdg \mathcal{P}^{\pi})^{O}$ of $\bdg \mathcal{P}^{\pi}$ having one of the following shapes $($cf.~Remark~\ref{R:loo} and Definition~\ref{D:canAD}):
 $$\onevertexbdg^{1,0,0}\  (n=1), \quad
  \mathbf{A}^0_n\  (n \geq 1), \quad
  \mathbf{A}^1_{n-1}\  (n \geq 2), \quad
  \mathbf{D}^0_n\  (n \geq 3), \ \  \text{or}\ \
  \mathbf{D}^1_{n-1}\  (n \geq 4).$$

\end{proposition}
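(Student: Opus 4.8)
The plan is to pin down $\bdg$, up to permutation of arrows and switching, directly from the sign pattern of the off-diagonal coefficients of $q:=q_{\bdg}$, using that for non-loop arrows $i\neq j$ one has $q_{ij}=[I(\bdg)^\tr\bas_i]^\tr[I(\bdg)^\tr\bas_j]$ with $I(\bdg)^\tr\bas_i=\epsilon\bas_u+\epsilon'\bas_{u'}$ whenever $\edges_\bdg(i)=\{(u,\epsilon),(u',\epsilon')\}$. The hypothesis that $\Delta(\bdg)$ has no dotted edge means precisely that $q_{ij}\leq 0$ for all $i\neq j$ and that there is no dotted loop, i.e.\ $q_i\in\{0,1\}$ for all $i$. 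I would first dispose of the reductions. For $n=1$ the single arrow is either a directed loop (then $q_1=0$, $\Delta(\bdg)=L$ and $\bdg=\onevertexbdg^{1,0,0}$) or a non-loop (then $q_1=1$, and one switching turns $\bdg$ into $\bdgA_1^0$); a bidirected loop is impossible, as it would give the dotted loop $q_1=2$. For $n\geq 2$ a directed loop has zero incidence row, so its vertex in $\Delta(\bdg)$ is isolated, contradicting connectedness (cf.\ Remark~\ref{R:irr}(b)), and a bidirected loop again yields a forbidden dotted loop; hence $\bdg$ is loopless and, by Lemma~\ref{L:ful}, $q$ is a connected classic unit form with $q_{ij}\in\{0,-1,-2\}$.

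The structural heart comes next. Two arrows meeting at a vertex $v$ but running to distinct further vertices share only $v$, so $q_{ij}=\epsilon_v\epsilon'_v\in\{\pm1\}$, and $q_{ij}\leq0$ forces opposite signs at $v$; as there are only two signs, no vertex is incident to arrows going to three distinct neighbours. Thus every vertex has at most two neighbours, the underlying simple graph of $\bdg$ is a path or a cycle, and each simple edge carries a bundle of parallel arrows. Describing a parallel arrow by the pair of its signs $(\epsilon_v,\epsilon_w)$ at the two common endpoints, one gets $q_{ij}\in\{0,\pm2\}$, with value $+2$ (a dotted edge) exactly when the two arrows have the same pair of signs; so a bundle contains at most one arrow of each of the four possible types. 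If both endpoints of a bundle are internal (degree two) then all its arrows share the same sign at each endpoint, whence the bundle is a single arrow; multiplicities can occur only on edges at a leaf. Finally, connectedness of $\Delta(\bdg)$ (invoked via Lemma~\ref{L:sub}) excludes bundles of three or four arrows and the degenerate $m=2$ configurations, since these disconnect $\Delta(\bdg)$. This leaves the clean picture: internal edges are single, and a leaf edge carries either a single arrow or exactly two arrows, the latter necessarily one directed and one bidirected — the $\D$-fork.

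It then remains to assemble the normal forms and read off coranks via Corollary~\ref{cor:crk}. A cycle has all edges internal, hence single; a short telescoping computation of $\prod_i\sigma(a_i)$ around the cycle shows such a classic cycle is always balanced, so by Proposition~\ref{P:null} it switches to a coherently oriented directed cycle, i.e.\ to $\bdgA_{n-1}^1$ after reindexing. A path with no fork consists of single arrows and, being a tree, is balanced and switches to a coherently directed path, i.e.\ $\bdgA_n^0$. A path with one fork (resp.\ two forks) switches, after coherently orienting its spanning directed path $u_1\to\cdots\to u_m$, to that path with a single parallel two-head arrow at the initial edge (resp.\ and a parallel two-tail arrow at the terminal edge), which are exactly $\bdgD_n^0$ (here $\crk=0$) and $\bdgD_{n-1}^1$ (here $\crk=1$). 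All switchings used are orthogonal (sign changes together with a vertex permutation, Lemma~\ref{L:ortSimson}) and preserve $q$ by Lemma~\ref{L:tra}(d), while the arrow relabelling is a permutation $\mathcal{P}^\pi$; hence $\bdg$ reaches one of the five listed graphs.

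I expect the unbalanced (fork) case to be the main obstacle. One must confine all bidirected behaviour to at most two leaf forks: proving that a classic cycle cannot be unbalanced, that three or more parallel arrows (or a bidirected $m=2$ bundle) would split $\Delta(\bdg)$, and that coherently orienting the spanning path rigidly fixes the fork arrows into the exact two-head/two-tail pattern of $\bdgD_r^c$. This same localization is what forbids any branch of $\Delta(\bdg)$ longer than the length-one legs of a $\D$-fork, so that the exceptional diagrams $\E_6,\E_7,\E_8$ — which require a degree-three vertex with longer legs — cannot arise; this is the combinatorial source of the $\A$/$\D$ dichotomy. Some care is also needed with the small cases $n=1,2$ and with the redundancy $q_{\bdgA_3^0}\cong q_{\bdgD_3^0}$ of Remark~\ref{R:canAD}, which lets the genuinely different graphs $\bdgA_3^0$ ($m=4$) and $\bdgD_3^0$ ($m=3$) both occur as normal forms of distinct $\bdg$ defining the same quadratic form.
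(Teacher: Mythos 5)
Your proposal is correct and follows essentially the same three-step strategy as the paper's proof: ruling out local configurations forced by $q_{ij}\le 0$ (a vertex with three distinct neighbours, a multiple internal bundle, three or more parallel arrows), concluding that the underlying graph is a path or cycle with possible doubled end-edges, and then switching to the normal forms. The only (minor) variation is in the last step, where you obtain the switching by observing balancedness and invoking Proposition~\ref{P:null} (plus the telescoping sign computation for the cycle), whereas the paper writes down the diagonal sign matrices explicitly case by case; the substance is the same.
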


\begin{proof}
Since $\Delta(\bdg)$ is connected then so is $\bdg=(\verts{\bdg},\edgs{\bdg},\edges)$, see Remark~\ref{R:con}(b). Assume first that $\Delta(\bdg)$ contains a (solid) loop. By connectedness and Lemmas~\ref{L:ful} and \ref{L:irr}(a), the bigraph $\Delta(\bdg)$ consists entirely of one vertex ($n=1$) and one solid loop, that is, $\bdg=\onevertexbdg^{1,0,0}$.  Assume that $\Delta(\bdg)$ has no loop. Then $\bdg$ has no loop by Lemma~\ref{L:ful} (see Table~\ref{TA:des}).

\medskip
\noindent \textit{Step~1.} The underlying graph $\widebar{\bdg}$ of $\bdg$ does not contain as subgraph any of the following:
\begin{enumerate}[label={\textnormal{(\roman*)}},topsep=3px,parsep=0px]
 \item $\xymatrix@!0@R=7pt@C=45pt{\bulito \ar@{-}[rd]^-{i_1} \\ & \bulito_v \ar@{-}[r]^-{i_3} & \bulito \\ \bulito \ar@{-}[ru]_-{i_2} }$ Indeed, in this case we have $(v,\epsilon_t) \in \edges(i_t)$ with signs $\epsilon_t \in \{\pm 1\}$, for $t=1,2,3$. Two of those signs must be equal, say $\epsilon_1=\epsilon_2$. Then, as vertices in $\Delta(\bdg)$, $i_1$ and $i_2$ are joint by a dotted edge (cf. Table~\ref{TA:des}), which is impossible.
 \item $\xymatrix@!0@R=7pt@C=45pt{\bulito \ar@{-}[r]^-{i} & \bulito_v \ar@{-}@<.4ex>[r]^-{j_1} \ar@{-}@<-.4ex>[r]_-{j_2} & \bulito_{v'} \ar@{-}[r]^-{k} & \bulito }$ In this case we have $\edges(j_t)=\{(v,\epsilon_t),(v',\epsilon'_t)\}$ with $\epsilon_t,\epsilon'_t \in \{\pm 1\}$ for $t=1,2$. Since both $j_1$ and $j_2$ are joint to $i$ by solid edges in $\Delta(\bdg)$, then $\epsilon_1=\epsilon_2$, and similarly $\epsilon'_1=\epsilon'_2$ using the incidence of $j_1$ and $j_2$ with $k$. Then $\edges(j_1)=\edges(j_2)$, and there are two dotted edges between $j_1$ and $j_2$ in $\Delta(\bdg)$, which is impossible.
 \item $\xymatrix@!0@R=15pt@C=45pt{& \bulito_v \ar@{-}@<.4ex>[dd]^-{j_2} \ar@{-}@<-.4ex>[dd]_-{j_1} \\ \bulito \ar@{-}[ru]^-{i} \ar@{-}[rd]_-{k} \\ & \bulito_{v'} }$ Same argument as in case (ii).
 \item $\xymatrix@!0@R=15pt@C=45pt{\bulito_v \ar@{-}@<1ex>[r]^-{i_1} \ar@{-}[r]|-{i_2} \ar@{-}@<-1ex>[r]_-{i_3} & \bulito_{v'}}$ If $|\verts{\bdg}|=2$, then $i_1,i_2,i_3$ must have the same sign $\sigma(i_1)=\sigma(i_2)=\sigma(i_3)$ as arrows in $\bdg$ (since $\bdg$ has no loop, cf.~Lemma \ref{L:irr}(d)). Then two of those arrows, say $i_1$ and $i_2$, must satisfy $\edges(i_1)=\edges(i_2)$. In this case $i_1$ and $i_2$ are joint in $\Delta(\bdg)$ by two dotted edges, which is impossible. If $|\verts{\bdg}|>2$, then by the connectedness of $\bdg$ there is an arrow $j$ which is incident but not parallel to $i_1$, say through vertex $v$. Then $i_1$, $i_2$ and $i_3$ are adjacent  to $j$ in $\Delta(\bdg)$, and taking $\edges(i_t)=\{(v,\epsilon_t),(v',\epsilon'_t)\}$ for $t=1,2,3$, we have $\epsilon_1=\epsilon_2=\epsilon_3$ because $\Delta(\bdg)$ does not contain dotted edges. Moreover, two of the signs $\epsilon'_1,\epsilon'_2,\epsilon'_3$ must coincide, say $\epsilon'_1=\epsilon'_2$, that is, $\edges(i_1)=\edges(i_2)$. Then there are two dotted edges between $i_1$ and $i_2$ in $\Delta(\bdg)$, which is impossible.
\end{enumerate}

\medskip
\noindent \textit{Step~2.} By Step~1 and the connectedness of $\bdg$, there is a permutation $\pi$ of the set of arrows $\edgs{\bdg}$ of $\bdg$ such that the underlying graph $\widebar{\bdg''}$ of $\bdg'':=\bdg \mathcal{P}^{\pi}$ is one of the following:
\begin{enumerate}[label={\textnormal{(\roman*)}},topsep=3px,parsep=0px]
 \item $\xymatrix@C=35pt{\bulito_{v_0} \ar@{-}[r]^-{1} & \bulito_{v_1} \ar@{-}[r]^-{2} & \bulito_{v_2} \ar@{-}[r]^-{3} & \cdots \ar@{-}[r]^-{n-2}  &  \bulito_{v_{n-2}} \ar@{-}[r]^-{n-1} & \bulito_{v_{n-1}} \ar@{-}[r]^-{n} & \bulito_{v_n}}$, for $n \geq 1$.
 \item $\xymatrix@C=35pt{\bulito_{v_0} \ar@{-}@/_15pt/[rrrrr]^-{n} \ar@{-}[r]^-{1} & \bulito_{v_1} \ar@{-}[r]^-{2} & \bulito_{v_2} \ar@{-}[r]^-{3} & \cdots \ar@{-}[r]^-{n-2}  &  \bulito_{v_{n-2}} \ar@{-}[r]^-{n-1} & \bulito_{v_{n-1}} }$, for $n \geq 2$.
 \item $\xymatrix@C=35pt{\bulito_{v_1} \ar@<.4ex>@{-}[r]^-{1} \ar@<-.4ex>@{-}[r]_-{2} & \bulito_{v_2} \ar@{-}[r]^-{3} & \cdots \ar@{-}[r]^-{n-2}  &  \bulito_{v_{n-2}} \ar@{-}[r]^-{n-1} & \bulito_{v_{n-1}} \ar@{-}[r]^-{n} & \bulito_{v_n}}$, for $n \geq 3$.
 \item $\xymatrix@C=35pt{\bulito_{v_1} \ar@<.4ex>@{-}[r]^-{1} \ar@<-.4ex>@{-}[r]_-{2} & \bulito_{v_2} \ar@{-}[r]^-{3} & \cdots \ar@{-}[r]^-{n-2}  &  \bulito_{v_{n-2}} \ar@<.4ex>@{-}[r]^-{n-1} \ar@<-.4ex>@{-}[r]_-{n} & \bulito_{v_{n-1}} }$, for $n \geq 4$.
\end{enumerate}

\medskip
\noindent \textit{Step~3.} In each of the cases (i-iv) of Step~2, there is a switching $\bdg':=(\bdg'')^O$ of $\bdg''$ satisfying
\[
\bdg' \in \{\mathbf{A}^0_n,\mathbf{A}^1_{n-1},\mathbf{D}^0_n,\mathbf{D}^1_{n-1}\}.
\]
Indeed, let $\edges''$ be the function of (signed) end-points for the arrows in $\bdg''$.

\textit{Case $($i$)$}. Take $\edges''(i)=\{(v_{i-1},\epsilon'_i),(v_i,\epsilon_i)\}$ for $i=1,\ldots,n$. Consider the signs $\eta_{i-1}:=\epsilon'_i$ for $i=1,\ldots,n$, $\eta_n:=-\epsilon_n$, and take $S:=\diag(\eta_0,\eta_1,\ldots,\eta_n)$. Since $\Delta(\bdg'')$ has no dotted edges, then $\epsilon_i\epsilon'_{i+1}=-1$ for $i=1,\ldots,n-1$. Therefore, the switching $(\bdg'')^S$ is a quiver of the shape as $\bdgA_n^0$, that is, it has an arrow $i$ from vertex $v_{i-1}$ to vertex $v_i$ for $i=1,\ldots,n$, which can be verified directly.

\textit{Case $($ii$)$}. Take $\edges''(i)=\{(v_{i-1},\epsilon'_i),(v_i,\epsilon_i)\}$ for $i=1,\ldots,n-1$ and $\edges''(n)=\{(v_0,\epsilon_0),(v_{n-1},\epsilon'_n)\}$. Consider the signs $\eta_i:=\epsilon'_i$ for $i=1,\ldots,n$, and take $S:=\diag(\eta_1,\ldots,\eta_{n})$. Since $\Delta(\bdg'')$ has no dotted edges, then $\epsilon_i\epsilon'_{i+1}=-1$ for $i=0,1,\ldots,n-1$. Therefore, the switching $(\bdg'')^S$ is a directed cycle as $\bdgA_{n-1}^1$.

\textit{Case $($iii$)$}. Take $\edges''(1)=\{(v_1,\epsilon_1),(v_2,\epsilon_1')\}$ and $\edges''(i)=\{(v_{i-1},\epsilon'_i),(v_i,\epsilon_i)\}$ for $i=2,\ldots,n$. Consider the signs $\eta_1:=\epsilon'_2$, $\eta_i:=\epsilon'_{i+1}$ for $i=2,\ldots,n-1$ and $\eta_n:=-\epsilon_n$, and take $S:=\diag(\eta_1,\ldots,\eta_n)$. Since $\Delta(\bdg'')$ has no dotted edges, we have $\epsilon'_1=\epsilon_2=-\epsilon'_3$, $\epsilon_1\epsilon'_2=-1$ and $\epsilon_i\epsilon'_{i+1}=-1$ for $i=3,\ldots,n-1$. Therefore, the switching $(\bdg'')^S$ has the shape as $\bdgD_n^0$.

\textit{Case $($iv$)$}. Take $\edges''(1)=\{(v_1,\epsilon_1),(v_2,\epsilon_1')\}$, $\edges''(i)=\{(v_{i-1},\epsilon'_i),(v_i,\epsilon_i)\}$ for $i=2,\ldots,n-1$ and $\edges''(n)=\{(v_{n-2},\epsilon_n),(v_{n-1},\epsilon'_n)\}$. Consider the signs $\eta_1:=\epsilon'_2$ and $\eta_i:=\epsilon'_{i+1}$ for $i=2,\ldots,n-1$, and take $S:=\diag(\eta_1,\ldots,\eta_{n-1})$. Since $\Delta(\bdg'')$ has no dotted edges, we have $\epsilon'_1=\epsilon_2=-\epsilon'_3$, $\epsilon_1\epsilon'_2=-1$, $\epsilon_i\epsilon'_{i+1}=-1$ for $i=3,\ldots,n-1$, and $\epsilon'_{n-1}=\epsilon_n=-\epsilon_{n-2}$. Therefore, the switching $(\bdg'')^S$ has the shape as $\bdgD_{n-1}^1$.

In each of the cases (i-iv) of Step~3, take an appropriate permutation matrix $Q$ to reorder the set of vertices of $\bdg''$ if necessary, and take the orthogonal matrix $O:=SQ$. Then the switching $\bdg':=(\bdg'')^O$ is precisely one of the bidirected graphs $\mathbf{A}^0_n$, $\mathbf{A}^1_{n-1}$, $\mathbf{D}^0_n$ or $\mathbf{D}^1_{n-1}$ respectively, as wanted.
\end{proof}

As first consequence of Proposition~\ref{L:graph} we discard the class of connected non-negative unit forms of Dynkin type $\EE$ as candidates to be incidence forms of bidirected graphs.

\begin{corollary}\label{C:typeE}
If $q$ is a connected non-negative unit form of Dynkin type $\EE_r$ for $r\in \{6,7,8\}$, then $q$ is not an incidence quadratic form.
\end{corollary}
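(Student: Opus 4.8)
The plan is to argue by contradiction and reduce the hypothetical $\EE$-type incidence form to a well-behaved representative whose bigraph contains $\EE_r$ as a full subbigraph \emph{without} dotted edges, so that Proposition~\ref{L:graph} forces an impossible shape. Suppose $q$ is a connected non-negative unit form of Dynkin type $\EE_r$ with $c:=\crk(q)$, and assume for contradiction that $q=q_\bdg$ for some bidirected graph $\bdg$.

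First I would pass to the canonical representative. By Theorem~\ref{T:wea}(a) we have $q\Gweak q_{\widehat{\EE}_r^{(c)}}$, and since $q$ is an incidence form, Corollary~\ref{C:tra} guarantees that $q_{\widehat{\EE}_r^{(c)}}$ is an incidence form as well, say $q_{\widehat{\EE}_r^{(c)}}=q_{\bdg'}$, so that $\Delta(\bdg')=\widehat{\EE}_r^{(c)}$. The purpose of this reduction is that, although $\Delta(\bdg)$ (and possibly $\widehat{\EE}_r^{(c)}$ itself when $c\geq 1$) may carry dotted edges, the Dynkin graph $\EE_r$ sits inside $\widehat{\EE}_r^{(c)}$ as a full subbigraph by Remark~\ref{R:canAD}(b), and $\EE_r$ is a simply-laced graph with no dotted edges.

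Next I would localize to $\EE_r$. By Lemma~\ref{L:sub}, the full subbigraph $\EE_r$ of $\Delta(\bdg')$ is itself an incidence bigraph: there is a bidirected subgraph $\bdg''$ of $\bdg'$ with $\Delta(\bdg'')=\EE_r$, whence $q_{\bdg''}=q_{\EE_r}$. Now $\EE_r$ is connected, has no dotted edges, and has $n=r\geq 6$ vertices, so Proposition~\ref{L:graph} applies: after a permutation of arrows $\mathcal{P}^\pi$ and a switching $O$, the graph $\bdg''':=(\bdg''\mathcal{P}^\pi)^{O}$ is one of $\bdgA_r^0$, $\bdgA_{r-1}^1$, $\bdgD_r^0$ or $\bdgD_{r-1}^1$ (the one-vertex case $\onevertexbdg^{1,0,0}$ is excluded since $n\geq 6$). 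By Lemma~\ref{L:tra}(c)--(d) these operations yield $q_{\bdg'''}\cong q_{\bdg''}=q_{\EE_r}$, while by Remark~\ref{R:canAD} the form $q_{\bdg'''}$ has Dynkin type $\A$ or $\D$. Since trivial equivalence is a special case of $\weak$ and the Dynkin type is a $\weak$-invariant (Theorem~\ref{T:wea}), we conclude $\Dyn(q_{\EE_r})\in\{\A,\D\}$, contradicting $\Dyn(q_{\EE_r})=\EE_r$ together with the fact that unit forms attached to distinct Dynkin graphs are not $\Z$-equivalent.

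The main obstacle is not any single computation but ensuring each reduction step is legitimate: the crucial moves are that $\Gweak$ transports the \emph{property of being an incidence form} (Corollary~\ref{C:tra}), that a full subbigraph of an incidence bigraph is again an incidence bigraph (Lemma~\ref{L:sub}), and that restricting to the dotted-edge-free core $\EE_r$ is precisely what unlocks Proposition~\ref{L:graph}. Everything after that amounts to reading off the admissible Dynkin types from the short list of shapes produced by that proposition.
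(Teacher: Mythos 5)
Your proposal is correct and follows essentially the same route as the paper's proof: reduce via Theorem~\ref{T:wea}(a) and Corollary~\ref{C:tra} to the canonical extension, pass to the dotted-edge-free full subbigraph $\EE_r$ via Remark~\ref{R:canAD}(b) and Lemma~\ref{L:sub}, apply Proposition~\ref{L:graph}, and contradict the uniqueness of the Dynkin type. The only differences are cosmetic (e.g.\ your explicit exclusion of the one-vertex case), so there is nothing to add.
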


\begin{proof}
Toward a contradiction, suppose that $q:\ZZ^n \to \ZZ$ is a connected non-negative unit form of corank $c\geq 0$ and Dynkin type $\Dyn(q)=\EE_r$. By Theorem~\ref{T:wea}(a), we have $q \Gweak q_{\widehat{\EE}^{(c)}_r}$ where $\widehat{\EE}^{(c)}_r$ is the canonical $c$-extension of the Dynkin graph $\EE_r$, and hence, by Corollary~\ref{C:tra}, $q_{\widehat{\EE}^{(c)}_r}$ is also an incidence quadratic form. By the definition of $\widehat{\EE}^{(c)}_r$, there is a full subbigraph of $\widehat{\EE}^{(c)}_r$ which coincides with the Dynkin graph $\EE_r$ (see Remark~\ref{R:canAD}(b)), and by Lemma~\ref{L:sub} it follows that $q_{\EE_r}$ is also an incidence form. That is, there is a bidirected graph $\bdg$ such that $\Delta(\bdg)=\EE_r$.  Since $\EE_r$ has no dotted edges (cf. Table~\ref{TA:Dynkins}), by Proposition~\ref{L:graph} there is a permutation $\pi$ of the set of arrows $\edgs{\bdg}$ of $\bdg$ and a switching $\bdg':=(\bdg \mathcal{P}^{\pi})^{O}$ of $\bdg \mathcal{P}^{\pi}$ such that $\bdg' \in \{\mathbf{A}^0_n,\mathbf{A}^1_{n-1},\mathbf{D}^0_n,\mathbf{D}^1_{n-1}\}$. In particular, $q_{\EE_r} \cong q_{\bdg'}$ has Dynkin type $\A$ or $\D$ (cf.~Lemma \ref{L:tra} and Remark \ref{R:canAD}), contradicting the uniqueness in Theorem~\ref{T:wea}(a).
\end{proof}

The following technical facts are needed for the proof of assertion (a) of Theorem A. Next lemma makes use of the uniqueness of the Dynkin type in Theorem~\ref{T:wea}, together with the description of rank and corank given in Corollary~\ref{cor:crk}.

\begin{lemma}\label{L:equal}
Assume that $\bdg$ and $\bdg'$ are different bidirected graphs in the set
$
\{\mathbf{A}^c_r\}_{r \geq 1, c\geq 0} \cup \{\mathbf{D}^c_r\}_{r \geq 3, c\geq 0}.
$
Then $q_{\bdg} \cong q_{\bdg'}$ if and only if $\{\bdg,\bdg'\}=\{\mathbf{A}^c_3,\mathbf{D}^c_3\}$ for some $c \geq 0$.
\end{lemma}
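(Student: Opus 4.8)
The plan is to reduce the whole statement to three numerical invariants that are preserved by trivial equivalence $\cong$ (indeed by the coarser relation $\weak$): the number of variables, the corank, and the Dynkin type. First I would record these three quantities for each member of the family. The number of variables equals the number of arrows, which is $r+c$ for both $\mathbf{A}^c_r$ and $\mathbf{D}^c_r$ (the $r$ path arrows together with the $c$ extra arrows, in either construction). For the corank, Remark~\ref{R:canAD}(a) already says $q_{\mathbf{A}^c_r}=q_{\widehat{\A}_r^{(c)}}$ and $q_{\mathbf{D}^c_r}=q_{\widehat{\D}_r^{(c)}}$ are connected non-negative unit forms of corank $c$; alternatively one reads this off Corollary~\ref{cor:crk}, since $\mathbf{A}^c_r$ is a quiver (hence balanced, $\cyccond=1$) while $\mathbf{D}^c_r$ contains the closed walk formed by the directed arrow $2$ and the two-head arrow $1$ between $u_1$ and $u_2$ with a single bidirected arrow (hence unbalanced, $\cyccond=0$), giving $\CRnk=(r+c)-m+\cyccond=c$ in both cases. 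Finally, Remark~\ref{R:canAD}(a) together with Theorem~\ref{T:wea} gives $\Dyn(q_{\mathbf{A}^c_r})=\A_r$ and $\Dyn(q_{\mathbf{D}^c_r})=\D_r$ for $r\geq 4$, while for $r=3$ the extension $\widehat{\D}_3^{(c)}$ is a canonical $c$-extension of $\A_3=\D_3$, so $\Dyn(q_{\mathbf{D}^c_3})=\A_3$.

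For the backward implication there is nothing to do beyond quoting the redundancy already observed in Remark~\ref{R:canAD}(a), namely $q_{\mathbf{A}^c_3}\cong q_{\mathbf{D}^c_3}$ for every $c\geq 0$. For the forward implication I would assume $\bdg\neq\bdg'$ with $q_\bdg\cong q_{\bdg'}$ and argue by elimination. Since $\cong$ is a special case of $\weak$, the two forms agree in the number of variables, in the corank (a $\weak$-invariant, by the remark following~\eqref{eq:equivalence}), and in the Dynkin type (by the uniqueness in Theorem~\ref{T:wea}). Writing the indices of $\bdg,\bdg'$ as $(r,c)$ and $(r',c')$, equality of coranks forces $c=c'$ and equality of the number of variables forces $r+c=r'+c'$, hence $r=r'$. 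Thus $\bdg$ and $\bdg'$ carry the same index pair, and being distinct they must be of different letters, i.e.\ $\{\bdg,\bdg'\}=\{\mathbf{A}^c_r,\mathbf{D}^c_r\}$ with $r\geq 3$. Matching the Dynkin types then requires $\A_r=\Dyn(q_{\mathbf{D}^c_r})$, which by the computation above holds precisely when $r=3$ (for $r\geq 4$ it would read $\A_r=\D_r$, impossible). This pins down $\{\bdg,\bdg'\}=\{\mathbf{A}^c_3,\mathbf{D}^c_3\}$, as required.

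I do not expect a serious obstacle here; the argument is a clean application of the invariance of the number of variables, corank, and Dynkin type under $\weak$. The only delicate point is the consistent bookkeeping of the exceptional isomorphism $\D_3=\A_3$, which is exactly the source of the single allowed coincidence, and making sure the Dynkin-type values $\A_r$ and $\D_r$ are genuinely justified from Remark~\ref{R:canAD} and Theorem~\ref{T:wea} (rather than assumed) and that the constraint $r\geq 3$ on the $\mathbf{D}$-family is respected throughout.
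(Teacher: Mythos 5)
Your proposal is correct and follows essentially the same route as the paper: both directions are handled by the redundancy $q_{\mathbf{A}^c_3}\cong q_{\mathbf{D}^c_3}$ from Remark~\ref{R:canAD} and by matching numerical $\weak$-invariants (the paper uses rank and corank via Corollary~\ref{cor:crk}, you use corank and the number of variables, which carry the same information) followed by the Dynkin-type comparison from Theorem~\ref{T:wea} to exclude $r\geq 4$. The bookkeeping of the exceptional case $\D_3=\A_3$ is handled correctly.
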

\begin{proof}
A direct verification shows that if $\{\bdg,\bdg'\}=\{\mathbf{A}^c_3,\mathbf{D}^c_3\}$ for any $c \geq 0$, then $q_{\bdg}\cong q_{\bdg'}$ (cf.~Remark~\ref{R:canAD}). For the converse, recall that  $\Rnk(q_{\mathbf{A}^c_r})=r=\Rnk(q_{\mathbf{D}^c_r})$ and $\CRnk(q_{\mathbf{A}^c_r})=c=\CRnk(q_{\mathbf{D}^c_r})$, cf.~Corollary \ref{cor:crk}. If $q_{\bdg} \cong q_{\bdg'}$, then the ranks and coranks of $q_{\bdg}$ and $q_{\bdg'}$ coincide. Thus $\bdg \neq \bdg'$ implies that $\{\bdg,\bdg'\}=\{\mathbf{A}^c_r,\mathbf{D}^c_r\}$ for some $r \geq 3$ and $c \geq 0$. If $r \geq 4$, then $\Dyn(q_{\mathbf{A}^c_r})=\A_r \neq \D_r =\Dyn(q_{\mathbf{D}^c_r})$ by Theorem \ref{T:wea}, thus $q_{\mathbf{A}^c_r}\not\sim q_{\mathbf{D}^c_r}$ and so $q_{\mathbf{A}^c_r}\not\cong q_{\mathbf{D}^c_r}$, cf.~Corollary \ref{cor:weakvsweakGforunit}. Therefore, the remaining option is $\{\bdg,\bdg'\}=\{\mathbf{A}^c_3,\mathbf{D}^c_3\}$ for some $c \geq 0$.
\end{proof}

The following lemma and corollary concern unit forms of Dynkin type $\AA$ and their relation to balanced bidirected graphs. The proofs uses the characterization of the (right) nullity of incidence matrices given in Proposition~\ref{P:null}.

\begin{lemma}\label{L:typeA}
Let $\bdg$ be a bidirected graph such that $q_{\bdg}= q_{\mathbf{A}^c_r}$ for some $r \geq 4$ and $c \geq 0$. Then $\bdg$ is balanced.
\end{lemma}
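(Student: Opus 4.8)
The plan is to translate balance into a statement about the number of vertices, and then rule out the unbalanced case by exhibiting a restriction whose Dynkin type is not $\A$. First I would record that, writing $m:=|\verts{\bdg}|$ and $n:=|\edgs{\bdg}|$, the number of arrows is $n=r+c$ (the number of variables of $q_{\mathbf{A}^c_r}$) and $\CRnk(q_\bdg)=c$. By Corollary~\ref{cor:crk} we have $c=\CRnk(q_\bdg)=n-m+\cyccond_\bdg=(r+c)-m+\cyccond_\bdg$, so $m=r+\cyccond_\bdg$; thus $\bdg$ is balanced (i.e.\ $\cyccond_\bdg=1$) precisely when $m=r+1$, and unbalanced precisely when $m=r$. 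It therefore suffices to derive a contradiction from the assumption that $\bdg$ is unbalanced. In that case, by Proposition~\ref{P:null}, $\Null(I(\bdg))=0$ and $\bdg$ contains a negative closed walk, hence a negative (simple) cycle.

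Next I would produce from this negative cycle a connected restriction of $q_{\bdg}$ whose Dynkin type is not $\A$. Let $C$ be a shortest negative cycle, of length $k$. If $k\geq 4$, I take $\bdg'$ to be $C$ itself; if $k\in\{2,3\}$, I use the connectedness of $\bdg$ together with $r\geq 4$ to enlarge $C$ to a connected, unbalanced, unicyclic bidirected subgraph $\bdg'$ on exactly $4$ vertices. In either case $\bdg'$ is a connected bidirected graph with no loops, carrying exactly one cycle, which is unbalanced, on $s\geq 4$ vertices (with $s=k$ or $s=4$), and its incidence form $q':=q_{\bdg'}$ is a restriction of $q_\bdg$ by Lemma~\ref{L:sub}. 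By Lemma~\ref{L:irr}(c) the bigraph of $\bdg'$ is connected, so $q'$ is connected; by Corollary~\ref{cor:crk} it has corank $0$, hence is positive, and it is a unit form of rank $s$. The square incidence matrix $I(\bdg')$ is non-singular by Proposition~\ref{P:null}, and a direct expansion along the unique (unbalanced) cycle shows $|\det I(\bdg')|=2$, whence $\det G_{q'}=\det(I(\bdg'))^2=4$. Since the Gram determinant is a $\weak$-invariant and equals $s+1\geq 5$ for any positive connected unit form of type $\A_s$, the form $q'$ is not of Dynkin type $\A$.

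Finally, $q'$ is a connected restriction of $q_{\bdg}=q_{\mathbf{A}^c_r}$, which is of Dynkin type $\A_r$; but every connected restriction of a non-negative unit form of Dynkin type $\A$ is again of type $\A$. This contradiction shows that $\bdg$ must be balanced. I expect the main obstacle to lie in the second paragraph: on the combinatorial side, the bookkeeping that guarantees, for a shortest negative cycle of length $2$ or $3$ inside a connected bidirected graph with at least four vertices, the existence of a $4$-vertex connected unbalanced unicyclic restriction (so that its determinant is forced to be $4$, separating type $\D_4$ from $\A_4$); and on the structural side, making precise the heredity statement that restrictions of type-$\A$ forms remain of type~$\A$, which is exactly where the determinant computation $|\det I(\bdg')|=2$ does the decisive work of distinguishing $\A$ from $\D$.
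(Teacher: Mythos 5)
Your argument is correct, but it takes a genuinely different route from the paper's. The paper first settles the coranks $c\in\{0,1\}$ by invoking the structural classification of Proposition~\ref{L:graph} together with Lemma~\ref{L:equal}, and then inducts on $c$: since $q_{n-1,n}=2$ forces $\edges(n-1)=\edges(n)$, one deletes arrow $n$, applies the induction hypothesis to $q_{\mathbf{A}^{c-1}_r}$, and notes that re-attaching a parallel arrow with identical signed endpoints cannot create a negative closed walk. You instead extract from a hypothetical unbalanced $\bdg$ a connected unbalanced unicyclic restriction $\bdg'$ on $s\geq 4$ vertices and rule it out with the Gram-determinant invariant, $\det G_{q'}=(\det I(\bdg'))^2=4$ versus $s+1\geq 5$ for a positive connected unit form of type $\A_s$. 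This bypasses Proposition~\ref{L:graph} and the induction on the corank entirely, which is a real simplification; the price is two ingredients the paper never states: (i) $|\det I(\bdg')|=2$ for a connected unbalanced $1$-tree, and (ii) the heredity of Dynkin type $\A$ under connected restriction. Both are true and both are reachable with the paper's tools --- (i) by expanding along pendant arrows down to the negative cycle, where the two terms of the $2\times 2$-like expansion add rather than cancel precisely because the cycle is negative; (ii) because by Theorem~\ref{T:mainA} the restriction of $q_{\mathbf{A}^c_r}$ to an arrow subset is the incidence form of the corresponding subquiver of the quiver $\mathbf{A}^c_r$, and its connected components have type $\A$ again by Theorem~\ref{T:mainA} --- but as written they are asserted rather than proved, and (ii) in particular should be pinned to Theorem~\ref{T:mainA} to keep the argument self-contained. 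Two cosmetic points: Lemma~\ref{L:sub} produces subgraphs retaining all vertices of $\bdg$, so you should note that discarding the resulting isolated vertices only removes zero columns of the incidence matrix and does not change $q'$; and the passage from a negative closed walk to a negative simple cycle, while standard, also deserves a sentence.
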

\begin{proof}
Assume first that $c \in \{0,1\}$. Then $\Delta(\bdg)=\Delta(q_{\bdg})$ has no dotted edges, and by Proposition~\ref{L:graph} there is a permutation $\pi$ of $\edgs{\bdg}$ and a switching $D:=(\bdg \mathcal{P}^{\pi})^{O}$ of $\bdg \mathcal{P}^{\pi}$ such that $D \in \{\mathbf{A}^c_r,\mathbf{D}^c_s\}_{r \geq 1, s \geq 3}$. Then we have $q_{D} \cong q_{\mathbf{A}_r^c}$ (see Lemma~\ref{L:tra}) for some $r \geq 4$, hence $D=\mathbf{A}_r^c$ by Lemma~\ref{L:equal}. Since $\mathbf{A}_r^c$ is balanced and $\Null(I(\bdg))=\Null(I((\bdg \mathcal{P}^{\pi})^O))$ (see Lemma~\ref{L:tra}), then $\bdg$ is balanced by Proposition~\ref{P:null}.

Assume now that $c>1$ and that the claim holds for any $r \geq 4$ and $c-1$. Take $q:=q_{\mathbf{A}^c_r}$, which is a quadratic form on $n:=r+c$ indeterminates satisfying $q_{n-1,n}=2$ (cf. Definition~\ref{D:canAD}(a) and Table~\ref{TA:des}), and let $\bdg=(\verts{\bdg},\edgs{\bdg},\edges)$ be a bidirected graph with $q_{\bdg}=q$. Consider the restriction $q':=q^{\un\setminus\{n\}}:\ZZ^{n-1}\to\ZZ$ of $q$ to the set $\un\setminus\{n\}$ (cf.~\eqref{eq:restriction}) and let $\bdg'$ be the bidirected graph obtained from $\bdg$ by removing arrow $n$. Clearly, $q'=q_{\mathbf{A}^{c-1}_r}$ and $q_{\bdg'}=q'$ (cf.~Lemma \ref{L:sub}). By induction hypothesis, $\bdg'$ is balanced. Since $q_{n-1,n}=2$, then the arrows $n-1$ and $n$ are parallel in $\bdg$ satisfying $\edges(n-1)=\edges(n)$, see Table~\ref{TA:des}. Then $\bdg$ is balanced.
\end{proof}

\begin{corollary}\label{C:typeA}
Any bidirected graph $\bdg$ such that $\Dyn(q_{\bdg}) =\A_r$ for some $r \geq 4$ is balanced.
\end{corollary}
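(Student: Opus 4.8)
The plan is to reduce the general Dynkin-type hypothesis to the canonical form $q_{\mathbf{A}_r^c}$ already treated in Lemma~\ref{L:typeA}, and then transport the balance property back along a $\GlnZ$-transformation. Write $q:=q_\bdg$. Since $\Dyn(q)=\A_r$ is defined, $q$ is by hypothesis a non-negative connected unit form, so $\bdg$ is connected by Remark~\ref{R:con}(b); set $c:=\CRnk(q)\geq 0$ for its corank.

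First I would invoke Theorem~\ref{T:wea}(a) to obtain $q\Gweak q_{\widehat{\A}_r^{(c)}}$, which by Remark~\ref{R:canAD}(a) is precisely $q\Gweak q_{\mathbf{A}_r^c}$. By Corollary~\ref{C:tra}, the G-transformation $T\in\GlnZ$ realizing this equivalence yields a bidirected graph $\bdg'$ with $I(\bdg')=T^{\tr}I(\bdg)$ and $q_{\bdg'}=q_{\mathbf{A}_r^c}$. Since $r\geq 4$ and $c\geq 0$, Lemma~\ref{L:typeA} applies directly to $\bdg'$ and shows that $\bdg'$ is balanced; equivalently, by Proposition~\ref{P:null}, its incidence matrix satisfies $\Null(I(\bdg'))=1$.

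It then remains to pull balance back from $\bdg'$ to $\bdg$, and this is the single step that needs care. The key observation is that $T^{\tr}$ is invertible, so left-multiplication by it leaves the (right) kernel unchanged: $\Ker(I(\bdg'))=\Ker(T^{\tr}I(\bdg))=\Ker(I(\bdg))$, whence $\Null(I(\bdg))=\Null(I(\bdg'))=1$. Applying Proposition~\ref{P:null}(a) in contrapositive form, $\bdg$ cannot be unbalanced, so $\bdg$ is balanced, as claimed.

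The main (and essentially only) obstacle is exactly this transfer step: a priori the graphical transformations of Definition~\ref{D:tra} underlying a G-transformation can convert directed arrows into bidirected ones and conversely, so balance is not visibly preserved arrow-by-arrow. What rescues the argument is that $\bdg'$ is produced with the same vertex and arrow sets as $\bdg$ and with incidence matrix differing only by an invertible left factor, so the nullity---and hence, via Proposition~\ref{P:null}, the balance parameter $\cyccond$---is an invariant of the whole G-orbit. Alternatively, I could phrase the last step through Corollary~\ref{cor:crk}: since $q\weak q_{\bdg'}$ forces $\CRnk(q_\bdg)=\CRnk(q_{\bdg'})$ while $|\edgs{\bdg}|$ and $|\verts{\bdg}|$ are unchanged, the formula $\cyccond=\CRnk-|\edgs{\bdg}|+|\verts{\bdg}|$ gives $\cyccond_\bdg=\cyccond_{\bdg'}=1$.
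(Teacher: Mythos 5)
Your proposal is correct and follows essentially the same route as the paper's own proof: reduce to $q_{\mathbf{A}_r^c}$ via Theorem~\ref{T:wea}(a) and Corollary~\ref{C:tra}, apply Lemma~\ref{L:typeA}, and transfer balance back through the equality $\Null(I(\bdg'))=\Null(I(\bdg))$ combined with Proposition~\ref{P:null}. Your explicit justification of the kernel-preservation step (and the alternative phrasing via Corollary~\ref{cor:crk}) only makes explicit what the paper leaves implicit.
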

\begin{proof}
By Theorem~\ref{T:wea}(a) and Remark~\ref{R:canAD}, there is a G-transformation $T$ of $q_{\bdg}$ such that $q_{\mathbf{A}_r^c}=q_{\bdg} \circ T$ where $c$ is the corank of $q_{\bdg}$. By Corollary~\ref{C:tra}, there is a bidirected graph $\bdg'$ such that $I(\bdg')=T^{\tr}I(\bdg)$, which implies that $q_{\bdg'}=q_{\mathbf{A}_r^c}$ and $\Null(I(\bdg'))=\Null(I(\bdg))$. Then $\bdg$ is balanced by Lemma~\ref{L:typeA} and Proposition~\ref{P:null}.
\end{proof}

Note that the assumption $r\geq 4$ in Lemma \ref{L:typeA} and Corollary \ref{C:typeA} can not be omitted, see Example \ref{ex:twobdgcrks}.

For the proof of Theorem A we will make use of the following version of the main result in~\cite{jaJ2018}, Theorem~5.5, formulated with the language and notation of this paper.

\begin{theorem}[{{\cite{jaJ2018}}}]\label{T:mainA}
The following are equivalent for an integral quadratic form $q$.
\begin{enumerate}[label={\textnormal{(\roman*)}},topsep=3px,parsep=0px]
\item The form $q$ is non-negative, connected and unitary with $\Dyn(q)=\A_r$ $(r \geq 1)$ and $\CRnk(q)=c \geq 0$.
\item There is a connected loop-less quiver $\bdg$ with $|\verts{\bdg}|=r+1$ and $|\edgs{\bdg}|=r+c$ such that $q=q_{\bdg}$.
\end{enumerate}
In this case, any  quiver $\bdg'$ satisfying $q_{\bdg}=q_{\bdg'}$ is given as a switching $\bdg'=\bdg^O$ of $\bdg$ with $O=(\pm 1)P$ for some permutation matrix $P$.
\end{theorem}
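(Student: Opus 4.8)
The plan is to prove (i)$\Leftrightarrow$(ii) by transporting between $q$ and the canonical quiver incidence form $q_{\bdgA_r^c}$, using two facts established above: the class of incidence forms is closed under $\Gweak$ (Corollary~\ref{C:tra}), and a connected bidirected graph is balanced --- equivalently, switchable to a quiver --- exactly when $\Null(I(\bdg))=1$ (Proposition~\ref{P:null}).

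For the implication (ii)$\Rightarrow$(i), suppose $q=q_{\bdg}$ for a connected loopless quiver $\bdg$ with $|\verts{\bdg}|=r+1$ and $|\edgs{\bdg}|=r+c$. Lemma~\ref{L:ful} gives that $q$ is non-negative and, since $\bdg$ is loopless, unitary ($q_i=1$ for every arrow); connectedness of $q$ follows from Remark~\ref{R:con}. As $\bdg$ has no bidirected arrows it is balanced, so $\cyccond_\bdg=1$ and Corollary~\ref{cor:crk} yields $\Rnk(q)=r$ and $\CRnk(q)=c$. It remains to show $\Dyn(q)=\A_r$. By Theorem~\ref{T:wea} the Dynkin type is some $D_r\in\{\A_r,\D_r,\E_6,\E_7,\E_8\}$, and I would rule out every non-$\A$ case at once. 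If $D_r\neq\A_r$, then $q\Gweak q_{\widehat{D}_r^{(c)}}$, so by Corollary~\ref{C:tra} the form $q_{\widehat{D}_r^{(c)}}$ is itself an incidence form $q_{\bdg'}$ with $\Null(I(\bdg'))=\Null(I(\bdg))=1$; hence $\bdg'$ is balanced and, by Proposition~\ref{P:null} and Lemma~\ref{L:tra}, switches to a quiver $\bdg''$ with $q_{\bdg''}=q_{\widehat{D}_r^{(c)}}$. Since the Dynkin graph $D_r$ is a full subbigraph of $\widehat{D}_r^{(c)}$, Lemma~\ref{L:sub} lets me delete arrows from the quiver $\bdg''$ to obtain a connected loopless quiver $\bdg'''$ with $q_{\bdg'''}=q_{D_r}$, a positive unit form of rank $r$; since $\bdg'''$ is a balanced quiver it has exactly $r+1$ vertices. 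As $D_r$ has no dotted edges, Proposition~\ref{L:graph} places $\bdg'''$, after a permutation of arrows and a switching, among $\onevertexbdg^{1,0,0},\bdgA_r^0,\bdgA_{r-1}^1,\bdgD_r^0,\bdgD_{r-1}^1$; the only one of these with $r+1$ vertices is $\bdgA_r^0$, so $q_{D_r}\cong q_{\bdgA_r^0}=q_{\A_r}$ and $D_r=\A_r$ by the uniqueness of Dynkin type in Theorem~\ref{T:wea} --- a contradiction. Thus $\Dyn(q)=\A_r$.

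For (i)$\Rightarrow$(ii), let $q$ be non-negative, connected and unitary with $\Dyn(q)=\A_r$ and $\CRnk(q)=c$. By Theorem~\ref{T:wea} and Remark~\ref{R:canAD}, $q\Gweak q_{\bdgA_r^c}$, where $\bdgA_r^c$ is a connected loopless quiver with $r+1$ vertices and $r+c$ arrows; since $\Gweak$ is symmetric (Corollary~\ref{cor:Gweak}), Corollary~\ref{C:tra} applied to the incidence form $q_{\bdgA_r^c}$ produces a bidirected graph $\bdg_0$ with $q=q_{\bdg_0}$ and $I(\bdg_0)=T^\tr I(\bdgA_r^c)$, hence $r+1$ vertices, $r+c$ arrows, and $\Null(I(\bdg_0))=\Null(I(\bdgA_r^c))=1$. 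Proposition~\ref{P:null} then gives a switching $\bdg:=\bdg_0^{O}$ that is a quiver, with $q_{\bdg}=q_{\bdg_0}=q$ (Lemma~\ref{L:tra}); here $\bdg$ is loopless because $q$ is unitary (Lemma~\ref{L:ful}), and connected because $q$ is connected (Remark~\ref{R:con}). This $\bdg$ is the required quiver, completing the equivalence.

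The uniqueness clause is where the real difficulty lies, and it is the step I expect to be the main obstacle. Given two (necessarily connected, loopless) quivers with $q_{\bdg}=q_{\bdg'}$, their common Gram matrix $G_q=I(\bdg)I(\bdg)^\tr=I(\bdg')I(\bdg')^\tr$ determines the incidence bigraph $\Delta(\bdg)=\Delta(\bdg')$, that is, the signed line graph shared by both. The plan is to reconstruct the quivers from this line graph by a Whitney-type argument --- the same phenomenon underlying the switching-class results later in the paper --- obtaining first an isomorphism $\overline{\bdg}\cong\overline{\bdg'}$ of underlying multigraphs and then a signed permutation $O=\diag(s_u)P$ with $I(\bdg')=I(\bdg)O$, i.e.\ $\bdg'=\bdg^O$. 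Finally, since switching a connected quiver by $\diag(s_u)$ turns the arrow across any $s$-sign change into a bidirected one, the connectedness of both $\bdg$ and $\bdg'$ forces the $s_u$ to be constant, so $O=(\pm1)P$. The obstruction is exactly this reconstruction: whereas the vertex-Laplacian $I(\bdg)^\tr I(\bdg)$ would read off $\overline{\bdg}$ directly, the edge-Gram $G_q$ records only adjacency of arrows, so one must resolve the classical line-graph ambiguities (notably the triangle versus the three-pointed star) and carry the orientation data through the reconstruction.
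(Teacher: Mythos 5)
First, a point of comparison: the paper does not prove Theorem~\ref{T:mainA} at all --- it is imported (up to notation) from the first author's earlier work \cite{jaJ2018}, Theorem~5.5 --- so there is no in-paper argument to measure yours against. Judged on its own terms, your derivation of the equivalence (i)$\Leftrightarrow$(ii) from the paper's other results is essentially sound and non-circular: transporting along $\Gweak$ via Corollary~\ref{C:tra}, switching a balanced graph into a quiver via Proposition~\ref{P:null}, and excluding $\Dyn(q)\neq\A_r$ by restricting to the Dynkin graph $D_r$ (Lemma~\ref{L:sub}) and then combining Proposition~\ref{L:graph} with the vertex count is a legitimate route. One citation is off: the implication ``$\bdg$ a connected loop-less quiver $\Rightarrow$ $q_\bdg$ connected'' is not Remark~\ref{R:con}(b), which is the converse, but Lemma~\ref{L:irr}(c),(d).

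The genuine gap is the uniqueness clause. You say yourself that the Whitney-type reconstruction of $\bdg$ from the shared line bigraph is ``the main obstacle,'' and you leave it as a plan rather than a proof: nothing in your text actually produces the orthogonal matrix $O=(\pm 1)P$ from the equality $I(\bdg)I(\bdg)^{\tr}=I(\bdg')I(\bdg')^{\tr}$, nor resolves the $K_3$-versus-$K_{1,3}$ ambiguity you flag. (That ambiguity in fact evaporates in the directed setting --- a directed $3$-cycle forces all off-diagonal Gram entries equal to $-1$, which no orientation of the $3$-star can achieve --- but this must be stated and checked, and it is only the smallest of the cases the reconstruction has to handle for multigraphs.) Since the uniqueness clause is exactly the part of Theorem~\ref{T:mainA} that the paper later leans on (Lemma~\ref{L:swiBal} and Corollary~\ref{C:swiAr} reduce directly to it), your proposal as written establishes only the equivalence and not the full statement; the missing reconstruction is precisely the content of \cite{jaJ2018} that the present paper chooses to cite rather than reprove.
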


\subsection{{\it Proof of Theorem A from Section \ref{sec:intro}.}}\label{subsec:proofA}

Let $q:\ZZ^n\to \ZZ$ be a connected unit form. Assume that $q$ is non-negative of Dynkin type $\A$ or $\D$. By Theorem~\ref{T:wea}(a) we have $q \Gweak q_{\Delta}$, where $\Delta=\hat{D}^{(c)}_r$ is the canonical extension of Dynkin graph $D_r=\A_r$ or $D_r=\D_r$ respectively. By Remark~\ref{R:canAD}, $q_{\Delta}$ is an incidence quadratic form, and so is $q$ by Corollary~\ref{C:tra}, that is, $q=q_{\bdg}$ for some bidirected graph $\bdg$. Since $q$ is connected and unitary, it follows that $\bdg$ is connected and has no loop, see Remark~\ref{R:con}(b) and Lemmas~\ref{L:ful}, \ref{L:irr}(a). Conversely, if $q$ is an incidence quadratic form, then $q$ is non-negative by Lemma \ref{L:ful} and the Dynkin type of $q$ is not $\EE_r$ by Corollary~\ref{C:typeE}. Then $q$ has Dynkin type $\A$ or $\D$ by Theorem~\ref{T:wea}.

To show (a), assume first that $\Dyn(q_{\bdg})=\D_r$ for some connected loop-less bidirected graph $\bdg$  ($r \geq 4$). If $\Null(I(\bdg))=1$, then by Proposition~\ref{P:null}(b) there is a switching $\bdg^O$ of $\bdg$ such that $\bdg^O$ is a quiver, and therefore $\Dyn(q_{\bdg^O})=\A_s$ for some $s \geq 1$ by Theorem~\ref{T:mainA}. This contradicts the uniqueness of the Dynkin type in Theorem~\ref{T:wea}, since $q_{\bdg}=q_{\bdg^O}$ by Lemma~\ref{L:tra}(d). Then $\Null(I(\bdg))=0$, that is, $\bdg$ is unbalanced by Proposition~\ref{P:null}(a). Morevoer, by Corollary~\ref{cor:crk} we have $|\verts{\bdg}|=\Rnk(q_{\bdg})=r \geq 4$. Conversely, assume that $r:=|\verts{\bdg}|\geq 4$ and that $\bdg$ is unbalanced for a connected loop-less bidirected graph $\bdg$. If the Dynkin type of $q_{\bdg}$ is not $\D_r$, then by the main claim of the theorem we have $\Dyn(q_{\bdg})=\A_r$. By Corollary~\ref{C:typeA} $\bdg$ is balanced, a contradiction.

Claim (b) follows by Corollary~\ref{cor:crk}. This completes the proof.\hfill$\Box$

\smallskip

\begin{example}\label{ex:twobdgcrks}  Take $\bdg$ and $\bdg'$ with $q:=q_\bdg=q_{\bdg'}$ as in Example \ref{ex:twobdg}. Note that $\bdg\cong\bdgD_3^0$, $\bdg'=\bdgA_3^0$ and $q=q_{\AA_3}$ so $q$ has Dynkin type $\AA_3$ and $\crk(q)=0$, that is, $q$ is positive. Observe that $\bdg$ is unbalanced (hence $\cyccond_\bdg=0$), and $|\verts{\bdg}|=3<4$, compare with Theorem A(a). Whereas $\cyccond_{\bdg'}=1$ and we have
\[|\edgs{\bdg}|-|\verts{\bdg}|+\cyccond_{\bdg}\, =\, 0\, =\, |\edgs{\bdg'}|-|\verts{\bdg'}|+\cyccond_{\bdg'},
\]
cf.~Theorem A(b). Moreover, $\Null(I(\bdg))=0$, $\Null(I(\bdg'))=1$ and $\bdg'$ is a quiver, cf.~Proposition \ref{P:null}.
\end{example}

\section{Dynkin type $\CC$}\label{sec:C}
In contrast to the unitary case and simply laced Dynkin type, it seems that there is no well-established notion of the Dynkin type for the non-unitary (Cox-regular) non-negative integral quadratic forms in the literature, see the discussion in \cite{MZ22}, cf.~\cite{KS15a, KS15b,jeH72} and Remark \ref{rem:weakDynkins}. For our purposes we use the following definition. Its meaningfulness is explained later, see Lemma \ref{L:cla} and subsections \ref{subsec:DynCclass}, \ref{subsec:rs}.

\begin{definition}\label{D:tyc}
Given $n\geq 1$, a non-negative connected irreducible integral quadratic form $q:\Z^n \to \Z$ is said to have \textbf{Dynkin type $\CC$} if it satisfies the following conditions:
   \begin{enumerate}[label={\textnormal{(\roman*)}},topsep=3px,parsep=1px]
    \item[\Ci] $q$ is non-unitary;
    \item[\Cii] $1\leq q_i \leq 2$ for all $i \in \underline{n}$;
    \item[\Ciii] $q$ is fully regular (thus also Cox-regular).
   \end{enumerate}
If the form $q$ has rank $r$, then we also say that its Dynkin type is $\CC_r$, written $\Dyn(q)=\CC_r$.
\end{definition}

The following observation  is an easy consequence of Corollary \ref{cor:Gweak}. 
\begin{lemma}\label{lem:GweakDynC}
Let $q$ and $q'$ be two G-equivalent Cox-regular integral quadratic forms, that is, $q\Gweak q'$. Then $q$ is non-negative of Dynkin type $\CC_r$ if and only if so is $q'$.
\end{lemma}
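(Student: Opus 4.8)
The plan is to verify that each condition defining Dynkin type $\CC_r$ (Definition~\ref{D:tyc}) is preserved under $\Gweak$, and then to use the symmetry of $\Gweak$ to obtain the stated equivalence. By Corollary~\ref{cor:Gweak} the relation $\Gweak$ is symmetric, so it suffices to prove one implication: assuming that $q$ is non-negative of Dynkin type $\CC_r$, I would show the same for $q'$. Note that $q,q':\Z^n\to\Z$ share the same number of variables by the very definition of $\Gweak$.

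First I would recall that any G-transformation lies in $\GlnZ$, so $q\Gweak q'$ implies $q\weak q'$. Hence, as observed just after~\eqref{eq:equivalence}, $q'$ is non-negative (because $q$ is) and $\crk(q')=\crk(q)$; since the number of variables coincides, also $\Rnk(q')=\Rnk(q)=r$. This secures both non-negativity and the rank~$r$ for $q'$.

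Next I would transfer the remaining structural hypotheses directly via Corollary~\ref{cor:Gweak}: since $q$ is connected, irreducible and fully regular, so is $q'$, the last property being exactly condition~\Ciii. Moreover, the same corollary supplies a permutation $\pi$ with $q'_i=q_{\pi(i)}$ for all $i\in\un$, so the multiset of diagonal coefficients of $q'$ equals that of $q$. Since $q$ satisfies $1\leq q_j\leq 2$ for all $j$ (condition~\Cii), it follows that $1\leq q'_i\leq 2$ for all $i$, giving~\Cii for $q'$; and since $q$ is non-unitary there is some $j$ with $q_j=2$, whence $q'_{\pi^{-1}(j)}=2$, so $q'$ is non-unitary as well, giving~\Ci.

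Collecting these facts, $q'$ is a non-negative, connected, irreducible integral form of rank $r$ satisfying \Ci--\Ciii, that is, $q'$ has Dynkin type $\CC_r$. The converse implication follows by applying the same argument to $q'\Gweak q$, which holds by the symmetry of $\Gweak$. I do not anticipate any genuine obstacle: the only point requiring care is to keep track that $\Gweak$ transfers each listed property exactly, and Corollary~\ref{cor:Gweak} (together with the fact that $\Gweak$ is a $\Z$-equivalence) is tailor-made for precisely this.
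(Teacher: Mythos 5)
Your proof is correct and follows exactly the route the paper intends: the paper states this lemma as ``an easy consequence of Corollary~\ref{cor:Gweak}'', and your argument is precisely the spelled-out version of that, using the corollary to transfer connectedness, irreducibility, full regularity and the (permuted) diagonal coefficients, together with the fact that $\Gweak$ is a $\Z$-equivalence to transfer non-negativity and rank.
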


The following limitations can be derived from general bounds on the coefficients of non-negative Cox-regular forms from \cite[Theorem 5.3]{MM21coeffs}. However, in the  particular case of Dynkin type $\CC$ the proof is elementary.

\begin{lemma}\label{lem:DynCbounds}
Let $q:\Z^n \to \Z$ be a non-negative Cox-regular form of Dynkin type $\CC_r$ and fix  $i\neq j$ in $\un$. Then
\begin{enumerate}[label={\textnormal{(\alph*)}},topsep=3px,parsep=0px]
\item $n,r\geq 2$ and $q_{i_0}=1$ for at least one $i_0\in\un$,
\item $-2\leq q_{ij}\leq 2$ provided $q_i=q_j=1$,
\item $q_{ij}\in\{-2,0,2\}$ provided $\{q_i, q_j\}=\{1,2\}$,
\item $q_{ij}\in\{-4,0,4\}$ provided $q_i=q_j=2$.
\end{enumerate}
\end{lemma}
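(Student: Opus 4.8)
The plan is to reduce everything to the behaviour of the $2\times 2$ restrictions $q^{\{i,j\}}$, using non-negativity to bound $|q_{ij}|$ and the full regularity of \Ciii\ to cut the admissible range down to the exact sets claimed; the global assertions in (a) will then come from irreducibility and connectedness. First I would record the basic local estimate. For $i\neq j$ the restriction $q^{\{i,j\}}:\Z^2\to\Z$ is again non-negative (restrictions of non-negative forms are non-negative), so its Gram matrix $\left[\begin{smallmatrix} 2q_i & q_{ij}\\ q_{ij}&2q_j\end{smallmatrix}\right]$ is positive semi-definite; in particular its determinant is non-negative, giving
$$q_{ij}^2\leq 4q_iq_j.$$
With the admissible values $q_i,q_j\in\{1,2\}$ from \Cii, this yields $|q_{ij}|\leq 2$ in cases (b) and (c) (since $q_{ij}^2\leq 4$, respectively $q_{ij}^2\leq 8$, forces $|q_{ij}|\le 2$ over the integers) and $|q_{ij}|\leq 4$ in case (d).

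To obtain the exact sets I would then invoke full regularity, i.e. $q_{ij}/(q_iq_j)\in\Z$. In case (c), where $q_iq_j=2$, this says $q_{ij}$ is even, so $|q_{ij}|\leq 2$ sharpens to $q_{ij}\in\{-2,0,2\}$; in case (d), where $q_iq_j=4$, it says $4\mid q_{ij}$, so $|q_{ij}|\leq 4$ sharpens to $q_{ij}\in\{-4,0,4\}$. Case (b) needs no refinement. This settles (b), (c) and (d).

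For (a) I would first produce an index with $q_{i_0}=1$. If none existed, then $q_i=2$ for all $i$ by \Cii, whence $\gcd(q_1,\dots,q_n)=2$, and by Remark~\ref{R:irr}(a) the Cox-regular form $q$ would fail to be irreducible, contradicting Definition~\ref{D:tyc}; hence some $q_{i_0}=1$. Non-unitarity \Ci\ furnishes some $i_1$ with $q_{i_1}=2$, so in particular $n\geq 2$. To get $r\geq 2$ I would use connectedness: the bigraph $\Delta_q$ is connected, so there is a path joining a diagonal-$1$ vertex to a diagonal-$2$ vertex, and across some edge of that path the diagonal label switches from $1$ to $2$, giving adjacent indices $i,j$ with $\{q_i,q_j\}=\{1,2\}$ and $q_{ij}\neq 0$. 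By part (c) this forces $q_{ij}=\pm 2$, so the Gram matrix of $q^{\{i,j\}}$ has determinant $4q_iq_j-q_{ij}^2=8-4=4>0$ and is a positive definite principal submatrix of $G_q$; therefore $\Rnk(q)=\Rnk(G_q)\geq 2$.

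The individual steps are short, and the genuinely new inputs beyond elementary $2\times 2$ positive-semidefiniteness are the divisibility from \Ciii\ (sharpening the ranges) and the irreducibility criterion of Remark~\ref{R:irr}(a) (forcing a diagonal entry equal to $1$). The main point requiring care is the logical ordering: the proof of $r\geq 2$ in (a) draws on the exact bound (c), and the existence of the crucial $\{1,2\}$-edge rests on combining connectedness with both $q_{i_0}=1$ and $q_{i_1}=2$. I would therefore establish (b)–(d) first and deduce (a) last; locating that $\{1,2\}$-edge and checking its restriction is positive definite is the only step that is not purely routine.
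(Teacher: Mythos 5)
Your proof is correct, and for parts (b)--(d) and the first half of (a) it is essentially the paper's argument: both reduce to the $2\times 2$ restriction $q^{\{i,j\}}$, extract a bound on $|q_{ij}|$ from non-negativity, and sharpen it via the divisibility $q_{ij}/(q_iq_j)\in\Z$ of \Ciii; likewise both get $q_{i_0}=1$ from irreducibility via Remark~\ref{R:irr}(a) and $n\ge 2$ from \Ci. (The paper bounds $|q_{ij}|\le q_i+q_j$ by evaluating $q(\bas_i\pm\bas_j)\ge 0$ rather than using the determinant inequality $q_{ij}^2\le 4q_iq_j$; the two are interchangeable here.) Where you genuinely diverge is the proof of $r\ge 2$: you invoke connectedness of $\Delta_q$ to locate an edge $\{i,j\}$ with $\{q_i,q_j\}=\{1,2\}$ and $q_{ij}\ne 0$, then feed part (c) back in to see that the corresponding $2\times2$ principal minor equals $8-4=4>0$. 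This works, but it forces the ordering constraint you flag (proving (c) before (a)) and uses connectedness, which is not needed. The paper simply takes \emph{any} $i_0$ with $q_{i_0}=1$ and \emph{any} $i_1$ with $q_{i_1}=2$ (adjacent or not) and observes that the minor
$\bigl|\begin{smallmatrix}4& a\\ a&2\end{smallmatrix}\bigr|=8-a^2$
is nonzero for every integer $a=q_{i_1i_0}$, since $8$ is not a perfect square; hence $\Rnk(G_q)\ge 2$ immediately. That version is shorter, independent of parts (b)--(d), and dispenses with the path/edge-switching argument entirely.
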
 
\begin{proof} Since $q$ is fully regular and irreducible then it has to admit $q_{i_0}=1$ by Remark \ref{R:irr}(a).
On the other hand, by the conditions \Ci-\Cii\, there exists $i_1\neq i_0$ with $q_{i_1}=2$. Thus $n\geq 2$ and $G_q$ has the minor $\left|{\scriptsize\begin{array}{cc}4&\!\! a\\a&\!\!2\end{array}}\right|=8-a^2\neq 0$ since $a:=q_{i_1i_0}\in\ZZ$. Therefore $r=\Rnk(q)=\Rnk(G_q)\geq 2$. 

To show the remaining claims note that by the non-negativity of $q$ we have $0\leq q(\bas_i+\xi\bas_j)=q_i+q_j+\xi q_{ij}$ for $\xi\in\{\pm1\}$. Now (b)-(d) follow easily by applying the conditions \Cii-\Ciii\, of Definition \ref{D:tyc}.
\end{proof}

\begin{remark}\label{rem:borderDynC} 
A connected incidence form $q=q_\bdg$ of a (connected) bidirected graph $\bdg$ satisfies the conditions \Ci-\Ciii\, of Definition \ref{D:tyc} with $q_i=2$ for all $i\in\un$ if and only if $\bdg=\onevertexbdg^{0,s,t}$ is a one-vertex bidirected graph as defined in
Remark \ref{R:loo}, for some $s+t=n$, cf.~Lemma \ref{L:ful}.
\end{remark}

Fix a connected unit form $q:\ZZ^n\to\Z$ and assume that $q$ is classic, that is, $q_{ij}\leq 0$ for all $i\neq j$. It is well known that then $q$ is non-negative of  Dynkin type $\AA$ if and only if $q=q_{D}$ for the Dynkin graph $D=\AA_n$ (and in this case $\CRnk(q)=0$) or the Euclidean graph $D=\widetilde{\A}_{n-1}=\widehat{\A}_{n-1}^{(1)}$ (and in this case $\CRnk(q)=1$), see \cite{BP99, dS13} (note that this can also be derived from our Proposition \ref{L:graph}). 
Similarly, $q$ is non-negative of  Dynkin type $\DD$ if and only if $q=q_{D}$ for the Dynkin graph $D=\DD_n$ with $n\geq 4$ (and in this case $\CRnk(q)=0$) or the Euclidean graph $D=\widetilde{\DD}_{n-1}=\widehat{\DD}_{n-1}^{(1)}$ with $n\geq 5$ (and in this case $\CRnk(q)=1$). We have the following analog of these known facts.

\begin{lemma} \label{L:cla}
Let $q:\Z^n \to \Z$ be a connected, irreducible,  non-unitary, classic and Cox-regular integral quadratic form. Then $q$ is non-negative of Dynkin type $\CC$ if and only if one of the following conditions hold:
\begin{enumerate}[label={\textnormal{(\alph*)}},topsep=3px,parsep=1px]
\item $q=q_{\C_n}$ for the Dynkin bigraph $\C_n$ with $n\geq 2$ $($and in this case $\CRnk(q)=0)$, where
\begin{equation*}\label{eq:C}
\begin{tabular}{ll}
		$\C_n:$&\begin{minipage}{4.8cm}
			{\scriptsize
				\begin{tikzpicture}[auto]
				\node (q1) at (0  , 0  ) {$1$};
				\node (q2) at (0.8, 0  ) {$2$};
				\node (q3) at (1.6, 0  ) {$3$};
				\node (q4) at (2.3, 0  ) {};
				\node (q5) at (2.8, 0  ) {};
				\node (q6) at (3.6, 0  ) {$n$};
				\draw[dotted, -] (q4) edge node{}(q5);
				\foreach \x/\y in {1/2, 2/1}
				\draw[-] (q\x) edge [bend left=8] node{}(q\y);
				\foreach \x/\y in {2/3, 3/4, 5/6}
				\draw[-] (q\x) edge node{}(q\y);
				\draw[dashed] ([yshift=4pt,xshift=4pt]q1) arc (-50:240:2.2mm);
				\end{tikzpicture}
		} \end{minipage}
\end{tabular}
\end{equation*}
\item $q=q_D$ for one of the Euclidean bigraphs $D=\wt{\C}_{n-1}$ or $D=\wt{\mathcal{CD}}_{n-1}$ with $n\geq 3$ $($and in this case $\CRnk(q)=1)$.
\begin{equation*}\label{eq:CtCDt}
\begin{tabular}{llll}
		$\widetilde{\C}_s:$&
		\begin{minipage}{5cm}
			{\scriptsize
				\begin{tikzpicture}[auto]
				\node (q1) at (0  , 0  ) {$1$};
				\node (q2) at (0.8, 0  ) {$2$};
				\node (q3) at (1.6, 0  ) {$3$};
				\node (q4) at (2.3, 0  ) {};
				\node (q5) at (2.8, 0  ) {};
				\node (q7) at (3.6, 0  ) {$s$};
				\node (q8) at (4.4, 0  ) {$s{\tiny+}1$};
				\draw[dotted, -] (q4) edge node{}(q5);
				\foreach \x/\y in {1/2, 2/1, 7/8, 8/7}
				\draw[-] (q\x) edge [bend left=8] node{}(q\y);
				\foreach \x/\y in {2/3, 3/4, 5/7}
				\draw[-] (q\x) edge node{}(q\y);
				\draw[dashed] ([yshift=4pt,xshift=3pt]q1) arc (-50:240:2.2mm);
				\draw[dashed] ([yshift=4pt,xshift=4pt]q8) arc (-50:240:2.2mm);
				\end{tikzpicture}
		} \end{minipage} & & \\
$\widetilde{\C\CD}_s:$&
		\begin{minipage}{5.2cm}
			{\scriptsize
				\begin{tikzpicture}[auto]
				\node (q1) at (0  , 0  ) {$1$};
				\node (q2) at (3.6, 0.7) {$s{\tiny+}1$};
				\node (q3) at (0.8, 0  ) {$2$};
				\node (q4) at (1.6, 0  ) {$3$};
				\node (q5) at (2.3, 0  ) {};
				\node (q6) at (2.8, 0  ) {};
				\node (q8) at (3.6, 0  ) {$s{\tiny-}1$};
				\node (q9) at (4.5, 0  ) {$s$};
				\draw[dotted, -] (q5) edge node{}(q6);
				\foreach \x/\y in {2/8, 3/4, 4/5, 6/8, 8/9}
				\draw[-] (q\x) edge node{}(q\y);
				\foreach \x/\y in {1/3, 3/1}
				\draw[-] (q\x) edge [bend left=8] node{}(q\y);
				\draw[dashed] ([yshift=4pt,xshift=3pt]q1) arc (-50:240:2.2mm);
				\end{tikzpicture}
		} \end{minipage} & $\widetilde{\C\CD}_2=\wt{\mathcal{B}}_2:$&
\begin{minipage}{2cm}
			{\scriptsize
				\begin{tikzpicture}[auto]
				\node (q1) at (0  , 0  ) {$2$};
				\node (q2) at (0.8, 0  ) {$1$};
				\node (q3) at (1.6, 0  ) {$3$};
				\foreach \x/\y in {1/2, 2/1, 2/3, 3/2}
				\draw[-] (q\x) edge [bend left=8] node{}(q\y);
				\draw[dashed] ([yshift=4pt,xshift=4pt]q2) arc (-50:240:2.2mm);
				\end{tikzpicture}
		} \end{minipage}
\end{tabular}
\end{equation*}
\end{enumerate}
\end{lemma}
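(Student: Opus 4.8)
The plan is to prove the equivalence by treating the two implications separately, with the backward direction being routine and the forward direction carrying all the content. For the \emph{if} direction I would verify directly that each of $q_{\C_n}$, $q_{\wt{\C}_{n-1}}$ and $q_{\wt{\C\CD}_{n-1}}$ is non-negative of Dynkin type $\CC$ with the stated corank. The conditions \Ci--\Ciii are immediate by inspection: each bigraph has at least one vertex with $q_i=2$ (so $q$ is non-unitary), all $q_i\in\{1,2\}$, and every off-diagonal coefficient incident to a vertex with $q_i=2$ equals $-2$, so $q_{ij}/(q_iq_j)=-1\in\Z$ and full regularity holds. For non-negativity and the exact corank I would realize each form as an incidence form $q_{\bdg}$ of an explicit bidirected graph carrying one or two bidirected loops (a bidirected loop produces a coefficient $q_i=2$ by Lemma~\ref{L:ful}, and a loop incident to a non-loop arrow produces a double solid edge $q_{ij}=-2$ by Table~\ref{TA:des}): e.g. $\C_n$ is the incidence form of a bidirected loop together with a directed path on the remaining $n-1$ arrows. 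Non-negativity is then Lemma~\ref{L:ful}, and the corank is read off from Corollary~\ref{cor:crk}, using $\cyccond_\bdg=0$ since a bidirected loop makes $\bdg$ unbalanced.

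For the \emph{only if} direction, let $q$ be classic, connected, irreducible, non-unitary, Cox-regular and non-negative of type $\CC$, and write $B=\{i:q_i=2\}$ and $U=\{i:q_i=1\}$, both nonempty by \Ci\ and Lemma~\ref{lem:DynCbounds}(a). Combining full regularity \Ciii\ with Lemma~\ref{lem:DynCbounds} and classicity ($q_{ij}\le 0$), I would first pin down the admissible edges: every $B$--$U$ edge satisfies $q_{ij}\in\{0,-2\}$, every $B$--$B$ edge $q_{ij}\in\{0,-4\}$, and every $U$--$U$ edge $q_{ij}\in\{0,-1,-2\}$. The engine is the identity $\rad(q)=\CR_q(0)$ for non-negative $q$: whenever I exhibit $v$ with $q(v)=0$ it must be radical, so $q(v,\bas_k)=0$ for all $k$, and classicity then forces the couplings of the support of $v$ to the rest of the graph to vanish, isolating a piece which (by connectedness and irreducibility) is the whole form. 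Applying this I would show: (i) a $B$--$B$ edge $q_{ij}=-4$ isolates $\{i,j\}$ and contradicts irreducibility, so $B$ is independent and every $B$--$U$ edge is a double edge $-2$; (ii) a $U$--$U$ double edge makes $\bas_i+\bas_j$ radical and isolates a copy of $\widetilde{\A}_1$, contradicting non-unitarity; (iii) any vertex meeting two double edges produces a radical vector ($\bas_i+\bas_j+\bas_k$ for a $B$-vertex with two $U$-neighbours, or $\bas_i+2\bas_j+\bas_k$ for a $U$-vertex with two $B$-neighbours), which isolates that triple and forces $q\cong q_{\wt{\mathcal{B}}_2}$ or $q\cong q_{\wt{\C}_2}$, the two corank-$1$ forms with $n=3$.

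It then remains to treat the case where no vertex meets two double edges, so that every $B$-vertex is a pendant joined by a single double edge to a $U$-vertex and no unit carries two big neighbours. Deleting the pendant big vertices leaves the connected restriction $q^U$, a connected classic non-negative unit form; by Theorem~\ref{T:wea} (in the classic case the realizing graphs are the Dynkin graphs $\A_m,\D_m,\E_r$ and their Euclidean extensions $\widetilde{\A}_m,\widetilde{\D}_m,\widetilde{\E}_r$). I would exclude the corank-$1$ cores $\widetilde{\A},\widetilde{\D},\widetilde{\E}$ by the radical argument: their all-positive imaginary root $r_0$ has $q(r_0)=0$, hence $r_0\in\rad(q)$, yet for a big vertex $i$ attached at $v$ one has $q(r_0,\bas_i)=(r_0)_v\,q_{iv}=-2(r_0)_v\ne 0$, a contradiction; thus $q^U$ is positive definite, i.e.\ $\A_m$, $\D_m$ or $\E_r$. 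To rule out the $\E_r$ cores and all ``interior'' attachment points I would use the Schur-complement criterion: adjoining a big pendant at vertex $v$ of a positive-definite core keeps $q$ non-negative if and only if $(G_{q^U}^{-1})_{vv}\le 1$. This fails at every vertex of $\E_6,\E_7,\E_8$ and at every non-end vertex of $\A_m$ (except the centre of $\A_3$) and every non-long-arm leaf of $\D_m$; the surviving configurations are precisely a single big at a path-end (giving $\C_n$, corank $0$), two bigs at the two path-ends (giving $\wt{\C}_{n-1}$, corank $1$ via $I-M\succeq 0$ with one null direction), and a single big at the centre of $\A_3$ or at the long-arm end of a $\D$-fork (giving $\wt{\C\CD}_{n-1}$, corank $1$). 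Assembling the cases yields exactly the three families.

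The genuine difficulty is this positive-definite core analysis: proving that full regularity together with non-negativity forbids $\E$-type cores and all interior attachments, and that the surviving forms coincide \emph{exactly} with $\C_n$, $\wt{\C}$ and $\wt{\C\CD}$. I expect the cleanest rigorous route is the criterion $(G_{q^U}^{-1})_{vv}\le 1$ combined with the known diagonal entries of the inverse Cartan matrices of $\A_m$, $\D_m$ and $\E_r$; alternatively one can follow the forbidden-subgraph method of Proposition~\ref{L:graph}, exhibiting explicit negative-value vectors on the minimal bad configurations (a big pendant on an interior vertex of $\A_{\ge 4}$, on a branch vertex of $\D$, or on any $\E_r$). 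It is worth emphasising that the hypothesis \Ciii\ is essential here: full regularity is exactly what excludes the multiply-laced exceptional forms $F_4$, $\widetilde{F}_4$ and $G_2$, which are non-negative with $q_i\in\{1,2\}$ but fail $q_{ij}/(q_iq_j)\in\Z$ on their long--long edge, and whose appearance would otherwise invalidate the classification.
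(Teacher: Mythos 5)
Your proposal is correct in substance but takes a genuinely different route from the paper on the hard direction. The paper's proof of the ``only if'' implication is essentially a two-line reduction: it invokes the Dlab--Ringel classification (\cite[Proposition 1.2]{DR76}, via \cite[Proposition 2.9]{MM19}) of \emph{all} connected, irreducible, classic, Cox-regular non-negative forms --- namely the Dynkin and Euclidean bigraphs of \cite[Tables 1 \& 2]{MZ22} --- and then filters that finite list by the conditions \Ci--\Ciii\ of Definition~\ref{D:tyc}, which is exactly where your closing observation about full regularity killing $\CB_n$, $\CF_4$, $\widetilde{\CF}_{4i}$, $\CG_2$, $\widetilde{\CG}_{2i}$ enters (and \Cii\ kills the $\mathcal{G}$-types). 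Your argument instead rebuilds the classification from scratch: the radical-isolation lemma ($\rad(q)=\CR_q(0)$ plus classicity forcing zero couplings on the support of a $0$-root) to dispose of $B$--$B$ edges, $U$--$U$ double edges and vertices meeting two double edges; reduction to a positive classic unit core $q^U$; and the Schur-complement criterion $(G_{q^U}^{-1})_{vv}\le 1$ against the known inverse Cartan diagonals of $\A_m,\D_m,\E_r$. I checked the individual steps (the radical vectors $\bas_i+\bas_j$, $\bas_i+\bas_j+\bas_k$, $\bas_i+2\bas_j+\bas_k$ all evaluate to $q_{ik}$ or $0$ and do isolate their supports; the surviving attachment data do match $\C_n$, $\wt\C_{n-1}$, $\wt{\C\CD}_{n-1}$, with the $n=3$ sporadic cases $\wt\C_2$ and $\wt{\mathcal{B}}_2$ absorbed correctly), so the approach goes through; what it buys is self-containedness and an explanation of \emph{why} only these shapes survive, at the cost of having to actually carry out the inverse-Cartan/two-pendant computations you flag as the genuine difficulty --- none of which is needed in the paper's citation-based proof. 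Your ``if'' direction via incidence-form realizations (Lemma~\ref{L:ful}, Corollary~\ref{cor:crk}) is also a slight variation: the paper just asserts a direct check of non-negativity and corank, though your route is arguably cleaner and is anyway how Lemma~\ref{L:canC} later recovers these bigraphs as $\widehat{\mathcal{C}}^{(0,0)}_r$, $\widehat{\mathcal{C}}^{(0,1)}_r$, $\widehat{\mathcal{C}}^{(1,0)}_r$.
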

We refer to \cite[Tables 1\! \&\! 2]{MZ22} for the full lists of Dynkin and Euclidean bigraphs. Recall that they arise via integral forms from \lq\lq classical'' simply laced and  not simply laced Dynkin and Euclidean diagrams  well-known in the theory of species and Lie algebras, see \cite{DR76} and \cite{jeH72, vK90}. We refer to \cite{MM19} for the detailed discussion of this correspondence, see also \cite[Remark 2.6]{MZ22} and \cite{KS15a}.

\begin{proof}
It is clear by a direct checking  that $q_{\C_n}$ and $q_D$ for $D=\wt{\C}_{n-1}$ or $D=\wt{\mathcal{CD}}_{n-1}$ are connected, irreducible, non-unitary, classic and fully regular forms. We also verify that they are non-negative of coranks 0 and 1, respectively (cf.~\cite[Lemma 3.1]{MZ22}). In particular, they satisfy Definition \ref{D:tyc}.

To show the opposite implication, assume that $q$ is a connected, irreducible,  classic and Cox-regular integral quadratic form. If it is non-negative, then it can be derived from the result of Dlab-Ringel \cite[Proposition 1.2]{DR76} that
$q=q_D$ for $D$ being one of the Dynkin bigraphs $D\in\{\AA_n, \DD_n, \EE_{6/7/8}, \CB_n, \C_n, \CF_4, \CG_2\}$ or one of the Euclidean bigraphs $D\in\{\widetilde{\A}_m,\widetilde{\D}_m,\widetilde{\EE}_{6/7/8}, \widetilde{\CA}_{11},\widetilde{\CB}_m,\widetilde{\C}_m,\widetilde{\CB\C}_m,\widetilde{\CB\CD}_m,
\widetilde{\C\CD}_m,\widetilde{\CF}_{41},$ $\widetilde{\CF}_{42},\widetilde{\CG}_{21},\widetilde{\CG}_{22}\}$ from \cite[Tables 1\! \&\! 2]{MZ22}, see \cite[Proposition 2.9]{MM19}. Now, if we additionally assume that $q$ has Dynkin type $\CC$, then a case by case verification shows that $D$ has to be  $\C_n, \wt{\C}_{n-1}$ or $\wt{\mathcal{CD}}_{n-1}$.
\end{proof}

\begin{remark}\label{rem:weakDynkins}
It follows from \cite[Proposition 2.4(a)]{KS15b} that $q_{\C_n}\weak q_{\D_n}$ and $q_{\wt{\C}_{n}}\weak q_{\wt{\D}_n}\weak q_{\wt{\mathcal{CD}}_{n}}$ for each $n\geq 4$. This shows that the $\ZZ$-equivalence does not preserve unit forms and that (some of) the non-negative unit forms of Dynkin type $\DD$ are $\ZZ$-equivalent to (some of) the non-negative Cox-regular forms of Dynkin type $\CC$. However, observe that the quadratic forms $q_{\C_n}, q_{\D_n}$, resp.~$q_{\wt{\C}_{n}}$, $q_{\wt{\D}_n}$, $q_{\wt{\mathcal{CD}}_{n}}$ are pairwise not G-equivalent since by Corollary \ref{cor:Gweak} G-equivalence preserves the number of (dotted) loops in the associated bigraphs. More comprehensive discussion of these phenomena is provided in subsection \ref{subsec:DynCclass}.
\end{remark}


\subsection{Auxiliary results}\label{subsec:aux}

Before we prove Theorem B we need some more technical facts. By \textbf{rigid G-transformation} of a Cox-regular form $q:\ZZ^n \to \ZZ$ we mean a composition $T$ of sign inversions and (well-defined) Gabrielov transformations. In this case, using Lemma~\ref{L:inv}(iii) and a straightforward computation for sign inversion, if $q':=q \circ T$, then we have $q'_i=q_i$ for all $i \in \underline{n}$ (compare with Corollary~\ref{cor:Gweak}).

\begin{lemma}\label{L:piv}
Let $q:\Z^n\to\Z$ be a connected Cox-regular quadratic form, and fix a vertex $i_0 \in \underline{n}$. Then there is a rigid G-transformation $T$ such that the quadratic form $q':=q \circ T$ satisfies
\begin{equation}\label{EQ:pos}
q'_{i_0j} >0, \qquad \text{for all $j \in \underline{n}$.}
\end{equation}
\end{lemma}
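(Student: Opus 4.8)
The plan is to realize $T$ as a composition of two successive blocks of elementary moves: first a sequence of Gabrielov transformations that makes $i_0$ adjacent to every other vertex, i.e.\ $q_{i_0 j}\neq 0$ for all $j\neq i_0$, and then a sequence of sign inversions that turns all the resulting coefficients $q_{i_0 j}$ positive. The observation that drives the whole argument is a \emph{locality} property read off from \eqref{EQ:coe}: the Gabrielov transformation with pivot $k$ and target $j$ alters only those off-diagonal entries involving the index $j$, and a sign inversion $T_j$ only negates the entries $q_{j\ell}$. In particular, neither kind of move changes any entry $q_{i_0\ell}$ with $\ell\neq j$. This is precisely what keeps the two phases from interfering with each other.

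\emph{Phase 1 (creating edges to $i_0$).} I would set $S:=\{i_0\}\cup\{j\in\un : q_{i_0 j}\neq 0\}$. If $S\neq\un$, then since $q$ is connected (equivalently $\Delta_q$ is connected) there is an edge of $\Delta_q$ joining $S$ to its complement; no such edge can be incident to $i_0$, since its other endpoint would then lie in $S$, so it joins some $k\in S\setminus\{i_0\}$ to some $j\notin S$. Thus $q_{i_0 k}\neq 0$, $q_{kj}\neq 0$ and $q_{i_0 j}=0$. I would apply the Gabrielov transformation $T_{kj}$ with pivot $k$ (well defined, as $q_k>0$ by Cox-regularity). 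By \eqref{EQ:coe} the only entry of the family $\{q_{i_0\ell}\}_{\ell}$ that changes is
\[
q'_{i_0 j}=q_{i_0 j}-\frac{q_{i_0 k}q_{kj}}{q_k}=-\,q_{i_0 k}\,\frac{q_{kj}}{q_k},
\]
which is a nonzero integer because $q_{i_0 k}\neq 0$ and $\tfrac{q_{kj}}{q_k}\in\Z\setminus\{0\}$ by Cox-regularity. Hence $j$ enters the neighbourhood set of the new form while no old member of $S$ leaves it, so $|S|$ strictly increases. Since the new form is again connected and Cox-regular (Lemma~\ref{L:inv}(ii) and Corollary~\ref{cor:Gweak}), I can iterate, and after at most $n-1$ steps reach $S=\un$, i.e.\ a Cox-regular form with $q_{i_0 j}\neq 0$ for every $j\neq i_0$.

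\emph{Phase 2 (fixing signs).} For each $j\neq i_0$ with $q_{i_0 j}<0$ I would apply the sign inversion $T_j$. By the recalled effect of a sign inversion this replaces $q_{i_0 j}$ by $-q_{i_0 j}>0$ and leaves every other $q_{i_0\ell}$ unchanged, so these moves are independent and together give $q_{i_0 j}>0$ for all $j\neq i_0$; the diagonal entry $q_{i_0}=q_{i_0 i_0}>0$ is untouched by any rigid G-transformation. The total composition $T$ of all moves from both phases is by definition a rigid G-transformation, and $q'=q\circ T$ satisfies \eqref{EQ:pos}.

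The only genuinely delicate point — and the step I would check most carefully — is the non-interference encoded in the locality property: one must verify from \eqref{EQ:coe} that $T_{kj}$ with pivot $k\neq i_0$ fixes every $q_{i_0\ell}$ with $\ell\neq j$ while making $q_{i_0 j}$ nonzero, since otherwise the monovariant $|S|$ need not be monotone. The remaining ingredients are routine: Cox-regularity ensures each transformation is integral, well defined, and produces a nonzero integer coefficient, while connectedness supplies at every stage the crossing edge needed to enlarge $S$.
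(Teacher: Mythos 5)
Your proposal is correct and follows essentially the same route as the paper's proof: the paper tracks the shrinking set $S_q(i_0)=\{i\in\underline{n}: q_{i_0 i}=0\}$, uses connectedness to find a crossing edge that necessarily avoids $i_0$, applies the same Gabrielov transformation with pivot outside $S_q(i_0)$, and finishes with sign inversions. Your explicit verification of the locality property from \eqref{EQ:coe} (only the entry $q_{i_0 j}$ in row $i_0$ changes, and it becomes the nonzero integer $-q_{i_0k}q_{kj}/q_k$) is precisely the computation the paper compresses into the statement $S_{q'}(i_0)=S_q(i_0)\setminus\{j\}$.
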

\begin{proof}
Consider the set $S_q(i_0)=\{i \in \underline{n} \colon q_{i_0i} =0\}$, and assume that $S_q(i_0) \neq \emptyset$. By connectedness we may find vertices $i \notin S_q(i_0)$ and $j \in S_q(i_0)$ such that $i \neq i_0$ and $q_{ij} \neq 0$. Take $q'=q \circ T^q_{ij}$, and using equations (\ref{EQ:coe}) observe that $S_{q'}(i_0) = S_q(i_0) \setminus \{j\}$. Since $q'$ is a connected Cox-regular quadratic form (cf.~Lemma \ref{L:inv} and \cite[Lemma~4.9]{MZ22}), we may proceed in this way to find an iterated Gabrielov transformation $T'$ such that $S_{q''}(i_0)=\emptyset$ for $q''=q\circ T''$. To complete the proof, compose $T'$ with appropriate sign inversions.
\end{proof}

The following technical lemma contains the induction step for the proof of Theorem B. See subsection~\ref{subsec:comp} for an implementable summary of this result. As usual, we take $\delta_{i,j}=1$ if $i=j$, and $\delta_{i,j}=0$ otherwise.

\begin{lemma}\label{L:techC}
Let $q:\Z^n \to \Z$ be a non-negative Cox-regular form of Dynkin type $\CC$ such that and $q_{1,i}>\delta_{1,i}$ for $i=1,\ldots,n$. Then there exist $m \geq 2$ and a partition $U$ of $\{1,\ldots,n\}$,
\begin{equation}\label{eq:parC}
U=\{U^2_{1,-1},U^1_{2,+1},U^1_{2,-1},\ldots,U^1_{m,+1},U^1_{m,-1}\},
\end{equation}
with some of the parts $U^1_{v,\epsilon}$ possibly empty, but $U^2_{1,-1} \neq \emptyset$ and $U^1_{v,+1}\cup U^1_{v,-1} \neq \emptyset$ for $v=2,\ldots,m$, such that for $1 \leq i \leq j \leq n$ with $i \in U^k_{v,\epsilon}$ and $j \in U^{k'}_{v',\epsilon'}$ we have
\begin{equation}\label{eq:conC}
q_{ij}= \left\{
\begin{array}{c l}
\frac{k|\epsilon+\epsilon'|}{1+\delta_{i,j}}, & \text{if $v=v'$ (hence, $k=k'$)}, \\[1ex]
kk', & \text{if $v \neq v'$}.
\end{array} \right.
\end{equation}
\end{lemma}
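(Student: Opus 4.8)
The plan is to extract from the hypotheses a complete set of \emph{radical relations} and then exploit the fact that $\rad(q)=\CR_q(0)$ is a subgroup to \emph{transport} off-diagonal coefficients across indices, reconstructing the partition intrinsically (without yet knowing that $q$ is an incidence form).

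\textbf{Step 1 (reading off the pivot row).} First I would unpack $q_{1,i}>\delta_{1,i}$: for $i=1$ it forces $q_1=2$ (as $1\le q_1\le 2$ by \Cii), and for $i\ne 1$ it gives $q_{1i}>0$. Combining full regularity \Ciii\ with the bounds of Lemma~\ref{lem:DynCbounds}(c)--(d), I obtain $q_{1i}=2$ whenever $q_i=1$ and $q_{1i}=4$ whenever $q_i=2$, so the entire first row is pinned down. I then set $U^2_{1,-1}:=\{i\in\un: q_i=2\}$ and $W:=\{i\in\un:q_i=1\}$; since $q_1=2$ and (by Lemma~\ref{lem:DynCbounds}(a)) some $i_0$ has $q_{i_0}=1$, both sets are nonempty, and $\un=U^2_{1,-1}\sqcup W$ by \Cii.

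\textbf{Step 2 (radical relations and coefficient bounds).} The engine of the argument is that, $q$ being non-negative, $x\in\rad(q)$ iff $q(x)=0$, and $\rad(q)=\Ker(G_q)$ is a subgroup. I would produce three families of null vectors: for $i\in U^2_{1,-1}$, $q(\bas_1-\bas_i)=q_1+q_i-q_{1i}=2+2-4=0$; for $i,j\in W$ with $q_{ij}=2$, $q(\bas_i-\bas_j)=1+1-2=0$; and for $i,j\in W$ with $q_{ij}=0$, $q(\bas_i+\bas_j-\bas_1)=q_i+q_j+q_1+q_{ij}-q_{1i}-q_{1j}=1+1+2+0-2-2=0$. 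Each relation $x\in\rad(q)$ yields the \emph{transport principle} $q(x,\bas_k)=0$, a linear identity among the $q_{\cdot k}$. Thus $\bas_1-\bas_i,\bas_1-\bas_{i'}\in\rad(q)$ for $i,i'\in U^2_{1,-1}$ gives $\bas_i-\bas_{i'}\in\rad(q)$, whence $q_{ii'}=4$, and also $q_{ik}=q_{1k}=2$ for every $k\in W$. Finally, testing non-negativity on $\{1,i,j\}$ via $q(\bas_1-\bas_i-\bas_j)=q_{ij}\ge 0$ for $i,j\in W$, together with Lemma~\ref{lem:DynCbounds}(b), confines $q_{ij}\in\{0,1,2\}$.

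\textbf{Step 3 (constructing the partition).} On $W$ I declare $i\equiv j$ iff $i=j$ or $q_{ij}=2$. Transitivity is exactly where transport is used: $\bas_i-\bas_j\in\rad(q)$ forces $q_{ik}=q_{jk}$ for all $k$, so $q_{ij}=q_{jk}=2$ gives $q_{ik}=2$; hence $\equiv$ is an equivalence relation whose classes will be the nonempty parts $U^1_{v,\epsilon}$ (``same far vertex, same sign''). Next I call classes $C,C'$ \emph{opposite} if $q_{ij}=0$ for some $i\in C$, $j\in C'$; using $\bas_i+\bas_j-\bas_1\in\rad(q)$ and the $\equiv$-relations I would show $q_{kl}=0$ for \emph{all} $k\in C$, $l\in C'$, and that each class has at most one opposite (if $C$ were opposite to $C'$ and $C''$, subtracting the two null vectors gives $\bas_j-\bas_{j'}\in\rad(q)$, i.e.\ $q_{jj'}=2$, so $C'=C''$). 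Grouping each class with its possible opposite defines the far-vertex labels $v=2,\dots,m$, assigning the two members of an opposite pair the signs $\epsilon=\pm1$ (one of $U^1_{v,\pm1}$ empty when a class has no opposite). Giving $U^2_{1,-1}$ the label $(k,v,\epsilon)=(2,1,-1)$, a direct case check against the values computed above ($q_{ij}=4$ inside $U^2_{1,-1}$; $q_{ij}=2$ between $U^2_{1,-1}$ and $W$; and inside $W$, $q_{ij}=2,0,1$ according to same-class / opposite / neither) matches formula~\eqref{eq:conC} term by term, while $m\ge 2$ follows since $W\ne\emptyset$.

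\textbf{Main obstacle.} The delicate point is Step~3: proving that ``same far vertex, same sign'' ($\equiv$) is transitive and that ``opposite'' is a well-defined involution on classes. The pairwise bounds of Lemma~\ref{lem:DynCbounds} alone do not suffice here; one genuinely needs the subgroup structure of $\rad(q)$ to transport coefficients across indices, turning local two- and three-vertex constraints into the global consistency encoded by~\eqref{eq:conC}. Checking that the three listed null vectors exhaust the relevant relations—so that no weight-$1$ pair escapes the trichotomy $q_{ij}\in\{0,1,2\}$ with the stated meaning—is the part demanding the most care.
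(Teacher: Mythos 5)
Your proof is correct, and it takes a genuinely different route from the paper's. The paper proves Lemma~\ref{L:techC} by induction on $n$: it first derives the inequality $q_{ij}\geq (q_{1,i}-q_i)+(q_{1,j}-q_j)-2$ from $q(\bas_i+\bas_j-\bas_1)\geq 0$ and full regularity (yielding $q_{ij}=q_iq_j$ whenever $q_iq_j>1$), then restricts to $\underline{n-1}$, and finally runs a four-case analysis to decide where vertex $n$ fits into the inherited partition, enforcing consistency by exhibiting explicit vectors on which certain four-vertex full subbigraphs would make the form negative. Your argument replaces both the induction and the forbidden-configuration checks by a single global mechanism: since $q$ is non-negative, $\rad(q)=\CR_q(0)$ is a subgroup annihilated by the polarization, so the null vectors $\bas_1-\bas_i$ (for $q_i=2$), $\bas_i-\bas_j$ (for $q_{ij}=2$ with $q_i=q_j=1$) and $\bas_i+\bas_j-\bas_1$ (for $q_{ij}=0$ with $q_i=q_j=1$) transport coefficients across indices, which is exactly what is needed to make ``same class'' transitive and ``opposite'' a well-defined partial involution on classes. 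I checked the details you flag as delicate — transitivity of $\equiv$, propagation of $q_{ij}=0$ to whole classes, uniqueness of the opposite class via differencing two radical vectors with a common $i$, and the term-by-term match with~\eqref{eq:conC} including the diagonal cases — and they all go through; the values $q_{1i}=2$ or $4$ in Step~1 do follow from positivity, full regularity and Lemma~\ref{lem:DynCbounds}. What each approach buys: yours is shorter and more conceptual, isolating the radical as the source of all the global consistency; the paper's vertex-by-vertex construction is what its Algorithm~\ref{alg:1} and Example~\ref{exa:algo} implement directly, so it is the more immediately algorithmic of the two, though your version could be made algorithmic just as easily.
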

\begin{proof}
Since $q_i \in \{1,2\}$ for $i=1,\ldots,n$ and $q_1=q_{1,1}>1$, then we have $q_1=2$. If $n=2$, then necessarily $q_2=1$ and $q_{1,2}=2$ (see Lemma~\ref{lem:DynCbounds}), and we can take $m=2$, $U^2_{1,-1}=\{1\}$, $U^1_{2,+1}=\{2\}$ and $U^1_{2,-1}=\emptyset$. Assume that $n>2$, and note that for $i \neq j$ in $\{2,\ldots,n\}$ we have $0 \leq q(\bas_i+\bas_j-\bas_1)=q_1-(q_{1,i}-q_i)-(q_{1,j}-q_j)+q_{ij}$, that is,
\[
q_{ij} \geq (q_{1,i}-q_i)+(q_{1,j}-q_j)-2. \tag{$*$}
\]
Since $q$ is fully regular and $q_1=2$, we have $q_{1,i}-q_i >0$ and $q_{1,i}-q_i >0$. Using~($*$), this shows that
\[
q_{ij} \geq 0, \qquad \text{for all $i,j \in \{1,\ldots,n\}$.}
\]
Moreover, if $q_i=q_j=2$, then $q_{1,i}=q_{1,j}=4$, and $q_{ij}\geq (2)+(2)-2=2$ by $(*)$. However, since $q$ is fully regular, it follows that $q_{ij} \geq 4$. Similarly, if $q_i=2$ and $q_j=1$, then  $q_{1,i}=2$ and $q_{1,j}=4$, and thus $q_{ij}\geq (1)+(2)-2=1$ by $(*)$. Since $q$ is Cox-regular, then $q_{ij}\geq 2$. Therefore, by Lemma~\ref{lem:DynCbounds} we have
\[
q_{ij} =q_iq_j, \qquad \text{for $i \neq j$ in $\{2,\ldots,n\}$ with $q_iq_j>1$}. \tag{$\dagger$}
\]
In particular, this shows the following: in case $q_i=2$ for $i=1,\ldots,n-1$ (hence $q_n=1$ since $q$ is irreducible), then the claim of the lemma holds for $m=2$ and the partition $U^2_{1,-1}=\{1,\ldots,n-1\}$ and $U^1_{2,+1}=\{n\}$ (with $U^1_{2,-1}=\emptyset$). The remaining cases, that is, when $n>2$ and $q_i=1$ for some $i\in \{2,\ldots,n-1\}$, are constructed by induction as follows. Note that the restriction $q^{\underline{n-1}}:\ZZ^{n-1}\to\ZZ$ of $q$ to the set $\underline{n-1}=\{1,\ldots,n-1\}$ (cf.~\eqref{eq:restriction}) is connected by the condition $q_{1,j}>0$ for all $i$ in $\{1,\ldots,n\}$. It is easy to verify that $q^{\underline{n-1}}$ has Dynkin type $\CC$ (using that $q_i=1$ for some $i \in \{2,\ldots,n-1\}$) with $q_{1,i}>\delta_{1,i}$ for $i=1,\ldots,n-1$. By induction hypothesis, there is an integer $\wt{m} \geq 2$ and a partition $\wt{U}$ of $\{1,\ldots,n-1\}$,
\[
\wt{U}=\{ \wt{U}^2_{1,-1}, \wt{U}^1_{2,+1}, \wt{U}^1_{2,-1},\ldots, \wt{U}^1_{m,+1}, \wt{U}^1_{\wt{m},-1}\},
\]
where~(\ref{eq:conC}) holds for $i\leq j$ in $\{1,\ldots,n-1\}$. Take $\wt{U}^1:=\bigcup \wt{U}^1_{v,\epsilon}$ for $v=2,\ldots,\wt{m}$ and $\epsilon \in \{\pm 1\}$, and consider the following:

\vspace{2mm}
\noindent \textit{Case 1.} Assume that $q_n=2$. Using~($\dagger$), for $1<i<n$ we have $q_{in}=4$ if $q_i=2$, and $q_{in}=2$ if $q_i=1$. In this case, take $m:=\wt{m}$ and $U^2_{1,-1}:=\wt{U}^2_{1,-1}\cup \{n\}$.

\vspace{2mm}
\noindent \textit{Case 2.} Assume that $q_n=1$ and $q_{in}=2$ for some $i\in \wt{U}^1$. In this case we claim that $q_{jn}=q_{ij}$ for all $j \in \wt{U}^1 \setminus \{i\}$. Indeed, otherwise one of the following bigraphs would be fully contained in the bigraph $\Delta_q$ of $q$,
\[
\xymatrix@C=1pc@R=1pc{ \textrm{(x1)}& {n} \ar@{.}[rd] \ar@{.}@<1.5pt>[dd] \ar@{.}@<-1.5pt>[dd] \\ {1} \ar@{.}@(lu,ld) \ar@{.}@<1.5pt>[rr] \ar@{.}@<-1.5pt>[rr] \ar@{.}@<1.5pt>[rd] \ar@{.}@<-1.5pt>[rd] \ar@{.}@<1.5pt>[ru] \ar@{.}@<-1.5pt>[ru] && {j} \ar@{.}@<1.5pt>[ld] \ar@{.}@<-1.5pt>[ld] \\ & {i} }
\qquad \qquad
\xymatrix@C=1pc@R=1pc{ \textrm{(x2)}& {n}  \ar@{.}@<1.5pt>[dd] \ar@{.}@<-1.5pt>[dd] \\ {1} \ar@{.}@(lu,ld) \ar@{.}@<1.5pt>[rr] \ar@{.}@<-1.5pt>[rr] \ar@{.}@<1.5pt>[rd] \ar@{.}@<-1.5pt>[rd] \ar@{.}@<1.5pt>[ru] \ar@{.}@<-1.5pt>[ru] && {j} \\ & {i} \ar@{.}[ru] }
\qquad \qquad
\xymatrix@C=1pc@R=1pc{ \textrm{(x3)}& {n} \ar@{.}@<1.5pt>[dd] \ar@{.}@<-1.5pt>[dd] \\ {1} \ar@{.}@(lu,ld) \ar@{.}@<1.5pt>[rr] \ar@{.}@<-1.5pt>[rr] \ar@{.}@<1.5pt>[rd] \ar@{.}@<-1.5pt>[rd] \ar@{.}@<1.5pt>[ru] \ar@{.}@<-1.5pt>[ru] && {j} \ar@{.}@<1.5pt>[ld] \ar@{.}@<-1.5pt>[ld] \\ & {i} }
\]
or one of the bigraphs $\textrm{(x1', x2', x3')}$ obtained by swapping positions of vertices $n$ and $i$. This is impossible, since the quadratic form associated to each of such bigraphs is indefinite (evaluate at vector $2\bas_n-2\bas_i+\bas_j$ for the cases $\textrm{(x1, x2, x3)}$, and at vector $2\bas_i-2\bas_n+\bas_j$ for the cases $\textrm{(x1', x2', x3')}$). If $i \in \wt{U}^1_{v,\epsilon}$ take $m:=\wt{m}$ and $U^1_{v,\epsilon}:=\wt{U}^1_{v,\epsilon} \cup \{n\}$.

\vspace{2mm}
\noindent \textit{Case 3.} Assume that $q_n=1$ and $q_{in} \leq 1$ for all $i\in \wt{U}^1$, and that there exists $i\in \wt{U}^1$ with $q_{in}=0$. In this case we claim that $q_{jn}+q_{ij}=2$ for all $j \in \wt{U}^1 \setminus \{i\}$. Indeed, otherwise one of the following bigraphs would be fully contained in the bigraph $\Delta_q$ of $q$,
\[
\xymatrix@C=1pc@R=1pc{ \textrm{(y1)}& {n} \\ {1} \ar@{.}@(lu,ld) \ar@{.}@<1.5pt>[rr] \ar@{.}@<-1.5pt>[rr] \ar@{.}@<1.5pt>[rd] \ar@{.}@<-1.5pt>[rd] \ar@{.}@<1.5pt>[ru] \ar@{.}@<-1.5pt>[ru] && {j}  \\ & {i} }
\qquad
\xymatrix@C=1pc@R=1pc{ \textrm{(y2)}& {n} \\ {1} \ar@{.}@(lu,ld) \ar@{.}@<1.5pt>[rr] \ar@{.}@<-1.5pt>[rr] \ar@{.}@<1.5pt>[rd] \ar@{.}@<-1.5pt>[rd] \ar@{.}@<1.5pt>[ru] \ar@{.}@<-1.5pt>[ru] && {j} \\ & {i} \ar@{.}[ru] }
\qquad
\xymatrix@C=1pc@R=1pc{ \textrm{(y3)}& {n} \ar@{.}[rd] \\ {1} \ar@{.}@(lu,ld) \ar@{.}@<1.5pt>[rr] \ar@{.}@<-1.5pt>[rr] \ar@{.}@<1.5pt>[rd] \ar@{.}@<-1.5pt>[rd] \ar@{.}@<1.5pt>[ru] \ar@{.}@<-1.5pt>[ru] && {j} \ar@{.}@<1.5pt>[ld] \ar@{.}@<-1.5pt>[ld] \\ & {i} }
\qquad
\xymatrix@C=1pc@R=1pc{ \textrm{(y4)}& {n} \ar@{.}@<1.5pt>[rd] \ar@{.}@<-1.5pt>[rd] \\ {1} \ar@{.}@(lu,ld) \ar@{.}@<1.5pt>[rr] \ar@{.}@<-1.5pt>[rr] \ar@{.}@<1.5pt>[rd] \ar@{.}@<-1.5pt>[rd] \ar@{.}@<1.5pt>[ru] \ar@{.}@<-1.5pt>[ru] && {j} \ar@{.}@<1.5pt>[ld] \ar@{.}@<-1.5pt>[ld] \\ & {i} }
\]
or one of the bigraphs $\textrm{(y2', y3')}$ obtained by swapping positions of vertices $n$ and $i$. This is impossible, since the quadratic form associated to each of such bigraphs is indefinite (evaluate at vector $2\bas_n+2\bas_i-2\bas_1-\bas_j$ for the cases $\textrm{(y1, y2, y2')}$, and at vector $2\bas_n+2\bas_i-3\bas_j$ for the cases $\textrm{(y3, y3', y4)}$). If $i \in \wt{U}^1_{v,\epsilon}$ take $m:=\wt{m}$ and $U^1_{v,-\epsilon}:=\wt{U}^1_{v,-\epsilon} \cup \{n\}$. In this situation we must have $\wt{U}^1_{v,-\epsilon}=\emptyset$, that is, $U^1_{v,-\epsilon}=\{n\}$; indeed, if $j \in \wt{U}^1_{v,-\epsilon}$, then by induction hypothesis we have $q_{ij}=|\epsilon-\epsilon|=0$, and thus $q_{jn}=2$ since $q_{jn}+q_{ij}=2$, contradicting the assumptions of this case (note that $j \in \wt{U}^1$).

\vspace{2mm}
\noindent \textit{Case 4.} Assume that $q_n=1$ and $q_{in} = 1$ for all $i\in \wt{U}^1$. In this case we take $m:=\wt{m}+1$, $U^1_{m,+1}:=\{n\}$ and $U^1_{m,-1}:=\emptyset$.

\vspace{2mm}
All together, we get a partition $U$ of $\{1,\ldots,n\}$ as in~(\ref{eq:parC}) by adding vertex $n$ to the corresponding partition $\wt{U}$ of the restriction $q^{\underline{n-1}}$ as indicated in the above cases. Using~($\dagger$) and the given properties of $q_{jn}$ for $j \in \wt{U}^1$ in each of the cases $1-4$, a straightforward verification shows that equation~(\ref{eq:conC}) is satisfied. Let us give the details of case 3 as illustration. Since in this case we have $i \in U^1_{v,\epsilon}$ for some $v \in \{2,\ldots,m\}$ and $\epsilon \in \{\pm 1\}$, and by construction $n \in U^1_{v,-\epsilon}$, to verify equation~(\ref{eq:conC}) we need to show that for $j \in \wt{U}^{k'}_{v',\epsilon'}$ we have
\[
q_{jn}= \left\{
\begin{array}{c l}
|\epsilon'-\epsilon|, & \text{if $v=v'$ (hence, $k'=1$)}, \\
k', & \text{if $v \neq v'$}.
\end{array} \right.
\]
Indeed, if $k'=2$ (hence, $v'=1 \neq v$), then $q_{jn}=2=k'$ by ($\dagger$). If $k'=1$ and $v' \neq v$, then $q_{ij}=1$, and since $q_{jn}+q_{ij}=2$ we have $q_{jn}=1=k'$. If $k'=1$, $v'=v$ and $\epsilon' = \epsilon$, then $q_{ij}=2$, and from $q_{jn}+q_{ij}=2$ it follows that $q_{jn}=0=|\epsilon'-\epsilon|$. These are all cases, since $\wt{U}^1_{v,-\epsilon}=\emptyset$ in case 3, as shown before. With this we complete the proof.
\end{proof}

\subsection{{\it Proof of Theorem B from Section \ref{sec:intro}.}}\label{subsec:proofAp}

Let $q:\ZZ^n\to\ZZ$ be a connected irreducible non-negative quadratic form Dynkin type $\CC$ as in Definition~\ref{D:tyc}. Permuting variables if necessary, we may assume that $q_1=2$. Apply Lemma~\ref{L:piv} to the form $q$ around vertex $1$ to get a rigid G-transformation $T$ such that the form $q'=q\circ T$ satisfies $q'_{1,i}>\delta_{1,i}$ for $i=1,\ldots,n$. By Lemma~\ref{L:techC}, there are $m\geq 2$ and a partition $U$ of $\{1,\ldots,n\}$ as in~(\ref{eq:parC}),
\[
U=\{U^2_{1,-1},U^1_{2,+1},U^1_{2,-1},\ldots,U^1_{m,+1},U^1_{m,-1}\},
\]
such that equation~(\ref{eq:conC}) holds for $U$ and $q'$. We consider a bidirected graph $\bdg'$ with $m$ vertices $\{1,\ldots,m\}$ and $n$ arrows $\{1,\ldots,n\}$ given by:
\begin{itemize}
 \item if $i \in U^2_{1,-1}$, then $i$ is a two-head loop at vertex $1$;
 \item if $i \in U^1_{v,+1}$ for some $v \in \{2,\ldots,m\}$, then $i$ is a directed arrow from vertex $v$ to vertex $1$;
 \item if $i \in U^1_{v,-1}$ for some $v \in \{2,\ldots,m\}$, then $i$ is a two-head arrow with end-vertices $v$ and $1$.
\end{itemize}
It is straightforward to verify that the incidence form $q_{\bdg'}$ satisfies equation~(\ref{eq:conC}) with partition $U$ (cf. Table~\ref{TA:des}), and therefore, $q'=q_{\bdg'}$. By Corollary~\ref{C:tra}, since $q'=q\circ T$ for a G-transformation $T$, then $q=q_{\bdg}$ for some bidirected graph $\bdg$ having at least one bidirected loop, cf.~Lemma~\ref{L:ful}.

Assume now that $q=q_{\bdg}$ with $\bdg$ having a bidirected loop $i_0 \in E(\bdg)$. Since $q$ is irreducible, then $|V(\bdg)| \geq 2$ (see Lemma~\ref{L:irr}(b)), and since $q$ is connected, then so is $\bdg$, and $n=|E(\bdg)| \geq 2$ since $\bdg$ contains a loop. By Lemma~\ref{L:ful}, $q$ is non-negative, we have $1 \leq q_i \leq 2$ for all $i \in \underline{n}$ (cf.~Lemma~\ref{L:irr}(a)), $q$ is non-unitary (since $q_{i_0}=2$) and $q$ is fully regular. Therefore, $q$ has Dynkin type $\CC$, cf.~Definition~\ref{D:tyc}. This shows the equivalence of conditions (i) and (ii) in Theorem B. Since $\bdg$ is unbalanced (because it contains a bidirected loop), then $\cyccond_{\bdg}=0$. Then the corank of $q$ is given by $\CRnk(q)=|E(\bdg)|-|V(\bdg)|$, by Corollary~\ref{cor:crk}. This completes the proof of Theorem B.\hfill$\Box$

\subsection{Classification and further properties}\label{subsec:DynCclass}

In this subsection we show that the methods used in the proof of Theorem B above can be extended to obtain a stronger result, namely, a classification of integral forms of Dynkin type $\CC$ up to G-equivalence, see Theorem \ref{T:chc}.

\begin{definition}\label{D:canC}
Consider the following family of bidirected graphs. For $r \geq 2$, $c_1 \geq 0$ and $c_2 \geq 0$, take
\[
\bdgC_r^{c_1,c_2} := \quad \xymatrix@C=3pc{ {\bulito}_{u_1} \ar@{<->}@(lu,ld)_-{1} \ar[r]^-{2} & {\bulito}_{u_2} \ar[r]^-{3} & \cdots \ar[r]^-{r-1} & {\bulito}_{u_{r-1}} \ar[r]^-{r} \ar@{|-|}@<-1.5ex>[r]_-{r+1} \ar@{|-|}@<-4ex>[r]_-{r+2} \ar@{|-|}@<-7.8ex>[r]_-{r+c_1}^-{\cdots} & *++[]{\bulito_{u_r}} \ar@{|-|}@(u,r)^(.5){r+c_1+1} \ar@{|-|}@(d,r)_(.5){r+c_1+c_2} \ar@{}[r]|-{\cdots} & {} }
\]
We denote by $\widehat{\mathcal{C}}^{(c_1,c_2)}_r:=\Delta(\bdgC_r^{c_1,c_2})$ the associated incidence bigraph, the {\bf canonical $(c_1,c_2)$-extension} of Dynkin bigraph $\mathcal{C}_r$ (cf.~Remark \ref{R:canAD}).
\end{definition}

\begin{remark}\label{rem:dl}
Given a Cox-regular form $q:\ZZ^n\to\ZZ$  we set
$
\dl(q):=\sum_{i=1}^n(q_i-1)\geq 0.
$
Note that $\dl(q)$ is a G-equivalence invariant (cf.~Corollary \ref{cor:Gweak}), it equals the number of dotted loops in $\Delta_q$ and $\dl(q)=0$ if and only if $q$ is unitary. Moreover, if $q$ has Dynkin type $\CC$, then $\dl(q)=|\{i\in\un :\,q_i=2\}|\geq 1$,  cf.~Definition \ref{D:tyc}\Ci-\Cii.
\end{remark}

\begin{lemma}\label{L:canC} Given $r \geq 2$, $c_1 \geq 0$ and $c_2 \geq 0$, the incidence form $q=q_\bdgC=q_{\Delta(\bdgC)}:\ZZ^{r+c_1+c_2}\to\ZZ$ of $\bdgC:=\bdgC_r^{c_1,c_2}$ satisfies the following:
\begin{enumerate}[label={\textnormal{(\alph*)}},topsep=3px,parsep=0px]
\item $q$ is a connected,  irreducible, non-negative integral quadratic form of Dynkin type $\CC$,
\item
$\widehat{\mathcal{C}}^{(0,0)}_r=\C_r,$ \ $\widehat{\mathcal{C}}^{(0,1)}_r=\widetilde{\C}_r$  \text{and}  $\widehat{\mathcal{C}}^{(1,0)}_r=\widetilde{\C\mathcal{D}}_r$,
 for $r\geq 2$,
\item we have $\dl(q)=c_2+1$ $($cf.~Remark \ref{rem:dl}$)$, \ $\Rnk(q)=r$ and $\crk(q)=c_1+c_2$,
\item $q_{\bdgC}\Gweak q_{\bdgC'}$ for $\bdgC' = \bdgC_{r'}^{c'_1,c'_2}$ if and only if $(r, c_1, c_2)=(r',c'_1,c'_2)$, that is, if $\bdgC= \bdgC'$.
 \end{enumerate}
\end{lemma}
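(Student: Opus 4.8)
The plan is to dispatch parts (a) and (c) directly from the structural lemmas on incidence forms, to verify (b) by an explicit computation of the incidence bigraph for the three distinguished parameter triples, and finally to deduce (d) from (c) together with the invariance of rank, corank and the loop-count $\dl$ under $\Gweak$.

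For (a) and (c), I would first read off the diagonal coefficients of $q=q_{\bdgC}$ from Lemma~\ref{L:ful}: the bidirected loops of $\bdgC_r^{c_1,c_2}$ are precisely the two-head loop $1$ at $u_1$ together with the $c_2$ two-tail loops at $u_r$, so these $1+c_2$ arrows give $q_i=2$, while every remaining arrow is a non-loop and gives $q_i=1$; there are no directed loops. This immediately yields $1\leq q_i\leq 2$ for all $i$, non-unitarity (since $q_1=2$) and, by Lemma~\ref{L:ful}, full regularity, i.e.\ conditions \Ci--\Ciii\ of Definition~\ref{D:tyc}. Connectedness of $\bdgC$ is clear from the directed path $u_1\to\cdots\to u_r$, and then $\Delta(\bdgC)$ is connected by Lemma~\ref{L:irr}(c) when $r\geq 3$ and by Lemma~\ref{L:irr}(d) when $r=2$ (there is no directed loop and a bidirected loop is present); irreducibility follows from $r\geq 2$ via Lemma~\ref{L:irr}(b). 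Together with non-negativity (Lemma~\ref{L:ful}) this proves (a). For (c), the loop-count gives $\dl(q)=\sum_i(q_i-1)=1+c_2$; since the two-head loop at $u_1$ is a closed walk with one (hence an odd number of) bidirected arrow, $\bdgC$ is unbalanced and $\cyccond_{\bdgC}=0$, so Corollary~\ref{cor:crk} yields $\Rnk(q)=r-\cyccond_{\bdgC}=r$ and $\crk(q)=(r+c_1+c_2)-r+\cyccond_{\bdgC}=c_1+c_2$.

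For (b) I would compute the off-diagonal coefficients of $q$ from the columns $I(\bdgC)^{\tr}\bas_i$ (equivalently, read them off Table~\ref{TA:des}). Writing $\bas_u$ for the vertex basis vectors, the two-head loop gives $I(\bdgC)^{\tr}\bas_1=-2\bas_{u_1}$, each path arrow $k$ gives $\bas_{u_{k-1}}-\bas_{u_k}$, a two-tail loop at $u_r$ gives $2\bas_{u_r}$, and a two-tail arrow between $u_{r-1}$ and $u_r$ gives $\bas_{u_{r-1}}+\bas_{u_r}$. Taking inner products then shows, for $\widehat{\mathcal{C}}^{(0,0)}_r$, a dotted loop at vertex $1$, a double solid bond between $1$ and $2$ (from $q_{1,2}=-2$) and a solid path $2-3-\cdots-r$, which is exactly $\C_r$. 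Adjoining the parameter $c_2=1$ creates a second dotted loop and a second double bond at the far end, giving $\wt{\C}_r$; adjoining $c_1=1$ instead produces an arrow $r+1$ whose only non-zero coefficient is with the penultimate path vertex ($q_{r-1,\,r+1}=-1$, $q_{r,\,r+1}=0$), i.e.\ a fork at $u_{r-1}$, which is exactly $\widetilde{\C\mathcal{D}}_r$. Matching these three bigraphs against the pictures in Lemma~\ref{L:cla} establishes (b); the only delicate point is the careful tracking of signs, but this is routine.

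Finally, for (d) the forward implication ($\bdgC=\bdgC'$) is trivial. Conversely, suppose $q_{\bdgC}\Gweak q_{\bdgC'}$. By Corollary~\ref{cor:Gweak} the relation $\Gweak$ is in particular a $\ZZ$-equivalence, hence preserves rank and corank, and by Remark~\ref{rem:dl} it preserves $\dl$. Thus by part (c) the three invariants coincide: $r=r'$, $c_1+c_2=c'_1+c'_2$ and $c_2+1=c'_2+1$. The last equality gives $c_2=c'_2$, and then $c_1=c'_1$; hence $(r,c_1,c_2)=(r',c'_1,c'_2)$, as required. I expect the only genuine effort to lie in the sign bookkeeping of part (b) --- parts (a), (c) and (d) being essentially immediate consequences of Lemma~\ref{L:ful}, Corollary~\ref{cor:crk}, Corollary~\ref{cor:Gweak} and Remark~\ref{rem:dl}.
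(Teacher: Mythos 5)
Your proposal is correct and follows essentially the same route as the paper's proof: (a) and (c) via Lemma~\ref{L:ful}, Lemma~\ref{L:irr} and Corollary~\ref{cor:crk}, (b) by direct computation of the incidence bigraph (the paper leaves this as ``a direct verification'' via Definition~\ref{D:inc} and Table~\ref{TA:des}, which you carry out explicitly and correctly), and (d) from the $\Gweak$-invariance of rank, corank and $\dl$ exactly as in the paper. The only cosmetic differences are your use of Lemma~\ref{L:irr}(d) for the $r=2$ connectedness case where the paper invokes a direct check, and a slight conflation of arrow labels with bigraph-vertex labels in part (b); neither affects the argument.
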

\begin{proof} (a) Connectedness for $r=2$ (resp.~for $r\geq 3$) follows by a direct check (resp.~from Lemma \ref{L:irr}(c)). To show the remaining properties in (a) apply Lemma \ref{L:irr}(b) and Lemma \ref{L:ful} by observing that $|\verts{\bdgC}|\geq 2$ and that all loops in  $\bdgC$ are bidirected.
The claim (b) follows by a direct verification by applying Definition \ref{D:inc}, see also Table \ref{TA:des}.

To show the first formula in (c) observe that $\bdgC$ has precisely $c_2+1$ bidirected loops and apply Lemma \ref{L:ful}. The remaining claims in (c) follow from Corollary \ref{cor:crk} since  $|V(\bdgC)|=r$,   $|\edgs{\bdgC}|=r+c_1+c_2$ and  $\cyccond_\bdgC=0$ (note that $\bdgC$ has at least one bidirected loop so it is obviously unbalanced).

To show (d) assume that $q_\bdgC\Gweak q_{\bdgC'}$. Then obviously $q_{\bdgC}$ and $q_{\bdgC'}$ are $\ZZ$-equivalent hence $r=\Rnk(q_{\bdgC})=\Rnk(q_{\bdgC'})=r'$ by (c). Moreover, G-equivalence preserves the number of dotted loops $\dl(q)$, thus $c_2=c_2'$ by (c). This implies that $c_1=c_1'$, since clearly $r+c_1+c_2=r'+c'_1+c'_2$.
\end{proof}

We have the following generalization of Theorem \ref{T:wea}(a) for Dynkin type $\CC$.
\begin{theorem}\label{T:chc}
Let $q:\Z^n \to \Z$ be a connected  irreducible Cox-regular form with $n\geq 2$. Then the following conditions are equivalent:
\begin{enumerate}[label={\textnormal{(\roman*)}},topsep=4px,parsep=0px]
\item $q$ is non-negative of Dynkin type $\CC_r$,
\item $q\Gweak q_{\Delta}$ for the unique canonical $(c_1,c_2)$-extension $\Delta:=\widehat{\mathcal{C}}^{(c_1,c_2)}_r$ of $\mathcal{C}_r$.
\end{enumerate}
In this case, $c_1+c_2=\crk(q)$ and $c_2=\dl(q)-1\geq 0$.
\end{theorem}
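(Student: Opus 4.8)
The plan is to prove the two implications separately and then read off the numerical data from the relevant $G$-equivalence invariants.

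The direction (ii)$\Rightarrow$(i) is immediate. By Lemma~\ref{L:canC}(a) the form $q_\Delta=q_{\bdgC_r^{c_1,c_2}}$ is a connected irreducible non-negative form of Dynkin type $\CC_r$, and by Lemma~\ref{lem:GweakDynC} the property of being non-negative of Dynkin type $\CC_r$ is preserved under $G$-equivalence (both forms being Cox-regular). Hence $q\Gweak q_\Delta$ forces $q$ to be non-negative of Dynkin type $\CC_r$.

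For the harder direction (i)$\Rightarrow$(ii) I would re-run the construction used in the proof of Theorem~B. After permuting variables so that $q_1=2$ and applying Lemma~\ref{L:piv} followed by Lemma~\ref{L:techC}, one obtains a rigid $G$-transformation producing a bidirected graph $\bdg'$ with $q\Gweak q_{\bdg'}$, namely a \emph{star} whose hub is vertex $1$ carrying $\dl(q)$ two-head loops, every other vertex being joined to vertex~$1$ by directed and/or two-head spokes. Since $\bdg'$ is unbalanced, $\beta_{\bdg'}=0$ and so $\bdg'$ has exactly $r=\Rnk(q)$ vertices by Corollary~\ref{cor:crk}. It then remains to transform $\bdg'$ into the canonical shape $\bdgC_r^{c_1,c_2}$ by graphical operations that, via Lemma~\ref{L:tra}, realize $G$-transformations (together with switchings, which leave the incidence form unchanged). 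Two elementary moves suffice. First, \emph{straightening}: an inflation/deflation $\mathcal{T}_{ij}$ applied to two spokes incident at the hub replaces a pair $v\to 1\leftarrow v'$ by a path $v'\to v\to 1$ (Definition~\ref{D:tra}, case (a1)). Second, \emph{loop transport}: the same transformation $\mathcal{T}_{ij}$ applied to a non-loop arrow $i$ and a bidirected loop $j$ at their shared vertex slides the loop to the other endpoint of $i$ (case (a3)), while a sign inversion $\mathcal{T}_i$ converts a two-head loop into a two-tail loop. Iterating the first move unfolds the star into a directed path on the $r$ vertices; iterating the second pushes all but one of the bidirected loops to the terminal vertex and turns them into two-tail loops, the surviving parallel spokes accumulating at the last edge. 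A final arrow permutation $\mathcal{P}^\pi$ and a switching to normalize signs then yield exactly $\bdgC_r^{c_1,c_2}$ for suitable $c_1,c_2\geq 0$, so $q\Gweak q_{\bdgC_r^{c_1,c_2}}=q_\Delta$.

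Uniqueness of the triple $(r,c_1,c_2)$ follows from Lemma~\ref{L:canC}(d): were $q$ $G$-equivalent to two canonical extensions, transitivity of $\Gweak$ (Corollary~\ref{cor:Gweak}) would make those extensions $G$-equivalent, forcing equality of parameters. The numerical identities are then invariant-tracking: $\Rnk$ and $\crk$ are $\Z$-equivalence (hence $G$-equivalence) invariants, so $r=\Rnk(q)$ and $c_1+c_2=\crk(q)$ by Lemma~\ref{L:canC}(c); and $\dl$ is a $G$-equivalence invariant (Remark~\ref{rem:dl}), whence $\dl(q)=\dl(q_\Delta)=c_2+1$, i.e.\ $c_2=\dl(q)-1\geq 0$. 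The main obstacle is the middle step: the explicit, order-dependent bookkeeping of the straightening and loop-transport moves, verifying that they can be scheduled so that no spurious adjacencies arise and that the outcome is \emph{precisely} the canonical graph (not merely some graph with a bidirected loop), handling in particular several spokes at one vertex and the separation of the $c_1$ parallel arrows at the final edge from the $c_2$ terminal loops. That is the combinatorial heart of the argument; everything else is routine invariant computation.
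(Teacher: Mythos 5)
Your overall strategy coincides with the paper's: the implication (ii)$\Rightarrow$(i) via Lemma~\ref{L:canC}(a) and Lemma~\ref{lem:GweakDynC}, uniqueness via Lemma~\ref{L:canC}(d), the numerical identities via the $G$-invariance of $\crk$ and $\dl$, and for (i)$\Rightarrow$(ii) the reduction to the star-shaped bidirected graph produced by Lemmas~\ref{L:piv} and~\ref{L:techC} followed by graphical Gabrielov moves that reshape the star into $\bdgC_r^{c_1,c_2}$. This is exactly the paper's Steps~1--7.

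There is, however, one concrete omission in the toolkit you propose for the middle step. Your two moves --- straightening via case (a1) of Definition~\ref{D:tra} and loop transport via case (a3) plus a sign inversion --- do not suffice to reach the canonical graph when the star has parallel spokes at several distinct rim vertices. Straightening re-routes each spoke at $v'$ onto the edge $v'v$ of the emerging path, so after unfolding, the $c_1$ surplus arrows end up distributed as parallel edges along \emph{several} segments of the path, whereas $\bdgC_r^{c_1,c_2}$ requires them all on the final edge. Note also that a Gabrielov transformation between two \emph{parallel} arrows (case (a2)) only alters signs and moves nothing, so parallel spokes cannot be consolidated by a single elementary move. What is needed is a third operation --- the paper's construction (2) in the proof of this theorem --- which transports a parallel spoke from vertex $v$ to a different vertex $v'$ by composing \emph{two} Gabrielov transformations, $\mathcal{T}_{i_0,j}$ followed by $\mathcal{T}_{i,j}$ (with a possible sign inversion), using a spoke $i_0$ at the target vertex and a spoke $i$ at the source. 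The paper performs this consolidation at the star stage (its Step~3), collecting all surplus spokes at a single rim vertex before unfolding; only then does the straightening sequence $\mathcal{T}_{2,1},\mathcal{T}_{3,2},\ldots,\mathcal{T}_{r,r-1}$ deliver precisely the canonical shape. Everything else in your plan --- including the observation that the hub carries all the bidirected loops and that one of them must be relocated to the opposite end of the path --- is consistent with the paper's execution.
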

\begin{proof}
 Recall that $q_{\Delta}$ has Dynkin type $\CC_r$, and $\CRnk(q_\Delta)=c_1+c_2$, $\dl(q_\Delta)=c_2+1$ by Lemma \ref{L:canC}. Therefore, if $q \sim_G q_{\Delta}$, then $q$ shares all these properties by Lemma \ref{lem:GweakDynC} and Corollary \ref{cor:Gweak}. Uniqueness of $\Delta$ follows from Lemma \ref{L:canC}(d).

For the converse implication, assume that $q$ has Dynkin type $\CC_r$ and let $c:=\crk(q)=n-r\geq 0$ and  $c_2:=\dl(q)-1$ ($c_2 \geq 0$, since $q$ is non-unitary).  As in the proof of Theorem B in \ref{subsec:proofAp}, we may find a G-transformation $T$ such that $q':=q \circ T'=q_{\bdg'}$ for some bidirected graph $\bdg'$ whose shape is determined by a partition $U$ of the set $\{1,\ldots,n\}$ as in~(\ref{eq:parC}). To be precise, the vertices of $\bdg'$ are $\verts{\bdg'}=\{1,\ldots,m\}$ and its arrows $\edgs{\bdg'}=\{1,\ldots,n\}$ are partitioned as follows:
\begin{eqnarray*}
U^2_{1,-1} & = & \{\text{two-head loops at vertex $1$}\}, \\
U^1_{v,+1} & = & \{\text{directed arrows from vertices $v$ to $1$}\}, \\
U^1_{v,-1} & = & \{\text{two-head arrows between vertices $v$ and $1$}\}.
\end{eqnarray*}
We need a couple of constructions to manipulate the bidirected graphs that appear in this way. Observe first that, after switching at vertex $v$ if necessary, we may assume that $|U^1_{v,+1}| \geq |U^1_{v,+1}|$ for $v=2,\ldots,m$. Moreover:
\begin{itemize}
 \item[(1)]  Assume that $j \in U^1_{v,\epsilon}$ for some $v \in \{2,\ldots,m\}$ and $\epsilon \in \{\pm 1\}$.  Then the transformation $\bdg' \mapsto (\bdg' \circ \mathcal{T}_{1,j}) \circ \mathcal{T}_j$ (cf. Definition~\ref{D:tra}) effectively ``moves arrow $j$ from $U^1_{v,\epsilon}$ to $U^1_{v,-\epsilon}$'', meaning that if $\wt{q}=(q'\circ T_{1,j})\circ T_j$ (cf. Lemma~\ref{L:tra}), then $\wt{q}$ satisfies the hypothesis of Lemma~\ref{L:techC}, and a corresponding partition $\wt{U}$ is given by
 \[
\wt{U}^{k'}_{v',\epsilon'}= \left\{
\begin{array}{l l}
U^1_{v,\epsilon}\setminus \{j\}, & \text{if $(k',v',\epsilon')=(1,v,\epsilon)$}, \\
U^1_{v,-\epsilon} \cup \{j\}, & \text{if $(k',v',\epsilon')=(1,v,-\epsilon)$}, \\
U^{k'}_{v',\epsilon'}, & \text{otherwise}.
\end{array} \right.
 \]
 \item[(2)] Assume that $i \in U^1_{v,+1}$ and $j \in U^1_{v,\epsilon}$ for some $v \in \{2,\ldots,m\}$ and $\epsilon \in \{\pm 1\}$, with $i \neq j$. Assume also that $i_0 \in U^1_{v',+1}$ with $v' \neq v$. If $\epsilon=1$ (resp. $\epsilon=-1$), then the transformation $\bdg' \mapsto ((\bdg' \circ \mathcal{T}_{i_0,j}) \circ \mathcal{T}_{i,j}) \circ \mathcal{T}_j$ (resp. the transformation $\bdg' \mapsto (\bdg' \circ \mathcal{T}_{i_0,j}) \circ \mathcal{T}_{i,j}$) ``moves arrow $j$ from $U^1_{v,\epsilon}$ to $U^1_{v',\epsilon}$''.
\end{itemize}
We follow some steps to complete the proof.

\medskip
\noindent \textit{Step~1.} As in the proof of Theorem B in \ref{subsec:proofAp}, we may find a G-transformation $T^1$ such that $q^1=q \circ T^1$ satisfies $(q^1_{1,i}) \geq \delta_{1,i}$ for $i \in \{1,\ldots,n\}$. In particular, there is a bidirected graph $\bdg'$ with $q^1=q_{\bdg'}$.

\medskip
\noindent \textit{Step~2.} Apply repeatedly the construction (1) above to find a (rigid) G-transformation $T^2$ such that $q^2:=q^1 \circ T^2=q_{\bdg''}$ where $\bdg''$ is obtained from $\bdg'$ by changing all its two-head arrows between vertices $v$ and $1$ to directed arrows from $v$ to $1$ (where $v\in \{2,\ldots,m\}$).

\medskip
\noindent \textit{Step~3.} Apply repeatedly the construction (2) above to find a (rigid) G-transformation $T^3$ such that $q^3:=q^2 \circ T^3=q_{\bdg'''}$ where $\bdg'''$ is obtained from $\bdg''$ by moving all but one of the parallel directed arrows from $v$ to $1$ (for $v \in \{3,\ldots,m\}$) to be directed arrows from vertex $2$ to vertex $1$.

\medskip
\noindent \textit{Step~4.} Permute the variables of $q^3$ with a permutation matrix $T^4$ so that, with the corresponding permutation of arrows of $\bdg'''$, we obtain a bidirected graph $\bdg_0$ as follows (left) with $c_2+1$ loops, $m\geq 2$ vertices and $c_1:=c-c_2\geq 0$ and (here $c=\CRnk(q)$ and $r=\Rnk(q)=m$ by Corollary \ref{cor:crk}):
\begin{equation}\label{eq:bdg0}
\bdg_0=\xymatrix@!0@R=12pt@C=14pt{&&&&& \bulito \ar[lllddd]_-{2}
\\ \\  && {} \ar@<-.7ex>@{<->}@(ru,lu)_-{1}
\\ {{}^{\cdots}} & {} \ar@<-1.2ex>@{<->}@(lu,l)_-{r+c_1+c_2} \ar@{<->}@(l,ld)_-{r+c_1+1} & *+++[]{\bulito} &&& \bulito \ar[lll]_-3
\\ &&&&& {} \ar@{}[d]|(.1){\cdots}
\\ &&&&& {}
\\ &&&&& \bulito \ar[llluuu]_(.3){r-1}
\\ && \bulito \ar@<2ex>[uuuu] \ar@<1.6ex>@{}[uuuu]|(.3)*[@]{{}_{r+c_1}} \ar@<1ex>@{}[uuuu]|-{\vdots} \ar[uuuu] \ar@<-2ex>[uuuu]_-r \ar@<-2ex>@{}[uuuu]|(.35)*[@]{{}_{r+1}} } \quad\qquad
\bdg'_0=\xymatrix@!0@R=12pt@C=14pt{&&&&& \bulito \ar[lllddd]_-{2}
\\ \\  && {} \ar@<-.7ex>@{<->}@(ru,lu)_-{1}
\\ {{}^{\cdots}} & {} \ar@<-1.2ex>@{<->}@(lu,l)_-{r+c_1+c_2} \ar@{<->}@(l,ld)_-{r+c_1+1} & *+++[]{\bulito} &&& \bulito \ar[lll]_-3
\\ &&&&& {} \ar@{}[d]|(.1){\cdots}
\\ &&&&& {}
\\ &&&&& \bulito \ar[llluuu]_(.3){r-1}
\\ && \bulito \ar@{|-|}@<2ex>[uuuu] \ar@<1.6ex>@{}[uuuu]|(.3)*[@]{{}_{r+c_1}} \ar@<1ex>@{}[uuuu]|-{\vdots} \ar@{|-|}[uuuu] \ar@<-2ex>[uuuu]_-r \ar@<-2ex>@{}[uuuu]|(.35)*[@]{{}_{r+1}} }
\end{equation}

\medskip
\noindent \textit{Step~5.} If $c_1>0$, then take recursively $\bdg_i:=\bdg_{i-1}\mathcal{T}_{1,r+i}$ for $i=1,\ldots,c_1$. Note that $\bdg'_0:=\bdg_{c_1}$ has the shape presented above (right). Take $T^5:=T_{1,r+1}T_{1,r+2} \cdots T_{1,r+c_1}$ the corresponding iterated Gabrielov transformation of $q_{\bdg_0}$, see Lemma \ref{L:tra}(a). In case $c_1=0$, take $\bdg'_0:=\bdg_0$ and $T^5:=\Id$.

\medskip
\noindent \textit{Step~6.} Take recursively $\bdg'_i:=\bdg'_{i-1}\mathcal{T}_{i+1,i}$ for $i=1,\ldots,r-1$ and $T^6:=T_{2,1}T_{3,2}\cdots T_{r,r-1}$ the corresponding iterated Gabrielov transformation of $q_{\bdg'_0}$. Note that $\bdg''_0:=\bdg'_{r-1}$ has the following shape,
\[
\bdg''_0=\xymatrix@C=3pc{ {\bulito} \ar@{<->}@(lu,ld)_-{1} \ar[r]^-{2} & {\bulito} \ar[r]^-{3} & \cdots \ar[r]^-{r-1} & {\bulito} \ar[r]^-{r} \ar@{|-|}@<-1.5ex>[r]_-{r+1}  \ar@{|-|}@<-5.3ex>[r]_-{r+c_1}^-{\cdots} & *++[]{\bulito} \ar@{<->}@(u,r)^(.5){r+c_1+1} \ar@{<->}@(d,r)_(.5){r+c_1+c_2} \ar@{}[r]|-{\cdots} & {} }
\]

\noindent \textit{Step~7.} If $c_2>0$, then take recursively $\bdg''_i:=\bdg''_{i-1}\mathcal{T}_{r+c_1+i}$ for $i=1,\ldots,c_2$, and let $T^7:=T_{r+c_1+1}\cdots T_{r+c_1+c_2}$ be the corresponding sign inversion (taking $T^7:=\Id$ if $c_2=0$), see Lemma \ref{L:tra}(b). Take $\bdg'''_0:=\bdg''_{c_2}$ (including the case $c_2=0$), and note that $\bdg'''_0=\bdgC_r^{c_1,c_2}$.

Finally, with the G-transformation $T:=T^1\cdots T^7$ we have $q \circ T=q_{\bdgC_r^{c_1,c_2}}$, which concludes the proof since $\Delta(\bdgC_r^{c_1,c_2})=\widehat{\mathcal{C}}^{(c_1,c_2)}_r$.
\end{proof}

As an immediate conclusion of the theorem above we have the following.
\begin{corollary}\label{cor:weakGforDynC}
Given two connected non-negative integral quadratic forms $q,q':\ZZ^n\to\ZZ$ of Dynkin type $\CC$, the following conditions are equivalent:
\begin{enumerate}[label={\textnormal{(\roman*)}},topsep=3px,parsep=0px]
\item $q\Gweak q'$,
\item $\crk(q)=\crk(q')$ and $\dl(q)=\dl(q')$.
\end{enumerate}
\end{corollary}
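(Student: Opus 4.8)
The plan is to reduce both forms to their canonical $(c_1,c_2)$-extensions via the normal-form result of Theorem \ref{T:chc}, and then to recover the triple of parameters $(r,c_1,c_2)$ from the two invariants $\crk$ and $\dl$. Everything substantial is already contained in Theorem \ref{T:chc}, so the argument is essentially bookkeeping.

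For the implication (i)$\Rightarrow$(ii) I would argue directly: if $q\Gweak q'$, then in particular $q\weak q'$, and since $\Z$-equivalence preserves corank we get $\crk(q)=\crk(q')$. Moreover $\dl(q)=\dl(q')$ because $\dl$ is a G-equivalence invariant by Remark \ref{rem:dl} (cf.~Corollary \ref{cor:Gweak}, which shows that $\Gweak$ permutes the diagonal coefficients). So this direction costs nothing.

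For (ii)$\Rightarrow$(i) I would apply Theorem \ref{T:chc} separately to $q$ and to $q'$. Both are connected, irreducible, fully regular (hence Cox-regular) and non-negative of Dynkin type $\CC$ by Definition \ref{D:tyc}, with $n\geq 2$ by Lemma \ref{lem:DynCbounds}(a), so the theorem applies. Writing $r:=\Rnk(q)$ it gives $q\Gweak q_{\widehat{\mathcal{C}}^{(c_1,c_2)}_r}$ with $c_1+c_2=\crk(q)$ and $c_2=\dl(q)-1$, and similarly $q'\Gweak q_{\widehat{\mathcal{C}}^{(c'_1,c'_2)}_{r'}}$ with $r':=\Rnk(q')$, $c'_1+c'_2=\crk(q')$ and $c'_2=\dl(q')-1$. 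The key step is then to check that the two triples coincide. Since both forms live on $\Z^n$, we have $n=r+\crk(q)=r'+\crk(q')$, so the hypothesis $\crk(q)=\crk(q')$ forces $r=r'$; the hypothesis $\dl(q)=\dl(q')$ forces $c_2=c'_2$; and combining $c_1+c_2=\crk(q)=\crk(q')=c'_1+c'_2$ with $c_2=c'_2$ yields $c_1=c'_1$. Hence $(r,c_1,c_2)=(r',c'_1,c'_2)$, so the two canonical extensions are literally the same bigraph, giving $q_{\widehat{\mathcal{C}}^{(c_1,c_2)}_r}=q_{\widehat{\mathcal{C}}^{(c'_1,c'_2)}_{r'}}$. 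Since $\Gweak$ is an equivalence relation (Corollary \ref{cor:Gweak}), chaining $q\Gweak q_{\widehat{\mathcal{C}}^{(c_1,c_2)}_r}=q_{\widehat{\mathcal{C}}^{(c'_1,c'_2)}_{r'}}\Gweak q'$ gives $q\Gweak q'$.

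I do not expect a genuine obstacle here: the only point requiring care is the observation that the triple $(r,c_1,c_2)$ is completely determined by the three numbers $(n,\crk(q),\dl(q))$. As an alternative to identifying the two extensions by hand in the final matching step, one could instead invoke Lemma \ref{L:canC}(d), which records exactly that $q_{\widehat{\mathcal{C}}^{(c_1,c_2)}_r}\Gweak q_{\widehat{\mathcal{C}}^{(c'_1,c'_2)}_{r'}}$ holds precisely when $(r,c_1,c_2)=(r',c'_1,c'_2)$; both routes close the argument identically.
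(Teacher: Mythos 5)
Your proposal is correct and follows exactly the route the paper intends: the corollary is stated there as an immediate consequence of Theorem \ref{T:chc}, with (i)$\Rightarrow$(ii) coming from the invariance of $\crk$ and $\dl$ under $\Gweak$ and (ii)$\Rightarrow$(i) from matching the triples $(r,c_1,c_2)$ determined by $(n,\crk,\dl)$ via the uniqueness in Theorem \ref{T:chc} (equivalently Lemma \ref{L:canC}(d)). Your write-up simply makes explicit the bookkeeping the paper leaves to the reader.
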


For convenience, we consider some transformations that do not preserve the loops in a bidirected graph (correspondingly, $\Z$-invertible linear mappings that do not preserve the loops of the bigraph associated to an integral quadratic form), see Table~\ref{TA:non}. For $n\geq 2$, a sign $\epsilon \in \{ \pm 1\}$ and $i\neq j$ in $\underline{n}$, consider the linear transformation $S_{i,j}^{\epsilon}:\Z^n \to \Z^n$ determined by $S_{i,j}^{\epsilon}(\bas_k)=\bas_k$ if $k \neq j$, and $S_{i,j}^{\epsilon}(\bas_j)=\bas_j-\epsilon \bas_i$. Clearly, $S^{\epsilon}_{i,j}$ is $\Z$-invertible, with inverse given by $S_{i,j}^{-\epsilon}$.

\begin{table}[ht]
\begin{center}
 \begin{tabular}{c c c}
 $\bdg$ &$\epsilon$& $\bdg\mathcal{S}^\epsilon_{i,j}$ \\
  \hline
$\xymatrix{ \bulito \ar@{<-}@<.5ex>[r]^-{i} \ar@{<-}@<-.5ex>[r]_-{j} & \bulito }$ & $+$ &
$\xymatrix{ \bulito \ar@{<-}[r]^-{i} \ar@(lu,ld)_-{j} & \bulito }$ \\[10pt]
$\xymatrix{ \bulito \ar@{<->}@(ru,rd)^-{i} \ar@{<->}@(lu,ld)_-{j} }$ & $+$ &
$\xymatrix{ \bulito \ar@{<->}@(ru,rd)^-{i} \ar@(lu,ld)_-{j} }$ \\[10pt]
$\xymatrix{ \bulito \ar@{<->}@(lu,ld)_-{j} \ar@{->}[r]^-{i} & \bulito }$ & $-$ &
$\xymatrix{ \bulito \ar@{->}@<.5ex>[r]^-{i} \ar@{<->}@<-.5ex>[r]_-{j} & \bulito }$
 \end{tabular}
\end{center}
\caption{Some auxiliary transformations of bidirected graphs. A new bidirected graph $\bdg\mathcal{S}^\epsilon_{i,j}$ is obtained from $\bdg$ by replacing the signed endpoints $\edges(j)$ of arrow $j$ as indicated in each case.}
\label{TA:non}
\end{table}

We have the following analog of Lemma \ref{L:tra}(a).
\begin{lemma}\label{L:non}
Let $\bdg$ be a bidirected graph having two arrows $i\neq j$ satisfying one of the three cases  in the left column of Table~\ref{TA:non}. Then $q_{\bdg} \circ S_{i,j}^\epsilon=q_{\bdg'}$ where $\bdg':=\bdg \mathcal{S}_{i,j}^\epsilon$ is obtained from $\bdg$ by replacing the signed endpoints of the arrow $j$ and $\epsilon$ is the corresponding sign as indicated in Table~\ref{TA:non}.
\end{lemma}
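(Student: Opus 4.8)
The plan is to reduce everything to a single matrix identity and then invoke Lemma~\ref{L:equivs}. Recall from that lemma that if $I(\bdg')=T^\tr I(\bdg)O$ for some $T\in\GlnZ$ and an orthogonal $O\in\M_m(\Z)$, then $q_\bdg\weak^T q_{\bdg'}$, that is, $q_{\bdg'}=q_\bdg\circ T$. Here I would take $O=\Id_m$ and $T=S^\epsilon_{i,j}$, so the whole statement follows once I verify the identity $(S^\epsilon_{i,j})^\tr I(\bdg)=I(\bdg')$ in each of the three cases of Table~\ref{TA:non}.

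First I would record the effect of left-multiplication by $(S^\epsilon_{i,j})^\tr$ on rows. Since $S^\epsilon_{i,j}(\bas_k)=\bas_k$ for $k\neq j$ and $S^\epsilon_{i,j}(\bas_j)=\bas_j-\epsilon\bas_i$, the product $(S^\epsilon_{i,j})^\tr I(\bdg)$ agrees with $I(\bdg)$ in every row except the $j$-th, whose new value is (row $j$ of $I(\bdg)$) minus $\epsilon$ times (row $i$ of $I(\bdg)$). Transcribed via \eqref{eq:IofB}, the only thing to check is the single vector identity $I(\bdg')^\tr\bas_j=I(\bdg)^\tr\bas_j-\epsilon\,I(\bdg)^\tr\bas_i$ together with $I(\bdg')^\tr\bas_k=I(\bdg)^\tr\bas_k$ for $k\neq j$; the latter is immediate because $\mathcal{S}^\epsilon_{i,j}$ alters only the signed endpoints of arrow $j$.

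Then I would run through the three rows of Table~\ref{TA:non}, reading off the signed endpoints from the pictures and applying \eqref{eq:IofB}. For instance, in the first case both $i$ and $j$ have endpoints $\{(u,-1),(u',+1)\}$, so $I(\bdg)^\tr\bas_i=I(\bdg)^\tr\bas_j=-\bas_u+\bas_{u'}$, while in $\bdg'$ the arrow $j$ becomes a directed loop at $u$ with $I(\bdg')^\tr\bas_j=0$; since $\epsilon=+1$ one indeed gets $I(\bdg)^\tr\bas_j-\epsilon I(\bdg)^\tr\bas_i=0$. The second case (two two-head loops, one converted into a directed loop, $\epsilon=+1$) and the third case (a directed arrow together with a two-head loop $j$ at $u$ that becomes a two-head arrow parallel to $i$, $\epsilon=-1$) are handled identically, each reducing to a short computation with the vectors $\pm\bas_u,\pm\bas_{u'}$. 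Having confirmed $(S^\epsilon_{i,j})^\tr I(\bdg)=I(\bdg')$ in all three cases, Lemma~\ref{L:equivs}(a) yields $q_\bdg\circ S^\epsilon_{i,j}=q_{\bdg'}$.

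I do not expect a genuine obstacle: the computation is of exactly the same type as in Lemma~\ref{L:tra}(a), the only care being the correct translation of the pictorial arrow decorations into signed endpoints $\{(u,\pm1),(u',\pm1)\}$ and the attendant sign bookkeeping. The point worth flagging, rather than a difficulty, is that unlike a Gabrielov transformation $S^\epsilon_{i,j}$ need not respect the divisibility conditions and in general changes the diagonal coefficient $q_j$, since it creates or destroys loops; this is precisely why these transformations are collected separately here instead of being subsumed under Lemma~\ref{L:tra}.
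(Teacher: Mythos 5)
Your proposal is correct and follows exactly the paper's own argument: the paper likewise verifies the matrix identity $(S_{i,j}^\epsilon)^\tr I(\bdg)=I(\bdg')$ case by case (as in Lemma~\ref{L:tra}(a)) and then concludes via Lemma~\ref{L:equivs}(a). Your sample computations for the three rows of Table~\ref{TA:non} check out, so nothing further is needed.
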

\begin{proof} 
By a similar straightforward calculation as in the proof of Lemma  \ref{L:tra}(a)  we check that $(S_{i,j}^\epsilon)^\tr I(\bdg)=I(\bdg')$  in all  three cases of Table~\ref{TA:non}.  This means that $q_\bdg\circ S_{i,j}^\epsilon=q_{\bdg'}$ by Lemma \ref{L:equivs}(a).
\end{proof}
Clearly,  $S_{i,j}^\epsilon$ defines a $\ZZ$-equivalence between $q_{\bdg}$ and $q_{\bdg'}$. However, $S_{i,j}^\epsilon$ is in general not G-equivalence since it may not preserve loops neither the connectedness of quadratic forms, cf.~Corollary \ref{cor:Gweak}.

Now we  can prove the following generalization of  Theorem \ref{T:wea}(c) for Dynkin type $\CC$. Recall that $\zeta^c$ denotes the zero quadratic form on $c$ variables.
\begin{theorem}\label{T:pr1}
Let $q:\Z^n \to \Z$ be a connected, irreducible and non-negative quadratic form of Dynkin type $\CC_r$ and corank $\CRnk(q)=c$, that is, $n=r+c$. Then there exist the following $\ZZ$-equivalences:
\begin{enumerate}[label={\textnormal{(\alph*)}},topsep=4px,parsep=0px]
 \item
 $q \weak q_{\mathcal{C}_r}\oplus \zeta^c$,
 \item $q \weak q_{\mathcal{\DD}_r}\oplus \zeta^c$ provided $r \geq 4$.
\end{enumerate}
\end{theorem}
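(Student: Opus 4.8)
The plan is to obtain everything from the canonical model furnished by Theorem~\ref{T:chc}, reducing part (b) to part (a) at the very start. Indeed, by Remark~\ref{rem:weakDynkins} we have $q_{\C_r}\weak q_{\D_r}$ for every $r\geq 4$, hence $q_{\C_r}\oplus\zeta^c\weak q_{\D_r}\oplus\zeta^c$; so once (a) is established, (b) follows by transitivity of $\weak$. All the work is therefore in (a).

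For (a), recall that $q$ is connected, irreducible, Cox-regular and non-negative of Dynkin type $\CC_r$, so Theorem~\ref{T:chc} gives $q\Gweak q_{\bdgC_r^{c_1,c_2}}$ for the unique pair with $c_1+c_2=\crk(q)=c$. Since $\Gweak$ is a special case of $\weak$, it suffices to prove $q_{\bdgC_r^{c_1,c_2}}\weak q_{\C_r}\oplus\zeta^c$. Write $\bdgC:=\bdgC_r^{c_1,c_2}$ and index its arrows so that $1,\dots,r$ are the arrows of the core $\bdgC_r^{0,0}$ (the two-head loop at $u_1$ and the path $u_1\to\cdots\to u_r$), while $r+1,\dots,r+c$ are the $c_1$ two-tail arrows parallel to arrow $r$ followed by the $c_2$ two-tail loops at $u_r$; I will call these the \emph{extra} arrows. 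The central step is a description of $\rad(q_{\bdgC})=\{x:I(\bdgC)^{\tr}x=0\}$ adapted to the extra coordinates. Reading off the columns $I(\bdgC)^{\tr}\bas_i$ (namely $-2\bas_{u_1}$ for arrow $1$, $\bas_{u_{k-1}}-\bas_{u_k}$ for the path arrows, $\bas_{u_{r-1}}+\bas_{u_r}$ for the parallel two-tail arrows and $2\bas_{u_r}$ for the two-tail loops) and solving the linear system shows that $x_{r+1},\dots,x_{r+c}$ are free and each one determines the remaining entries. Hence $\rad(q_{\bdgC})$ admits a basis $w_{r+1},\dots,w_{r+c}$ in which $w_e$ has $e$-th coordinate $1$, vanishes at every other extra coordinate, and is otherwise supported on $\{1,\dots,r\}$. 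I would then define $T\in\GlnZ$ by $T\bas_e:=w_e$ for each extra index $e$ and $T\bas_k:=\bas_k$ for $1\leq k\leq r$; the adapted shape of the $w_e$ makes the matrix of $T$ block-triangular with identity diagonal blocks, so $T$ is $\Z$-invertible.

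Because each $T\bas_e=w_e$ lies in the radical, in $q':=q_{\bdgC}\circ T$ every extra coordinate satisfies $q'_e=0$ and $q'(\bas_e,\bas_\ell)=0$ for all $\ell$; as the extra indices follow the core ones, this yields the honest decomposition $q'={q'}^{\{1,\dots,r\}}\oplus\zeta^{c}$. Since $T$ fixes $\bas_1,\dots,\bas_r$, the restriction ${q'}^{\{1,\dots,r\}}$ coincides with the restriction of $q_{\bdgC}$ to $\{1,\dots,r\}$, which by Lemma~\ref{L:sub} is the incidence form of the bidirected subgraph obtained by deleting the extra arrows, namely $\bdgC_r^{0,0}$; and $\Delta(\bdgC_r^{0,0})=\widehat{\mathcal{C}}^{(0,0)}_r=\C_r$ by Lemma~\ref{L:canC}(b), so this restriction is $q_{\C_r}$. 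Therefore $q_{\bdgC}\circ T=q_{\C_r}\oplus\zeta^{c}$, which is exactly the desired $\weak$-equivalence and finishes (a), hence (b).

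I expect the only genuine obstacle to be the radical computation together with the key observation that a basis of $\rad(q_{\bdgC})$ can be chosen \emph{triangular over the extra coordinates}: this is precisely what lets all $c$ zero forms be split off by a single $T\in\GlnZ$ and, crucially, leaves behind exactly the recognizable core $q_{\C_r}$ rather than some opaque positive form of the same rank. A more graphical but longer alternative is to peel the extra arrows one at a time --- a pair of parallel two-tail arrows, or of parallel two-tail loops, differs by a radical vector $\bas_i-\bas_j$, and the corresponding elementary $\Z$-equivalence (computed directly from the incidence matrix, in the spirit of Lemma~\ref{L:non}) turns one of them into a directed loop, i.e.\ a $\zeta$ summand, reducing to the cases $c_1,c_2\leq 1$; the simultaneous version above is cleaner and avoids this case analysis.
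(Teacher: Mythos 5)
Your proposal is correct, but it takes a genuinely different route from the paper. The paper proves both parts by explicit graphical surgery: starting from the bidirected graph $\bdg_0$ produced in the proof of Theorem~\ref{T:chc}, it applies the non-G-equivalences $\mathcal{S}^{+}_{r,r+i}$ and $\mathcal{S}^{+}_{1,r+c_1+i}$ of Table~\ref{TA:non} to convert all extra bidirected arrows and loops into directed loops (each contributing a $\zeta$ summand by Lemma~\ref{L:irr}(a)), then reshapes the remainder into $\C_r$ by Gabrielov transformations, and obtains (b) by one further move $\mathcal{S}^{-}_{2,1}$ turning the bidirected loop into a second arrow parallel to arrow $2$, i.e.\ the $\D_r$ shape. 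You instead reduce to the canonical form $q_{\bdgC_r^{c_1,c_2}}$ via Theorem~\ref{T:chc} and split off $\zeta^c$ in one stroke by a unimodular change of basis adapted to the radical; your radical computation checks out (the basis vectors are $\bas_1+2(\bas_2+\cdots+\bas_{r-1})+\bas_r+\bas_e$ for the parallel two-tail arrows and $\bas_1+2(\bas_2+\cdots+\bas_{r-1})+2\bas_r+\bas_e$ for the two-tail loops, which are integral and triangular over the extra coordinates exactly as you claim), and the block-triangular $T$ then kills the extra rows and columns of the Gram matrix while fixing the core, leaving $q_{\C_r}\oplus\zeta^c$. What the paper's approach buys is an explicit, self-contained chain of transformations that in particular re-derives $q_{\C_r}\weak q_{\D_r}$ for $r\geq 4$ as a byproduct (as noted right after the proof); your reduction of (b) to (a) instead leans on Remark~\ref{rem:weakDynkins}, which is imported from \cite[Proposition 2.4(a)]{KS15b}, so your proof of (b) is correct but not self-contained in the same sense. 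What your approach buys is a shorter and more conceptual argument for (a), essentially the radical-splitting technique of \cite[2.1(ii)]{BP99} transported to type $\CC$, avoiding the seven-step case analysis.
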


\begin{proof} 
Let $c_2:=\dl(q)-1\geq 0$ and $c_1:=c-c_2$. Proceeding as in the  proof of Theorem~\ref{T:chc}, note that $c_1\geq 0$ and that there is a G-transformation $\widehat{T}P$ such that $q \circ (\widehat{T}P)=q_{\bdg_0}$, where $\bdg_0$ is the bidirected graph described in Step~4 of that proof.

\medskip
\noindent \textit{Step~1.} If $c_1>0$, then define recursively $\bdg_i:=\bdg_{i-1}\mathcal{S}^+_{r,r+i}$ for $i=1,\ldots,c_1$. Take the corresponding $\ZZ$-equivalence $S':=S^+_{r,r+1}\cdots S^+_{r,r+c_1}$ (cf.~Lemma \ref{L:non}) and $\bdg'_0:=\bdg_{c_1}$. In case $c_1=0$, take $S':=\Id$ and $\bdg'_0:=\bdg_0$.

\medskip
\noindent \textit{Step~2.} If $c_2>0$, then define recursively $\bdg'_i:=\bdg'_{i-1}\mathcal{S}^+_{1,r+c_1+i}$ for $i=1,\ldots,c_2$. Take $S'':=S^+_{1,r+c_1+1}\cdots S^+_{1,r+c_1+c_2}$ and $\bdg''_0:=\bdg'_{c_2}$. In case $c_2=0$ take $S''=\Id$ and $\bdg''_0=\bdg'_0$.

\medskip
\noindent \textit{Step~3.} Take recursively $\bdg''_i:=\bdg''_{i-1}\mathcal{T}_{i+1,i}$ for $i=1,\ldots,r-1$, and let $T''':=T_{2,1}T_{3,2}\cdots T_{r,r-1}$ be the corresponding iterated Gabrielov transformation of $q_{\bdg''_0}$. Then $\bdg''':=\bdg''_{r-1}$ has the following shape,
\[
\bdg'''=\xymatrix@C=3pc{ {\bulito} \ar@{<->}@(lu,ld)_-{1} \ar[r]^-{2} & {\bulito} \ar[r]^-{3} & \cdots \ar[r]^-{r-1} & {\bulito} \ar[r]^-{r}  & *++[]{\bulito} \ar@{<-}@(u,r)^(.5){r+1} \ar@{<-}@(d,r)_(.5){r+c_1+c_2} \ar@{}[r]|-{\cdots} & {} }
\]
Then by Lemmas \ref{L:tra}(a) and \ref{L:non} we have $q \circ T^a= q_{\bdg'''}$ for $T^a:=\widehat{T}PS'S''T'''$. Observe that $\bdg'''$ contains exactly $c=c_1+c_2$ directed loops, precisely those with labels $r+1,\ldots,r+c_1+c_2$. This means that $q_{\bdg'''}=q_{\C_r}\oplus\zeta^c$ by Lemmas \ref{L:canC}(b) and \ref{L:irr}(a), thus the claim $(a)$ holds.

Take now $\widehat{\bdg}:=\bdg'''\mathcal{S}^-_{2,1}$, and observe that if $r \geq 4$, then $q\circ T^b=q_{\widehat{\bdg}}=q_{\D_r}\oplus \zeta^c$, where $T^b:=T^aS^-_{2,1}$, cf.~Definition \ref{D:canAD}(b). This shows $(b)$, completing the proof.
\[
\widehat{\bdg}=\xymatrix@C=3pc{ {\bulito} \ar@{<->}@<-1.5ex>[r]_-{1} \ar[r]^-{2} & {\bulito} \ar[r]^-{3} & \cdots \ar[r]^-{r-1} & {\bulito} \ar[r]^-{r}  & *++[]{\bulito} \ar@{<-}@(u,r)^(.5){r+1} \ar@{<-}@(d,r)_(.5){r+c_1+c_2} \ar@{}[r]|-{\cdots} & {} }
\]
\end{proof}

In particular, the proof of Theorem \ref{T:pr1} provides explicit $\ZZ$-equivalences $q_{\C_r}\weak q_{\D_r}$ for each $r\geq 4$, and $q_{\widetilde{\C}_r}\weak q_{\C_r}\oplus\xi\weak q_{\widetilde{\C\CD}_r}$  for each $r\geq 2$, cf.~Remark \ref{rem:weakDynkins}. More generally,
we have the following.

\begin{corollary}\label{cor:weakforDynC}
Given two connected non-negative integral quadratic forms $q,q':\ZZ^n\to\ZZ$ of Dynkin type $\CC$, the following two conditions are equivalent:
\begin{enumerate}[label={\textnormal{(\roman*)}},topsep=3px,parsep=0px]
\item $q\weak q'$,
\item $\crk(q)=\crk(q')$.
\end{enumerate}
In particular, the following $\ZZ$-equivalences hold:
\begin{enumerate}[label={\textnormal{(\alph*)}},topsep=4px,parsep=0px]
 \item $q_{\widehat{\mathcal{C}}^{(c_1,c_2)}_r}\weak q_{\widehat{\mathcal{C}}^{(c'_1,c'_2)}_r}$ for each $r\geq 2$ and $c_1,c_2,c_1',c_2'\geq 0$ such that $c_1+c_2=c_1'+c_2'$,
\item $q_{\widehat{\mathcal{C}}^{(c_1,c_2)}_r}\weak q_{\widehat{\D}^{(c)}_r}$ for each $r\geq 4$ and $c, c_1,c_2\geq 0$ such that $c=c_1+c_2$.
\end{enumerate}
\end{corollary}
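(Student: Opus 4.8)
The plan is to reduce everything to the normal-form results already established, namely Theorem~\ref{T:pr1}, together with the fact that $\ZZ$-equivalence preserves corank. The implication (i)$\Rightarrow$(ii) is immediate: if $q\weak q'$ then $\crk(q)=\crk(q')$ by the observation recorded right after~\eqref{eq:equivalence}. So all the content lies in (ii)$\Rightarrow$(i).

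For (ii)$\Rightarrow$(i), I would first note that since $q$ and $q'$ are both defined on $\ZZ^n$ and have Dynkin type $\CC$, writing $c:=\crk(q)=\crk(q')$ forces both to have rank $r:=n-c$, hence Dynkin type $\CC_r$ for the \emph{same} $r$. Because forms of Dynkin type $\CC$ are connected, irreducible and non-negative by Definition~\ref{D:tyc}, Theorem~\ref{T:pr1}(a) applies to each of $q$ and $q'$ and yields $q\weak q_{\C_r}\oplus\zeta^c$ and $q'\weak q_{\C_r}\oplus\zeta^c$. Transitivity of $\weak$ then gives $q\weak q'$, completing the main equivalence.

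For the two displayed $\ZZ$-equivalences I would treat (a) and (b) separately. For (a), Lemma~\ref{L:canC}(a),(c) shows that $q_{\widehat{\mathcal{C}}^{(c_1,c_2)}_r}$ and $q_{\widehat{\mathcal{C}}^{(c'_1,c'_2)}_r}$ are both of Dynkin type $\CC_r$ with coranks $c_1+c_2$ and $c'_1+c'_2$; since these coincide by hypothesis, the just-proved equivalence (i)$\Leftrightarrow$(ii) delivers the claim at once (alternatively one may cite Theorem~\ref{T:pr1}(a) directly for both). For (b) the target form $q_{\widehat{\D}^{(c)}_r}$ lies \emph{outside} the class of type-$\CC$ forms, so the main equivalence does not apply directly; instead I would route through the common direct sum $q_{\D_r}\oplus\zeta^c$. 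Applying Theorem~\ref{T:pr1}(b) (valid since $r\geq 4$) to $q_{\widehat{\mathcal{C}}^{(c_1,c_2)}_r}$, whose corank is $c=c_1+c_2$ by Lemma~\ref{L:canC}(c), gives $q_{\widehat{\mathcal{C}}^{(c_1,c_2)}_r}\weak q_{\D_r}\oplus\zeta^c$; on the other side, $q_{\widehat{\D}^{(c)}_r}$ is a non-negative connected unit form of Dynkin type $\D_r$ and corank $c$ (Remark~\ref{R:canAD}), so Theorem~\ref{T:wea}(c) gives $q_{\widehat{\D}^{(c)}_r}\weak q_{\D_r}\oplus\zeta^c$, and transitivity closes the argument.

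Since all the hard structural work is done in Theorem~\ref{T:pr1} and Theorem~\ref{T:wea}, I do not anticipate a genuine obstacle; the only points requiring care are the bookkeeping that equal $n$ together with equal corank forces equal rank (so that the \emph{same} $\C_r$ appears on both sides of the reduction), and the observation that part (b) must be proved by passing to the mixed normal form $q_{\D_r}\oplus\zeta^c$ rather than by invoking the type-$\CC$ equivalence, precisely because its right-hand side has Dynkin type $\D$ rather than $\CC$.
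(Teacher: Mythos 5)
Your proposal is correct and follows essentially the same route as the paper: the equivalence (i)$\Leftrightarrow$(ii) via Theorem~\ref{T:pr1}(a) after noting that equal $n$ and equal corank force equal rank, part (a) as an immediate consequence, and part (b) by chaining Theorem~\ref{T:pr1}(b) with Theorem~\ref{T:wea}(c) through the common form $q_{\D_r}\oplus\zeta^{c}$. No gaps.
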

\begin{proof} 
Equivalence of (i) and (ii) follows from Theorem \ref{T:pr1}(a) since $\crk(q)=\crk(q')$ implies $\Rnk(q)=\Rnk(q')$ ($q$ and $q'$ have the same number $n$ of indeterminates). In particular, the claim (a) holds by
Lemma \ref{L:canC}(c).

To show (b) combine Theorems \ref{T:pr1}(b) and \ref{T:wea}(c) to get $q_{\widehat{\mathcal{C}}^{(c_1,c_2)}_r}\weak  q_{\D_r}\oplus\xi^{c_1+c_2}\weak q_{\widehat{\D}^{(c_1+c_2)}_r}$.
\end{proof}

We finish this section with a useful fact which is an analog of the known property of non-negative unit forms (see \cite[Theorem 2(c)]{BP99}) and it has a surprisingly simple proof by our techniques.
Given a subset $X\subseteq \edgs{\bdg}$ of the set of arrows of a bidirected graph $\bdg$, we denote by $\bdg^X$ the bidirected subgraph of $\bdg$ obtained by removing all the arrows in $E(\bdg)\setminus X$. Note that $q_{\bdg^X}=q_{\bdg}^X$, where $q_{\bdg}^X$ is the restriction of $q_\bdg$ to $X$, cf.~\eqref{eq:restriction} and Lemma \ref{L:sub}. 

\begin{proposition}\label{prop:pr2}
Let $q:\Z^n \to \Z$ be a connected, irreducible and non-negative quadratic form of Dynkin type $\CC_r$ with $r\geq 2$. Then there exists a connected restriction $q'$ of $q$ such that $q'$ is a positive quadratic form of Dynkin type $\CC_r$.
\end{proposition}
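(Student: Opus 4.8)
The plan is to pass to a graphical model of $q$ and then prune its bidirected graph down to a spanning subgraph carrying a single bidirected loop. By Theorem B, since $q$ has Dynkin type $\CC_r$ we may write $q=q_{\bdg}$ for a connected bidirected graph $\bdg$ possessing a bidirected loop; moreover $\bdg$ is unbalanced, so $\cyccond_\bdg=0$, and Corollary~\ref{cor:crk} gives $|\verts{\bdg}|=\Rnk(q)=r$ and $|\edgs{\bdg}|-r=\crk(q)$. A restriction $q^X$ of $q$ corresponds, via Lemma~\ref{L:sub} and the identity $q_{\bdg^X}=q_{\bdg}^X$, to the bidirected subgraph $\bdg^X$ on the full vertex set $\verts{\bdg}$ retaining only the arrows of $X$. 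Thus it suffices to choose $X\subseteq\edgs{\bdg}$ so that $\bdg^X$ is connected on all $r$ vertices, still contains a bidirected loop, and has exactly $r$ arrows.

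I would construct $X$ as follows. Since $\overline{\bdg}$ is connected with $r\geq 2$ vertices it admits a spanning tree, using $r-1$ non-loop arrows of $\bdg$; let $i_0$ be a bidirected loop of $\bdg$, sitting at some vertex $u_0$. Set $X$ to be the $r-1$ tree arrows together with $i_0$, so that $|X|=r$. Because the tree is spanning, $u_0$ already lies in it, whence adjoining $i_0$ leaves $\bdg^X$ connected on all $r$ vertices; note that $\bdg^X$ then has exactly one loop, which is bidirected, and no directed loop.

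It remains to read off the required properties of $q':=q^X=q_{\bdg^X}$ from the earlier lemmas. Non-negativity is Lemma~\ref{L:ful}; connectedness of $q'$ follows from Lemma~\ref{L:irr}(c) when $r\geq 3$ (as $\bdg^X$ has no directed loop) and from Lemma~\ref{L:irr}(d) when $r=2$ (as $\bdg^X$ has a bidirected loop). Since $|\verts{\bdg^X}|=r\geq 2$, Lemma~\ref{L:irr}(b) yields irreducibility. The bidirected loop forces $q'$ to be non-unitary with $q'_{i_0}=2$, while Lemma~\ref{L:ful} gives $1\leq q'_i\leq 2$ for each arrow together with semi-full regularity; as all diagonal coefficients are positive, this is in fact full regularity, so $q'$ has Dynkin type $\CC$ by Definition~\ref{D:tyc}. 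Finally $\bdg^X$ is unbalanced, so $\cyccond_{\bdg^X}=0$, and Corollary~\ref{cor:crk} gives $\Rnk(q')=r$ and $\CRnk(q')=|X|-r+0=0$. Hence $q'$ is a connected restriction of $q$ that is positive of Dynkin type $\CC_r$, as desired.

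The computations are entirely routine once the graph is set up. The only point requiring a little care is ensuring that the combinatorial connectivity of $\bdg^X$ genuinely translates into connectivity of $q'$ (equivalently, of $\Delta(\bdg^X)$), which is why the degenerate case $r=2$ must be separated off and handled by Lemma~\ref{L:irr}(d); beyond this minor bookkeeping I expect no real obstacle.
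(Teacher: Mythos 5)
Your proposal is correct and follows essentially the same route as the paper: invoke Theorem B to realize $q=q_{\bdg}$ for a connected bidirected graph with a bidirected loop and $r$ vertices, restrict to a spanning tree together with one bidirected loop, and conclude positivity from $\CRnk(q')=|X|-r=0$ via Corollary~\ref{cor:crk}. Your extra care with the $r=2$ case via Lemma~\ref{L:irr}(d) is a slightly more explicit version of the paper's appeal to Lemma~\ref{L:irr} and Table~\ref{TA:des}, but the argument is the same.
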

\begin{proof}
By Theorem B, there is a connected bidirected graph $\bdg$ such that $q=q_{\bdg}$ with $\cyccond_\bdg=0$, at least one bidirected loop, say $i_0\in\edgs{\bdg}$, and  $r=|\verts{\bdg}|$. Since $q$ is connected and irreducible, $\bdg$ contains no directed loop.   Consider a spanning subtree $\bdg'$ of $\bdg$, let $X'=E(\bdg')$ be the set of arrows in $\bdg'$, and take $X:=X' \cup \{i_0\}$. Then $\bdg^X$ is connected, has exactly one bidirected loop and exactly $r \geq 2$ vertices. Then $q':=q^X$ is a connected irreducible restriction of $q$ (apply Lemma \ref{L:irr} and Table \ref{TA:des}) with Dynkin type $\CC_r$. Moreover, its corank is $\CRnk(q')=|E(\bdg^X)|-|V(\bdg^X)|=0$, thus $q'$ is a positive quadratic form. This completes the proof.
\end{proof}

\section{Roots of incidence quadratic forms}\label{sec:roots}
In this section we show that  walks in a bidirected graph  provide an adequate graphical model for certain roots of its incidence form. In particular,  we prove Theorem C in subsection \ref{subsec:proofB}.

Fix a bidirected graph $\bdg=(\verts{\bdg},\edgs{\bdg},\edges)$ with the set of all directed loops  $\shposloops:=\posloops{\bdg}\subseteq E:=\edgs{\bdg}$, cf.~Definition \ref{D:bdg}.    By $\extwalks(\bdg)$ we denote the set of all walks of $\bdg$, as defined in \eqref{eq:walk} in terms of the extended bidirected graph $\extbdg{\bdg}$. We define a mapping $\inc:\extwalks(\bdg)\to\ZZ^n$ for $n=|E|$ as follows.
For any walk $\wlk=(v_0,i_1,v_1,i_2,v_2,\ldots,v_{\ell-1},i_{\ell},v_{\ell})$ of ${\bdg}$ consider the vector $\inc(\wlk)$ in $\Z^n$ given by
\begin{equation}\label{eq:inc}
\inc(\wlk)=\inc_\bdg(\wlk)=\sum_{t=1}^{\ell}\sigma(\wlk^{[t-1]})d(v_{t-1},i_t)\bas_{i_t},
\end{equation}
where $\wlk^{[t]}=(v_0,i_1,v_1,i_2,v_2,\ldots,v_{t-1},i_{t},v_{t})$ for $t=1,\ldots,\ell$, and $\wlk^{[0]}=(v_0,v_0)$, and where we identify $\bas_{i}$ and $\bas_{i^{-1}}$ for $i \in \shposloops$.
Moreover, $d=d_\bdg$ denotes the function  $d_\bdg:\verts{\bdg} \times \edgs{\extbdg{\bdg}} \to \{\pm 1\}$ given by
\begin{equation*}
d(v,i) = \left\{
\begin{array}{l l}
d, & \text{if $(v,d) \in \edges(i)$ and $(v,-d) \notin \edges(i)$},\\
+1, & \text{if both $(v,d)$ and $(v,-d)$ belong to $\edges(i)$, and {$i \in \shposloops$}},\\
-1, & \text{if both $(v,d)$ and $(v,-d)$ belong to $\edges(i)$, and {$i \in (\shposloops)^{-1}$}},\\
0,& \text{otherwise.}
\end{array} \right.
\end{equation*}
For any walk $\wlk$ as above, define the \textbf{reversed walk} as the sequence $\omega^{-1}=(w_0,j_1,w_1,j_2,$ $w_2,\ldots,w_{\ell-1},j_{\ell},w_{\ell})$ where $w_t=v_{\ell-t}$ for $t=0,\ldots,\ell$, and $j_t=i_{\ell-t+1}$ if $i_{\ell-t+1}\in E\setminus \shposloops$, and $j_t=i_{\ell-t+1}^{-1}$ otherwise, for $t=1,\ldots,\ell$, where we take $(i^{-1})^{-1}=i$ for $i\in\shposloops$ (recall that the formal inverse is defined only for the elements of $E^{+\ell}$). {Note that $\sigma(\wlk)=\sigma(\wlk^{-1})$.} If $\wlk$ is closed, then we set $\wlk^0:=(v_0)$ and $\wlk^{k+1}:=\wlk^k\wlk$ for $k\geq 0$, and $\wlk^{-k}:=(\wlk^k)^{-1}$.

\begin{lemma}\label{L:inc}
Let $\bdg$ be a bidirected graph, and $\omega\in\extwalks(\bdg)$ a  walk  of $\bdg$.
\begin{enumerate}[label={\textnormal{(\alph*)}},topsep=3px,parsep=1px]
 \item If $\wlk$ has length one, say $\wlk=(v_0,i_1,v_1)$ for $i_1 \in \edgs{\extbdg{\bdg}}$, then $\inc(\wlk)=d(v_0,i_1)\bas_{i_1}$ and $\inc(\wlk^{-1})=-\sigma(\wlk)\inc(\wlk)$.
 \item If $\wlk=\wlk'\wlk''$ for walks $\wlk'$ and $\wlk''$, then $\inc(\wlk)=\inc(\wlk')+\sigma(\wlk')\inc(\wlk'')$.
 \item We have $\inc(\wlk^{-1})=-\sigma(\wlk)\inc(\wlk)$.
 \item If $\wlk'$ is a closed positive walk such that $\wlk\wlk'\wlk^{-1}$ is a walk, then $\inc(\wlk\wlk'\wlk^{-1})=\sigma(\wlk)\inc(\wlk')$.
 \item If $\wlk', \wlk''$ are walks such that $\wlk'\wlk\wlk^{-1}\wlk''$ is defined, then $\inc(\wlk'\wlk\wlk^{-1}\wlk'')=\inc(\wlk'\wlk'')$.
\item If $\wlk$ is a positive closed walk, then $\inc(\wlk^k)=k\inc(\wlk)$ for each $k\in\ZZ$.
\item If $\wlk$ is a negative closed walk, then $\inc(\wlk^k)=\inc(\wlk)$ for each odd $k\in\ZZ$ and
$\inc(\wlk^k)=0$ for each even $k\in\ZZ$.
\end{enumerate}
\end{lemma}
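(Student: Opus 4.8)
The plan is to treat statements (a) and (b) as the two foundational facts and derive (c)--(g) from them by induction and straightforward algebra. First I would prove (a) by unwinding the definition~\eqref{eq:inc}: for a walk $\wlk=(v_0,i_1,v_1)$ of length one only the index $t=1$ contributes, and since $\wlk^{[0]}$ is the trivial walk with $\sigma(\wlk^{[0]})=+1$, the sum collapses to $\inc(\wlk)=d(v_0,i_1)\bas_{i_1}$. The reversal identity then splits into two cases according to the type of $i_1$. If $i_1$ is not a directed loop (so $j_1=i_1$ and $\bas_{j_1}=\bas_{i_1}$), a direct check from $\sigma(i_1)=-\epsilon_0\epsilon_1$ gives $d(v_1,i_1)=-\sigma(i_1)\,d(v_0,i_1)$, covering both the non-loop and the bidirected-loop subcases at once. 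If $i_1$ is a directed loop, then $j_1=i_1^{-1}$, $\sigma(i_1)=+1$, and the defining clauses for $d$ yield $d(v_0,i_1^{-1})=-1=-d(v_0,i_1)$ under the identification $\bas_{i_1^{-1}}=\bas_{i_1}$; in all cases one obtains $\inc(\wlk^{-1})=-\sigma(\wlk)\inc(\wlk)$.

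The crucial additivity law (b) is where I expect the real work to lie. Writing $\wlk=\wlk'\wlk''$ with $\wlk'$ of length $\ell'$, I would split the defining sum~\eqref{eq:inc} at $t=\ell'$. For $t\le\ell'$ the prefix $\wlk^{[t-1]}$ equals $\wlk'^{[t-1]}$, so these terms reassemble exactly into $\inc(\wlk')$. For $t>\ell'$ the prefix factors as $\wlk^{[t-1]}=\wlk'\,\wlk''^{[t-1-\ell']}$, and multiplicativity of $\sigma$ over concatenation extracts a common factor $\sigma(\wlk')$; after reindexing the second block of terms becomes $\sigma(\wlk')\inc(\wlk'')$, giving $\inc(\wlk)=\inc(\wlk')+\sigma(\wlk')\inc(\wlk'')$. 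The delicate point, and the one demanding care in writing, is the exact bookkeeping of the running prefix signs $\sigma(\wlk^{[t-1]})$ as the split is performed.

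With (a) and (b) available the rest is formal. I would prove (c) by induction on the length $\ell$, decomposing $\wlk=\wlk_1\wlk_2$ with $\wlk_2$ of length one, applying (b) to both $\wlk$ and $\wlk^{-1}=\wlk_2^{-1}\wlk_1^{-1}$, and combining the base case (a), the inductive hypothesis, and the identity $\sigma(\wlk^{-1})=\sigma(\wlk)$ already noted in the text. The single computation powering (d) and (e) is $\inc(\wlk\wlk^{-1})=\inc(\wlk)+\sigma(\wlk)\inc(\wlk^{-1})=\inc(\wlk)-\sigma(\wlk)^2\inc(\wlk)=0$, using (c). For (e) this shows that inserting $\wlk\wlk^{-1}$ between $\wlk'$ and $\wlk''$ contributes nothing, since by (b) the middle block carries $\sigma(\wlk\wlk^{-1})=+1$ and zero incidence vector. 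For (d) with $\wlk'$ positive I would expand $\inc(\wlk\wlk'\wlk^{-1})=\inc(\wlk)+\sigma(\wlk)\inc(\wlk')+\sigma(\wlk)\sigma(\wlk')\inc(\wlk^{-1})$ and use $\sigma(\wlk')=+1$ together with (c) to cancel the outer terms, leaving $\sigma(\wlk)\inc(\wlk')$.

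Finally (f) and (g) follow by induction on $k\ge 0$ from the recursion $\inc(\wlk^{k+1})=\inc(\wlk^k)+\sigma(\wlk)^k\inc(\wlk)$, which is just (b) applied to $\wlk^{k+1}=\wlk^k\wlk$ with $\sigma(\wlk^k)=\sigma(\wlk)^k$. When $\sigma(\wlk)=+1$ this telescopes to $k\inc(\wlk)$, giving (f); when $\sigma(\wlk)=-1$ the sign $(-1)^k$ alternates, producing $0$ for even $k$ and $\inc(\wlk)$ for odd $k$, giving (g). The cases $k<0$ are recovered in both statements from the defining convention $\wlk^{-k}=(\wlk^k)^{-1}$ via (c), which flips sign by $-\sigma(\wlk^k)=-\sigma(\wlk)^k$ and is checked to match the asserted values in each parity.
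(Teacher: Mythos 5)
Your proposal is correct and follows essentially the same route as the paper: (a) by unwinding the definition with a case check on the arrow type, (b) by splitting the defining sum at $t=\ell'$ and extracting $\sigma(\wlk')$ via multiplicativity of the sign, (c) by induction on length using (a) and (b), and (d)--(g) as formal consequences (the paper merely states these follow by straightforward calculation and induction, which is exactly what you supply). All the sign computations you sketch check out against the definitions of $d(-,-)$ and $\sigma$.
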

\begin{proof}
The first claim in (a) follows from the definitions of $\inc$ and $d(-,-)$, since $\sigma(\wlk^{[0]})=1$ for $\wlk^{[0]}$ is a trivial walk. Also notice that if $\wlk^{-1}=(v_0',i_1',v_1')$, then
\[
\inc(\wlk^{-1})=d(v_0',i_1')\bas_{i_1}=d(v_1,i_1')\bas_{i_1}=-\sigma(i_1)d(v_0,i_1)\bas_{i_1}=-\sigma(\wlk)\inc(\wlk)
\]
Observe that this equation holds even when $i_1$ is a directed loop.

Now, assume that $\wlk=(v_0,i_1,v_1,\ldots,v_{\ell-1},i_{\ell},v_{\ell})$. To show (b), take $\ell'$ and $\ell''$ the lengths of $\wlk'$ and $\wlk''$ (so that $\ell=\ell'+\ell''$). A direct calculation yields,
\begin{eqnarray*}
\inc(\wlk'\wlk'') & = & \sum_{t=1}^{\ell}\sigma(\wlk^{[t-1]})d(v_{t-1},i_t)\bas_{i_t} \\
& = & \inc(\wlk')+\sum_{t=\ell'+1}^{\ell'+\ell''}\sigma(\wlk^{[t-1]})d(v_{t-1},i_t)\bas_{i_t} \\
& = & \inc(\wlk')+\sigma(\wlk')\sum_{t=\ell'+1}^{\ell'+\ell''}\sigma((\wlk'')^{[t-\ell'-1]})d(v_{t-1},i_t)\bas_{i_t} \\
& = & \inc(\wlk')+\sigma(\wlk')\inc(\wlk'').
\end{eqnarray*}

Claim (c) follows by induction on the length of $\wlk$ (the base case shown in (a)). For a composite walk $\wlk'\wlk''$ with both $\wlk'$ and $\wlk''$ non trivial, using (b) we have
\begin{eqnarray*}
\inc[(\wlk'\wlk'')^{-1}] & = & \inc[(\wlk'')^{-1}(\wlk')^{-1}] =\inc[(\wlk'')^{-1}]+\sigma(\wlk'')\inc[(\wlk')^{-1}] \\
& = & -\sigma(\wlk'')\inc(\wlk'')-\sigma(\wlk'')\sigma(\wlk')\inc(\wlk') \\
& = & -\sigma(\wlk')\sigma(\wlk'')[\inc(\wlk')+\sigma(\wlk')\inc(\wlk'')] \\
& = & -\sigma(\wlk'\wlk'')\inc(\wlk'\wlk''),
\end{eqnarray*}
since clearly $\sigma(\wlk)=\sigma(\wlk^{-1})$ for any walk $\wlk$.

The remaining assertions (d)-(g) follow by a straightforward calculation from (b) and (c), and a simple induction in the cases (f)-(g).
\end{proof}

\begin{lemma}\label{L:Incinc}
Fix    a bidirected graph $\bdg$  and a walk $\wlk=(v_0,i_1,\ldots,i_{\ell},v_{\ell})\in\extwalks(\bdg)$. Then
\begin{enumerate}[label={\textnormal{(\alph*)}},topsep=4px,parsep=0px]
\item $I(\bdg)^{\tr}\inc(\wlk) = \bas_{v_0}-\sigma(\omega)\bas_{v_{\ell}}$,
 \item the walk $\wlk$ is closed and positive if and only if  $I(\bdg)^{\tr}\inc(\wlk)=0$,
 \item the walk $\wlk$ is closed and negative if and only if  $I(\bdg)^{\tr}\inc(\wlk)=2\bas_{v_0}$.
\end{enumerate}
\end{lemma}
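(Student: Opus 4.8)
The plan is to establish (a) by induction on the length $\ell$ of $\wlk$, and then to read off (b) and (c) as immediate consequences of (a) together with the linear independence of the canonical basis vectors $\bas_u$ of $\Z^{\verts{\bdg}}$.

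For the base of the induction, I would check the cases $\ell=0$ and $\ell=1$ directly. When $\ell=0$ the walk is trivial, $\inc(\wlk)=0$ and $\sigma(\wlk)=+1$, so both sides vanish since $v_0=v_\ell$. For a walk $\wlk=(v_0,i_1,v_1)$ of length one, Lemma~\ref{L:inc}(a) gives $\inc(\wlk)=d(v_0,i_1)\bas_{i_1}$, whence by~\eqref{eq:IofB} one has $I(\bdg)^{\tr}\inc(\wlk)=d(v_0,i_1)\,I(\bdg)^{\tr}\bas_{i_1}$. Writing $\edges(i_1)=\{(u,\epsilon),(u',\epsilon')\}$ and invoking the identity $\sigma(i_1)=(-1)\epsilon\epsilon'$ from Definition~\ref{D:bdg}(a) (equivalently $\epsilon\epsilon'=-\sigma(i_1)$), a short trichotomy according to whether $i_1$ is a non-loop, a directed loop, or a bidirected loop yields $I(\bdg)^{\tr}\inc(\wlk)=\bas_{v_0}-\sigma(\wlk)\bas_{v_1}$ in each case.

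For the inductive step I would split $\wlk=\wlk'\wlk''$ with $\wlk'=(v_0,i_1,v_1)$ of length one and $\wlk''=(v_1,i_2,\ldots,i_{\ell},v_{\ell})$ of length $\ell-1$. By Lemma~\ref{L:inc}(b),
\[
I(\bdg)^{\tr}\inc(\wlk)=I(\bdg)^{\tr}\inc(\wlk')+\sigma(\wlk')\,I(\bdg)^{\tr}\inc(\wlk'').
\]
Applying the base case to $\wlk'$ and the induction hypothesis to $\wlk''$, the two occurrences of $\bas_{v_1}$ cancel and, since $\sigma(\wlk)=\sigma(\wlk')\sigma(\wlk'')$, the expression collapses to $\bas_{v_0}-\sigma(\wlk)\bas_{v_{\ell}}$, proving (a).

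Finally, (b) and (c) follow directly from (a). As $\bas_{v_0}$ and $\bas_{v_{\ell}}$ are canonical basis vectors, the vector $\bas_{v_0}-\sigma(\wlk)\bas_{v_{\ell}}$ equals $0$ precisely when $v_0=v_{\ell}$ and $\sigma(\wlk)=+1$, i.e. $\wlk$ is closed and positive, giving (b); and it equals $2\bas_{v_0}$ precisely when $v_0=v_{\ell}$ and $\sigma(\wlk)=-1$, i.e. $\wlk$ is closed and negative, giving (c). The only delicate part is the base case for loops: for a directed loop one has $\epsilon+\epsilon'=0$, so $I(\bdg)^{\tr}\bas_{i_1}=0$ and $\sigma(\wlk)=+1$, and both sides vanish consistently with $v_0=v_1$ irrespective of which branch of $d(v_0,i_1)$ (that is, the formal-inverse choice) is taken; while for a bidirected loop one has $I(\bdg)^{\tr}\bas_{i_1}=2\epsilon\bas_{v_0}$, $d(v_0,i_1)=\epsilon$ and $\sigma(\wlk)=-1$, producing $2\bas_{v_0}$ on both sides.
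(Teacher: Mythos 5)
Your proposal is correct and follows essentially the same route as the paper: the paper computes $I(\bdg)^{\tr}\inc(\wlk)$ as a single telescoping sum, with the per-arrow identity $d(v_{t-1},i_t)I(\bdg)^{\tr}\bas_{i_t}=\bas_{v_{t-1}}-\sigma(i_t)\bas_{v_t}$ playing the role of your length-one base case, while your induction via Lemma~\ref{L:inc}(b) is just the same telescoping argument unrolled step by step. Parts (b) and (c) are read off from (a) in both arguments exactly as you do.
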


\begin{proof} Applying \eqref{eq:inc} we have
\begin{eqnarray*}
\label{EqG:inc}
I(\bdg)^{\tr}\inc(\wlk) & = & \sum_{t=1}^{\ell}\sigma(\wlk^{[t-1]})d(v_{t-1},i_t)I(\bdg)^{\tr}\bas_{i_t} \nonumber\\
& = & \sum_{t=1}^{\ell}\sigma(\wlk^{[t-1]})d(v_{t-1},i_t)[d(v_{t-1},i_t)\bas_{v_{t-1}}+\xi(i_t)d(v_t,i_t)\bas_{v_{t}}] \nonumber\\
& = & \sum_{t=1}^{\ell}\sigma(\wlk^{[t-1]})[\bas_{v_{t-1}}-\sigma(i_t)\bas_{v_{t}}]
 \,=\,  \sum_{t=1}^{\ell}[\sigma(\wlk^{[t-1]})\bas_{v_{t-1}}-\sigma(\wlk^{[t]})\bas_{v_{t}}]
 \,=\,  \bas_{v_0}-\sigma(\wlk)\bas_{v_{\ell}},
\end{eqnarray*}
where $\xi(i):=-1$ for $i\in \posloops{\bdg}\sqcup(\posloops{\bdg})^{-1}$, and $\xi(i):=+1$ for all other arrows.

The claims $(b)$ and (c) follow directly from (a).
\end{proof}

The following is the converse of Lemma \ref{L:Incinc}(a) in some sense, and a crucial ingredient in the proof of Theorem~C.
\begin{proposition}\label{prop:incinc}
Let $\bdg$ be a connected bidirected graph with $n \geq 1$ arrows, and $x \in \Z^n$. If $I(\bdg)^{\tr}x=\bas_s + \T\bas_t$ for vertices $s,t \in V(\bdg)$ and a sign $\T \in \{\pm 1\}$, then there is a walk $\wlk$ from $s$ to $t$ such that $x=\inc(\wlk)$.
\end{proposition}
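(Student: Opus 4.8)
The plan is to induct on the length $|x|=\sum_{i}|x_i|$, building $\wlk$ one arrow at a time starting from $s$. If $|x|=0$ then $x=0$, so $\bas_s+\T\bas_t=I(\bdg)^{\tr}x=0$ forces $s=t$ and $\T=-1$, and the trivial walk $\wlk=(s)$ works. For the inductive step the idea is to split off a first arrow $i_1$ incident to $s$, say with signed endpoint $(s,\epsilon)$ and other endpoint $v_1$, and set $x':=\sigma(i_1)\bigl(x-\epsilon\,\bas_{i_1}\bigr)$. Since $I(\bdg)^{\tr}\bas_{i_1}=\epsilon\bas_s-\sigma(i_1)\epsilon\,\bas_{v_1}$ (using $\sigma(i_1)=-\epsilon\epsilon'$), a direct computation gives
\[
I(\bdg)^{\tr}x'=\bas_{v_1}+\sigma(i_1)\T\,\bas_t ,
\]
so $x'$ again has the required shape, now based at $v_1$. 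If moreover $\epsilon=\sgn(x_{i_1})$ and $x_{i_1}\neq0$, then $|x'|=|x|-1$, and by induction there is a walk $\wlk''$ from $v_1$ to $t$ with $\inc(\wlk'')=x'$; Lemma~\ref{L:inc}(b) then yields $\inc\bigl((s,i_1,v_1)\wlk''\bigr)=\epsilon\bas_{i_1}+\sigma(i_1)x'=x$, so $\wlk:=(s,i_1,v_1)\wlk''$ is the desired walk.

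Everything thus reduces to producing such a \emph{good} first arrow at $s$. The key is to read off the $s$-entry of the hypothesis $I(\bdg)^{\tr}x=\bas_s+\T\bas_t$: it equals $\sum_i c_i x_i$, where $c_i\in\{\pm1\}$ for a non-loop at $s$, $c_i\in\{\pm2\}$ for a bidirected loop at $s$, and $c_i=0$ otherwise. Whenever this entry is positive — that is, whenever $s\neq t$, or $s=t$ with $\T=+1$ — some summand $c_ix_i$ must be positive, which is precisely a good arrow $i_1$ at $s$, so the peeling above applies and the induction advances. Directed loops at $s$ contribute $0$ to this entry but cause no difficulty: using the formal inverse $i^{-1}\in(\shposloops)^{-1}$ in $\extbdg{\bdg}$, one peels a directed loop in whichever direction decreases $|x_i|$, keeping the base vertex and $\T$ unchanged.

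The remaining, and genuinely harder, case is $s=t$ with $\T=-1$, where the hypothesis reads $I(\bdg)^{\tr}x=0$, the $s$-entry carries no information, and $\wlk$ must be a positive closed walk at $s$ realizing $x$ — while $x$ may be supported entirely away from $s$. I would resolve this by \emph{transport}: pick any arrow $i_0$ with $x_{i_0}\neq0$, use connectedness of $\bdg$ to choose a walk $P$ from $s$ to an endpoint $w$ of $i_0$, and invoke Lemma~\ref{L:inc}(d), which gives $\inc(P\,\wlk_1\,P^{-1})=\sigma(P)\inc(\wlk_1)$ for any positive closed walk $\wlk_1$ at $w$. Hence it suffices to realize $\sigma(P)x$ by a closed walk based at $w$. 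Now $w$ is incident to a support arrow, so the $w$-entry of $I(\bdg)^{\tr}(\sigma(P)x)=0$ has a nonzero summand and therefore, summing to $0$, also a positive one — a good arrow at $w$ to peel (and if $i_0$ is itself a directed loop at $w$ it is peeled directly). This strictly decreases $|x|$, so the inductive hypothesis applies; the resulting closed walk at $w$ is automatically positive because $I(\bdg)^{\tr}$ of its $\inc$-image is $0$ (Lemma~\ref{L:Incinc}(b)), and composing with $P$ and $P^{-1}$ returns a closed walk at $s$ with $\inc$-image $x$. The main obstacle is exactly this null case: one must first relocate the base point to a vertex $w$ meeting the support, via the reversal identity Lemma~\ref{L:inc}(d), in order to recover a good arrow without inflating $|x|$, after which the uniform peeling argument finishes the induction.
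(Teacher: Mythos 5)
Your proof is correct and follows essentially the same route as the paper's: induction on $|x|=\sum_i|x_i|$, peeling off an arrow $i$ at the base vertex with $(s,\sgn(x_i))\in\edges(i)$ (whose existence is read off from the $s$-coordinate of $I(\bdg)^{\tr}x$ when that vector is nonzero), and handling the null case $I(\bdg)^{\tr}x=0$ by first producing a positive closed walk at a vertex meeting the support of $x$ and then conjugating back to $s$ via Lemma~\ref{L:inc}(c),(d). The only cosmetic differences are that you normalize by $\sigma(i_1)$ before applying the inductive hypothesis where the paper rescales afterwards, and that you localize the sum-to-zero argument at the chosen vertex $w$ where the paper invokes it more globally.
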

\begin{proof} 
We use the notation $\edges(i)=\{(v_0^i,\epsilon_0^i),(v_1^i,\epsilon_1^i)\}$ for an arrow $i \in \edgs{\bdg}$ and we identify $\edgs{\bdg}$ with the set $\un=\{1,\ldots,n\}$.
We proceed by induction on the length $|x|=\sum_{i=1}^n|x_i|$ of $x=[x_1,\ldots,x_n]^{\tr}$. If $x=0$, then we have $s=t$ and $\T=-1$, and the claim holds for the trivial walk $\wlk:=(s,s)$. If $|x|=1$, then $x=\eta\bas_i$ for some $i\in \edgs{\bdg}$ and $\eta\in\{\pm1\}$, so $\bas_s + \T\bas_t=I(\bdg)^{\tr}x=\eta\epsilon_0^i\bas_{v_0^i}+\eta\epsilon_1^i\bas_{v_1^i}$ and we may assume that $\{s,t\}=\{v_0^i,v_1^i\}$. Then applying \eqref{eq:inc} we verify that $\inc(\wlk)=x$ for $\wlk:=(s, i, t)$ or $\wlk:=(s, i^{-1}, s=t)$ in case $i\in\posloops{\bdg}$ and $\eta=-1$.

Now assume that $|x|>1$, and consider the following set
\[
E_{s}^x:=\{i \in \edgs{\bdg} \colon \text{$x_i \neq 0$ and $(s,\sgn(x_i)) \in \edges(i)$}\}.
\]
Observe first that if $I(\bdg)^{\tr}x \neq 0$, then $E_{s}^x \neq \emptyset$. Indeed, we have
\begin{equation}\label{EQ:proof5.4}
I(\bdg)^{\tr}x=\sum_{i=1}^nx_iI(\bdg)^{\tr}\bas_i=\sum_{i=1}^nx_i[\epsilon_0^i\bas_{v_0^i}+\epsilon_1^i\bas_{v_1^i}]=\bas_s+\T\bas_t.
\end{equation}
Therefore, the set $E_s^x$ must be non-empty. Fix an arbitrary $i \in E_s^x$, and take $\epsilon:=\sgn(x_{i})$. If $\edges(i)=\{(s,\epsilon),(s',\epsilon')\}$, then consider the vector $x':=x-\epsilon\bas_i$, which clearly satisfies $|x'|<|x|$ and
\begin{equation}\label{EQ:proof5.4-2}
I(\bdg)^{\tr}x'=I(\bdg)^{\tr}x-\epsilon I(\bdg)^{\tr}\bas_i=(\bas_s+\T\bas_t)-\epsilon(\epsilon\bas_s+\epsilon'\bas_{s'})=-\epsilon\epsilon'\bas_{s'}+\T\bas_t.
\end{equation}
Then $x'':=(-\epsilon\epsilon')x'$ satisfies $|x''|<|x|$ and $I(\bdg)^{\tr}x''=\bas_{s'}-\epsilon\epsilon'\T\bas_t$, and by induction hypothesis there is a walk $\wlk''$ from $s'$ to $t$ such that $x''=\inc(\wlk'')$. Take $\wlk':=(s,i,s')$ if $i\notin \posloops{\bdg}$ or $\wlk':=(s,i^\epsilon,s)$ if $i\in \posloops{\bdg}$, and let $\wlk:=\wlk'\wlk''$, which is a walk from $s$ to $t$. Using Lemma~\ref{L:inc}(b), we get
\[
\inc(\wlk)=\inc(\wlk'\wlk'')=\inc(\wlk')+\sigma(\wlk')\inc(\wlk'')=\epsilon\bas_i-\epsilon\epsilon'x''=\epsilon\bas_i+x'=x,
\]
since $\inc(\wlk')=\epsilon\bas_i$ and $\sigma(\wlk')=-\epsilon\epsilon'$. This shows the induction step when $I(\bdg)^{\tr}x \neq 0$.

In case $I(\bdg)^{\tr}x=0$, that is, when $s=t$ and $\T=-1$, there is a vertex $v \in V(\bdg)$ such that $E_v^x \neq \emptyset$. This also follows from the equation \eqref{EQ:proof5.4}, since $x \neq 0$. In this case  the equation \eqref{EQ:proof5.4-2} for a fixed $i \in E_v^x$ takes the form
\[
I(\bdg)^{\tr}(x-\epsilon\bas_i)=-\epsilon I(\bdg)^{\tr}\bas_i=-\bas_{v}-\epsilon\epsilon'\bas_{v'},
\]
where $\epsilon=\sgn(x_{i})$ and  $\edges(i)=\{(v,\epsilon),(v',\epsilon')\}$.
Proceeding the induction step similarly as above and using Lemma \ref{L:Incinc}(a)
 we get a positive closed  walk $\wlk'$ starting and ending at vertex $v$, and such that $\inc(\wlk')=x$.   Since $\bdg$ is connected, there is a walk $\widetilde{\wlk}$ from $s$ to $v$. If $\sigma(\widetilde{\wlk})=1$, then take $\wlk:=\widetilde{\wlk}\wlk'\widetilde{\wlk}^{-1}$, which satisfies $\inc(\wlk)=\sigma(\widetilde{\wlk})\inc(\wlk')=x$ by Lemma~\ref{L:inc}(d). If $\sigma(\widetilde{\wlk})=-1$, then take $\wlk:=\widetilde{\wlk}(\wlk')^{-1}\widetilde{\wlk}^{-1}$, which satisfies $\inc(\wlk)=\sigma(\widetilde{\wlk})\inc((\wlk')^{-1})=x$ by Lemma~\ref{L:inc}(c,d), since $\sigma(\wlk')=1$. This completes the proof.
\end{proof}

The above proposition inspires to consider the following notion. Fix a bidirected graph $\bdg$ with $m=|\verts{\bdg}|, n=|\edgs{\bdg}|\geq 1$ and the incidence matrix $I:=I(\bdg)\in\M_{n,m}(\Z)$. A (column) vector $x$ in $\Z^n$ together with a multi-set $\{(s,\S),(t,\T)\}$ with exactly two elements in $\{1,\ldots,m\}\times \{\pm 1\}$, is called an \textbf{incidence vector}  of $\bdg$ (or of $I$) if
\begin{equation}\label{eq:incvec}
I(\bdg)^{\tr}x=\S\bas_s+\T\bas_t,
\end{equation}
where $\bas_1,\ldots,\bas_m$ are the canonical basis vectors of $\Z^m$. 
Note that the multiset $\{(s,\S),(t,\T)\}$  is determined by $x$ unless $I^{\tr}x=0$. Observe that  all canonical vectors $\bas_1,\ldots,\bas_n$ of $\Z^n$ are incidence vectors of $I$, cf.~\eqref{eq:IofB}.

\begin{corollary}\label{cor:incinc}
Let $\bdg$ be a connected bidirected graph with $n\geq 1$ arrows. A vector $x$ in $\Z^n$ is an incidence vector of $I(\bdg)$ if and only if there is a walk $\wlk\in\extwalks(\bdg)$ of $\bdg$ such that
$
x=\pm \inc(\wlk).
$
\end{corollary}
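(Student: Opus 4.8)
The plan is to read this corollary as a clean repackaging of the two preceding results, Proposition~\ref{prop:incinc} and Lemma~\ref{L:Incinc}(a), so that essentially no new argument is needed beyond sign bookkeeping. Recall that, by definition~\eqref{eq:incvec}, $x$ is an incidence vector of $I(\bdg)$ precisely when $I(\bdg)^{\tr}x=\S\bas_s+\T\bas_t$ for some multi-set $\{(s,\S),(t,\T)\}$; thus the statement to prove is the equivalence between this condition and the existence of a walk $\wlk$ with $x=\pm\inc(\wlk)$.

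For the ``if'' direction I would start from a walk $\wlk=(v_0,i_1,\ldots,i_\ell,v_\ell)$ and simply invoke Lemma~\ref{L:Incinc}(a), which gives $I(\bdg)^{\tr}\inc(\wlk)=\bas_{v_0}-\sigma(\wlk)\bas_{v_\ell}$. This already has the shape $\S\bas_s+\T\bas_t$ with $(s,\S)=(v_0,+1)$ and $(t,\T)=(v_\ell,-\sigma(\wlk))$, so $\inc(\wlk)$ is an incidence vector; and since $I(\bdg)^{\tr}(-\inc(\wlk))=-\bas_{v_0}+\sigma(\wlk)\bas_{v_\ell}$ is again of this form, so is $-\inc(\wlk)$. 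Hence every $x=\pm\inc(\wlk)$ is an incidence vector.

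For the ``only if'' direction I would reduce the general defining equation to the normalized hypothesis of Proposition~\ref{prop:incinc}, namely $I(\bdg)^{\tr}y=\bas_s+\T\bas_t$ with leading coefficient $+1$. Because the right-hand side $\S\bas_s+\T\bas_t$ is symmetric in its two summands (the data being a multi-set), I may assume $\S=+1$ whenever at least one of the two signs is positive, and Proposition~\ref{prop:incinc} then produces a walk $\wlk$ from $s$ to $t$ with $x=\inc(\wlk)$. In the single remaining case $\S=\T=-1$ I would apply the same proposition to $-x$, for which $I(\bdg)^{\tr}(-x)=\bas_s+\bas_t$, obtaining a walk with $-x=\inc(\wlk)$, that is, $x=-\inc(\wlk)$. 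In either case $x=\pm\inc(\wlk)$, as required.

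The only point demanding any care is this sign and multi-set normalization in the reverse direction; the genuine content, namely the inductive construction of the walk on the length $|x|$, is entirely carried by Proposition~\ref{prop:incinc}. I therefore do not expect a real obstacle here, and the write-up should be short.
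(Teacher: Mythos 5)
Your proposal is correct and follows essentially the same route as the paper: the forward direction is Lemma~\ref{L:Incinc}(a), and the converse is the sign normalization reducing to Proposition~\ref{prop:incinc} (the paper does this in one stroke by applying the proposition to $\S x$, which satisfies $I(\bdg)^{\tr}(\S x)=\bas_s+\S\T\bas_t$, rather than splitting into your two cases, but this is the same idea). No gaps.
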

\begin{proof}
The fact that $\inc(\wlk)$ and $-\inc(\wlk)$ are incidence vectors of $I(\bdg)$ follows from Lemma \ref{L:Incinc}(a), see \eqref{eq:incvec}.
To prove the converse assume that $I(\bdg)^{\tr}x=\S\bas_s+\T\bas_t$ for $\S,\T\in\{\pm1\}$ and $s, t\in\verts{\bdg}$. Then by Proposition \ref{prop:incinc} there is a walk $\wlk$ from $s$ to $t$ such that $\S x=\inc(\wlk)$.
\end{proof}

\subsection{{\it Proof of Theorem C from Section \ref{sec:intro}.}} \label{subsec:proofB} 
Fix a bidirected graph $\bdg$ with $m=|\verts{\bdg}|\geq 1$ and $n=|\edgs{\bdg}|\geq 1$. Let
\[\CRinc_\bdg:=\{\inc(\wlk) \colon \ \wlk\in\extwalks(\bdg)\}\,\cup\, \{-\inc(\wlk) \colon \ \wlk\in\extwalks(\bdg)\}\,\subseteq\, \Z^{n}\]
be the set of vectors as defined in the statement of Theorem C. The elements of $\CRinc_\bdg$ are called the {\bf incidence roots} of $\bdg$. By Corollary \ref{cor:incinc}, the incidence roots of $\bdg$  are precisely the  incidence vectors of $I(\bdg)$.

To show the inclusions \eqref{eq:B}, observe first that $q_\bdg(x)\in \{0,1\}$ if and only if $||I(\bdg)^{\tr}x||^{2}\in \{0,2\}$, that is, exactly when there are signs $\S,\T \in \{\pm 1\}$ and indices $s,t \in \{1,\ldots,m\}$ with
\[
I(\bdg)^{\tr}x=\S\bas_s +\T\bas_t,
\]
satisfying $\S=-\T$ when $s=t$, and where $s=t$ is arbitrary if $q_\bdg(x)=0$ (cf.~Definition~\ref{D:inc}). To show the second inclusion in  \eqref{eq:B} observe that in the remaining case, that is, with $\S=\T$ and $s=t$, we have $q_\bdg(x)=2$.

Now having the inclusions \eqref{eq:B}, the assertions (a) and (c) of Theorem C follow from Lemma \ref{L:Incinc}. The remaining assertions  (b) and (d) follow obviously from (a) and (c) (for (d) recall that $\bdg$ being balanced means  that there is no negative closed walk, that is, the set $\CRinc_\bdg\cap\CR_q(2)$ is empty, cf.~Proposition \ref{P:null}).
\hfill$\Box$

\medskip

\begin{example}\label{ex:onevertexbdgbasis}
Take a one-vertex bidirected graph $\bdg=\onevertexbdg^{p,s,t}$ as defined in
Remark \ref{R:loo}, for some $p+s+t= n\geq 1$ (here $\verts{\bdg}=\{u\}$). Assume that $q:=q_\bdg$ is connected, equivalently, that $\bdg$ has no directed loops, that is, $p=0$ (cf.~Lemma \ref{L:irr}(a)). In this case $\CRnk(q_\bdg)=n-1$ (see Example \ref{ex:onevertexbdgcrk}). Moreover, it follows that:
\begin{enumerate}[label={\textnormal{(\alph*)}},topsep=3px,parsep=0px]
\item
By applying  respective sign inversions we have $q_\bdg\Gweak q_{\bdg'}$ for $\bdg'=\onevertexbdg^{0,1,n-1}$, see Lemma \ref{L:tra}(b). It can be  verified that $q_{\bdg'}=2\hat{q}$ where $\hat{q}=q_{\widehat{\A}_1^{(n-1)}}=q_{\bdgA_1^{(n-1)}}$ is the canonical $(n-1)$-extension of the Dynkin graph $\AA_1$ as in Remark \ref{R:canAD}. In particular, $\hat{q}$ has Dynkin type $\Dyn(\hat{q})=\AA_1$.
\item By  Theorem C(a) in this case the radical $\CR_q(0)$ of $q=q_{\bdg'}$ consists of vectors $\pm \inc(\wlk)$ for all walks $\wlk$ of $\bdg'$ made up of even number of loops. In particular, if $\edgs{\bdg'}=\{i_1, \ldots, i_n\}$ and $i_1$ is the unique two-tail arrow, then $y_2:=\inc(u,i_1,u,i_2,u)=\bas_1+\bas_2$, $y_3:=\inc(u,i_1,u,i_3,u)=\bas_1+\bas_3$, \ldots, $y_n:=\inc(u,i_1,u,i_n,u)=\bas_1+\bas_n$ belong to $\CR_q(0)$. By the above description of $\CR_q(0)$  it is easy to check that the set $Y:=\{y_2, \ldots, y_n\}$ form a $\ZZ$-basis of $\CR_q(0)$. Moreover, $Y$ is a so-called $(i_2,\ldots, i_n)$-{\bf special $\ZZ$-basis} of $\CR_q(0)$ in the sense of \cite{dS16a}. 
\end{enumerate}
\end{example}

\begin{example}\label{ex:incroots} 
Take $\bdg$ and $\bdg'$ with $q:=q_\bdg=q_{\bdg'}=q_{\AA_3}$ and $\cyccond_\bdg=0$, $\cyccond_{\bdg'}=1$ as in Examples \ref{ex:twobdg} and \ref{ex:twobdgcrks}. Since $q$ is positive then $\CR_q(0)=\{0\}$; note that $\bdg$ and $\bdg'$ do not have non-trivial positive closed walks, compare with Theorem C(a). Observe that by Theorem C(b), $\bdg$ and $\bdg'$ yield two different graphical descriptions of the set
\[
\CR_q(1) =\{\,\pm[1, 0, 0], \pm[0, 1, 0], \pm[0, 0, 1], \pm[1, 1, 0], \pm[0, 1, 1],   \pm[1, 1, 1]\,\}
\]
in terms of open walks. For example,
\[
\inc_{\bdg}(u_3, 2, u_2, 3, u_1, 1, u_2) = -[1, 1, 1] = \inc_{\bdg'}(u_4, 3, u_3, 2, u_2, 1, u_1).
\]
Since $\cyccond_{\bdg'}=1$ then $\CRinc_{\bdg'}\cap\CR_q(2)=\emptyset$ by Theorem C(d). But in case of $\bdg$ it appears that
\[
\CRinc_{\bdg}\cap\CR_q(2)=\CR_q(2) =
\{\,   \pm[1, 0, 1], \pm[1, 0, -1], \pm[1, 2, 1]\,\}.
\]
For instance, take the closed walk $\wlk:=(u_3, 2, u_2, 1, u_1, 3, u_2, 2, u_3)$. Then we check that $\inc_{\bdg}(\wlk^k)=-[1,2,1]$ for $k\in\ZZ$ odd and $\inc_{\bdg}(\wlk^k)=0$ for $k$ even (cf.~Lemma \ref{L:inc}(g)).
\end{example}

\section{Applications, conclusions and plans}\label{sec:appl}

In this last section we exhibit some consequences of our main results in number theory, graph theory, representations of algebras and computer science, in part showing the potential of our techniques as well as outlining some of our future work.

\subsection{Positive incidence forms}\label{subsec:positive}

A walk $\wlk$ in a bidirected graph $\bdg$ is called  a {\bf reduced walk} if it is not of the form  $\wlk=\wlk'\wlk_1\wlk_1^{-1}\wlk''$ for any walks $\wlk_1,\wlk',\wlk''$ with $\wlk_1$ non-trivial. The set of all reduced walks of $\bdg$ is denoted by $\walksred(\bdg)\subseteq\walks(\bdg)$.

\begin{remark}\label{rem:reduced}
We can restrict the study of incidence roots to these induced by reduced walks. More precisely, by Lemma \ref{L:inc}(e) we have $\CRinc_\bdg=\{\pm\inc(\wlk)\,| \ \wlk\in\walksred(\bdg)\}$.
\end{remark}

The following may be viewed as an analog of a criterion for unit forms of Drozd \cite[1.1(2)]{cmR} and Happel \cite{Happel95}.
\begin{proposition}\label{prop:posfinite}
Let $q$ be a connected irreducible incidence form $q=q_\bdg$ of a  bidirected graph $\bdg$. Then the following conditions are equivalent:
\begin{enumerate}[label={\textnormal{(\alph*)}},topsep=3px,parsep=0px]
\item $q$ is positive,
\item $\CR_q(0)=\{0\}$,
\item the set $\CR_q(1)$ is finite,
\item the set $\CRinc_\bdg$ is finite.
\end{enumerate}
\end{proposition}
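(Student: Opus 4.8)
The plan is to prove the four conditions equivalent by running the cycle (a)$\Rightarrow$(d)$\Rightarrow$(c)$\Rightarrow$(b)$\Rightarrow$(a), using Theorem~C as the bridge between the combinatorial set $\CRinc_\bdg$ and the arithmetic root sets $\CR_q(d)$, and using throughout that $q=q_\bdg$ is non-negative by Lemma~\ref{L:ful}. First I would record that, since $q$ is non-negative, $\rad(q)=\CR_q(0)$ and $q$ is positive precisely when $\crk(q)=0$, i.e.\ when $\rad(q)=\{0\}$; this yields (b)$\Rightarrow$(a) immediately (and also its converse, although only one direction is needed to close the cycle).

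For (a)$\Rightarrow$(d) I would invoke the standard fact that a positive definite integral form has finite level sets: if $q$ is positive, then $q(x)=d$ cuts out a compact ellipsoid in $\R^n$, which contains only finitely many points of $\Z^n$, so each of $\CR_q(0)$, $\CR_q(1)$, $\CR_q(2)$ is finite. The upper inclusion $\CRinc_\bdg\subseteq\CR_q(0)\cup\CR_q(1)\cup\CR_q(2)$ of Theorem~C then forces $\CRinc_\bdg$ to be finite. The implication (d)$\Rightarrow$(c) is immediate from the lower inclusion $\CR_q(1)\subseteq\CRinc_\bdg$ of Theorem~C.

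The one step carrying actual content is (c)$\Rightarrow$(b), which I would prove by contraposition. Assuming $\CR_q(0)=\rad(q)$ contains a nonzero vector $z$, I would first produce a single $1$-root: since $q$ is connected and irreducible, $\bdg$ has at least two vertices by Lemma~\ref{L:irr}(b), hence a non-loop arrow $i$, and then $q(\bas_i)=q_i=1$ by Lemma~\ref{L:ful}. Exploiting $z\in\rad(q)$, so that $q(\bas_i,z)=0$ and $q(z)=0$, the polarization identity gives $q(\bas_i+kz)=q(\bas_i)=1$ for every $k\in\Z$; as $z\neq 0$ these vectors are pairwise distinct, exhibiting infinitely many $1$-roots. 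This contradicts finiteness of $\CR_q(1)$ and closes the cycle.

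I do not expect a serious obstacle here: the substantive inputs (Theorem~C, already proved, and the finiteness of lattice points on a positive-definite level set) are available, and the translation argument for (c)$\Rightarrow$(b) is elementary once the radical is used. The only point needing care is the existence of a $1$-root in (c)$\Rightarrow$(b), which is exactly where irreducibility enters, guaranteeing at least two vertices and hence a non-loop arrow.
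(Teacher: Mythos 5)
Your proposal is correct, and the implication cycle closes properly, but it takes a genuinely different route from the paper on the one implication where the paper invests real work, namely (a)$\Rightarrow$(d). You dispatch it by the standard observation that a positive form has positive definite Gram matrix (this equivalence is built into the paper's definition of positivity), so each level set $q^{-1}(d)$ lies on a compact ellipsoid and contains finitely many lattice points; finiteness of $\CRinc_\bdg$ then follows from the upper inclusion $\CRinc_\bdg\subseteq\CR_q(0)\cup\CR_q(1)\cup\CR_q(2)$ of Theorem~C. The paper instead argues combinatorially: positivity forces $|\edgs{\bdg}|-|\verts{\bdg}|+\cyccond_\bdg=0$, so $\bdg$ is a tree or an unbalanced $1$-tree, and an explicit enumeration of reduced walks shows $|\CRinc_\bdg|=n^2+n+1$ or $2n^2+1$ respectively. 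Your argument is shorter and perfectly adequate for the proposition as stated, but the paper's longer proof is not wasted effort: the tree/$1$-tree dichotomy and the exact cardinalities are harvested immediately afterwards in Corollary~\ref{cor:posfinite} and again in Proposition~\ref{P:rootSys} (to identify the root system types $\A_n$ and $\CC_n$ by counting long roots), so with your shortcut those later results would need the combinatorial analysis anyway. The remaining implications match the paper: your (c)$\Rightarrow$(b) by translating a $1$-root $\bas_i$ (with $i$ a non-loop arrow, guaranteed by irreducibility via Lemma~\ref{L:irr}(b)) along a nonzero radical vector is exactly the paper's argument, merely packaged as (c)$\Rightarrow$(b) rather than as the contrapositive of (a)$\Rightarrow$(b),(c).
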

\begin{proof} Since $q$ is assumed  to be connected then so is $\bdg$ (cf.~Remark \ref{R:con}(b)).
So the assertion (d) implies that the sets  $\CR_q(0)$ and $\CR_q(1)$ are finite by Theorem C.
Assume that  the set $\CR_q(0)$ contains a non-zero element $h\in\CR_q(0)$. Then $q(ah)=a^2q(h)=0$ for each $a\in\ZZ$, what contradicts with the finiteness of $\CR_q(0)$. This proves  that (d) implies (b) and (c).

 To show that (b) (resp.~(c)) implies (a) assume to the contrary that $q$ is not positive. Then by Lemma \ref{L:ful} $q$ is non-negative with $\crk(q)\geq 1$. This means that $\CR_q(0)\neq \{0\}$. Moreover, note that  $\bas_{i_0}\in\CR_q(1)$ for some index $i_0\in\edgs{\bdg}$ with $q_{i_0}=1$ (it exists since $q$ is irreducible,  cf.~Lemma \ref{L:irr}(b)).  So  $q(ah+\bas_{i_0})=q(ah)+q(ah,\bas_{i_0})+q(e_{i_0})=1$ for each $a\in\ZZ$ and $h\in\CR_q(0)$ (cf.~\eqref{eq:rad}). This shows that the set $\CR_q(1)$ is infinite.

Now it remains to show the implication (a) $\Rightarrow$ (d). If $q$ is positive, that is, $\crk(q)=0$, then by Theorems A and B we have $0=|\edgs{\bdg}|-|\verts{\bdg}|+\cyccond_\bdg$. We consider two cases.

\medskip
\noindent \textit{Case 1:} $\cyccond_\bdg=1$. Then $|\verts{\bdg}|=|\edgs{\bdg}|+1$ which means that $\bdg$ is a tree. Hence, for any pair of vertices $s,t \in \verts{\bdg}$ there is exactly one reduced walk $\wlk_{s,t}$ from $s$ to $t$, which is trivial if and only if $s=t$. By Remark~\ref{rem:reduced} and Lemma \ref{L:inc}(c), we have
\[
\CRinc_{\bdg}=\{\inc(\wlk_{s,t})\}_{s<t \in \verts{\bdg}} \cup \{0\} \cup \{-\inc(\wlk_{s,t})\}_{s<t \in \verts{\bdg}}.
\]
In particular, the set $\CRinc_{\bdg}$ is finite. To be precise, if $\inc(\wlk_{s,t})=\epsilon \inc(\wlk_{s',t'})$ for some $\epsilon=\pm 1$ and $s<t$, $s'<t'$, then $s=s'$, $t=t'$ and $\epsilon=1$, see Lemma~\ref{L:Incinc}(a). This shows that $|\CRinc_{\bdg}|=n^2+n+1$ for $n:=|\edgs{\bdg}|$, and $q_{\bdg}(x)=1$ for any non-zero $x \in \CRinc_{\bdg}$. 

\medskip
\noindent \textit{Case 2:} $\cyccond_\bdg=0$. Then $|\verts{\bdg}|=|\edgs{\bdg}|$ (we call such graph a {\bf 1-tree}). In this case there exists a unique (up to rotation) closed walk $\wlk=(v_0,i_1,v_1,\ldots,v_{\ell-1}, i_{\ell},v_{\ell})$ with $\ell\geq 1$ and pairwise different vertices $v_1,\ldots, v_{\ell-1}$ and arrows $i_1,\ldots, i_\ell$. Moreover, $\wlk$ is negative since $\cyccond_\bdg=0$. For any vertex $s$ fix a walk $\wlk_s$ from $s$ to $v_0$ of minimal length (in particular, reduced). For any pair of vertices $s\leq t$ and any $k \in \Z$, take $\wlk_{s,t}^k:=\wlk_s\wlk^k\wlk_t^{-1}$. It is easy to check that for every reduced walk $\wt{\wlk}$ from $s$ to $t$ in $\bdg$, there is $k \in \Z$ such that $\inc(\wt{\wlk})=\inc(\wlk_{s,t}^k)$. Moreover, using Lemma~\ref{L:inc}(b),(g) we get
\begin{equation}\label{eq:summands}
\inc(\wlk^k_{s,t}) = \left\{
\begin{array}{l l}
\inc(\wlk_s)+\epsilon\inc(\wlk_t^{-1}), & \text{if $k$ is even}, \\
\inc(\wlk_s)+\epsilon(\inc(\wlk)-\inc(\wlk_t^{-1})), & \text{if $k$ is odd},
\end{array} \right.
\end{equation}
where $\epsilon:=\sigma(\wlk_s)$. Using Remark \ref{rem:reduced}, this shows that
\begin{equation}\label{eq:Rinc1tree}
\CRinc_{\bdg}=\{\inc(\wlk^k_{s,t})\}^{k \in \{1,\delta_{s,t}\}}_{s\leq t \in \verts{\bdg}} \cup \{0\} \cup \{-\inc(\wlk^k_{s,t})\}^{k \in \{1,\delta_{s,t}\}}_{s\leq t \in \verts{\bdg}}.
\end{equation}
Then the set $\CRinc_\bdg$ is finite, which finishes the proof of the remaining implication (a) $\Rightarrow$ (d). As in Case~1, one can show that the expression \eqref{eq:Rinc1tree}  is a disjoint partition of sets without repeated elements, and therefore $|\CRinc_{\bdg}|=2n^2+1$, having exactly $2n$ elements $x$ with $q_{\bdg}(x)=2$: those given by $\pm \inc(\wlk^1_{s,s})$ for some $s \in \verts{\bdg}$.
\end{proof}

As an immediate consequence of the proof  we obtain the following useful criterion (compare with \cite[Theorem 5.1]{tZ08}).

\begin{corollary}\label{cor:posfinite}
A connected irreducible incidence form $q=q_\bdg$ is positive if and only if $\bdg$ is a tree or an unbalanced 1-tree.
\end{corollary}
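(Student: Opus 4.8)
The plan is to obtain the corollary directly from the rank--corank formula of Corollary~\ref{cor:crk}, reusing the case analysis already performed in the proof of Proposition~\ref{prop:posfinite}. Since $q=q_\bdg$ is a connected irreducible incidence form, the graph $\bdg$ is connected (Remark~\ref{R:con}(b)) and $q$ is non-negative (Lemma~\ref{L:ful}); hence $q$ is positive precisely when $\crk(q)=0$. By Corollary~\ref{cor:crk}, $\crk(q)=n-m+\cyccond_\bdg$ with $n=|\edgs{\bdg}|$ and $m=|\verts{\bdg}|$, so the whole statement reduces to analysing the single numerical equality $m=n+\cyccond_\bdg$, splitting on the two possible values of $\cyccond_\bdg\in\{0,1\}$.

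First I would treat the balanced case $\cyccond_\bdg=1$, where the condition $\crk(q)=0$ reads $m=n+1$; for a connected graph this is exactly the statement that $\bdg$ is a tree, which is Case~1 of the proof of Proposition~\ref{prop:posfinite}. Then I would treat the unbalanced case $\cyccond_\bdg=0$, where $\crk(q)=0$ becomes $m=n$, i.e.\ $\bdg$ is a connected graph with as many arrows as vertices, which is by definition a $1$-tree (Case~2 of that proof); since $\cyccond_\bdg=0$ this is an \emph{unbalanced} $1$-tree. This gives the forward implication: positivity forces $\bdg$ to be a tree or an unbalanced $1$-tree.

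For the converse I would simply check that each of the two listed families satisfies $m=n+\cyccond_\bdg$. If $\bdg$ is a tree then $m=n+1$, so it remains only to confirm that a tree is automatically balanced, i.e.\ $\cyccond_\bdg=1$; then $\crk(q)=n-(n+1)+1=0$. If $\bdg$ is an unbalanced $1$-tree then $m=n$ and $\cyccond_\bdg=0$ by hypothesis, so again $\crk(q)=0$. The one point genuinely beyond bookkeeping is the observation that every tree is balanced: in a tree the underlying cycle space is trivial, so the $\mathbb{F}_2$-edge-usage vector of any closed walk vanishes, meaning each arrow—and in particular each bidirected arrow—is traversed an even number of times; hence $\sigma(\wlk)=+1$ for every closed walk $\wlk$. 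This is the step I expect to state most carefully, since it both supplies $\cyccond_\bdg=1$ in the tree case and explains the asymmetry in the criterion: the word ``balanced'' is superfluous for trees (they are always balanced), whereas ``unbalanced'' is essential for $1$-trees, a balanced $1$-tree having $\crk(q)=n-n+1=1>0$ and thus failing to be positive. Everything else is an immediate consequence of the corank formula.
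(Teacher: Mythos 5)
Your proof is correct and follows essentially the same route as the paper, which obtains the corollary from the corank formula $\crk(q_\bdg)=|\edgs{\bdg}|-|\verts{\bdg}|+\cyccond_\bdg$ and the two-case split on $\cyccond_\bdg$ already carried out in the proof of Proposition~\ref{prop:posfinite}. Your explicit verification that every tree is balanced (each arrow of a closed walk in a tree is traversed an even number of times, so $\sigma(\wlk)=+1$) is a detail the paper leaves implicit, but it is correct and does not change the argument.
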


\subsection{Diophantine equations and universality}\label{subsec:universal}

Recall that a non-negative  integral quadratic form $q:\ZZ^n\to\ZZ$ is called universal if for any integer $d\geq 0$ there exists $x\in\ZZ^n$ such that $q(x)=d$ (cf.~Section \ref{sec:intro}).  Simson proved in  \cite[Proposition 4.1]{dS11a} and \cite[Theorems 3.1, 3.3]{dS13c} the following fact by a clever reduction to Lagrange's theorem and the results of Ramanujan \cite{Ram}.

\begin{theorem}[{{\cite{dS11a,dS13c}}}]\label{thm:Simuniv}
Let $q:\ZZ^n\to\ZZ$ be a non-negative connected unit form of  corank $c=\CRnk(q)$ equal 0 or 1. Then $q$ is universal, provided $n-c\geq 4$.
\end{theorem}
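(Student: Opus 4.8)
The plan is to reduce, by the classification of Theorem~\ref{T:wea}, to the simply laced Dynkin forms and then to a lattice--representation problem governed by Lagrange's four--square theorem. The starting observation is that the set of values represented by an integral quadratic form is invariant under $\weak$. By Theorem~\ref{T:wea}(c) there is a Dynkin graph $D_r\in\{\A_r,\D_r,\E_6,\E_7,\E_8\}$ with $q\weak q_{D_r}\oplus\zeta^c$, where $r=n-c\geq 4$ and $\zeta^c$ is the zero form in $c$ variables. Since $\zeta^c$ represents only $0$, the direct sum $q_{D_r}\oplus\zeta^c$ represents exactly the integers represented by $q_{D_r}$, so $q$ is universal if and only if $q_{D_r}$ is. Thus it suffices to prove that $q_{D_r}$ is universal for every $D_r$ with $r\geq 4$; note that this reduction works for arbitrary corank, so the route in fact establishes universality whenever $r\geq 4$, the hypothesis $c\in\{0,1\}$ being inessential here. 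To make the problem concrete I would write $q_{D_r}(x)=\tfrac12 x^\tr G_{q_{D_r}}x$ and realize $G_{q_{D_r}}$ as the Gram matrix of a system of simple roots, so that $q_{D_r}(x)=\tfrac12\|v\|^2$ with $v=\sum_i x_i\alpha_i$ ranging over the root lattice $\Lambda$ of type $D_r$. Hence $q_{D_r}$ represents $d$ if and only if $\Lambda$ represents $2d$ as a squared norm, and the goal becomes: \emph{each root lattice of type $\A_r,\D_r$ $(r\geq 4)$ and $\E_6,\E_7,\E_8$ represents every non-negative even integer} (these lattices are even, so only even values occur, consistently with $2d$).

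For type $\D_r$ this is immediate. Realizing $\Lambda_{\D_r}=\{y\in\Z^r:\ \textstyle\sum_i y_i\ \text{even}\}$ with the standard norm, given $d\geq 0$ Lagrange's theorem yields $2d=b_1^2+b_2^2+b_3^2+b_4^2$; as $2d$ is even the number of odd $b_i$ is even, so $\sum_i b_i$ is even and the vector $y=(b_1,b_2,b_3,b_4,0,\dots,0)\in\Lambda_{\D_r}$ (using $r\geq 4$) has $\|y\|^2=2d$. For the exceptional types I would use that $\Lambda_{\E_6},\Lambda_{\E_7},\Lambda_{\E_8}$ contain sublattices isometric to $\Lambda_{\D_5},\Lambda_{\D_6},\Lambda_{\D_8}$ respectively; a sublattice represents a subset of the norms of the ambient lattice, and each $\Lambda_{\D_k}$ with $k\geq 4$ is universal over the evens by the previous sentence, so all three $\E$--lattices represent every even integer.

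The genuinely hard case is type $\A_r$, which I expect to be the main obstacle. Here $\Lambda_{\A_r}=\{y\in\Z^{r+1}:\ \sum_i y_i=0\}$, so one must represent every even $2d$ as a sum of $r+1\ (\geq 5)$ integer squares whose sum is $0$; padding with zeros reduces this to rank four, i.e.\ to showing that $\Lambda_{\A_4}$ (equivalently the quaternary form with Gram matrix $\Id_4+J_4$, where $J_4$ is the all--ones matrix) represents every non-negative even integer. The difficulty is precisely the constraint $\sum_i y_i=0$: in contrast to $\D_r$, a four--square representation of $2d$ cannot simply be placed in the coordinates, and $\Lambda_{\A_4}$ contains no orthogonal sum of four norm--$2$ vectors, so no elementary orthogonal splitting produces a universal diagonal subform.

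I would resolve the type $\A$ case by a local--global argument. The lattice $\Lambda_{\A_4}$ has determinant $5$, hence it is $2$--adically even unimodular and $p$--adically unimodular for every $p\neq 5$; a rank--four even unimodular $\Z_2$--lattice represents all of $2\Z_2$, and a rank--four unimodular $\Z_p$--lattice ($p$ odd) is $p$--adically universal, while $5$--adically the splitting into a unimodular rank--three piece and $\langle 5u\rangle$ shows $\Lambda_{\A_4}$ represents all of $\Z_5$. Therefore every even integer is represented locally everywhere; since $\Lambda_{\A_4}$ constitutes a one--class genus, local representability of all evens upgrades to global representability, completing the type $\A$ case and hence the theorem. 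This is the step where arithmetic beyond Lagrange enters, matching Simson's original reduction to Ramanujan's classification of universal quaternary forms, which one may alternatively invoke directly after exhibiting a suitable quaternary subform.
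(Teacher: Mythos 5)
Your argument is essentially correct, but it takes a route that differs from the one the paper relies on. Note first that the paper does not prove Theorem~\ref{thm:Simuniv} at all: it is quoted from Simson, whose proof is described as a reduction to Lagrange's theorem and to Ramanujan's quaternary forms; the paper's own contribution on this point appears inside the proof of Theorem~\ref{thm:univ}, where the authors observe a gap in Simson's treatment of the $\AA_4$ case and replace it by an appeal to the Bhargava--Hanke $290$-theorem (checking the $29$ critical integers for $q_{\AA_4}$, and likewise for $q_{\DD_4}$), after reducing the general case to the subforms $q_{\AA_4}$ and $q_{\DD_4}$ by restriction. Your reduction via Theorem~\ref{T:wea}(c) to $q_{D_r}\oplus\zeta^c$ and then to representing $2d$ by the root lattice of type $D_r$ plays the same role as the paper's restriction argument, and your correct remark that the hypothesis $c\in\{0,1\}$ is superfluous is exactly what Theorem~\ref{thm:univ} exploits. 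Your treatment of types $\D$ and $\E$ (Lagrange plus the parity of $\sum b_i$, then $\D_k$-sublattices of the $\E$-lattices) is clean, elementary and correct. For the crux, type $\A_4$, you substitute a local--global argument: the computations over $\Z_p$ are right (the $2$-adic completion is $H\perp A$, hence contains a hyperbolic plane and represents all of $2\Z_2$; ranks $3$ and $4$ of unimodular pieces suffice at odd $p$), but the whole argument hinges on the assertion that $\Lambda_{\A_4}$ is alone in its genus, which you state without justification. That fact is true and standard (it can be read off the Smith--Minkowski--Siegel mass formula, $|{\rm Aut}(A_4)|=240$, or from tables of quaternary forms of determinant $5$), but it is a genuine input of comparable weight to the $290$-theorem or to Ramanujan's classification, so you should either cite it explicitly or verify it; with that reference supplied, your proof is complete and arguably more classical and self-contained than either of the published routes.
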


By applying our results we are able to generalize Simson's theorem to a larger class of quadratic forms containing many non-unitary ones.
\begin{theorem}\label{thm:univ}
Let $q:\ZZ^n\to\ZZ$ be a non-negative connected irreducible integral form of arbitrary corank $c=\CRnk(q)\geq 0$. Assume that $q$ is unitary or that $q$ has Dynkin type $\CC$. Then $q$ is universal, provided $n-c\geq 4$.
\end{theorem}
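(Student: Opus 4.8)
The plan is to reduce both cases to Simson's Theorem~\ref{thm:Simuniv}, applied to a \emph{positive} unit form of rank at least $4$, using the classification results of Sections~\ref{sec:AD} and~\ref{sec:C}. Two elementary observations make this reduction work. First, universality is a $\ZZ$-equivalence invariant: if $q'=q\circ T$ for an automorphism $T$ of $\ZZ^n$ as in~\eqref{eq:equivalence}, then $\CR_{q'}(d)=T^{-1}\CR_q(d)$, so $q$ and $q'$ represent exactly the same integers. Second, for any non-negative form $p:\ZZ^r\to\ZZ$ the direct sum $p\oplus\zeta^c$ represents precisely the integers represented by $p$, since the adjoined variables contribute $0$ to the value. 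Hence in each case it suffices to exhibit a positive unit summand of rank $r=n-c$ whose direct sum with a zero form is $\ZZ$-equivalent to $q$, and then to invoke Theorem~\ref{thm:Simuniv} with corank~$0$.

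First I would treat the unitary case. Writing $r:=\Rnk(q)=n-c\geq 4$, Theorem~\ref{T:wea}(c) gives a (simply laced) Dynkin graph $D_r$ with $q\weak q_{D_r}\oplus\zeta^c$. The form $q_{D_r}$ is a connected \emph{positive} unit form (its corank is $0$) of rank $r\geq 4$, so Theorem~\ref{thm:Simuniv}, with $n-c=r\geq 4$ and corank $0$, shows that $q_{D_r}$ is universal. By the second observation $q_{D_r}\oplus\zeta^c$ is then universal, and by the first observation so is $q$.

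For the case $\Dyn(q)=\CC$ I would exploit Theorem~\ref{T:pr1}(b): since $r=n-c\geq 4$, it yields a $\ZZ$-equivalence $q\weak q_{\D_r}\oplus\zeta^c$, where now $q_{\D_r}$ is a \emph{positive unit form} of type $\D_r$. Thus the non-unitary case collapses to the unitary one: Theorem~\ref{thm:Simuniv} again gives universality of $q_{\D_r}$, and the two observations transfer it to $q$. Routing through $\D_r$ rather than through the equivalence $q\weak q_{\C_r}\oplus\zeta^c$ of Theorem~\ref{T:pr1}(a) is essential, because $q_{\C_r}$ is non-unitary and therefore lies outside the scope of Simson's theorem; the point is precisely to land inside its unit-form hypothesis.

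The argument is short because all the heavy lifting is carried by the classification (Theorems~\ref{T:wea} and~\ref{T:pr1}) and by Simson's result, so there is no genuine obstacle beyond verifying the two invariance observations, which are immediate. The only point demanding care is the bookkeeping of ranks and coranks: one must confirm that in both reductions the positive unit summand has rank exactly $r=n-c\geq 4$, so that the hypothesis $n-c\geq 4$ of Theorem~\ref{thm:Simuniv} is met with corank $0$, and that the standing hypotheses of connectedness and irreducibility of $q$ are exactly those under which the decompositions $q\weak q_{D_r}\oplus\zeta^c$ and $q\weak q_{\D_r}\oplus\zeta^c$ are guaranteed by Theorems~\ref{T:wea} and~\ref{T:pr1}.
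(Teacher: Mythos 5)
Your proposal is correct and follows essentially the same route as the paper: both cases are reduced to Simson's Theorem~\ref{thm:Simuniv} via the classification results, with Theorem~\ref{T:pr1}(b) handling type $\CC$; the only cosmetic difference is that in the unitary case you invoke the decomposition of Theorem~\ref{T:wea}(c) and apply Simson's theorem directly to the positive form $q_{D_r}$, whereas the paper uses Theorem~\ref{T:wea}(b) and restricts the canonical extension $\widehat{D}_r^{(c)}$ to a full subbigraph $\AA_4$ or $\DD_4$. Be aware that the paper additionally flags (and patches via the Bhargava--Hanke 290-theorem) a small gap in Simson's published proof of the universality of $q_{\AA_4}$, an issue your argument silently inherits when citing Theorem~\ref{thm:Simuniv} as a black box.
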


\begin{proof}
 Let $r:=\Rnk(q)=n-c$. Assume first that $q$ is unitary. Then $q\weak^T q_{\Delta}$ for $\Delta=\widehat{D}_r^{(c)}$,  $D_r \in \{\A_r,\D_r,\E_6,\E_7,$ $\E_8\}$ and some $T\in\GlnZ$ by Theorem \ref{T:wea}(b). Observe that since $r\geq 4$, then $\AA_4$ or $\DD_4$ is a full subbigraph of $\Delta$, cf.~Remark \ref{R:canAD}, Table \ref{TA:Dynkins} and \cite[Definition 2.2]{dS16a}. This means that $q_{\Delta}$ is universal since so are $q_{\AA_4}$ and $q_{\DD_4}$ by Theorem \ref{thm:Simuniv}, cf.~the definition of the restriction \eqref{eq:restriction}. So given an arbitrary integer $d\geq 0$, there exists $x\in\ZZ^n$ such that $d=q_{\Delta}(x)=q(T(x))$. This shows that also $q$ is universal.

At this point we should note that the proof of \cite[Theorem 3.1]{dS13c} for the case $\AA_4$ seems to contain a small gap. Namely, the author omits the argument that the reduction from $q_{\AA_4}(x)=d$ to Ramanujan's equation
$y^2_1+ 2y^2_2+ 2y^2_3+ 5y^2_4 = d'$ can be indeed done over the integers (see \cite[p.~21]{dS13c} for the details). We propose the following alternative argument to show that $q_{\AA_4}$ is universal.
By Bhargava-Hanke \lq\lq 290-Theorem''  \cite{BH290} it is enough to show that the quadratic form $q_{\AA_4}(x_1,x_2,x_3,x_4)=x_1^2+x_2^2+x_3^2+x_4^2-x_1x_2-x_2x_3-x_3x_4$  represents  29 \lq\lq critical'' integers $d$ from the range $1\leq d\leq 290$. We verify this by hand or a simple computer search (for example, two largest critical integers are
$203=5^2+1^2+14^2-5-14=q_{\AA_4}(0,5,1,14)$ and $290=q_{\AA_4}(1,0,0,17)$). Note that the same can be done for $q_{\DD_4}$.

In the remaining case, that is, when $q$ has Dynkin type $\CC_r$, we apply Theorem \ref{T:pr1}(b) to get the equivalence $q \weak q_{\mathcal{\DD}_r}\oplus \zeta^c$. On the other hand, by Theorem \ref{T:wea} we have
$q_{\widehat{\DD}_r^{(c)}}\weak q_{\mathcal{\DD}_r}\oplus \zeta^c\weak q$. This means that $q$ is universal since so is $q_{\widehat{\DD}_r^{(c)}}$ by the arguments for the unitary case.
\end{proof}

Note that Theorem \ref{thm:univ} embraces all connected irreducible incidence forms $q_\bdg$ (with $n-c\geq 4$) of bidirected graphs $\bdg$, see Theorems A and B.
It would be useful to have a direct method to solve  the Diophantine equation $q_\bdg(x)=d$ for arbitrary $d\geq 0$, at least to find a single solution $x\in\ZZ^n$. The following observations can be helpful. Note that by Theorem C the value of $q_\bdg(\inc(\wlk))$ for a walk $\wlk$ in $\bdg$ depends only on (interrelation between) the ending points of $\wlk$ and its sign $\sigma(\wlk)$. It appears that similar fact holds also for the associated bilinear form $q(x,y)=q(x+y)-q(x)-q(y)$ (cf.~Section \ref{sec:basics}).
\begin{lemma}\label{lem:univ.polar}
Let $q=q_\bdg$ be the incidence form of a bidirected graph $\bdg$. Fix two walks $\wlk=(v_0,i_1,v_1\ldots,i_{\ell},v_{\ell})$ and $\wlk'=(w_0,i_1,w_1\ldots,i_{\ell'},w_{\ell'})$ in $\bdg$ with signs $\sigma:=\sigma(\wlk)$, $\sigma':=\sigma(\wlk')$ and the corresponding incidence roots $x:=\inc(\wlk)$ and $y:=\inc(\wlk')$.  Then the value of $q(x,y)$  depends on the vertices $s:=v_0$, $t:=v_\ell$,
$s':=w_0$ and $t':=w_{\ell'}$ as follows:
\begin{enumerate}[label={\textnormal{(\alph*)}},topsep=3px,parsep=0px]
\item if $\wlk$ and $\wlk'$ are open walks, that is, if $s\neq t$ and $s'\neq t'$, then
\begin{equation}\label{tab:univ.polar}\begin{array}{rcccccccc}
q(x,y)=& 1+\sigma\sigma'&-\sigma-\sigma'&\sigma\sigma'&1&-\sigma&-\sigma'&0\\
\hline
   \text{if}& s=s'&s=t'&s\neq s'&s=s'    &s\neq t'    &s=t'    &s'\neq s\neq t'\\
  \text{and}& t=t'&t=s'&t=t'    &t\neq t'&t=s'        &t\neq s'&t' \neq t\neq s'\\
\end{array}
\end{equation}
\item if $\wlk$ is a closed walk, that is, if $s= t$,  then
\begin{equation}\label{tab:univ.polarclosed}\begin{array}{rcccccccc}
q(x,y)=& 1-\sigma & (\sigma-1)\sigma' &(1-\sigma)(1-\sigma')&0\\
\hline
   \text{if}& s=s'     &s=t'     &s=s'     &s\neq s'    \\
  \text{and}& s'\neq t'&s'\neq t'&s'=t'    &s\neq t'\\
\end{array}
\end{equation}
\end{enumerate}
\end{lemma}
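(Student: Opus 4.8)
The plan is to reduce everything to the fundamental identity of Lemma~\ref{L:Incinc}(a), namely $I(\bdg)^{\tr}\inc(\wlk)=\bas_{v_0}-\sigma(\wlk)\bas_{v_\ell}$, together with the observation that for an incidence form $q=q_\bdg$ the polarization is $q(x,y)=x^\tr G_q y = x^\tr I(\bdg)I(\bdg)^\tr y = (I(\bdg)^\tr x)^\tr(I(\bdg)^\tr y)$, which is just the standard inner product in $\ZZ^{\verts{\bdg}}$ of the two ``boundary'' vectors. First I would write, using Lemma~\ref{L:Incinc}(a) applied to both $\wlk$ and $\wlk'$,
\begin{equation*}
q(x,y)=(\bas_s-\sigma\bas_t)^\tr(\bas_{s'}-\sigma'\bas_{t'})
 = \bas_s^\tr\bas_{s'} -\sigma'\bas_s^\tr\bas_{t'}-\sigma\bas_t^\tr\bas_{s'}+\sigma\sigma'\bas_t^\tr\bas_{t'}.
\end{equation*}
Since $\bas_u^\tr\bas_w=\delta_{u,w}$ for canonical basis vectors of $\ZZ^{\verts{\bdg}}$, this single expression already contains all the information; the whole statement is then a bookkeeping exercise listing which of the four Kronecker deltas $\delta_{s,s'},\delta_{s,t'},\delta_{t,s'},\delta_{t,t'}$ vanish in each column of the tables.

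The second step is the case analysis for part (a), where $\wlk,\wlk'$ are open, so $s\neq t$ and $s'\neq t'$. Here the four endpoints $s,t,s',t'$ can coincide in limited patterns, and I would simply evaluate the displayed inner-product formula in each of the seven listed configurations. For instance, the first column has $s=s'$ and $t=t'$ (and then automatically $s\neq t'$, $t\neq s'$ because $s\neq t$), giving $q(x,y)=1-0-0+\sigma\sigma'=1+\sigma\sigma'$; the column $s=t'$, $t=s'$ gives the two cross terms, $q(x,y)=-\sigma'\cdot 1-\sigma\cdot 1=-\sigma-\sigma'$; and so on down to the last column where all four deltas vanish and $q(x,y)=0$. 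I would verify that the seven columns exhaust all possibilities under the open-walk hypothesis (for open walks the patterns ``$s=s',t\neq t'$'' etc.\ are mutually exclusive and jointly cover every arrangement of the endpoints), which is the one genuinely combinatorial point to check carefully.

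The third step is part (b), the closed case $s=t$, which is completely analogous but with $\bas_t$ replaced by $\bas_s$, so that $I(\bdg)^\tr x=\bas_s-\sigma\bas_s=(1-\sigma)\bas_s$ and
\begin{equation*}
q(x,y)=(1-\sigma)\bas_s^\tr(\bas_{s'}-\sigma'\bas_{t'})
 =(1-\sigma)(\delta_{s,s'}-\sigma'\delta_{s,t'}).
\end{equation*}
Reading off the four subcases $\{s=s',s'\neq t'\}$, $\{s=t',s'\neq t'\}$, $\{s=s'=t'\}$, $\{s\neq s',s\neq t'\}$ reproduces the four entries $1-\sigma$, $(\sigma-1)\sigma'$, $(1-\sigma)(1-\sigma')$, $0$ of table~\eqref{tab:univ.polarclosed} immediately.

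I do not expect any serious obstacle here: the proof is essentially the one-line computation $q(x,y)=(I(\bdg)^\tr x)^\tr(I(\bdg)^\tr y)$ followed by specialization via Lemma~\ref{L:Incinc}(a). The only place demanding attention is the verification that the columns of the two tables correctly enumerate all endpoint-coincidence patterns --- in particular that in (a) the hypotheses $s\neq t$ and $s'\neq t'$ rule out the degenerate overlaps and make the listed cases disjoint and exhaustive --- and noting that the tables record $q(x,y)$ independently of the chosen representatives, which is automatic since $q(x,y)$ depends on $x,y$ only through $I(\bdg)^\tr x$ and $I(\bdg)^\tr y$. Thus the ``hard part'' is merely organizational rather than mathematical.
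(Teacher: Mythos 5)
Your proposal is correct and follows exactly the paper's own argument: expanding $q(x,y)=(I(\bdg)^\tr x)^\tr(I(\bdg)^\tr y)=(\bas_s-\sigma\bas_t)^\tr(\bas_{s'}-\sigma'\bas_{t'})$ via Lemma~\ref{L:Incinc}(a) and then reading off the tables by a case-by-case check of the Kronecker deltas. Your write-up is in fact more detailed than the paper's, which dismisses the case analysis as "straightforward."
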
 
\begin{proof} Since $q(x,y)=x^\tr G_q y=x^\tr I(\bdg)I(\bdg)^\tr y$ (cf.~Remark \ref{R:con}(a)), by Lemma \ref{L:Incinc}(a) we have
 $$q(x,y)=[I(\bdg)^\tr\inc(\wlk)]^\tr[I(\bdg)^{\tr}\inc(\wlk')] = (\bas_{s}-\sigma\bas_{t})^\tr(\bas_{s'}-\sigma'\bas_{t'}).$$
 Now tables \eqref{tab:univ.polar} and \eqref{tab:univ.polarclosed} follow by a straightforward case by case check.
\end{proof}

\begin{corollary}\label{cor:univ.polar}
Let $q=q_\bdg$ be the incidence form of a connected bidirected graph $\bdg$. Then for any $k\geq 1$ and a collection  of 1-roots $x^{(1)},x^{(2)},\ldots, x^{(k)}\in\CR_q(1)$,  we have $q(\sum_{i=1}^kx^{(i)})\leq k^2=q(kx^{(1)})$.
\end{corollary}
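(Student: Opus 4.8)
The plan is to expand the value $q\!\left(\sum_{i=1}^k x^{(i)}\right)$ through the polarization and then bound the cross-terms uniformly. Recall from Section~\ref{sec:basics} that $q(x)=\tfrac12 x^\tr G_q x$ and that the polarization satisfies $q(u,v)=u^\tr G_q v$, so that $q(u,u)=2q(u)$. First I would write, for $x:=\sum_{i=1}^k x^{(i)}$,
\[
q(x)=\tfrac12\Big(\sum_i x^{(i)}\Big)^\tr G_q\Big(\sum_j x^{(j)}\Big)
=\sum_{i=1}^k q(x^{(i)})+\sum_{1\le i<j\le k} q(x^{(i)},x^{(j)}).
\]
Since every $x^{(i)}$ lies in $\CR_q(1)$, we have $q(x^{(i)})=1$, and hence $q(x)=k+\sum_{i<j}q(x^{(i)},x^{(j)})$. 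On the other hand $q(kx^{(1)})=k^2q(x^{(1)})=k^2$, which settles the claimed equality $q(kx^{(1)})=k^2$.

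The remaining task, and the only substantive point, is to bound each pairwise polarization by $q(x^{(i)},x^{(j)})\le 2$, which would give
\[
q(x)=k+\sum_{i<j}q(x^{(i)},x^{(j)})\ \le\ k+2\binom{k}{2}=k+k(k-1)=k^2,
\]
as desired. For this bound I would invoke Lemma~\ref{lem:univ.polar}(a): by Theorem~C(b) each $1$-root is of the form $x^{(i)}=\pm\inc(\wlk_i)$ for an \emph{open} walk $\wlk_i$, so after absorbing the two signs $\pm$ (using $q(-u,v)=-q(u,v)$) the value $q(x^{(i)},x^{(j)})$ equals $\pm$ one of the entries listed in table~\eqref{tab:univ.polar}. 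A glance at that table shows every entry lies between $-2$ and $2$ for all choices of $\sigma,\sigma'\in\{\pm1\}$ (the extreme value $2$ occurring e.g.\ as $1+\sigma\sigma'$ with $\sigma\sigma'=1$), so $q(x^{(i)},x^{(j)})\le 2$ in every case.

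Alternatively, and perhaps more transparently, I would derive the bound directly from Cauchy--Schwarz without the walk description: writing $q(u,v)=u^\tr I(\bdg)I(\bdg)^\tr v=(I(\bdg)^\tr u)^\tr(I(\bdg)^\tr v)$ (cf.~Remark~\ref{R:con}(a)) and recalling $\|I(\bdg)^\tr x^{(i)}\|^2=2q(x^{(i)})=2$, one gets
\[
q(x^{(i)},x^{(j)})\le \big\|I(\bdg)^\tr x^{(i)}\big\|\,\big\|I(\bdg)^\tr x^{(j)}\big\|=\sqrt2\cdot\sqrt2=2.
\]
Either route completes the argument. There is no real obstacle here beyond recognizing this uniform bound on pairwise products of $1$-roots; I would favour the Lemma~\ref{lem:univ.polar} version to keep the statement an immediate consequence of the preceding lemma, while noting the Cauchy--Schwarz derivation as a conceptual shortcut.
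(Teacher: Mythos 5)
Your proof is correct and follows essentially the same route as the paper: expand $q\bigl(\sum_i x^{(i)}\bigr)$ via the polarization and bound each cross-term by $q(x^{(i)},x^{(j)})\leq 2$ using Lemma~\ref{lem:univ.polar}. Your Cauchy--Schwarz derivation of that bound is a valid and more elementary alternative (it avoids the walk description entirely), but it does not change the substance of the argument.
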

\begin{proof}
Fix $x^{(1)},\ldots, x^{(k)}\in\CR_q(1)$. By Theorem C(b) there exist open walks $\wlk^{(i)}$ in $\bdg$ and signs $\epsilon_i\in\{\pm1\}$ such that $x^{(i)}=\epsilon_i\inc(\wlk^{(i)})$ for each $i=1,\ldots,k$. Then
\begin{equation}\label{eq:cor.univ}
q(\sum\limits_{i=1}^kx^{(i)})=\sum\limits_{i=1}^kq(x^{(i)})+\sum\limits_{i<j}^kq(x^{(i)},x^{(j)})\leq k + 2{{k}\choose{2}}=k^2
\end{equation}
since $q(x^{(i)},x^{(j)})\leq 2$ by Lemma \ref{lem:univ.polar}. The final equality is obvious: $q(kx^{(1)})=k^2q(x^{(1)})=k^2$.
\end{proof}

Lemma \ref{lem:univ.polar} and the proof above may lead to an efficient  method to find solutions of the equations $q_\bdg(x)=d$ for some $d$. Namely, experimental results show that for a proper choice of incidence roots
$x^{(1)}=\inc(\wlk^{(1)}),\ldots,$ $x^{(k)}=\inc(\wlk^{(k)})\in\CR_q(1)$, for any $l$
 we may represent almost all integers $\leq l^2$ by taking $q(\sum_{t=1}^{l}x^{(i_t)})$ for some $1\leq i_1,\ldots,i_{l}\leq k$. We illustrate the method on the following example.

 \begin{example}\label{eq:univ} 
 Take the incidence form $q=q_\bdg:\ZZ^4\to\ZZ$ of the bidirected graph $\bdg:=\bdgC_4^{0,0}$ with the incidence bigraph $\Delta=\Delta(\bdg)=\mathcal{C}_4$:
 \[
\bdg =  \xymatrix@C=1.6pc{ {\bulito}_{u_1} \ar@{<->}@(lu,ld)_{1} \ar[r]^-{2} & {\bulito}_{u_2} \ar[r]^-{3} & {\bulito}_{u_3} \ar[r]^-{4} & {\bulito}_{u_{4}} }\qquad \Delta=\ \
\begin{minipage}{4.8cm}
			{\scriptsize
				\begin{tikzpicture}[auto]
				\node (q1) at (0  , 0  ) {$1$};
				\node (q2) at (1, 0  ) {$2$};
				\node (q3) at (2, 0  ) {$3$};
				\node (q4) at (3, 0  ) {$4$};
				\foreach \x/\y in {1/2, 2/1}
				\draw[-] (q\x) edge [bend left=8] node{}(q\y);
				\foreach \x/\y in {2/3, 3/4}
				\draw[-] (q\x) edge node{}(q\y);
				\draw[dashed] ([yshift=4pt,xshift=4pt]q1) arc (-50:240:2.2mm);
				\end{tikzpicture}
		} \end{minipage}
\]
 (cf.~Definition \ref{D:canC}). Then $q$ is positive of Dynkin type $\CC_4$ hence it is universal by Theorem \ref{thm:univ}, cf.~Lemma \ref{L:canC}. We are going to find chosen solutions of the Diophantine equation $q(x)=d$, that is,
 $$
 2x_{{1}}^{2}+x_{{2}}^{2}+x_{{3
}}^{2}+x_{{4}}^{2}-2x_{{1}}x_{{2}}-x_{{2}}x_{{3}}-x_{{3}}x_{{4}}=d,
 $$
 for each $d\leq 16=4^2$. Inspired by Lemma \ref{lem:univ.polar} we choose four walks in $\bdg$ with common starting point: $\wlk^{(1)}=(u_1,2,u_2)$, $\wlk^{(2)}=(u_1,2,u_2,3,u_3)$, $\wlk^{(3)}=(u_1,2,u_2,3,u_3,4,u_4)$ and $\wlk^{(4)}=(u_1,1,u_1,2,u_2)$. We set $x^{(i)}:=\inc(\wlk^{(i)})$ for $i=1,2,3,4$. Then $$x^{(1)}=[0,1,0,0]^\tr,\ \, x^{(2)}=[0,1,1,0]^\tr,\ \, x^{(3)}=[0,1,1,1]^\tr, \ \,x^{(4)}=[-1,-1,0,0]^\tr$$ (see \eqref{eq:inc}). Moreover, by the table \eqref{tab:univ.polar} (or by a direct check) it follows that $q(x^{(i)},x^{(i)})=2q(x^{(i)})=2$ for $1\leq i\leq 4$, $q(x^{(1)},x^{(4)})=0$, and $q(x^{(i)},x^{(j)})=1$ for the remaining pairs $(i,j)$. Then with the help of the formula \eqref{eq:cor.univ} we deduce that:
 $$\begin{array}{ll}q(x^{(1)} ) = q(0, 1, 0, 0) = 1&q(x^{(1)}+x^{(1)}+x^{(1)} ) = q(0, 3, 0, 0) = 9\\
q(x^{(1)}+x^{(4)} ) = q(-1, 0, 0, 0) = 2&q(x^{(1)}+x^{(1)}+x^{(1)}+x^{(4)} ) = q(-1, 2, 0, 0) = 10\\
q(x^{(1)}+x^{(2)} ) = q(0, 2, 1, 0) = 3&q(x^{(1)}+x^{(1)}+x^{(2)}+x^{(3)} ) = q(0, 4, 2, 1) = 11\\
q(x^{(1)}+x^{(1)} ) = q(0, 2, 0, 0) = 4&q(x^{(1)}+x^{(1)}+x^{(2)}+x^{(2)} ) = q(0, 4, 2, 0) = 12\\
q(x^{(1)}+x^{(1)}+x^{(4)} ) = q(-1, 1, 0, 0) = 5&q(x^{(1)}+x^{(1)}+x^{(1)}+x^{(2)} ) = q(0, 4, 1, 0) = 13\\
q(x^{(1)}+x^{(2)}+x^{(3)} ) = q(0, 3, 2, 1) = 6&q(x^{(1)}+ x^{(1)}+ x^{(2)}+ x^{(3)}+ x^{(4)} ) = q(-1, 3, 2, 1) = 14\\
q(x^{(1)}+x^{(1)}+x^{(2)} ) = q(0, 3, 1, 0) = 7&q(x^{(1)}+ x^{(1)}+ x^{(1)}+ x^{(2)} +x^{(4)} ) = q(-1, 3, 1, 0) = 15\\
q(x^{(1)}+x^{(1)}+x^{(4)}+x^{(4)} ) = q(-2, 0, 0, 0) = 8&q(x^{(1)}+ x^{(1)}+ x^{(1)} +x^{(1)} ) = q(0, 4, 0, 0) = 16.
\end{array}$$
As a side effect we obtain the following interesting application. Take the matrix
$$T:=\left[ \begin {array}{rrrr} -1&0&0&1\\ -1&-1&0&2\\ 0&0&0&2\\ 0&0&-1&1\end {array}
 \right].$$
 We check directly that $q\circ T=q_{\rm Lag}$, where $q_{\rm Lag}(x_1,x_2,x_3,x_4)=\sum_{i=1}^4x_i^2$ is the Lagrange form. Note that this is not a $\ZZ$-equivalence since ${\rm det}(T)=2$. However, we obtain a quick method to present integers as a sum of squares of 4 half-integers. For instance,
 $$
 \begin{array}{l}
  14 = q(-1, 3, 2, 1)=q_{\rm Lag}(T^{-1}[-1, 3, 2, 1]^\tr)=q_{\rm Lag}(2, -3, 0, 1)=2^2+3^2+0^2+1^2\\
15 = q(-1, 3, 1, 0)=q_{\rm Lag}(T^{-1}[-1, 3, 1, 0]^\tr)= q_{\rm Lag}(\frac{3}{2}, -\frac{7}{2}, \frac{1}{2}, \frac{1}{2})=(\frac{3}{2})^2+(\frac{7}{2})^2+(\frac{1}{2})^2+(\frac{1}{2})^2. \\
\end{array}
 $$
 \end{example}

\subsection{Reflections and root systems}\label{subsec:rs}

In this section we show that reflections through (non-radical) incidence roots of a bidirected graph $\bdg$ can be realized as certain switching of $\bdg$ (Lemma~\ref{L:reflection}). This is a simple graph theoretical counterpart of the isometry of reflections, which has some interesting consequences. For instance, we show that in the finite case studied in~\ref{subsec:positive}, the set $\CRinc_{\bdg}\setminus\{0\}$ is a finite root system in the sense Bourbaki, cf. \cite[9.2]{jeH72} and~\cite{CGSS76,Kac80} (Proposition~\ref{P:rootSys}).

Recall that given an integral quadratic form $q:\ZZ^n\to\ZZ$ and a vector $x \in \Z^n$ with $q(x)\neq 0$, the (real) \textbf{reflection} $\refl_x=\refl_x^q:\R^n \to \R^n$ at $x$ with respect to $q$ is the linear mapping  given by (see for instance~\cite{MZ22})
\[
\refl_x(y)=y-\frac{2q(y,x)}{q(x,x)}x, \qquad \text{for $y \in \R^n$.}
\]
Note that if $q$ is Cox-regular, then the reflections $\refl_{\bas_i}$ through canonical vectors $\bas_i$ are defined over the integer numbers, that is, simple reflection for Cox-regular quadratic forms are integral, cf.~\cite[Remark~4.4]{MZ22}.

\begin{lemma}\label{L:reflection}
Let $\bdg$ be a connected bidirected graph with $m$ vertices and $n$ arrows. For any incidence root $x$ of $\bdg$ with $q_{\bdg}(x)\neq 0$, the reflection $\refl_x:=\refl_x^{q_{\bdg}}$ restricts to an isomorphism $\Z^n \to \Z^n$, also denoted by $\refl_x$. Moreover, there is an orthogonal  matrix $O^x\in\MM_m(\ZZ)$ such that
\[
I(\bdg)^{\tr}\refl_x=O^xI(\bdg)^{\tr}.
\]
\end{lemma}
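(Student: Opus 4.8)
The plan is to exhibit $O^x$ explicitly as a Householder-type reflection built from the vector $w := I(\bdg)^\tr x \in \Z^m$. Write $q := q_\bdg$, so that $\refl_x = \refl_x^q$. First I would record the two structural facts that $x$ being an incidence root supplies. By Corollary~\ref{cor:incinc}, $x = \pm\inc(\wlk)$ for some walk $\wlk$, hence $x$ is an incidence vector of $I(\bdg)$ and, by \eqref{eq:incvec}, $w = \S\bas_s + \T\bas_t$ for some vertices $s,t \in \verts{\bdg}$ and signs $\S,\T \in \{\pm 1\}$. Moreover Theorem~C gives $x \in \CR_q(0) \cup \CR_q(1) \cup \CR_q(2)$, and since $q(x) \neq 0$ by hypothesis we have $q(x) \in \{1,2\}$; inspecting $\|w\|^2 = 2q(x)$ then shows that $q(x) = 1$ forces $s \neq t$ (with $w = \S\bas_s + \T\bas_t$), while $q(x) = 2$ forces $s = t$, $\S = \T$ and $w = 2\S\bas_s$.

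Next I would rewrite the reflection using the Gram identity $G_q = I(\bdg)I(\bdg)^\tr$ from Remark~\ref{R:con}(a). Since $q(x,x) = 2q(x)$ and $q(y,x) = (I(\bdg)^\tr y)^\tr w = w^\tr I(\bdg)^\tr y$, the reflection takes the form
\[
\refl_x(y) = y - \frac{q(y,x)}{q(x)}\, x = y - \frac{w^\tr I(\bdg)^\tr y}{q(x)}\, x .
\]
Applying $I(\bdg)^\tr$ on the left yields, for all $y$,
\[
I(\bdg)^\tr \refl_x(y) = \Bigl(\Id_m - \tfrac{1}{q(x)}\, w w^\tr\Bigr) I(\bdg)^\tr y ,
\]
so the natural candidate is $O^x := \Id_m - \tfrac{1}{q(x)}\, w w^\tr$, which satisfies $I(\bdg)^\tr \refl_x = O^x I(\bdg)^\tr$ as soon as it is known to have integer entries.

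It then remains to check that $O^x \in \M_m(\Z)$ is orthogonal and that $\refl_x$ preserves $\Z^n$. Integrality of $O^x$ follows from the case analysis: when $q(x) = 1$ the matrix $w w^\tr$ already has entries in $\{0,\pm 1\}$; when $q(x) = 2$ we have $w w^\tr = 4\,\bas_s\bas_s^\tr$, so $\tfrac{1}{2} w w^\tr = 2\,\bas_s\bas_s^\tr$ is integral (indeed $O^x$ is the sign inversion at $s$, cf.~Lemma~\ref{L:ortSimson}). Orthogonality is the standard Householder computation: $O^x$ is symmetric, and using $w^\tr w = \|w\|^2 = 2q(x)$ one verifies $(O^x)^2 = \Id_m$, whence $O^x (O^x)^\tr = \Id_m$. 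Finally, $\refl_x$ maps $\Z^n$ into $\Z^n$ because the scalar $q(y,x)/q(x)$ is an integer for every $y \in \Z^n$: this is immediate for $q(x) = 1$, and for $q(x) = 2$ it follows from $q(y,x) = w^\tr I(\bdg)^\tr y = 2\S\,(I(\bdg)^\tr y)_s$ being even. As $\refl_x$ is an involution, its restriction to $\Z^n$ is an automorphism.

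All of these computations are routine, so I do not expect a genuine obstacle; the one point where the hypothesis that $x$ is an \emph{incidence} root (rather than an arbitrary vector with $q(x) \neq 0$) is essential is in securing integrality of both $O^x$ and the reflection coefficient. The crux is therefore the structural fact $w = I(\bdg)^\tr x = \S\bas_s + \T\bas_t$ from Corollary~\ref{cor:incinc}: its support on at most two coordinates is exactly what makes $\tfrac{1}{q(x)} w w^\tr$ integral in the $q(x) = 2$ case, which would fail for a generic vector of value $2$.
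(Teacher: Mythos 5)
Your proof is correct and follows essentially the same route as the paper: the same Householder matrix $O^x=\Id_m-\frac{2ww^{\tr}}{w^{\tr}w}$ built from $w=I(\bdg)^{\tr}x=\S\bas_s+\T\bas_t$, and the same one-line intertwining computation $I(\bdg)^{\tr}\refl_x(y)=O^xI(\bdg)^{\tr}y$. The only (harmless, arguably cleaner) deviation is in checking that $\refl_x$ preserves $\Z^n$: you verify directly that $q(y,x)=w^{\tr}I(\bdg)^{\tr}y$ is divisible by $q(x)$ for every $y\in\Z^n$ using $w=2\S\bas_s$ in the $q(x)=2$ case, whereas the paper tests only incidence roots $x'$ (which include the canonical basis) and invokes Lemma~\ref{lem:univ.polar}(b) for the parity of $q(x,x')$.
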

\begin{proof}
Take $q:=q_{\bdg}$ and consider a walk $\wlk$ such that $\inc(\wlk)=\pm x$. Take an arbitrary $x' \in \CRinc_{\bdg}$ and a walk $\wlk'$ with $\inc(\wlk')=\pm x'$. If $\wlk$ is an open walk, then by Theorem C(b) we have $q(x,x)=2q(x)=2$, and thus $2q(x,x')/q(x,x)$ is an integer. If $\wlk$ is a closed walk, since $q(x)\neq 0$, then $\wlk$ is  negative  and $q(x,x)=4$, see Theorem C. Using Lemma~\ref{lem:univ.polar}(b), note that $q(x,x')$ is even, hence $2q(x,x')/q(x,x)$ is also an integer. Since the canonical vectors belong to $\CRinc_{\bdg}$ and $\refl_x(\refl_x(y))=y$, this shows that $\refl_x$ restricts to a $\ZZ$-isomorphism of $\Z^n$.

Take $\alpha:=I(\bdg)^{\tr}x=\S\bas_s+\T\bas_t\neq 0$ (cf.~Lemma \ref{L:Incinc}), and define $O^x:=\left(\Id_m-\frac{2\alpha \alpha^{\tr}}{\alpha^{\tr}\alpha}\right)$. Verify directly that $O^x(O^x)^{\tr}=(O^x)^2=\Id_m$, and that for any vector $y \in \Z^n$, taking $\beta:=I(\bdg)^{\tr}y$ we get
\[
I(\bdg)^{\tr}\refl_x(y)=\beta-\frac{2\beta^{\tr}\alpha}{\alpha^{\tr}\alpha}\alpha=\beta-\frac{2\alpha \alpha^{\tr}}{\alpha^{\tr}\alpha}\beta=O^x\beta=O^xI(\bdg)^{\tr}y.
\]
This shows the second claim of the lemma.
\end{proof}

In other words, reflections through (non-radical) incidence roots are realized as switching of bidirected graphs, cf.~Lemma \ref{L:tra}(d). This property potentially sheds new light in the study of reflections on root systems and related algebraic structures.

\begin{proposition}\label{P:rootSys}
Let $\bdg$ be a bidirected graph which is either a tree with $n\geq 1$ arrows or an unbalanced $1$-tree with $n \geq 2$ arrows. Then $\CRinc_{\bdg}\setminus\{0\}$ is an irreducible finite root system $($in the sense of \cite{jeH72}$)$ in the Euclidean space $(\mathbb{R}^n, q(-,-))$  of Dynkin type $\A_n$ or $\CC_n$, respectively.
\end{proposition}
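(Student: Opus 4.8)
The plan is to check the four root-system axioms of \cite[\S9.2]{jeH72} for $\Phi:=\CRinc_\bdg\setminus\{0\}$ inside $\R^n$ equipped with the polarization $q(-,-)$ of $q:=q_\bdg$, then to prove irreducibility, and finally to read off the Dynkin type. First I would record that $\bdg$ is connected with $\cyccond_\bdg=1$ in the tree case (every closed walk in a tree traverses each arrow an even number of times, hence is positive) and $\cyccond_\bdg=0$ in the unbalanced $1$-tree case; in either case Corollary~\ref{cor:crk} gives $\CRnk(q)=|\edgs{\bdg}|-|\verts{\bdg}|+\cyccond_\bdg=0$, so $q$ is positive by Lemma~\ref{L:ful}, $q(-,-)$ is a genuine inner product, and $\Phi$ is finite by Proposition~\ref{prop:posfinite}. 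Axiom (R1) is then immediate: $0\notin\Phi$ by definition, and $\Phi$ spans $\R^n$ since it contains $\bas_1,\dots,\bas_n$ (each $\bas_i$ is an incidence root with $q(\bas_i)=q_i\in\{1,2\}$). For (R2) I would use Theorem~C to note $\Phi\subseteq\CR_q(1)\cup\CR_q(2)$, so each root has $q$-value $1$ or $2$; if $\alpha$ and $c\alpha$ are roots then $c^2=q(c\alpha)/q(\alpha)\in\{\tfrac12,1,2\}$, and the values $\tfrac12,2$ give irrational $c$, incompatible with $c\alpha\in\Z^n$, forcing $c=\pm1$ (this also shows $\Phi$ is reduced).

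The crux is (R3). Given $\alpha\in\Phi$, Lemma~\ref{L:reflection} shows the reflection $\refl_\alpha$ restricts to a $\Z$-automorphism of $\Z^n$ and satisfies $I(\bdg)^\tr\refl_\alpha=O^\alpha I(\bdg)^\tr$ for an orthogonal $O^\alpha\in\MM_m(\Z)$. By Lemma~\ref{L:ortSimson} such $O^\alpha$ is a signed permutation matrix, so it maps any $\S\bas_s+\T\bas_t$ to some $\S'\bas_{s'}+\T'\bas_{t'}$; hence $\refl_\alpha$ sends incidence vectors to incidence vectors and, by Corollary~\ref{cor:incinc}, $\refl_\alpha(\Phi)=\Phi$. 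Axiom (R4), the integrality of $2q(\beta,\alpha)/q(\alpha,\alpha)$, is precisely the computation inside the proof of Lemma~\ref{L:reflection} ($q(\alpha,\alpha)=2$ for open walks; $q(\alpha,\alpha)=4$ together with the parity of $q(\beta,\alpha)$ from Lemma~\ref{lem:univ.polar}(b) for closed ones). For irreducibility I would argue that a decomposition $\Phi=\Phi_1\sqcup\Phi_2$ into nonempty orthogonal pieces yields an orthogonal splitting $\R^n=E_1\oplus E_2$ into coordinate subspaces (each $\bas_i$ lies in one $E_k$), giving a partition $\un=S_1\sqcup S_2$. For incident distinct arrows $j,j'$ the length-two walk traversing $j$ then $j'$ has incidence root supported exactly on $\{j,j'\}$ with entries $\pm1$, which must lie in a single $E_k$ and so forces $j,j'$ into the same $S_k$; connectedness of $\bdg$, hence of its line graph, then puts all arrows in one part, making $\Phi_2$ empty, a contradiction.

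To identify the type I would split into the two cases. In the tree case $\bdg$ is balanced, so $\CRinc_\bdg=\CR_q(0)\cup\CR_q(1)$ by Theorem~C(d) and $\Phi=\CR_q(1)$; here $q$ is a connected non-negative unit form of corank $0$ and rank $n$ which, being balanced, has Dynkin type $\A$ by Theorem~A, i.e.\ $\Dyn(q)=\A_n$. A $\Z$-equivalence $q\weak q_{\A_n}$ from Theorem~\ref{T:wea} is an isometry carrying the standard $A_n$ root system $\CR_{q_{\A_n}}(1)$ bijectively onto $\CR_q(1)=\Phi$, so $\Phi$ has type $\A_n$. In the unbalanced $1$-tree case Theorem~C(c) shows $\Phi$ has roots of two $q$-values, and the proof of Proposition~\ref{prop:posfinite} records that exactly $2n$ of them have $q$-value $2$ (the long roots, squared norm $4$) while the other $2n(n-1)$ have $q$-value $1$ (short, squared norm $2$). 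Since $\Phi$ is an irreducible reduced crystallographic root system of rank $n$ with two root lengths, the classification \cite[\S11]{jeH72} leaves only $B_n,C_n,F_4,G_2$; the root counts exclude $F_4$ and $G_2$, and having exactly $2n$ \emph{long} roots excludes $B_n$ (whose short roots number $2n$), so $\Phi$ is of type $\CC_n$.

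The main obstacle I anticipate is this last type identification for the $1$-tree. The hypothesis \lq\lq unbalanced $1$-tree'' simultaneously covers bidirected graphs with a bidirected loop, whose incidence form is a non-unit form of Dynkin type $\CC$ (Theorem~B), and loop-free ones, whose incidence form is a unit form of Dynkin type $\D_n$ (Theorem~A); these two forms are not even $\Gweak$-equivalent, yet must give the same root system. The argument therefore cannot be routed through the Dynkin type of $q$ as a quadratic form, and must instead work intrinsically with the full set $\Phi$ -- in particular with the long/short root counts supplied by Proposition~\ref{prop:posfinite} -- so that both sub-cases collapse to the single answer $\CC_n$; correctly distinguishing $C_n$ from $B_n$ via these counts is the delicate point.
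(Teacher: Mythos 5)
Your proposal is correct and shares the paper's overall architecture (finiteness via Proposition~\ref{prop:posfinite}, spanning via the canonical vectors, reducedness via the values $q(x)\in\{1,2\}$ from Theorem~C, closure under $\refl_x$ via the orthogonal matrix $O^x$ of Lemma~\ref{L:reflection} combined with Corollary~\ref{cor:incinc}, integrality of $2q(y,x)/q(x,x)$ from the proof of that lemma, and identification of the type from the root counts computed in the proof of Proposition~\ref{prop:posfinite}), but it deviates in two sub-arguments, both legitimately. For irreducibility the paper shows that any two orthogonal roots admit a third root non-orthogonal to both, by analyzing the endpoint configurations of the underlying reduced walks via Lemma~\ref{lem:univ.polar} and composing or connecting the walks; you instead pass to an orthogonal decomposition $\R^n=E_1\oplus E_2$ into coordinate subspaces and use the incidence roots $\pm\bas_j\pm\bas_{j'}$ of length-two walks together with connectedness of $\bdg$ -- this is arguably cleaner and avoids the case analysis, though it quietly uses that $E_1,E_2$ must be coordinate subspaces (which does follow from a dimension count since every $\bas_i$ is a root). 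For the tree case the paper simply counts $n^2+n$ roots all of norm $1$ and invokes the classification, whereas you transport the standard $\A_n$ root system along a $\Z$-equivalence $q\weak q_{\A_n}$ from Theorem~A; both work, and your explicit separation of $\CC_n$ from $B_n$ via the count of $2n$ \emph{long} roots in the $1$-tree case makes precise a point the paper leaves to the citation of Humphreys. Your closing observation -- that the $1$-tree case mixes bidirected graphs whose forms have Dynkin type $\DD_n$ with those of type $\CC_n$, so the identification cannot be routed through $\Dyn(q)$ -- is exactly the phenomenon the paper records separately in Remark~\ref{rem:rootsysD}.
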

\begin{proof} 
Take $q:=q_{\bdg}$. By Corollary~\ref{cor:posfinite}, $q$ is a positive incidence form, and $\CRinc_{\bdg}$ is a finite set by Proposition~\ref{prop:posfinite}. Since the canonical basis is contained in $\CRinc_{\bdg}$, then $\roots:=\CRinc_{\bdg}\setminus\{0\}$ is a finite set of non-zero vectors that generates $\mathbb{R}^n$. By Theorem C we have $q_{\bdg}(x)\in \{1,2\}$ for any $x \in \roots$. In particular, if $ax \in \roots$ for some $a \in \Z$, then $a=\pm 1$. That $2q(x,y)/q(x,x)$ is an integer for any $x,y \in \roots$ was shown in the proof of Lemma~\ref{L:reflection}.  Therefore, to verify that $\roots$ is a finite root system as in~\cite[9.2]{jeH72}, it remains to show that $\roots$ is closed under reflection at any of its elements. This is consequence of Lemma~\ref{L:reflection} as follows: if $x,y \in \roots$, then by Corollary~\ref{cor:incinc} we have $I(\bdg)^{\tr}y=\S\bas_s+\T\bas_t$ for some signs $\S,\T$ and vertices $s,t$. Let $O^x$ be the integer orthogonal matrix given in Lemma~\ref{L:reflection}, and take signs $\S',\T'$ and vertices $s',t'$ such that $O^x(\S\bas_s)=\S'\bas_{s'}$ and $O^x(\T\bas_t)=\T'\bas_{t'}$ (cf. Lemma~\ref{L:ortSimson}). Then
\[
I(\bdg)^{\tr}\refl_x(y)=O^xI(\bdg)^{\tr}y=O^x(\S\bas_s+\T\bas_t)=\S'\bas_{s'}+\T'\bas_{t'},
\]
and again by Corollary~\ref{cor:incinc} we have $\refl_x(y) \in \CRinc_{\bdg}$. Since $\refl_x$ is an isomorphism, 
 the reflected vector $\refl_x(y)$ is non-zero, that is, $\refl_x(y) \in \roots$.

To show that the root system $\roots$ is irreducible, consider two incidence roots $x,x' \in \roots$ such that $q(x,x')=0$. Take reduced walks $\wlk$ from vertex $s$ to $t$, and $\wlk'$ from vertex $s'$ to $t'$, such that $\inc(\wlk)=\pm x$ and $\inc(\wlk')=\pm x'$. By Lemma~\ref{lem:univ.polar}, since $\bdg$ has no non-trivial positive reduced closed walks, we have either $\{s,t\} \cap \{s',t'\}=\emptyset$, or $\{s,t\} = \{s',t'\}$ and $\sigma(\wlk)\sigma(\wlk')=-1$. In the first case, since $\bdg$ is connected, there is a walk $\wlk''$ from vertex $s$ to $s'$. By Lemma~\ref{lem:univ.polar}, then $x'':=\inc(\wlk'') \in \roots$ satisfies $q(x,x'')\neq 0$ and $q(x',x'') \neq 0$. In the second case, one of the walks $\wlk'':=\wlk \wlk'$ or $\wlk'':=\wlk (\wlk')^{-1}$ is defined, and therefore closed and negative. In particular, $x'':=\inc(\wlk'')$ belongs to $\roots$ and satisfies $q(x,x'')\neq 0$ and $q(x',x'') \neq 0$ also by Lemma~\ref{lem:univ.polar}. This shows that $\roots$ is irreducible.

To determine the Dynkin type of $\roots$, assume first that $\bdg$ is a tree. By the above, and as indicated in the proof of Proposition~\ref{prop:posfinite} (Case~1), $\roots$ is an irreducible finite root system of rank $n$ with $n^2+n$ elements $x$, all of them satisfying $q(x)=1$, see Theorem C(b,d). In particular, $\roots$ has Dynkin type $\A_n$ (cf.~\cite[12.1-2]{jeH72}). Assume now that $\bdg$ is a $1$-tree with at least $2$ arrows. By the proof of Proposition~\ref{prop:posfinite} (Case~2), $\roots$ is an irreducible finite root system of rank $n$ with a total of $2n^2$ roots, exactly $2n$ of them satisfying $q_{\bdg}(x)=2$. Then $\roots$ has Dynkin type $\CC_n$ (cf.~\cite[12.1-2]{jeH72}).
\end{proof}

\begin{remark}\label{rem:rootsysD} 
If $q=q_\bdg$ is an incidence form of a (connected) unbalanced 1-tree $\bdg$ with $n:=|\verts{\bdg}|=|\edgs{\bdg}|\geq 2$, then $q$ is positive and Theorems A and B imply the following (see also Lemmas \ref{L:ful} and \ref{L:irr}). If $\bdg$ has a bidirected loop, then $q$ is connected of Dynkin type $\CC_n$. In case $\bdg$ has no loops, if $n=2$ or $n=3$, resp.~$n\geq 4$, then $q_\bdg=q_{\AA_1}\oplus q_{\AA_1}$ or $q_\bdg$ is connected of Dynkin type $\AA_3$, resp.~$\DD_n$. In all these cases the set $\CRinc_{\bdg}\setminus\{0\}$ is an irreducible root system of type $\CC_n$ by Proposition \ref{P:rootSys}. However, by applying the formula \eqref{eq:Rinc1tree} and the results of \cite{MM19,MZ22} (cf.~Theorem \ref{T:pr1}) one can show that for $n\geq 4$ the set $\CR_q(1)\subsetneq \CRinc_{\bdg}\setminus\{0\}$ has $2(n^2-n)$ elements and it is an irreducible root system of Dynkin type $\DD_n$.
\end{remark}
In general, there is much to say about root systems, Weyl groups, reflections and Coxeter transformations in the incidence setting, which we leave for the future work. For some special cases, we refer the reader to~\cite{MZ22,jaJ2020b,jaJ2020c}.

\subsection{Whitney's problem on line graphs}\label{subsec:W}

Given an integral quadratic form $q:\ZZ^n\to\ZZ$ its Gram matrix has the shape
$G_q={\bf Ad}(\Delta)+2\Id=-{\bf Ad}(-\Delta)+2\Id$, where $\Delta=\Delta_q$ is the associated bigraph,  ${\bf Ad}(\Delta)$ its (signed) adjacency
matrix, and $-\Delta$ is the bigraph obtained from $\Delta$ by inverting all signs, cf.~\cite{tZ08}, Section \ref{sec:basics} and the footnote on p.~\pageref{p:convsigns}. In particular, $q$ is non-negative if and only if the (real) eigenvalues of ${\bf Ad}(-\Delta)$ are bounded by 2. Moreover, in case $q=q_\bdg$ for a bidirected graph $\bdg$, the bigraph $-\Delta$ coincides with the {\em line $($signed$)$ graph} of $\bdg$ in the sense of \cite{tZ08}. These simple observations provide a bridge between our results and the study of graphs, their spectra, and related root systems in \cite{CGSS76, tZ08,BH12}.
Note that usually the results in the classical spectral graph theory refer to (unsigned) graphs or  simple signed or bidirected graphs. We consider a more general setting: we study signed and bidirected graphs  admitting multi-edges and loops. The proper understanding of the consequences of the above bridge requires an  extensive study. Here we focus on the following interesting aspects.

We say that two bidirected graphs $\bdg$ and $\bdg'$ are {\bf switching equivalent} if $\bdg'$ is a switching $\bdg'=\bdg^O$ of $\bdg$ for some orthogonal integer matrix $O$ (cf.~Definition \ref{D:tra}(d) and \cite{tZ08}). We write $\bdg\switch \bdg'$ (note that $\switch$ is indeed an equivalence relation on bidirected graphs). Recall that then $q_\bdg=q_{\bdg'}$, see Lemma \ref{L:tra}(d). On the other hand, Example \ref{ex:twobdg} shows that there exist $\bdg$ and $\bdg'$ which are not switching equivalent but $q_\bdg=q_{\bdg'}$. This motivates to state the following natural problem, extending a result on line graphs by Whitney dating back to 1932~\cite{hW32} (see~\cite{hC21} and \cite{tZ08,BH12}).

\begin{problem}\label{prob:switching}
 Given an incidence quadratic form $q:\ZZ^n\to\ZZ$, describe all switching equivalence classes of bidirected graphs $\bdg$ such that $q_\bdg=q$. In other words, describe all switching classes of bidirected graphs $\bdg$ having the same line graph $-\Delta(\bdg)$.
 \end{problem}

Using Proposition~\ref{P:null} and one of the main results of~\cite{jaJ2018}, here we give a partial solution to Problem~\ref{prob:switching} as follows. Recall that, as general convention, bidirected graphs have no isolated vertices.

\begin{lemma}\label{L:swiBal}
Let $\bdg$ and $\bdg'$ be balanced connected loop-less bidirected graphs with $q_{\bdg}=q_{\bdg'}$. Then $\bdg$ and $\bdg'$ are switching equivalent.
\end{lemma}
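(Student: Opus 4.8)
The statement asserts that for balanced connected loop-less bidirected graphs $\bdg$ and $\bdg'$ sharing an incidence form, switching equivalence follows. The natural plan is to reduce to the quiver case, where Whitney-type results are already available through Theorem~\ref{T:mainA}. Since $\bdg$ is balanced and connected with no loop, Proposition~\ref{P:null}(b) guarantees that $\Null(I(\bdg))=1$, so there is an orthogonal matrix $O$ with $\bdg^{O}$ a quiver; the same applies to $\bdg'$ with some $O'$. By Lemma~\ref{L:tra}(d) we have $q_{\bdg^{O}}=q_{\bdg}=q_{\bdg'}=q_{(\bdg')^{O'}}$, so after replacing $\bdg,\bdg'$ by these switchings (which does not change their switching classes, since $\switch$ is an equivalence relation) we may assume both are connected loop-less quivers with the same incidence form.

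First I would verify that these quivers satisfy the hypotheses of Theorem~\ref{T:mainA}. The common incidence form $q:=q_{\bdg}$ is non-negative, connected and unitary (connected by Remark~\ref{R:con}(b), unitary since there are no loops, by Lemma~\ref{L:ful}), and being balanced gives $\cyccond_\bdg=1$, so by Theorem A its Dynkin type is $\A$ or $\D$. Being a quiver, $\Null(I(\bdg))=1$, so by Theorem A(a) the type cannot be $\D$ (that case requires $\bdg$ unbalanced); hence $\Dyn(q)=\A_r$ for some $r$. Thus condition (i) of Theorem~\ref{T:mainA} holds, and both $\bdg$ and $\bdg'$ are quivers realizing the same form $q$ of type $\A$.

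The key step is then the uniqueness clause of Theorem~\ref{T:mainA}: any quiver $\bdg'$ with $q_{\bdg}=q_{\bdg'}$ is a switching $\bdg'=\bdg^{O''}$ with $O''=(\pm 1)P$ for a permutation matrix $P$. Such an $O''$ is exactly an integer orthogonal matrix (it is a diagonal sign matrix times a permutation, cf.~Lemma~\ref{L:ortSimson}), so $\bdg\switch\bdg'$ as quivers. Tracing back through the switchings $O$ and $O'$ applied at the outset, and using transitivity of $\switch$, I obtain that the original $\bdg$ and $\bdg'$ are switching equivalent.

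\textbf{Main obstacle.} I expect the only subtle point to be confirming that Theorem~\ref{T:mainA} applies with matching number of vertices: the uniqueness statement there implicitly assumes $\bdg$ and $\bdg'$ have the same number of vertices $r+1$, and I must check that replacing $\bdg,\bdg'$ by quiver switchings preserves $|\verts{\bdg}|$ (switching does not change the vertex set) and that both quivers have vertex count $\Rnk(q)+1=r+1$ by Corollary~\ref{cor:crk}, using $\cyccond=1$. Once the vertex counts agree, the uniqueness clause delivers the result directly; the rest is bookkeeping about composing orthogonal matrices and invoking transitivity of $\switch$.
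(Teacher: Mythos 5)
Your proposal is correct and follows essentially the same route as the paper's proof: reduce to quivers via Proposition~\ref{P:null}(b), transfer the equality of incidence forms via Lemma~\ref{L:tra}(d), invoke the uniqueness clause of Theorem~\ref{T:mainA}, and conclude by transitivity of $\switch$. Your additional checks (Dynkin type $\A$, matching vertex counts) are sound verifications of hypotheses that the paper leaves implicit.
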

\begin{proof}
By Proposition~\ref{P:null}(b), the balanced bidirected graphs $\bdg$ and $\bdg'$ are switching equivalent to quivers $\overrightarrow{\bdg}$ and $\overrightarrow{\bdg}'$, respectively. Using Lemma~\ref{L:tra}(d) we have $q_{\overrightarrow{\bdg}}=q_{\overrightarrow{\bdg}'}$, and by Theorem~\ref{T:mainA} we get $\overrightarrow{\bdg} \switch \overrightarrow{\bdg}'$. Then $\bdg \switch \bdg'$, as wanted.
\end{proof}

Let $\bdg$ be a loop-less bidirected graph with $n$ arrows and $m$ vertices. Recall that we identify the set of arrows of $\bdg$ with $\edgs{\bdg}=\un$, and its set of vertices with the labels $\verts{\bdg}=\{u_1,\ldots,u_m\}$, with respective natural orders. With this identification, the collection $\bdg(q):=\{\text{$\bdg$ a bidirected graph} \colon q_{\bdg}=q\}$ is a finite set for any quadratic form $q$. 
By Lemma~\ref{L:tra}(d), the set $\bdg(q)$ is closed under switching. Let $\swi(q)$ denote the number of switching classes of bidirected graphs $\bdg$ such that $q=q_{\bdg}$ (that is, $\swi(q)=|\bdg(q)_{/\switch}|$). 

\begin{corollary}\label{C:swiAr}
If $q$ is a connected non-negative unit form of Dynkin type $\Dyn(q)=\A_r$ for some $r \geq 4$, then $\swi(q)=1$.
\end{corollary}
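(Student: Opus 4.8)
The plan is to show that the finite set $\bdg(q)=\{\bdg : q_\bdg=q\}$ consists of a single switching class, whence $\swi(q)=|\bdg(q)_{/\switch}|=1$. First I would record that $\bdg(q)\neq\emptyset$: since $q$ is a connected non-negative unit form with $\Dyn(q)=\A_r$, Theorem A provides a connected loop-less bidirected graph $\bdg$ with $q=q_\bdg$. Next I would check that \emph{every} $\bdg\in\bdg(q)$ is connected and loop-less. Loop-freeness is immediate from Lemma \ref{L:ful} together with the construction of $\Delta_q$: as $q$ is unitary we have $q_i=1$ for all $i$, so no arrow of $\bdg$ can be a loop (a bidirected loop would force $q_i=2$, a directed loop $q_i=0$). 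Connectedness follows from Remark \ref{R:con}(b), since $\Delta(\bdg)=\Delta_q$ is connected because $q$ is a connected form, hence so is $\bdg$.

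The crux is to establish that all realizing graphs carry the \emph{same} balance type. Here I would invoke Corollary \ref{C:typeA}: because $\Dyn(q_\bdg)=\A_r$ with $r\geq 4$, every $\bdg\in\bdg(q)$ is balanced. This is precisely the step where the hypothesis $r\geq 4$ is essential: for $r\le 3$ the form $q_{\AA_3}$ admits both a balanced and an unbalanced realization (compare Example \ref{ex:twobdgcrks}, where $\bdgA_3^0$ is a balanced quiver while $\bdgD_3^0$ is unbalanced), so the conclusion would fail in that range.

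Finally, given arbitrary $\bdg,\bdg'\in\bdg(q)$, both are connected loop-less balanced bidirected graphs with $q_\bdg=q=q_{\bdg'}$, so Lemma \ref{L:swiBal} yields $\bdg\switch\bdg'$. Thus $\bdg(q)$ forms a single $\switch$-class, giving $\swi(q)=1$. I expect no serious obstacle beyond correctly invoking the uniformity of the balance property: all the genuine difficulty is already absorbed into Corollary \ref{C:typeA} (via Lemma \ref{L:typeA} and Proposition \ref{P:null}) and into Lemma \ref{L:swiBal} (which rests on Theorem \ref{T:mainA}). The one point demanding care is to confirm that the defining conditions \emph{connected}, \emph{loop-less}, and \emph{balanced} hold for \emph{all} members of $\bdg(q)$ simultaneously, not merely for one chosen realization, so that Lemma \ref{L:swiBal} applies to every pair.
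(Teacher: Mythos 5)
Your proof is correct and follows essentially the same route as the paper's: Theorem A for existence, Lemma \ref{L:ful} and Remark \ref{R:con}(b) to see that every realization is loop-less and connected, the balancedness statement for type $\A_r$ with $r\geq 4$, and finally Lemma \ref{L:swiBal}. Your citation of Corollary \ref{C:typeA} (rather than Lemma \ref{L:typeA}, which formally only covers the canonical forms $q_{\mathbf{A}_r^c}$) is in fact the more precise reference for the balancedness step.
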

\begin{proof}
For such $q$, by Theorem A we have $\swi(q)>0$. Let $\bdg$ and $\bdg'$ be bidirected graphs such that $q=q_{\bdg}=q_{\bdg'}$.  By Theorem A (cf.~Lemma~\ref{L:ful} and Remark~\ref{R:con}(b)), both $\bdg$ and $\bdg'$ are loop-less and connected. Since $r \geq 4$, by Lemma~\ref{L:typeA} both $\bdg$ and $\bdg'$ are balanced. Therefore, Lemma~\ref{L:swiBal} implies that $\bdg\switch \bdg'$, that is, $\swi(q)=1$.
\end{proof}

The problem for incidence forms $q$ not satisfying the assumptions of Corollary \ref{C:swiAr} seems to be more difficult. For instance, by Example  \ref{ex:twobdg} we have $\swi(q_{\AA_3})\geq 2$ (one can show that actually $\swi(q_{\AA_3})= 2$). We believe that our techniques allow to solve  Problem \ref{prob:switching} in full generality.

\subsection{Euler form of a gentle algebra}\label{subsec:gentle}

 In this subsection we freely use basic notions and  facts of representation theory of finite-dimensional algebras, including path algebras of (bound) quivers. For all the details we refer to \cite{ASS06}.

 Gentle algebras were introduced in \cite{AS87} in the context of study of algebras derived equivalent to hereditary algebras of type $\widetilde{\AA}$. Recall that a finite-dimensional algebra $\La$  over a field $\kk$ is called a {\bf gentle algebra} if it is a quotient $\La=\kk \quiv/\ideal$ of the path algebra $\kk \quiv$ of a connected quiver $\quiv$ modulo a two-sided ideal $\ideal\subseteq \kk \quiv$ such that: the indegree and the outdegree of every vertex in $\quiv$ is at most 2, the ideal $\ideal$ is generated by paths in $\kk \quiv$ of length 2, and for each arrow $\alpha\in \quiv$, there is at most one arrow $\beta$ such that $\alpha\beta\in\ideal$, there is at
most one arrow $\beta'$ such that $\beta'\alpha\in\ideal$, there is at most one (composable) arrow $\gamma$ such that $\alpha\gamma\notin\ideal$, and there is at
most one (composable)  arrow $\gamma'$ such that $\gamma'\alpha\notin\ideal$. Although this quite technical nature gentle algebras  appear in many different contexts, and in recent years they attracted special attention for their deep interrelations with symplectic geometry and mathematical physics, see \cite{LP20, OPS18} and references therein.

Fix a gentle algebra $\La=\kk \quiv/\ideal$. For any vertex $i$ in $\quiv$ we denote by $\mathbbm{1}_i$ the trivial path in $i$ and by $P_i=\mathbbm{1}_i\La$ the corresponding indecomposable projective (right) $\La$-module, cf.~\cite[III.2]{ASS06}. From now we assume that $\La$ has finite global dimension. In particular this means that $\quiv$ has no loops (cf.~\cite{gB17}) and that the Cartan matrix $\Car_\La\in\MM_n(\ZZ)$ of $\La$, whose columns $\Car_\La\bas_i=\pp_i:=\dim(P_i)$ are the dimension vectors of $P_i$'s, is $\ZZ$-invertible, where $n\geq 1$ is the number of vertices in $\quiv$ (see \cite[III.3]{ASS06}). Moreover,  the {\bf Euler form} $q_\La:\ZZ^n\to\ZZ$ of $\La$ given by
\begin{equation}\label{eq:eulf}
q_\La(x):=x^\tr(\Car_\La^{-1})^\tr x
\end{equation}
 is a well-defined integral quadratic form (cf.~Example \ref{ex:gentle}).

\def\fp{\phi}

Following \cite{AG08}, a  path in $\quiv$ not contained in the ideal  $\ideal$ is called a \textbf{permitted path}, and such paths which are maximal are called \textbf{non-trivial permitted threads}.  A path $c=\alpha_1\alpha_2\cdots \alpha_\ell$ such that $\alpha_{t}\alpha_{t+1} \in \ideal$ for each $t=1\ldots \ell-1$ is called a \textbf{forbidden path}, and such paths which are maximal are called \textbf{non-trivial forbidden threads}. If $i$ is a vertex in $\quiv$ with at most one arrow $\alpha$ ending at $i$, and at most one arrow $\beta$ starting at $i$, then the trivial path $\mathbbm{1}_i$ is a called a \textbf{trivial permitted thread} (resp.~\textbf{trivial forbidden thread}) if $\alpha\beta \notin \ideal$ (resp.~$\alpha\beta \in \ideal$), see \cite{AG08} for more details.
We denote by $\perm$ (resp.~$\forb$) the set of all permitted  (resp.~forbidden) threads. By the definition of a gentle algebra it follows that there is a well-defined bijection $\fp:\forb\to\perm$ such that $\theta\in\forb$ and $\eta:=\fp(\theta)$ have the same starting vertex and if $\theta\in\forb$ is non-trivial, then  $\eta$ is either trivial or the starting arrows of $\theta$ and $\eta$ differ (see \cite[2.2]{gB17} for the details).
Given a path $\wlk=\alpha_1\cdots \alpha_\ell$ in $\quiv$ with arrows $\alpha_t:i_{t-1} \to i_t$, for $t=1,\ldots \ell$, 
 we consider the vectors
$
\lceil \wlk \rceil :=\sum_{t=0}^\ell\bas_{i_t}$ and $\lfloor \wlk \rfloor:=\sum_{t=0}^\ell(-1)^t\bas_{i_t}
$ in $\ZZ^n$.

\begin{lemma}\label{lem:forbperm}
For any forbidden thread $\theta\in\forb$  we have
 $
 \Car_\La\lfloor \theta \rfloor=\lceil \fp(\theta) \rceil.
 $
\end{lemma}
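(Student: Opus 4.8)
The plan is to fix a forbidden thread $\theta\in\forb$, write it explicitly as a path, and compute both sides of the claimed identity $\Car_\La\lfloor\theta\rfloor=\lceil\fp(\theta)\rceil$ by unwinding the definitions of the Cartan matrix $\Car_\La$ and the bijection $\fp:\forb\to\perm$. First I would treat the case where $\theta=\alpha_1\cdots\alpha_\ell$ is a non-trivial forbidden path with arrows $\alpha_t:i_{t-1}\to i_t$, so that $\lfloor\theta\rfloor=\sum_{t=0}^\ell(-1)^t\bas_{i_t}$. Since $\Car_\La\bas_i=\pp_i=\dim(P_i)$, applying $\Car_\La$ gives $\Car_\La\lfloor\theta\rfloor=\sum_{t=0}^\ell(-1)^t\pp_{i_t}$. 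The heart of the argument will be to identify this alternating sum of dimension vectors of indecomposable projectives with the ``permitted-thread vector'' $\lceil\fp(\theta)\rceil$.

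The key structural input is a precise description of the dimension vector $\pp_i=\dim(P_i)$ in terms of permitted paths: for a gentle algebra, the $j$-th coordinate of $\pp_i$ counts the permitted paths from $i$ to $j$ (equivalently, a $\kk$-basis of $\mathbbm{1}_i\La$ is given by the permitted paths starting at $i$). I would record this as a preliminary observation, then analyze the telescoping behaviour of $\sum_{t=0}^\ell(-1)^t\pp_{i_t}$. The mechanism is that consecutive subpaths $\alpha_{t+1}\cdots$ along the forbidden thread control how permitted paths out of $i_t$ and $i_{t+1}$ overlap: because $\alpha_t\alpha_{t+1}\in\ideal$ for each $t$ (the defining property of a forbidden path), the permitted paths emanating from successive vertices cancel in pairs, leaving only the contribution of the permitted thread $\fp(\theta)$ that shares the starting vertex $i_0$ of $\theta$. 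I expect the gentle conditions — at each vertex at most one arrow $\beta$ with $\alpha\beta\in\ideal$ and at most one $\gamma$ with $\alpha\gamma\notin\ideal$ — to be exactly what guarantees that this cancellation is clean and that the surviving terms assemble into $\lceil\fp(\theta)\rceil=\sum_{s}\bas_{j_s}$ over the vertices $j_s$ visited by $\fp(\theta)$.

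I would then dispose of the boundary/degenerate cases separately: when $\theta=\mathbbm{1}_i$ is a trivial forbidden thread, $\lfloor\theta\rfloor=\bas_i$ and $\Car_\La\bas_i=\pp_i$, so the identity reduces to checking that $\pp_i=\lceil\fp(\mathbbm{1}_i)\rceil$, i.e.\ that the dimension vector of $P_i$ is the vertex-incidence vector of the permitted thread starting at $i$; this is again the basis description of $P_i$ together with the fact that the permitted thread $\fp(\theta)$ records precisely the vertices supporting $P_i$ when $\theta$ is trivial. I would also account for the clause in the definition of $\fp$ that the starting arrows of $\theta$ and $\eta=\fp(\theta)$ differ (when $\theta$ is non-trivial), since this is what pins down which permitted thread survives the cancellation and ensures $\fp$ is the correct bijection to appear on the right-hand side.

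The main obstacle will be making the telescoping/cancellation argument fully rigorous rather than merely plausible: one must verify that at each internal vertex $i_t$ the permitted paths counted by $\pp_{i_t}$ and $\pp_{i_{t-1}}$ match up with the correct signs so that everything except the $\fp(\theta)$-contribution cancels. I expect this to require a careful induction on the length $\ell$ of $\theta$, peeling off the first arrow $\alpha_1$ and relating the permitted paths from $i_0$ to those from $i_1$ via the (at most one) permitted continuation allowed by the gentle hypotheses. Managing the base case and the interaction with trivial threads at the ends is where the bookkeeping is most delicate; the algebraic content, however, is entirely contained in the combinatorics of permitted versus forbidden paths, so once the correct inductive statement is set up the computation should close.
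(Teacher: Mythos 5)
Your plan is correct, but it takes a genuinely different route from the paper. The paper's proof is homological: for $\eta=\fp(\theta)$ it forms the string module $M_\eta$ with $\dim(M_\eta)=\lceil\eta\rceil$ and observes that the forbidden thread $\theta=\alpha_1\cdots\alpha_\ell$ indexes a projective resolution $0\to P_{i_\ell}\to\cdots\to P_{i_1}\to P_{i_0}\to M_\eta\to 0$ whose maps are compositions with the arrows $\alpha_t$; exactness then yields $\lceil\eta\rceil=\sum_{t=0}^{\ell}(-1)^t\pp_{i_t}=\Car_\La\lfloor\theta\rfloor$ in one line. Your telescoping of $\sum_t(-1)^t\pp_{i_t}$ is exactly the combinatorial shadow of that exactness, and it does close cleanly: writing $B_t$ for the contribution to $\pp_{i_t}$ of the permitted paths beginning with $\alpha_{t+1}$ (and $B_\ell:=0$), the relation $\alpha_t\alpha_{t+1}\in\ideal$ together with the gentle conditions makes $q\mapsto\alpha_t q$ an endpoint-preserving bijection from the permitted paths at $i_t$ \emph{not} beginning with $\alpha_{t+1}$ onto the permitted paths at $i_{t-1}$ beginning with $\alpha_t$, whence $\pp_{i_t}=B_{t-1}+B_t$ for $t\geq 1$ and the alternating sum collapses to $\pp_{i_0}-B_0$; this equals $\lceil\fp(\theta)\rceil$ because the surviving permitted paths at $i_0$ are precisely the initial subpaths (trivial path included) of the unique maximal permitted path from $i_0$ not starting with $\alpha_1$, which is $\fp(\theta)$ by the definition of the bijection $\fp$. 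So the induction you anticipate is not really needed once $\pp_{i_t}=B_{t-1}+B_t$ is in place. What the paper's route buys is brevity, since the resolution is a known fact for gentle algebras of finite global dimension (cited from Bobi\'nski); what yours buys is a self-contained elementary argument that never mentions string modules. The delicate points are the ones you already flag: the trivial forbidden threads (where the identity reduces to $\pp_i=\lceil\fp(\mathbbm{1}_i)\rceil$, i.e.\ the basis description of the uniserial $P_i$) and the case where $\fp(\theta)$ is trivial because no second arrow leaves $i_0$.
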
 

\begin{proof} Take $\theta=\alpha_1\cdots \alpha_\ell\in\forb$ with $\alpha_t:i_{t-1} \to i_t$, for $t=1,\ldots \ell$, and $\eta:=\fp(\theta)\in\perm$. Since $\eta$ does not belong to the ideal $\ideal$, we can define an indecomposable $\La$-module $M_\eta$ with $\dim(M_\eta)=\lceil \eta \rceil$ (so-called string module, see~\cite{BR87}). By the definitions of a gentle algebra and of forbidden (resp.~permitted) threads we verify that the following sequence of $\La$-modules and $\La$-homomorphisms

\begin{equation}\label{eq:resolution}
\xymatrix{0 \ar[r]& P_{i_{\ell}} \ar[r]^-{\hat{\alpha}_{\ell}} & P_{i_{\ell-1}} \ar[r]^-{\hat{\alpha}_{\ell-1}} & \cdots \ar[r]^-{\hat{\alpha}_2} & P_{i_1} \ar[r]^-{\hat{\alpha}_1} & P_{i_0}\ar[r]^-{p}&M_\eta\ar[r]&0,}
\end{equation}
is a projective resolution of $M_\eta$, where $\hat{\alpha}_t$ denotes the homomorphism given by respective composition with the arrow $\alpha_t$ and $p$ is the projective cover of $M_\eta$,
cf.~\cite[I.5]{ASS06}, and \cite[Lemma 2.4]{gB17} and its proof. Thus, since the sequence \eqref{eq:resolution} is exact, we have
$$\lceil \eta \rceil = \dim(\stringmod{\eta})=\sum_{t=0}^\ell(-1)^t\dim(P_{i_t})=\sum_{t=0}^\ell(-1)^t\Car_\La\bas_{i_t}=\Car_\La\lfloor \theta \rfloor.$$
\end{proof}

\begin{theorem}\label{thm:gentle}
Let $\La=\kk \quiv/\ideal$ be a gentle algebra of finite global dimension. Then there exists a bidirected graph $\bdg$ such that the Euler form $q_\La=q_\bdg$ is the incidence form of $\bdg$. In particular, $q_\La$ is non-negative, and if $q_\La$ is connected, then $q_\La$ has Dynkin type $\AA$, $\DD$ or $\CC$.
\end{theorem}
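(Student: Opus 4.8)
The plan is to produce the bidirected graph $\bdg$ explicitly and then read off every remaining assertion from the structure theorems already established. Since $\La$ has finite global dimension, the quiver $\quiv$ has no loops and the Cartan matrix $\Car_\La$ is $\ZZ$-invertible, so by \eqref{eq:eulf} the Euler form has Gram matrix $G_{q_\La}=\Car_\La^{-1}+(\Car_\La^{-1})^{\tr}$. Producing $\bdg$ therefore amounts to exhibiting, on the arrow set $\{1,\ldots,n\}=\verts{\quiv}$, signed endpoints assembling into an incidence matrix $I(\bdg)$ with $I(\bdg)I(\bdg)^{\tr}=\Car_\La^{-1}+(\Car_\La^{-1})^{\tr}$. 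Once this is done, non-negativity of $q_\La$ is immediate from Lemma~\ref{L:ful}; and if $q_\La$ is connected, then $\bdg$ is connected by Remark~\ref{R:con}(b), so that Theorems~A and~B apply: if every $(\Car_\La^{-1})_{ii}=1$ the form is unitary and hence of type $\A$ or $\D$, while if some $(\Car_\La^{-1})_{ii}=2$ the corresponding arrow is a bidirected loop of $\bdg$ and $q_\La$ is a non-unitary irreducible form (Lemma~\ref{L:irr}(b)) of type $\CC$.

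The construction of $\bdg$ is where Lemma~\ref{lem:forbperm} enters. As $\theta$ ranges over the forbidden threads $\forb$, the identities $\Car_\La\lfloor \theta \rfloor=\lceil \fp(\theta) \rceil$ encode $\Car_\La^{-1}$ in terms of the floor vectors $\lfloor\theta\rfloor$ (alternating $\pm1$ along a thread) and the ceiling vectors $\lceil\eta\rceil$ (the $0$--$1$ support of a permitted thread). First I would let the vertices of $\bdg$ record the permitted/forbidden thread decomposition of $\La$, and attach to each vertex $i$ of $\quiv$ the two signed endpoints dictated by the two threads meeting at $i$: the start/end orientation of a thread and whether the thread is permitted or forbidden would determine the two signs, so that each row of $I(\bdg)$ is of the required shape $\pm\bas_u\pm\bas_{u'}$. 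The verification $I(\bdg)I(\bdg)^{\tr}=\Car_\La^{-1}+(\Car_\La^{-1})^{\tr}$ then reduces, through Lemma~\ref{lem:forbperm}, to matching the inner products of pairs of these endpoint vectors with the symmetrised entries $(\Car_\La^{-1})_{ij}+(\Car_\La^{-1})_{ji}$.

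The hard part will be exactly this combinatorial bookkeeping. The naive guess ``vertices of $\bdg=$ threads'' does not survive a count, because trivial threads at boundary vertices, interior vertices of long threads, and the global linear relations among the floor vectors all perturb the number of vertices (one already sees this by comparing the path algebra of $\A_3$ with and without a single zero relation), so the correct vertex set of $\bdg$ must be extracted from the thread data with care and the sign rule must be checked to be globally consistent. A convenient auxiliary device, which I would use to decouple the non-negativity and type statements from the delicate part, is the factorisation
\begin{equation*}
G_{q_\La}=\Car_\La^{-1}+(\Car_\La^{-1})^{\tr}=(\Car_\La^{-1})^{\tr}\bigl(\Car_\La+\Car_\La^{\tr}\bigr)\Car_\La^{-1},
\end{equation*}
which exhibits $q_\La$ as $\ZZ$-equivalent, through $\Car_\La^{-1}\in\GlnZ$, to the form with Gram matrix $\Car_\La+\Car_\La^{\tr}$. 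The latter has all diagonal entries equal to $2$ (finite global dimension excludes permitted cyclic paths, so $(\Car_\La)_{ii}=1$) and off-diagonal entries counting permitted paths, and should be realised by a loop-less bidirected graph $\bdg_0$. This would settle the non-negativity of $q_\La$ independently of the explicit construction, and, combined with the local computation of the coefficients $q_i=(\Car_\La^{-1})_{ii}$ and $q_{ij}=(\Car_\La^{-1})_{ij}+(\Car_\La^{-1})_{ji}$ together with the equivalence criteria of Corollaries~\ref{cor:weakvsweakGforunit} and~\ref{cor:weakforDynC}, would pin down the Dynkin type and thereby confirm, via Theorems~A and~B, that $q_\La$ is itself an incidence form.
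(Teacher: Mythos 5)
Your overall strategy coincides with the paper's: build $\bdg$ out of the permitted/forbidden thread decomposition of $\La$, use Lemma~\ref{lem:forbperm} to relate the alternating vectors $\lfloor\theta\rfloor$ to the supports $\lceil\fp(\theta)\rceil$, and exploit the factorisation of $G_{q_\La}$ through $\Car_\La+\Car_\La^{\tr}$. But the proposal stops exactly at the point that carries the proof: you declare the identification of the vertex set of $\bdg$ to be ``the hard part'', assert that the naive guess ``vertices of $\bdg=$ threads'' does not survive a count, and leave the sign rule unverified. In fact that naive guess, applied to the \emph{forbidden} threads only (trivial ones included), is precisely what works, and the count is not an obstruction: since $\La$ has finite global dimension, every vertex of $\quiv$ occurs exactly twice among the forbidden threads (the combinatorial fact from Avella-Alaminos--Geiss that the paper cites). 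Consequently the $n\times m$ matrix $I$ whose $u$-th column is $\lfloor\theta_u\rfloor$ has every row of the form $\epsilon\bas_u+\epsilon'\bas_{u'}$ with $\epsilon,\epsilon'\in\{\pm1\}$, i.e.\ it \emph{is} the incidence matrix of a bidirected graph $\bdg$ whose arrows are the vertices of $\quiv$ and whose vertices are the forbidden threads; the signs are simply the parities $(-1)^t$ of the positions of a $\quiv$-vertex inside the two forbidden threads through it, so no separate global consistency check is needed. Missing this fact is a genuine gap: without it you have no candidate $I(\bdg)$ against which to verify anything.

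The second half of the argument is likewise left as a ``reduction'' rather than carried out, and the clean route is slightly different from matching inner products of endpoint vectors entry by entry. Let $\hI$ be the matrix with columns $\lceil\eta_u\rceil$. Lemma~\ref{lem:forbperm} says exactly $\Car_\La I=\hI$, and the identity $\pp_i+\qq_i=\lceil\eta_{s_i}\rceil+\lceil\eta_{t_i}\rceil$ (each $\quiv$-vertex also lies on exactly two permitted threads) gives $\Car_\La+\Car_\La^{\tr}=\hI\hI^{\tr}$. Plugging this into the factorisation in the orientation $G_{q_\La}=\Car_\La^{-1}(\Car_\La+\Car_\La^{\tr})\Car_\La^{-\tr}$ (the transpose of the one you wrote, which is equally valid as an identity but does not cancel against $\hI=\Car_\La I$) yields $G_{q_\La}=\Car_\La^{-1}\hI\hI^{\tr}\Car_\La^{-\tr}=II^{\tr}$ in one line. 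Your auxiliary loop-less graph $\bdg_0$ realising $\Car_\La+\Car_\La^{\tr}$ is essentially $\hI$ in disguise, but it is not needed as a separate device --- and it need not be loop-less, since if both permitted threads through $i$ coincide the corresponding row of $\hI$ is $2\bas_{s_i}$ --- because non-negativity and the Dynkin type both follow from Lemma~\ref{L:ful} and Theorems~A and~B once $G_{q_\La}=II^{\tr}$ is in hand.
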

\begin{proof}
If $\La=\kk$, then $q_\La=q_{\AA_1}=q_{\bdgA_1^0}$. So we may assume that $\quiv$ has $n\geq  2$ vertices. We fix a numbering of the elements in $\forb=\{\theta_1,\ldots,\theta_m\}$ and $\perm=\{\eta_1,\ldots,\eta_m\}$ such that $\eta_u=\fp(\theta_u)$ for $u=1,\ldots,m$. We define a matrix $I=I_\La\in\MM_{n\times m}(\ZZ)$ by setting its columns to be
\begin{equation}\label{eq:IofLa}
I_\La\bas_u=\lfloor \theta_u \rfloor
\end{equation} for $u=1,\ldots,m$. Since $\La$ has finite global dimension it is known that every vertex of $\quiv$ appears exactly two times among forbidden threads, see \cite[2.2]{AG08}. Thus every transposed row of $I$ has the shape $I^{\tr}\bas^{(n)}_i=\epsilon\bas^{(m)}_{u}+\epsilon'\bas^{(m)}_{u'}$ for some $u,u'\in\underline{m}$ and $\epsilon, \epsilon'\in\{\pm1\}$. In particular, there exists a bidirected graph $\bdg$ with $|\edgs{\bdg}|=n$ and $|\verts{\bdg}|=m$ such that $I=I(\bdg)$ is its incidence matrix, cf.~\eqref{eq:IofB}. Note that $\bdg$ is determined uniquely (up to isomorphism depending on numbering of $\forb$) by $\La$ unless $I$ has zero rows. 

Now we show that $q_\La=q_\bdg$. First recall that the rows of the Cartan matrix  $\Car_\La^\tr\bas_i=\qq_i:=\dim(Q_i)$ are the dimension vectors of the indecomposable injective $\La$-modules $Q_i={\bf Hom}_K(\La\mathbbm{1}_i,K)$ corresponding to vertices $i$ of $\quiv$, see~\cite[III.3]{ASS06}. Moreover, given a vertex $i$ in $\quiv$, since indegree and outdegree of $i$ is at most 2 we infer that
\begin{equation}\label{eq:ppqq}
 (\Car_\La+\Car_\La^{\tr})\bas_i=\pp_i+\qq_i=\lceil \eta_{s_i}\rceil + \lceil \eta_{t_i} \rceil
\end{equation}
for  indices $s_i,t_i\in\underline{m}$ (maybe equal) such that permitted threads $\eta_{s_i}, \eta_{t_i}\in\perm$ contain $i$, cf.~\cite[III.2]{ASS06} (note that every vertex of $\quiv$ appears exactly two times also among permitted threads, see \cite[2.2]{AG08}). 
In particular,  matrix $\hI\in\MM_{n\times m}(\ZZ)$ defined by
$\hI\bas^{(m)}_u=\lceil \eta_u \rceil$ for $u=1,\ldots,m$, has (transposed) rows of the form $\hI^{\tr}\bas^{(n)}_i=\bas^{(m)}_{s_i}+\bas^{(m)}_{t_i}$ for $i\in\un$. Note that $\Car_\La I=\hI$ by Lemma \ref{lem:forbperm}. Combining these observations with \eqref{eq:ppqq} and \eqref{eq:IofLa} we get:
\begin{equation*}\label{eq:CChIhI}
(\Car+\Car^{\tr})\bas_i=\lceil \eta_{s_i}\rceil + \lceil \eta_{t_i} \rceil=
\Car(\lfloor \theta_{s_i}\rfloor + \lfloor \theta_{t_i} \rfloor)
=\Car I (\bas_{s_i}+\bas_{t_i})
=\Car I \hI^\tr\bas_i
=\hI \hI^\tr\bas_i
\end{equation*}
for any $i$, where $\Car=\Car_\La$. Therefore $\Car+\Car^{\tr}=\hI \hI^\tr$ and in consequence,
$$G_{q_\La}=\Car^{-\tr}+\Car^{-1}=\Car^{-1}(\Car+\Car^\tr)\Car^{-\tr}
=\Car^{-1}(\hI \hI^\tr)\Car^{-\tr}
=\Car^{-1}\Car I (\Car I)^\tr\Car^{-\tr}=II^\tr,
$$
which means that $q_\La=q_\bdg$, cf.~\eqref{eq:eulf}, Definition \ref{D:inc} and Remark \ref{R:con}(a).

Since $q_\La$ is an incidence form then the last claim follows from Lemma \ref{L:ful} and Theorems A and B.
\end{proof}

Observe that by the above theorem and Theorem C all $0$-roots, 1-roots and some $2$-roots of the Euler form $q_\La=q_\bdg$ of a gentle algebra $\La$ are described by walks in $\bdg$. Moreover, the rank, corank and the Dynkin type of $q_\La$ can be read off from $\bdg$ by Theorems A and B. As we mentioned in Section \ref{sec:intro}, the properties like the non-negativity, the Dynkin type of $q_\La$ and its roots are of interest for the structural analysis in the representation theory of $\La$ (cf.~\cite{ASS06, cmR,Kac80,BPS11}). We observed that the associated bidirected graph $\bdg$ 
encodes more useful data on $\La$, including  derived invariants like the Coxeter polynomial, cf.~\cite{H88,JAP.AM2015}. In particular,  Theorem \ref{thm:gentle} implies that the Coxeter polynomial of $\La$ is cyclotomic, that is, its Mahler measure is 1, see \cite{Sato} and \cite{BP99,JAP.AM2015}.
By using our techniques we hope to obtain a graphical model for gentle algebras which may be an  alternative (in some aspects simpler) to the widely applied combinatorial invariant of Avella-Alaminos and Geiss \cite{AG08} and to the geometric model of  \cite{OPS18,LP20}.
We plan to investigate these problems in a subsequent paper, also for the infinite global dimension case.

\begin{example}\label{ex:gentle} Let $\La=\kk \quiv/\ideal$, where $\quiv$ is the  quiver
$
\quiv:\, \xymatrix{{}_1 \ar@/^7pt/[r]^-{\alpha} & {}_2 \ar@/^7pt/[l]^-{\beta} }
$
and $\ideal$ is the two-sided ideal generated by the path $\alpha\beta$.  Then $\La$ is  a gentle algebra and we have
$$\Car_{\La}=\left[{\begin{array}{rr}1&1\\1&2  \end{array}}\right],\quad  \Car_\La^{-\tr}=
\left[{\begin{array}{rr}\!2\!&\!-1\!\\\!-1\!&\!1 \! \end{array}}\right],\ \ \text{and}\ \
 q_\Lambda(x)=x^\tr \Car_\La^{-\tr}x=2x_1^2+x_2^2-2x_1x_2.$$
 \vspace{-0.1cm}
 Note that the  bigraph of $q_\La$ has the shape $\Delta_{q_\La}=\begin{minipage}{4.2em}
{\scriptsize
				\begin{tikzpicture}[auto]
				\node (q1) at (0  , 0  ) {1};
				\node (q2) at (1, 0  ) {2};
				\foreach \x/\y in {1/2, 2/1}
				\draw[-] (q\x) edge [bend left=8] node{}(q\y);
				\foreach \x in {1}
				\draw[dashed] ([yshift=4pt,xshift=5pt]q\x) arc (-50:240:2.7mm);
				\end{tikzpicture}
		}\end{minipage} = \mathcal{C}_2$ so $q_\La$ is positive and $\Dyn(q_\La)=\CC_2$ (cf.~Lemma \ref{L:cla}). We have two forbidden threads: $\theta_1:=\alpha\beta$ and the trivial one $\theta_2:=\mathbbm{1}_2$. So the associated matrix $I=I_\La\in\MM_2(\ZZ)$ (see \eqref{eq:IofLa}) and the bidirected graph $\bdg$ with $I(\bdg)=I$ have the shape
$$I=\left[{\begin{array}{rr}2&0\\-1&1  \end{array}}\right],\qquad  \bdg=\xymatrix@C=3pc{ {\bulito}_{\theta_1} \ar@{|-|}@(lu,ld)_-{1}  & {\bulito}_{\theta_2} \ar[l]^-{2} }$$
We check directly that $\Car_\La^{-1}+(\Car_{\La}^{-1})^\tr=II^\tr$, that is, $q_\La=q_\bdg$. Note that  $\bdg$ is switching equivalent to $\bdgC_{2}^{0,0}$.
\end{example}
\subsection{Computational aspects}\label{subsec:comp}

We emphasize that most of the proofs of our main results as well as the corresponding auxiliary facts are constructive and the arguments can be easily transformed into algorithmic procedures. We avoid presenting the explicit pseudocodes not to extend our discussion, but restrict to the following hints which can also be viewed as a brief summary of the computational part of the paper:
\begin{alg}\label{alg:1}
\noindent
{\bf Input:} a connected irreducible non-negative integral quadratic form $q:\Z^n \to \Z$,


\noindent
{\bf Output:} a bidirected graph $\bdg$ such that $q=q_\bdg$ or {\bf false} if $q$ is not an incidence form.


\begin{enumerate}[label={\textnormal{\arabic*.}},topsep=3px,parsep=-1px]
\item If $q$ is unitary, then apply the inflation algorithm \cite[Algorithm 3.18]{SZ17} (cf.~Theorem \ref{T:wea}(a)) to obtain an explicit $G$-transformation $T$ such that $q\circ T=q_{\widehat{D}_r^{(c)}}$ with $D_r=\Dyn(q)$. If $D_r=\EE_r$, then return {\bf false}.
\item Otherwise, that is, if $D_r=\A_r$ (resp.~$D_r=\DD_r$), then apply Lemma \ref{L:tra} (cf.~Corollary \ref{C:tra}) to $T^{-1}$ and $\bdg:=\bdgA_r^c$ (resp.~$\bdg:=\bdgD_r^c$)  to obtain $\bdg\mathcal{T}$ with $q=q_{\bdg\mathcal{T}}$ (cf.~the proof of \lq\lq(i) implies (ii)'' of Theorem A in \ref{subsec:proofA}). Return $\bdg\mathcal{T}$. 
\item If $q$ is not unitary, then check whether or not $q$ satisfies Definition~\ref{D:tyc}. If not, then return {\bf false}. Otherwise follow the proof of Theorem B in \ref{subsec:proofAp} to find a G-transformation $T$ such that $q\circ T$ satisfies the hypothesis of Lemma~\ref{L:techC} (recall that $T$ is composition of a permutation and a rigid G-transformation given by Lemma~\ref{L:piv}), and to constuct a bidirected graph $\bdg'$ with $q\circ T=q_{\bdg'}$. Finally, apply Lemma \ref{L:tra} to $T^{-1}$ and $\bdg'$ to obtain $\bdg=\bdg'\mathcal{T}$ with $q=q_\bdg$. Return $\bdg$.
\end{enumerate}
\end{alg}

\begin{example} \label{exa:algo}
We illustrate Algorithm~\ref{alg:1} (especially the proofs of Theorem B and Lemmas~\ref{L:piv}, \ref{L:techC}) with the quadratic form $q$ of Dynkin type $\CC$ given by the following bigraph:
\[
\xymatrix@!0@C=30pt@R=30pt{ 1 \ar@{.}@(lu,ld) \ar@{=}[r]  \ar@{:}[d] & 2 \ar@{.}[d] \ar@{-}[ld] \\
3 \ar@{-}[r]  & 4}
\]
Since $q_1=2$, we are ready to find a rigid G-transformation $T'$ as in Lemma~\ref{L:piv} such that $q'=q\circ T'$ satisfies $q'_{1,i}>\delta_{1,i}$ for $i=1,\ldots,n$. We get $T'=T_{3,4}T_2$
 \[
 \xymatrix@!0@C=30pt@R=30pt{ 1 \ar@{.}@(lu,ld) \ar@{=}[r]  \ar@{:}[d] & 2 \ar@{.}[d] \ar@{-}[ld] \\
3 \ar@{-}[r]  & 4}
\, \xymatrix@!0@C=25pt@R=20pt{{} \\ {} \ar@{|->}[r]^-{T_{3,4}} & {} } \quad
\xymatrix@!0@C=30pt@R=30pt{ 1 \ar@{.}@(lu,ld) \ar@{:}[rd] \ar@{=}[r]  \ar@{:}[d] & 2  \ar@{-}[ld] \\
3 \ar@{.}[r]  & 4}
\, \xymatrix@!0@C=25pt@R=20pt{{} \\ {} \ar@{|->}[r]^-{T_{2}} & {} } \quad
\xymatrix@!0@C=30pt@R=30pt{ 1 \ar@{.}@(lu,ld) \ar@{:}[rd] \ar@{:}[r]  \ar@{:}[d] & 2  \ar@{.}[ld] \\
3 \ar@{.}[r]  & 4}
 \]
Following the cases 1-4 of Lemma~\ref{L:techC}, we obtain a partition $U$ of $\{1,\ldots,n\}$ with $U^2_{1,-1}=\{1\}$, $U^1_{2,+1}=\{2\}$, $U^1_{2,-1}=\{4\}$, $U^1_{3,+1}=\{3\}$ and $U^1_{3,-1}=\emptyset$. As indicated in the proof of Theorem B, a bidirected graph $\bdg'$ with $q'=q_{\bdg'}$ is given below, which is taken with the iterated transformation $\mathcal{T}_2 \circ \mathcal{T}_{3,4}$ into a bidirected graph $\bdg$ with $q=q_\bdg$,
\[
\bdg'=\xymatrix@!0@C=25pt@R=20pt{ & \bulito_2 \\
\bulito_1 \ar@{<->}@(lu,ld)_(.2)1 \ar@{<-}[rd]_-3 \ar@<-.5ex>@{<->}[ru]_-4  \ar@<.5ex>@{<-}[ru]^-2
\\  & \bulito_3 }
\xymatrix@!0@C=25pt@R=20pt{{} \\ {} \ar@{|->}[r]^-{\mathcal{T}_{2}} & {} } \,
\xymatrix@!0@C=25pt@R=20pt{ & \bulito_2 \\
\bulito_1 \ar@{<->}@(lu,ld)_(.2)1 \ar@{<-}[rd]_-3 \ar@<-.5ex>@{<->}[ru]_-4  \ar@<.5ex>[ru]^-2
\\  & \bulito_3 }
\xymatrix@!0@C=25pt@R=20pt{{} \\ {} \ar@{|->}[r]^-{\mathcal{T}_{3,4}} & {} } \,
\xymatrix@!0@C=25pt@R=20pt{ & \bulito_2 \ar@{<->}[dd]^-4 \\
\bulito_1 \ar@{<->}@(lu,ld)_(.2)1 \ar@{<-}[rd]_-3   \ar[ru]^-2
\\  & \bulito_3 }=\bdg
\]
In particular, $\Rnk(q)=3$ and $\CRnk(q)=1$.
\end{example}

\begin{alg}\label{alg:2} 
\noindent
{\bf Input:} a non-negative integral quadratic form $q:\Z^n \to \Z$ of Dynkin type $\CC_r$,


\noindent
{\bf Output:} a matrix $T\in\GlnZ$ and the canonical $(c_1,c_2)$-extension $\Delta:=\widehat{\mathcal{C}}^{(c_1,c_2)}_r=\Delta(\bdgC_r^{c_1,c_2})$ of $\mathcal{C}_r$ such that $q\Gweak^T q_\Delta$, $c_1+c_2=\crk(q)$ and $c_2=\dl(q)-1\geq 0$ (cf.~Theorem \ref{T:chc}).


\begin{enumerate}[label={\textnormal{\arabic*.}},topsep=3px,parsep=-1px]
\item As in Step 3 of Algorithm \ref{alg:1}, find a G-transformation $\widehat{T}$ such that $q\circ \widehat{T}=q_{\bdg'}$ for a bidirected graph $\bdg'$ (note that this is also Step 1 of the proof of Theorem \ref{T:chc}).
\item Compose the explicit transformations from Steps 2-7 of the proof of Theorem \ref{T:chc} to get a G-transformation $\widetilde{T}$ such that $q\circ(\widehat{T}\widetilde{T})=q_\Delta$ for $\Delta=\widehat{\mathcal{C}}^{(c_1,c_2)}_m$ (note that $m=r$). Return $\widehat{T}\widetilde{T}$ and $\Delta$.
\end{enumerate}
\end{alg}

\begin{example} \label{exa:algo2}
We illustrate Algorithm~\ref{alg:2} (especially the proof of Theorem~\ref{T:chc}) with the form $q$ of Example~\ref{exa:algo}. Recall from that example that $q$ has rank $3$, corank $1$ and only one (dotted) loop, thus $(c_1,c_2)=(1,0)$. We want to find a G-transformation $T$ such that $q\circ T$ coincides with the canonical $(1,0)$-extension of $\mathcal{C}_3$. The seven steps of the proof of Theorem~\ref{T:chc} yield the following transformations (cf. Example~\ref{exa:algo} for the first step):
\[
\xymatrix@!0@C=40pt{{} \\ {q\quad} \ar@{|->}[r]^-{T_{3,4}T_2}_-{\text{Step~1}} & {} } \,
\xymatrix@!0@C=25pt@R=20pt{ & \bulito_2 \\
\bulito_1 \ar@{<->}@(lu,ld)_(.2)1 \ar@{<-}[rd]_-3 \ar@<-.5ex>@{<->}[ru]_-4  \ar@<.5ex>@{<-}[ru]^-2
\\  & \bulito_3 }
\xymatrix@!0@C=30pt{{} \\ {} \ar@{|->}[r]^-{\mathcal{T}_{1,4} \circ\mathcal{T}_{4}}_-{\text{Step~2}} & {} } \,
\xymatrix@!0@C=25pt@R=20pt{ & \bulito_2 \\
\bulito_1 \ar@{<->}@(lu,ld)_(.2)1 \ar@{<-}[rd]_-3 \ar@<-.5ex>@{<-}[ru]_-4  \ar@{<-}@<.5ex>[ru]^-2
\\  & \bulito_3 }
\xymatrix@!0@C=30pt{{} \\ {} \ar@{|->}[r]^-{\mathcal{P}^{(2,3)}}_-{\text{Step~4}} & {} } \,
\xymatrix@!0@C=25pt@R=20pt{ & \bulito_2 \\
\bulito_1 \ar@{<->}@(lu,ld)_(.2)1 \ar@{<-}[rd]_-2 \ar@<-.5ex>@{<-}[ru]_-4  \ar@{<-}@<.5ex>[ru]^-3
\\  & \bulito_3 }
\xymatrix@!0@C=30pt{{} \\ {} \ar@{|->}[r]^-{\mathcal{T}_{1,4}}_-{\text{Step~5}} & {} } \,
\xymatrix@!0@C=25pt@R=20pt{ & \bulito_2 \\
\bulito_1 \ar@{<->}@(lu,ld)_(.2)1 \ar@{<-}[rd]_-2 \ar@<-.5ex>@{|-|}[ru]_-4  \ar@{<-}@<.5ex>[ru]^-3
\\  & \bulito_3 }
\xymatrix@!0@C=40pt{{} \\ {} \ar@{|->}[r]^-{\mathcal{T}_{2,1} \circ\mathcal{T}_{3,2}}_-{\text{Step~6}} & {\, \bdgC^{1,0}_3} } \,
\]
In this example, Steps~3 and~7 are empty. The wanted G-transformation is $T=T_{3,4}T_2T_{1,4}T_4P^{(2,3)}T_{1,4}T_{2,1}T_{3,2}$.
\end{example}

\begin{alg}\label{alg:3} 
\noindent
{\bf Input:}  a connected positive irreducible incidence form $q=q_\bdg$ of a  bidirected graph $\bdg$,


\noindent
{\bf Output:} the (finite) set $\CR_q(1)\subseteq\ZZ^n$ of solutions of Diophantine equation $q(x)=1$.


\begin{enumerate}[label={\textnormal{\arabic*.}},topsep=3px,parsep=-1px]
\item Since $q$ is positive then $\bdg$ is a tree or a 1-tree (cf.~the proof of Proposition \ref{prop:posfinite}).
\item If $\bdg$ is a tree, then apply Breadth-First Search to compute the (finite) set $\walksred(\bdg)$ of all reduced walks in $\bdg$ (cf.~Remark \ref{rem:reduced}). Return $\pm\inc(\walksred(\bdg))\setminus\{0\}$ (cf.~\eqref{eq:inc} and Theorem C).
\item If $\bdg$ is a 1-tree, then apply Depth-First Search to detect a non-trivial closed walk $\wlk$ and apply Breadth-First Search to compute the (finite) set $W$ of all reduced walks in $\bdg$ of the form $\wlk_s\wlk^k\wlk_t^{-1}$, $s<t$,  as in Case 2 of the proof of Proposition \ref{prop:posfinite} for $k=0,1$ (cf.~the formula \eqref{eq:Rinc1tree}). Return $\pm\inc(W)$.
\end{enumerate}
\end{alg}

All three  above algorithms can be implemented with polynomial arithmetic complexity (with respect to $n$), cf.~\cite{kZ20, MM19, MM21qa}; moreover, main arithmetic operations are performed on small integers,
cf.~Lemma \ref{lem:DynCbounds} and \cite[Proposition 5.1]{MM21qa}.
In particular, Algorithm \ref{alg:3} provides an efficient polynomial time procedure for solving Diophantine equations \eqref{EQ:dio} with $d=1$, compare with general exponential time Simson's \cite[Algorithm 3.7]{dS11} and \cite[Algorithm 4.2]{dS11a}. It seems that Algorithm \ref{alg:3} can be extended to non-negative incidence forms $q$ to compute \lq\lq bases'' of the (usually infinite) sets $\CR_q(d)$ for $d=0,1,2$, cf.~Example \ref{ex:onevertexbdgbasis} and \cite[Algorithm 3.9]{dS11}.

Finally, observe that a single Gabrielov transformation on a quadratic form or a bigraph takes linear time (see the formula \eqref{EQ:coe}). These elementary transformations are the main ingredient in the known algorithms reducing a non-negative unit or Cox-regular form to its canonical form, cf.~Theorem \ref{T:wea} and \cite{kZ20, MM19, MM21qa, BJP19, BP99, PR19}. Our approach allows to substitute Gabrielov transformations with the constant time transformations of bidirected graphs, see Definition \ref{D:tra}(a), cf.~Lemma \ref{L:tra}. This may lead to more efficient reduction algorithms.

\bibliographystyle{plainnat}

\end{document}